\setlist{itemsep=1ex}
    \DeclareMathOperator{\dom}{{\rm dom}}
    \newcommand{\thzfc}{\mathrm{ZFC}}
    \newcommand{\Bwf}{\mathcal{B}}
    \newcommand{\Ewf}{\mathcal{E}}
    \newcommand{\Gwf}{\mathcal{G}}
    \newcommand{\Iwf}{\mathcal{I}}
    \newcommand{\Mwf}{\mathcal{M}}
    \newcommand{\Nwf}{\mathcal{N}}
    \newcommand{\Swf}{\mathcal{S}}
    \newcommand{\bfrak}{\mathfrak{b}}
    \newcommand{\cfrak}{\mathfrak{c}}
    \newcommand{\dfrak}{\mathfrak{d}}
    \newcommand{\hfrak}{\mathfrak{h}}
    \newcommand{\gfrak}{\mathfrak{g}}
    \newcommand{\pfrak}{\mathfrak{p}}
    \newcommand{\rfrak}{\mathfrak{r}}
    \newcommand{\sfrak}{\mathfrak{s}}
    \newcommand{\menos}{\smallsetminus}
    \DeclareMathOperator{\pts}{\mathcal{P}}
    \newcommand{\frestr}{{\upharpoonright}}
    \DeclareMathOperator{\add}{{\rm add}}
    \DeclareMathOperator{\cov}{{\rm cov}}
    \DeclareMathOperator{\non}{{\rm non}}
    \DeclareMathOperator{\cof}{{\rm cof}}
    \DeclareMathOperator{\limdir}{{\rm limdir}}
    \newcommand{\Aor}{\mathbb{A}}
    \newcommand{\Bor}{\mathbb{B}}
    \newcommand{\Dor}{\mathbb{D}}
    \newcommand{\Eor}{\mathbb{E}}
    \newcommand{\Loc}{\mathbb{LOC}}
    \newcommand{\Por}{\mathbb{P}}
    \newcommand{\Pbb}{\mathbb{P}}
    \newcommand{\Qor}{\mathbb{Q}}
    \newcommand{\Qnm}{\dot{\mathbb{Q}}}
    \newcommand{\Ncal}{\mathcal{N}}
    \newcommand{\Mcal}{\mathcal{M}}
    \newcommand{\SNwf}{\mathcal{SN}}
    \newcommand{\Q}{\mathbb{Q}}
    \newcommand{\R}{\mathbb{R}}
    \DeclareMathOperator{\cf}{{\rm cf}}
    \newcommand{\imp}{\mathrel{\mbox{$\Rightarrow$}}}
    \newcommand{\sii}{\mathrel{\mbox{$\Leftrightarrow$}}}
    \newcommand{\la}{\langle}
    \newcommand{\ra}{\rangle}
   \newcommand{\Sbf}{\mathbf{S}}
\newcommand{\Dbf}{\mathbf{D}}
\DeclareMathOperator{\supp}{\mathrm{supp}}
\DeclareMathOperator{\seq}{\mathrm{seq}}
\DeclareMathOperator{\sqw}{\mathrm{seq}_{<\omega}}
\newcommand{\Fr}{\mathrm{Fr}}
\newcommand{\Rbf}{\mathbf{R}}
\newcommand{\Lc}{\mathbf{Lc}}
\newcommand{\aLc}{\mathbf{aLc}}
\DeclareMathOperator{\Lb}{\mathbf{Lb}}
\newcommand{\Hcal}{\mathcal{H}}
\newcommand{\Scal}{\mathcal{S}}
\newcommand{\id}{\mathrm{id}}
\newcommand{\Ed}{\mathbf{Ed}}
\DeclareMathOperator{\hgt}{\mathrm{ht}}
\DeclareMathOperator{\Seqw}{\mathrm{seq}_{<\omega}}
\DeclareMathOperator{\gsupp}{\mathrm{gsupp}}
\newcommand{\leqT}{\mathrel{\mbox{$\preceq_{\mathrm{T}}$}}}
\newcommand{\eqT}{\mathrel{\mbox{$\cong_{\mathrm{T}}$}}}
\newcommand{\supcof}{\mathrm{supcof}}
\newcommand{\supcov}{\mathrm{supcov}}
\newcommand{\minadd}{\mathrm{minadd}}
\newcommand{\minnon}{\mathrm{minnon}}
\newcommand{\minLc}{\mathrm{minLc}}
\newcommand{\baireincr}{\omega^{\uparrow\omega}}
\newcommand{\Cbf}{\mathbf{C}}
\newcommand{\set}[2]{\left\{#1 :\, #2\right\}}
\newcommand{\Seq}[2]{\la #1 :\, #2\ra}
\newcommand{\largeset}[2]{\left\{#1 :\, #2\right\}}
\newcommand{\Fn}{\mathrm{Fn}}
\newcommand{\gen}{\mathrm{gn}}
\newcommand{\balc}{\mathfrak{b}^{\mathrm{aLc}}}
\newcommand{\dalc}{\mathfrak{d}^{\mathrm{aLc}}}
\newcommand{\onebf}{\mathbf{1}}
\newcommand{\zerobf}{\mathbf{0}}
\newcommand{\DS}{\mathbf{DS}}
\DeclareMathOperator{\scf}{{\rm scf}}
\newcommand\subsetdot{\mathrel{\ooalign{$\subset$\cr
  \hidewidth\hbox{$\cdot\mkern3mu$}\cr}}}
\newcommand{\leqtr}{\triangleleft}
\newcommand{\nleqtr}{\ntriangleleft}
\newcommand{\nsqsubset}{\not\sqsubset}
\newcommand{\nsqsubseteq}{\not\sqsubseteq}
\definecolor{sub0}{RGB}{29,32,137}
\definecolor{sub1}{RGB}{1,71,157}
\definecolor{sub2}{RGB}{1,104,183}
\definecolor{sub3}{RGB}{0,160,234}
\definecolor{sug}{RGB}{0,154,68}
\definecolor{suy}{RGB}{208,219,1}
\newcommand{\symdf}{\mathrel{\mbox{\scriptsize $\triangle$}}}
\definecolor{dodger}{rgb}{0.0,0.5,1.0}
\definecolor{carrotorange}{rgb}{0.93, 0.57, 0.13}
\title{Separating cardinal characteristics of the strong measure zero ideal}
\author{J\"org Brendle}
\address{Graduate School of System Informatics, Kobe University,
Rokko--dai 1--1, Nada--ku, 657--8501 Kobe, Japan}
\email{\href{mailto:brendle@kobe-u.ac.jp}{brendle@kobe-u.ac.jp}}
\author{Miguel A.~Cardona}
\address{Einstein Institute of Mathematics,
The Hebrew University of Jerusalem. Givat Ram, Jerusalem, 9190401, Israel}
\email{\href{mailto:miguel.cardona@mail.huji.ac.il}{miguel.cardona@mail.huji.ac.il}}
\urladdr{\url{https://sites.google.com/mail.huji.ac.il/miguel-cardona-montoya/home-page}}
\author{Diego A.~Mej\'ia}
\address{Graduate School of System Informatics, Kobe University,
Rokko--dai 1--1, Nada--ku, 657--8501 Kobe, Japan}
\email{\href{mailto:damejiag@people.kobe-u.ac.jp}{damejiag@people.kobe-u.ac.jp}}
\urladdr{\url{https://researchmap.jp/mejia?lang=en}}
\thanks{The first author was partially supported by Grant-in-Aid for Scientific Research (C) 18K03398, Japan Society for the Promotion of Science; the second author was partially supported by the Slovak Research and Development Agency under Contract No.~APVV-20-0045 and by Pavol Jozef \v{S}af\'arik University at a postdoctoral position; and the third author was supported by the Grants-in-Aid for Scientific Research (C) 23K03198, Japan Society for the Promotion of Science.}
\subjclass[2020]{03E17, 03E35, 03E40}
\keywords{Strong measure zero sets, cardinal characteristics of the continuum, forcing iteration theory}
\begin{document}

\makeatletter
\def\@roman#1{\romannumeral #1}
\makeatother

\newcounter{enuAlph}
\renewcommand{\theenuAlph}{\Alph{enuAlph}}

\numberwithin{equation}{section}
\renewcommand{\theequation}{\thesection.\arabic{equation}}

\theoremstyle{plain}
  \newtheorem{theorem}[equation]{Theorem}
  \newtheorem{corollary}[equation]{Corollary}
  \newtheorem{lemma}[equation]{Lemma}
  \newtheorem{mainlemma}[equation]{Main Lemma}
  \newtheorem*{mainthm}{Main Theorem}
  \newtheorem{prop}[equation]{Proposition}
  \newtheorem{clm}[equation]{Claim}
  \newtheorem{fact}[equation]{Fact}
  \newtheorem{exer}[equation]{Exercise}
  \newtheorem{question}[equation]{Question}
  \newtheorem{problem}[equation]{Problem}
  \newtheorem{conjecture}[equation]{Conjecture}
  \newtheorem{assumption}[equation]{Assumption}
  \newtheorem*{thm}{Theorem}
  \newtheorem{teorema}[enuAlph]{Theorem}
  \newtheorem*{corolario}{Corollary}
\theoremstyle{definition}
  \newtheorem{definition}[equation]{Definition}
  \newtheorem{example}[equation]{Example}
  \newtheorem{remark}[equation]{Remark}
  \newtheorem{notation}[equation]{Notation}
  \newtheorem{context}[equation]{Context}

  \newtheorem*{defi}{Definition}
  \newtheorem*{acknowledgements}{Acknowledgements}

\def\sectionautorefname{Section}
\def\subsectionautorefname{Subsection}


\begin{abstract}
Let $\mathcal{SN}$ be the $\sigma$-ideal of the strong measure zero sets of reals. We present general properties of forcing notions that allow to control of the additivity of $\mathcal{SN}$ after finite support iterations. This is applied to force that the four cardinal characteristics associated with $\mathcal{SN}$ are pairwise different: 
\[\mathrm{add}(\mathcal{SN})<\mathrm{cov}(\mathcal{SN})<\mathrm{non}(\mathcal{SN})<\mathrm{cof}(\mathcal{SN}).\]
Furthermore, we construct a forcing extension satisfying the above and Cicho\'n's maximum (i.e.\ that the non-dependent values in Cicho\'n's diagram are pairwise different).
\end{abstract}

\maketitle

\section{Introduction}\label{SecIntro}




In this paper, we provide answers to two open questions addressed in~\cite{CMR,cardona} related to the consistency of the cardinal characteristics associated with the strong measure zero ideal, in particular concerning its additivity number. 

Denote by $\SNwf$ the ideal of strong measure zero subsets of $\R$ (or of the Cantor space $2^\omega$). The cardinal characteristics associated with $\SNwf$ have also been interesting objects of research, in particular, when related to the cardinals in Cicho\'n's diagram as well as other cardinal characteristics.  
%
%
%
%
%
Recently, the \emph{Yorioka ideals} $\Iwf_f$, parametrized by increasing functions $f\in \omega^\omega$, had been playing an important role in understanding the combinatorics of $\SNwf$ and its cardinal characteristics. 
%
%
%
%
%
%
%
%
%
\autoref{Cichonwith_SN} illustrates the provable inequalities among the cardinal characteristics associated with $\SNwf$ and $\Iwf_f$, and Cicho\'n's diagram~\cite{Mi1982,BJ,O08,KO08, CM,CM23}. (See \autoref{SecPre} for definitions.)

\begin{figure}[ht!]
\begin{center}
  \includegraphics[width=\linewidth]{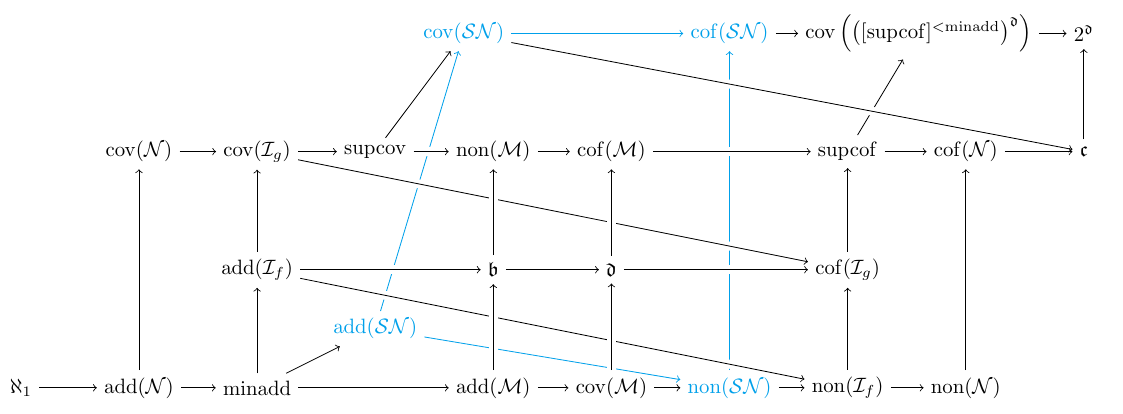}
 \caption{Cicho\'n's diagram and the cardinal characteristics associated with $\SNwf$ and the Yorioka ideals.
 An arrow  $\mathfrak x\rightarrow\mathfrak y$ means that (provably in ZFC) 
    $\mathfrak x\le\mathfrak y$. Moreover, $\add(\Mwf)=\min\{\bfrak,\non(\SNwf)\}$ and $\cof(\Mwf)=\max\{\dfrak,\supcov\}$, and $\minnon = \non(\SNwf)$. Two $\Iwf_f$ and $\Iwf_g$ are illustrated for arbitrary $f$ and $g$ to emphasize that the covering of any Yorioka ideal is an upper bound of the additivities of all Yorioka ideals, likewise for the cofinality and uniformity.}
  \label{Cichonwith_SN}
\end{center}
\end{figure}

As to consistency results, by using finite support (FS, for short) iterations of ccc forcings,
Pawlikowski~\cite{paw85} constructed a model satisfying $\bfrak=\aleph_1<\add(\SNwf)=\cfrak=\aleph_2$, while Judah and Shelah~\cite{JS89} constructed a model satisfying $\add(\SNwf)=\aleph_1<\add(\Mwf)=\cfrak=\aleph_2$ by iterating with precaliber $\aleph_1$ posets. Motivated by the latter, Pawlikowski~\cite{P90} proved that precaliber $\aleph_1$ posets do not increase $\cov(\SNwf)$, which shows that $\cov(\SNwf) = \aleph_1$ in Judah's and Shelah's model. Later, Goldstern, Judah, and Shelah~\cite{GJS} 
used a countable support (CS) iteration of proper forcings to prove the consistency of $\cov(\Mwf)=\bfrak=\aleph_1<\non(\SNwf)=\dfrak=\cfrak=\aleph_2$ as well as the consistency of $\cof(\Mwf)=\aleph_1<\add(\SNwf)=\aleph_2=\cfrak$. Modifying the latter result, Judah and Bartoszyinski~\cite[Thm.~8.4.11]{BJ} obtained a model where  $\dfrak=\aleph_1<\add(\SNwf)=\cov(\Nwf)=\aleph_2$. Almost 9 years later, Yorioka introduced his ideals to show that no inequality between $\cof(\SNwf)$ and $2^{\aleph_0}$ can be proved in $\thzfc$.

A natural question in the context of the above consistency results is whether $\cov(\SNwf)\leq\cof(\Nwf)$. The answer to this question is negative since 
$\cof(\Nwf)=\aleph_1<\cov(\SNwf)=\aleph_2$ holds in Sacks model.\footnote{Marczewski~\cite{Marcz} proved that every strong measure zero set is in the Marczewski ideal $s^0$, and it is clear that $\cov(s^0)=\aleph_2$ in Sacks model, while $\cof(\Ncal)=\aleph_1$ follows by the well-known Sacks property (see e.g.~\cite[Model~7.6.2]{BJ}). Another proof of $\cov(\SNwf)=\aleph_2$ in Sacks model, indicated to us by the referee, follows from Miller's result~\cite{Milmap} stating that, in this model, every subset of the Cantor space of size $\cfrak$ maps continuously onto the Cantor space. A more recent proof for iterations of tree forcings in general is due by the authors with Rivera-Madrid~\cite{CMR}.} This was used to prove the consistency of \begin{equation}\tag{\faLeaf}
\add(\SNwf)=\non(\SNwf)<\cov(\SNwf)<\cof(\SNwf),\label{3SNtwo}\end{equation} which was the first consistency result where more than two cardinal characteristics associated with $\SNwf$ are pairwise different. On the other hand, the second author~\cite{cardona} generalized Yorioka's characterization of $\cof(\SNwf)$ and, using finite support iteration techniques, proved the consistency of \begin{equation}\tag{\faPagelines}\add(\SNwf)=\cov(\SNwf)<\non(\SNwf)<\cof(\SNwf).\label{3SNone}\end{equation}
%
Furthermore, employing the technique of matrix iterations with vertical support restrictions
of the third author~\cite{mejiavert}, the second and third authors~\cite{CM23} refined the model of~\eqref{3SNone} and obtained that $\non(\SNwf)$ may be singular in such model.

The main challenge in both results is to separate $\add(\SNwf)$ from the rest. So we ask:

\begin{question}[{\cite[Question~5.1]{CMR}, see also~\cite[Sec.~5]{cardona}}]\label{Mainadd}
Is it consistent with $\thzfc$ that \[\add(\SNwf)<\min\{\cov(\SNwf),\non(\SNwf)\}?\]
\end{question}

So far, in the context of FS iterations, it is only known that precaliber $\aleph_1$ posets do not increase $\add(\SNwf)$ (as shown by Judah and Shelah~\cite{JS89}), but this is not enough to solve the question because such posets do not increase $\cov(\SNwf)$
(as discovered by Pawlikowski~\cite{P90}). 

We solve the previous question in this paper: we present a new property that helps us to force the additivity of $\SNwf$ small. Concretely, we introduce a new 
Polish relational system $\Rbf^f_\Gwf$ for an increasing function $f\in\omega^\omega$ and a countable family of increasing functions $\Gwf\subseteq\omega^\omega$, and prove that FS iterations of $\Rbf^f_\Gwf$-good posets force $\add(\SNwf)$ small.

\begin{teorema}[\autoref{thm:Paddsn}]\label{mainpresaddSN}
Let $\theta$ be an uncountable regular cardinal, $\lambda=\lambda^{\aleph_0}$ a cardinal and let $\pi=\lambda\delta$ (ordinal product) for some ordinal $0<\delta<\lambda^+$. Assume $\theta\leq \lambda$ and $\cf(\pi)\geq \omega_1$. If $\Por$ is a FS iteration of length $\pi$ of non-trivial ccc $\theta$-$\Rbf^f_\Gwf$-good posets of size ${\leq}\lambda$ 
then $\Por$ forces
$\add(\SNwf)\leq\theta$ and $\lambda\leq\cof(\SNwf)$. 
\end{teorema}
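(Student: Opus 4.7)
My plan is to combine two ingredients from the theory of Polish relational systems. First, I would invoke the Tukey connections between $\SNwf$ and the new relational system $\Rbf^f_\Gwf$, which motivate the introduction of this system in the first place, namely inequalities of the form
\[\add(\SNwf)\leq\bfrak(\Rbf^f_\Gwf) \quad \text{and} \quad \dfrak(\Rbf^f_\Gwf)\leq\cof(\SNwf).\]
With these in hand, the theorem reduces to verifying that $\Por$ forces $\bfrak(\Rbf^f_\Gwf)\leq\theta$ and $\lambda\leq\dfrak(\Rbf^f_\Gwf)$.

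Second, I would apply the standard preservation template for FS iterations of $\theta$-good ccc posets, in the form familiar from the analogous arguments for relational systems like $\Loc$ and those associated with the Yorioka ideals $\Iwf_f$. For the bound $\bfrak(\Rbf^f_\Gwf)\leq\theta$, I would use $\cf(\pi)\geq\omega_1$ together with the regularity of $\theta$ to fix a cofinal sequence of stages of cofinality $\theta$, and exploit the goodness of each iterand (propagated through the FS iteration by the usual reflection argument) to extract an $\Rbf^f_\Gwf$-unbounded family of size $\theta$ in the final extension. For the bound $\lambda\leq\dfrak(\Rbf^f_\Gwf)$, I would combine the size estimate $|\Por|\leq\lambda$ (granted by the size restriction on the iterands together with $\pi=\lambda\delta$, $\delta<\lambda^+$, and $\lambda=\lambda^{\aleph_0}$) with a bookkeeping argument that uses the non-triviality of each iterand to add $\lambda$-many generic reals which cannot be jointly $\Rbf^f_\Gwf$-dominated by fewer than $\lambda$ elements of $V^{\Por}$.

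The main obstacle, in my view, is not the iteration step itself, which should follow a pattern well-developed in the preservation-theorem literature, but rather making the Tukey reduction $\add(\SNwf)\leq\bfrak(\Rbf^f_\Gwf)$ actually hold. Establishing this requires showing that whenever a family of strong measure zero sets has non-strong-measure-zero union, it codes a subfamily of reals which is $\Rbf^f_\Gwf$-unbounded; the delicate point is that the definition of $\Rbf^f_\Gwf$, through the parameters $f$ and $\Gwf$, must be tailored precisely to capture the combinatorial content of Yorioka-style interval coverings of strong measure zero sets, while retaining enough regularity to be a Polish relational system amenable to the goodness analysis. This tension is where the main conceptual novelty of the paper presumably lies, and I would expect the bulk of the technical work to sit in the preparatory lemmas rather than in the above two-line application to the iteration theorem.
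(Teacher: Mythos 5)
The central gap is in the first step, where you assume a ZFC Tukey connection $\Rbf^f_\Gwf\leqT\SNwf$ (equivalently, $\add(\SNwf)\leq\bfrak(\Rbf^f_\Gwf)$ and $\dfrak(\Rbf^f_\Gwf)\leq\cof(\SNwf)$). The paper does not prove and does not use such a connection, and there are structural reasons to doubt it exists: a single pair $(t,\sigma)$ in the domain of $\Rbf^f_\Gwf$ cannot naturally encode a strong measure zero set, since $[\sigma]_\infty$ for a single $\sigma$ is only a Yorioka-style closed-null set. To produce a member of $\SNwf$ one needs to intersect $[\sigma_\alpha]_\infty$ over a family of $\sigma_\alpha$'s whose heights dominate $\omega^\omega$, and the paper makes this work only by choosing the $\sigma_\alpha$'s to be Cohen reals whose heights are governed by a book-keeping argument (exploiting $\lambda^{\aleph_0}=\lambda$) that picks out all increasing functions along the iteration. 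In other words, the witnesses $X_\beta := \bigcap_{\alpha\in Z^0_\beta}[\sigma_\alpha]_\infty$ for the forced Tukey connection $\Cbf_{[\lambda]^{<\theta}}\leqT\SNwf$ are built using the iteration itself, not pulled out of a pre-existing reduction of $\SNwf$ to $\Rbf^f_\Gwf$. The paper's proof even opens with the observation that $\SNwf$ cannot in general be coded by a definable relational system of the reals, which should be a warning sign against the clean two-step decomposition you are proposing.

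Beyond that, your second step conflates two different things. You treat the extraction of a $\theta$-$\Rbf^f_\Gwf$-unbounded family as a ``standard preservation template''; but the mechanism used in the paper (Theorem~\ref{Comgood}) is not the usual ``first add Cohen reals, then preserve via $\theta$-goodness'' pattern. It is a new result proved via the goodness-support apparatus of Section~\ref{sec:good}, which tracks, for each nice name $\dot y$, a set $\gsupp(\dot y)$ of size ${<}\theta$ and shows that Cohen reals added at limit stages outside $\gsupp(\dot y)$ remain $\Rbf^f_\Gwf$-unbounded over $\dot y$. This is what lets the Cohen reals added naturally at limit steps of the iteration serve simultaneously as the unbounded witnesses and as the $\sigma_\alpha$'s used to build the $X_\beta$'s. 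Finally, even with the $\Rbf^f_\Gwf$-unboundedness of the pairs $(t,\sigma_\alpha)$ in hand, the crossing from that to ``$X_\beta$ is not contained in $\bigcup_i[\tau(i)]$'' requires an additional argument: fix the tree $T$ with $[T]=2^\omega\setminus\bigcup_i[\tau(i)]$, use a Cohen real $c\in[T]$ added after $\tau$, and then invoke Lemma~\ref{magiclem} to see that the Cohen $\sigma_\alpha$'s added before $c$ are dense in $T$ (so $c\in[\sigma_\alpha]_\infty$). This bridge from $\Rbf^f_\Gwf$-unboundedness to $\SNwf$-unboundedness is the heart of the proof, and it is inherently a forcing argument carried out in the extension, not a consequence of a ZFC Tukey connection applied to a preservation theorem.
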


The terms ``Polish relational system" and ``goodness" are reviewed in \autoref{SecPres}. These are old notions developed by the first author~\cite{Br} and Judah and Shelah~\cite{JS}, but with many recent updates as in~\cite[Sec.~4]{CM}.

We prove that instances of $\Rbf^f_\Gwf$-good posets are (posets whose completions are) Boolean algebras with a strictly positive finitely additive measure $\mu$. 
We remark that Kamburelis~\cite{Ka} proved that such posets do not increase $\add(\Nwf)$ (see \autoref{ExmPrs}~\ref{ExmPrsd}). 
Examples are random forcing, any of its subalgebras, and the $\sigma$-centered posets. 

As a direct consequence, \autoref{Mainadd} is solved:

\begin{corollary}
Let $\Por$ be the FS iteration of length $\nu$ of random forcing $\Bor$, where $\nu=\nu^{\aleph_0}$ is a cardinal. Then, in the $\Por$-extension, 
$\add(\SNwf)=\aleph_1$ and $\cov(\SNwf)=\non(\SNwf)=\nu$.    
\end{corollary}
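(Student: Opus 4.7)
The plan is to combine \autoref{mainpresaddSN} with well-known ZFC inequalities relating $\SNwf$ to the Cicho\'n diagram, plus standard facts about FS iterations of random forcing.

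First, to get $\add(\SNwf) = \aleph_1$, I would apply \autoref{mainpresaddSN} with $\theta = \aleph_1$ and $\lambda = \pi = \nu$. The assumption $\nu = \nu^{\aleph_0}$ ensures $\cf(\nu) \geq \aleph_1$, so the requirements on $\pi$ are satisfied; the size bound $|\Bor| \leq \nu$ holds throughout the iteration (intermediate extensions have $\cfrak\leq\nu$). The key input is that random forcing $\Bor$ is $\aleph_1$-$\Rbf^f_\Gwf$-good; this is afforded by the paper's claim that every ccc Boolean algebra carrying a strictly positive finitely additive measure has this property, random forcing being the prototypical example. The theorem then yields $\add(\SNwf) \leq \aleph_1$ (and, as a bonus, $\cof(\SNwf) \geq \nu$). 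Since $\add(\SNwf) \geq \aleph_1$ always, equality follows.

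For $\cov(\SNwf) = \nu$: the inclusion $\SNwf \subseteq \Nwf$ gives $\cov(\Nwf) \leq \cov(\SNwf) \leq \cfrak = \nu$, so it suffices to show $\cov(\Nwf) \geq \nu$ in the extension. This is classical for FS iterations of random of length $\nu = \nu^{\aleph_0}$: any family of fewer than $\nu$ Borel null sets in the extension is coded (via the ccc and $\cf(\nu)\geq\aleph_1$) before some stage $\alpha < \nu$, and the random real introduced at a later stage escapes them all.

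For $\non(\SNwf) = \nu$: I would invoke the ZFC inequality $\cov(\Mwf) \leq \non(\SNwf)$, essentially Carlson's theorem that every set of size less than $\cov(\Mwf)$ has strong measure zero. It then remains to show $\cov(\Mwf) \geq \nu$ in the $\Por$-extension. A FS iteration of any non-trivial ccc forcing adds Cohen reals at limit stages of countable cofinality; combined with the standard ccc bookkeeping, this gives $\cov(\Mwf) = \nu$. The upper bound $\non(\SNwf) \leq \cfrak = \nu$ is immediate.

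The main conceptual step, and the only one that is not a classical invocation, is the verification that random forcing fits the newly introduced $\Rbf^f_\Gwf$-goodness framework; once that is granted, all remaining ingredients are either standard facts about FS iterations of random or ZFC theorems about $\SNwf$ (namely $\SNwf\subseteq\Nwf$ and $\cov(\Mwf)\leq\non(\SNwf)$).
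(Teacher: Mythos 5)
Your proof is correct and follows essentially the same route as the paper's: apply \autoref{mainpresaddSN} (with $\theta=\aleph_1$, $\lambda=\pi=\nu$) for $\add(\SNwf)=\aleph_1$, use $\cov(\Nwf)\leq\cov(\SNwf)$ together with $\cov(\Nwf)=\cfrak=\nu$, and use a ZFC lower bound on $\non(\SNwf)$ together with $\cov(\Mwf)=\cfrak=\nu$.

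One small point in your favor: you invoke $\cov(\Mwf)\leq\non(\SNwf)$ (every set of size below $\cov(\Mwf)$ has strong measure zero), which is the correct ZFC inequality and matches the fact the paper actually establishes about the model ($\cov(\Mwf)=\nu$). The paper's proof writes ``$\non(\Mwf)\leq\non(\SNwf)$'' at this point, which is not a ZFC theorem (since $\non(\SNwf)\leq\non(\Nwf)$ always, while $\non(\Mwf)>\non(\Nwf)$ is consistent) and in any case would require the unstated $\non(\Mwf)=\nu$; this appears to be a typo for $\cov(\Mwf)$. All of your other checks (that $\nu=\nu^{\aleph_0}$ forces $\cf(\nu)>\omega$ via K\"onig, that the iterands have size $\leq\nu$, that random forcing is $\Rbf^f_\Gwf$-good via the pfam criterion, that the Cohen reals at countable-cofinality limits give $\cov(\Mwf)\geq\nu$) are correct.
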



\begin{proof}
   It is well-known that $\Por$ forces $\cov(\Nwf)=\cov(\Mwf)= \cfrak =\nu$. 
   Since $\cov(\Nwf)\leq\cov(\SNwf)$ and $\non(\Mwf)\leq\non(\SNwf)$, $\Por$ forces $\cov(\SNwf)=\non(\SNwf)=\nu$. The equality $\add(\SNwf)=\aleph_1$ is forced thanks to~\autoref{mainpresaddSN}.
\end{proof}

Using \autoref{mainpresaddSN}, we also prove that the four cardinal characteristics ($\add$, $\cov$, $\non$ and $\cof$) associated with $\SNwf$ can be forced pairwise different, which strengthens~(\ref{3SNone}) and solves~{\cite[Question~5.2]{CMR}}. This is obtained by iterating (with finite support) restrictions of random forcing.

\begin{teorema}[{\autoref{thm4SN}}]\label{4SN}
There is a ccc poset that forces
   \[\add(\SNwf)<\cov(\SNwf)<\non(\SNwf)<\cof(\SNwf).\]    
\end{teorema}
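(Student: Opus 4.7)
The plan is to build $\Por$ as a single FS iteration of $\Rbf^f_\Gwf$-good ccc factors of three different kinds, interleaved by a bookkeeping function, so that \autoref{mainpresaddSN} automatically bounds $\add(\SNwf)$ from above and $\cof(\SNwf)$ from below, while the specific iterands pin $\cov(\SNwf)$ and $\non(\SNwf)$ in between. Work over a ground model of $\mathrm{GCH}$ and fix regular cardinals $\aleph_1 \leq \kappa_1 < \kappa_2 < \kappa_3 < \kappa_4$ with $\kappa_i = \kappa_i^{\aleph_0}$; let $\pi$ be an ordinal of the form $\kappa_4 \cdot \delta$ with $0 < \delta < \kappa_4^+$ and $\cf(\pi) = \kappa_4$. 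The bookkeeping partitions $\pi$ into three cofinal subsets: $S_{\mathrm{cov}}$ of size $\kappa_2$, $S_{\mathrm{non}}$ of size $\kappa_3$, and $S_{\mathrm{cof}}$ of size $\kappa_4$.

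At stages in $S_{\mathrm{cov}}$ I would use restrictions of random forcing (complete Boolean algebras carrying a strictly positive finitely additive measure), which by the comment following \autoref{mainpresaddSN} are $\Rbf^f_\Gwf$-good; since $\cov(\Nwf) \leq \cov(\SNwf)$ holds in ZFC (Figure \ref{Cichonwith_SN}), iterating $\kappa_2$ many such factors pushes $\cov(\SNwf) \geq \kappa_2$ in the final extension. At stages in $S_{\mathrm{non}}$ I would use $\sigma$-centered iterands tailored, in the spirit of \cite{cardona,CM23}, to add a real outside every strong measure zero set coded by the intermediate extension; $\sigma$-centered posets are good, and iterating $\kappa_3$ of them pushes $\non(\SNwf) \geq \kappa_3$. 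At stages in $S_{\mathrm{cof}}$ I would use further good factors used purely for bookkeeping, ensuring $\cfrak = \kappa_4$. Since every factor is good and of size $\leq\kappa_4$, \autoref{mainpresaddSN} applies with $\theta = \kappa_1$ and $\lambda = \kappa_4$, yielding $\add(\SNwf) \leq \kappa_1$ and $\cof(\SNwf) \geq \kappa_4$ in the $\Por$-extension.

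What remains, and what I expect to be the main obstacle, are the matching upper bounds $\cov(\SNwf) \leq \kappa_2$ and $\non(\SNwf) \leq \kappa_3$. For $\non(\SNwf) \leq \kappa_3$, the goal is to produce a non-$\SNwf$ set of size $\kappa_3$ in the final model; this can be arranged via a standard reflection for FS iterations combined with the generalized Yorioka-ideal characterization from \cite{cardona,CM23}, using that the bookkeeping never allows a latter factor to destroy a non-$\SNwf$ witness already added. For $\cov(\SNwf) \leq \kappa_2$ one needs a preservation theorem of ``$\SNwf$-cover'' type stating that the $\sigma$-centered factors at $S_{\mathrm{non}}$ and the bookkeeping factors at $S_{\mathrm{cof}}$ do not introduce strong measure zero covers beyond those contributed by the $\kappa_2$-many random-type iterands. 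This is the delicate step: it is precisely why \emph{restricted} random forcing is needed in $S_{\mathrm{cov}}$ rather than the full measure algebra (the latter would force $\cov(\Nwf)=\cfrak=\kappa_4$, collapsing the separation), and it is the analogue for $\SNwf$ of the ``$\cov$-preservation'' steps underlying the classical Cichoń-maximum constructions cited in the introduction. Putting the lower and upper bounds together yields
\[
\add(\SNwf) \leq \kappa_1 < \cov(\SNwf) = \kappa_2 < \non(\SNwf) = \kappa_3 < \kappa_4 \leq \cof(\SNwf),
\]
which is the desired chain of strict inequalities.
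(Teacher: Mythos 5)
Your proposal has a genuine gap, and it stems from a constraint on finite support iterations that you cannot escape: in any FS iteration of non-trivial ccc posets of length $\pi$ with $\cf(\pi)>\omega$, the Cohen reals added at cofinally many limit stages force $\cf(\pi)\leq\non(\SNwf)$ (this is exactly \autoref{thm:cfcovSN}, or the stronger \autoref{thm:precaliber}, which even gives $\scf(\pi)\leq\non(\SNwf)$ for precaliber-$\theta$ iterands). In your setup $\cf(\pi)=\kappa_4=\cfrak$, so the iteration forces $\non(\SNwf)\geq\kappa_4$. This directly contradicts your intended $\non(\SNwf)=\kappa_3<\kappa_4$. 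There is no ``standard reflection for FS iterations'' that would produce a non-strong-measure-zero witness of size $\kappa_3$; such a witness simply does not exist in the final model. Relatedly, the ``delicate step'' you flag for $\cov(\SNwf)\leq\kappa_2$ is not something you need to invent: it is exactly the precaliber preservation theorem \autoref{thm:precaliber}, and that same theorem is what simultaneously forces $\non(\SNwf)$ up to $\scf(\pi)$, which is why your three-parameter scheme collapses.

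The paper's actual proof of \autoref{thm4SN} sidesteps this entirely and is considerably simpler than what you describe. It uses a FS iteration of length $\lambda$ of a single kind of iterand: restricted random forcing $\Bor^N$ over small transitive models of size ${<}\kappa$, with bookkeeping so that every set of ${<}\kappa$-many null Borel codes is covered by some $N$. The cardinal characteristics then fall out of Tukey equivalences rather than ad hoc preservation: these iterands are $\Rbf^{f_0}_\Gwf$-good and $\Lc^*$-good, so \autoref{thm:Paddsn} (together with \autoref{Comgood}) forces $\Cbf_{[\lambda]^{<\aleph_1}}\leqT\SNwf$ and $\Nwf\eqT\Cbf_{[\lambda]^{<\aleph_1}}$, giving $\add(\SNwf)=\aleph_1$ (not a freely chosen $\kappa_1$ — since the iterands are $\aleph_1$-good, the additivity is pinned to $\aleph_1$ regardless of the parameter you plug into the theorem); they have size ${<}\kappa$, hence precaliber $\kappa$, so $\Cbf_\SNwf^\perp\eqT\Cbf_\Mwf\eqT\Cbf_\Nwf^\perp\eqT\Cbf_{[\lambda]^{<\kappa}}$, which yields $\cov(\SNwf)=\cov(\Nwf)=\non(\Mwf)=\kappa$ \emph{and} $\non(\SNwf)=\cov(\Mwf)=\cfrak=\lambda$ simultaneously. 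The final strict inequality $\non(\SNwf)<\cof(\SNwf)$ is obtained not by squeezing a fourth parameter below $\cfrak$ but by showing $\cof(\SNwf)>\cfrak$: since the resulting model satisfies $\cov(\Mwf)=\dfrak$ and $\non(\SNwf)=\supcof=\lambda$, \autoref{cor:lowSN} gives $\la\cf(\lambda)^{\cf(\lambda)},\leq\ra\leqT\SNwf$, hence $\lambda<\cof(\SNwf)$. So the chain is $\aleph_1<\kappa<\lambda=\cfrak<\cof(\SNwf)$, with only two cardinal parameters in the construction and the last gap obtained ``for free'' above the continuum, rather than the four parameters $\kappa_1<\kappa_2<\kappa_3<\kappa_4$ you attempt to interleave.
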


Furthermore, we can prove the previous consistency result by forcing, simultaneously, \emph{Cicho\'n's maximum}, i.e.\ that all the non-dependent values in Cicho\'n's diagram are pairwise different.

\begin{teorema}[{\autoref{cor1:4SNmax}}]\label{Main4SN}
Let $\lambda = \lambda^{\aleph_0}$ be a cardinal and, for $1\leq i\leq 5$, let $\lambda^\bfrak_i$ and $\lambda^\dfrak_i$ be  uncountable regular cardinals
such that $\lambda^\bfrak_i\leq \lambda^\bfrak_j \leq \lambda^\dfrak_j \leq \lambda^\dfrak_i\leq\lambda$ for any $i\leq j$,  and assume that $\cof\left(([\lambda^\dfrak_1]^{<\lambda^\bfrak_1})^{\lambda^\dfrak_4}\right) = \lambda^\dfrak_1$.\footnote{The latter assumption implies $\lambda^\dfrak_4<\lambda^\dfrak_1$.} Then there is a ccc poset forcing the constellation in~\autoref{cichonmaxSN2}.
\end{teorema}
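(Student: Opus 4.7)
The plan is to combine the matrix iteration machinery used in recent Cicho\'n's maximum constructions (the submodel/ultrafilter-limit approach of the third author and collaborators) with the new preservation property $\Rbf^f_\Gwf$-goodness supplied by \autoref{mainpresaddSN}. Concretely, I would build a FS ccc iteration of length $\pi = \lambda^\dfrak_1 \cdot \delta$ (for a suitable $\delta < (\lambda^\dfrak_1)^+$ producing the $\cof(\SNwf) = \lambda$ on top) that simultaneously handles, through appropriate cofinal bookkeeping of names, the ten ``small'' sides of the ten relational systems governing Cicho\'n's diagram together with the four relational systems governing $\SNwf$. The target values $\lambda^\bfrak_i, \lambda^\dfrak_i$ for $1 \le i \le 5$ will be distributed among the five ``stairs'' $\add(\Nwf){=}\cdots{=}\lambda^\bfrak_1$, $\add(\Mwf){=}\cov(\Nwf){=}\lambda^\bfrak_2$, and so forth, matching whatever assignment is fixed in \autoref{cichonmaxSN2}, with the four $\SNwf$-cardinals placed at the five stair-levels according to their provable ZFC inequalities (see \autoref{Cichonwith_SN}).

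For the actual forcings used along the iteration, I would use exactly the classical building blocks that are known to be $\Rbf^f_\Gwf$-good: subalgebras of random forcing $\Bor$ of size ${\le}\lambda^\bfrak_i$ to push $\cov(\Nwf)$ and $\non(\Mwf)$ up to the required stair, $\sigma$-centered posets (Hechler-type and eventually-different-type forcings $\Dbf, \Ed$) to handle $\bfrak, \dfrak, \cov(\Mwf), \non(\Mwf)$, and further restrictions of random for $\non(\Nwf)$. Since $\sigma$-centered and random(-subalgebra) forcings are all $\Rbf^f_\Gwf$-good (they are Boolean algebras with strictly positive finitely additive measure, or $\sigma$-centered which is even stronger), \autoref{mainpresaddSN} applied with $\theta = \lambda^\bfrak_1$ and $\pi$ of cofinality $\lambda^\dfrak_1$ forces $\add(\SNwf) \le \lambda^\bfrak_1$ and $\lambda \le \cof(\SNwf)$, thereby pinning the outer two $\SNwf$-characteristics. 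To separate $\cov(\SNwf)$ from $\non(\SNwf)$, one cofinally adds random reals in a carefully restricted pattern (using the vertical support restrictions of matrix iterations in the style of~\cite{mejiavert, CM23}), exploiting that $\cov(\Nwf) \le \cov(\SNwf)$ and $\non(\Mwf) \le \non(\SNwf)$ while preventing $\cov(\SNwf)$ from overshooting $\non(\SNwf)$ via the characterization involved in~\eqref{3SNone}.

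The cardinal arithmetic hypothesis $\cof(([\lambda^\dfrak_1]^{<\lambda^\bfrak_1})^{\lambda^\dfrak_4}) = \lambda^\dfrak_1$ will be used at two points: first, to guarantee there are enough nice names along the iteration to witness the cofinalities $\lambda^\dfrak_i$ of the various ``large'' characteristics simultaneously (this is the standard role of such an assumption in GKS-type arguments), and second, to bound the size of the iteration so that $\cof(\SNwf)$ comes out exactly $\lambda$ rather than being blown up further. The upper bounds $\cov(\SNwf) \le \lambda^\dfrak_3$ and $\non(\SNwf) \le \lambda^\dfrak_4$ (or whichever placement the figure dictates) will come from the fact that these characteristics are bounded above by combinations of $\cof(\Nwf)$, $\dfrak$, and the supremum of the Yorioka covering numbers, all of which are controlled by the iteration.

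The main obstacle, and the place where most of the technical work will go, is the \emph{simultaneous} preservation argument: one must verify that every iterand, after each stage, is not only good for the ten Cicho\'n-relational systems (Tukey-good in the sense of~\cite[Sec.~4]{CM}) but also $\Rbf^f_\Gwf$-good uniformly for the countable family $\Gwf$ used in \autoref{mainpresaddSN}, and that the matrix (or submodel) framework propagates all these goodness properties through both the ccc limit and the vertical restrictions. Once this uniform goodness is established, the standard lower-bound computation ($\bfrak_\Rbf \ge \theta$ by goodness, $\dfrak_\Rbf \ge \lambda$ by cofinal addition of $\Rbf$-dominating reals) together with \autoref{mainpresaddSN} for the $\SNwf$ side yields all four strict inequalities of the $\SNwf$-block and all nine strict inequalities of Cicho\'n's maximum, completing the proof.
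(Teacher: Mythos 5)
Your proposal describes, in essence, only the \emph{first} of the two steps in the paper's proof, and the missing second step is not optional: without it the construction cannot yield Cicho\'n's maximum. A single FS ccc iteration (with or without a matrix/vertical-restriction structure) of length $\pi$ with uncountable $\cf(\pi)$ always forces $\non(\Mwf)\le\cf(\pi)\le\cov(\Mwf)$ and $\cov(\SNwf)\le\cf(\pi)$, so it can never separate the right-hand side of the diagram---this is exactly why the paper builds $\Por$ (essentially your iteration, corresponding to \autoref{thm:4SN}) forcing $\cov(\Mwf)=\cfrak=\theta_6$ while separating only the left side, and then applies the method of intersections with a $\sigma$-closed elementary submodel $N\preceq H_\chi$ to get $\Por\cap N$, which is the actual poset witnessing \autoref{thm:4SNmax}. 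Your sentence ``\emph{Once this uniform goodness is established \ldots yields \ldots all nine strict inequalities of Cicho\'n's maximum}'' skips over this obstruction entirely.

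Moreover, the submodel intersection is not a routine add-on here, because $\SNwf$ is not a definable relational system of the reals: the transfer lemma \autoref{KcapN} does \emph{not} apply to $\SNwf$ or $\Cbf^\perp_{\SNwf}$, and the paper has to prove the tailored substitute \autoref{SN_N} (restricting witnesses of Tukey connections for $\Sbf^{f_0}$ and $\Cbf^\perp_\SNwf$ to $N$) to carry the $\SNwf$-information through the intersection. Your proposal does not identify this issue at all.

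Finally, the role of the hypothesis $\cof\bigl(([\lambda^\dfrak_1]^{<\lambda^\bfrak_1})^{\lambda^\dfrak_4}\bigr)=\lambda^\dfrak_1$ is misread. It is not a bookkeeping or ``enough nice names'' assumption, nor does it make ``$\cof(\SNwf)$ come out exactly $\lambda$.'' It is used once, at the very end (in the step from \autoref{thm:4SNmax} to \autoref{cor1:4SNmax}), to invoke the ZFC upper bound $\SNwf\leqT\Cbf^\dfrak_{[\supcof]^{<\minadd}}$ of \autoref{new_upperb}: since the iteration forces $\supcof=\cof(\Nwf)=\lambda^\dfrak_1$, $\minadd=\lambda^\bfrak_1$ and $\dfrak=\lambda^\dfrak_4$, the assumption (preserved by ccc forcing) gives $\cof(\SNwf)\le\lambda^\dfrak_1$, which together with the automatic $\cof(\Nwf)\le\cof(\SNwf)$ pins $\cof(\SNwf)=\lambda^\dfrak_1$. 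In particular the value is $\lambda^\dfrak_1$, not $\lambda$, and your appeal to \autoref{mainpresaddSN} to get ``$\lambda\le\cof(\SNwf)$'' would be inconsistent with the target constellation (where $\cof(\SNwf)=\lambda^\dfrak_1\le\lambda$) unless $\lambda^\dfrak_1=\lambda$.

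Your first step (the $\mathord{<}\theta$-uf-extendable matrix iteration with small transitive-model iterands of random, $\Eor^h_b$, $\Dor$, $\Loc$, and the use of $\Rbf^f_\Gwf$-goodness via \autoref{mainpresaddSN} and \autoref{cor:addsn} to pin $\add(\SNwf)$ and $\add(\Iwf_f)$) is essentially right and matches \autoref{thm:4SN}. What is missing is everything after that: the submodel intersection, the $\SNwf$-specific transfer lemma, and the correct application of the cardinal arithmetic hypothesis.
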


\begin{figure}[ht!]
\begin{center}
  \includegraphics[width=\linewidth]{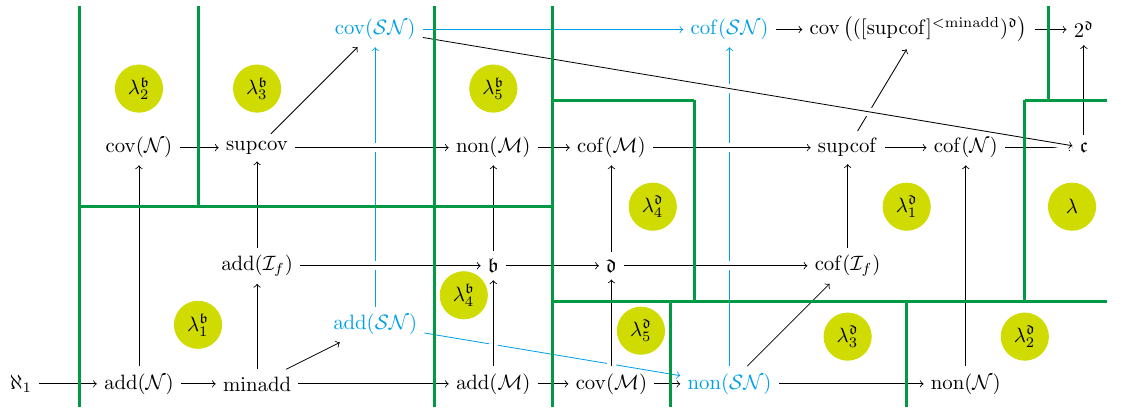}
 \caption{Cicho\'n's maximum as forced in \autoref{Main4SN}, with the four cardinal characteristics associated with $\SNwf$ pairwise different. Here $\Iwf_f$ is illustrated for any $f$, so their four cardinal characteristics are pairwise different, too. 
 The value of $2^\dfrak$ is the ground-model value of $\lambda^{\lambda^\dfrak_4}$.}
  \label{cichonmaxSN2}
\end{center}
\end{figure}

The assumption $\cof\left(([\lambda^\dfrak_1]^{<\lambda^\bfrak_1})^{\lambda^\dfrak_4}\right) = \lambda^\dfrak_1$ is feasible since it can be easily forced by adding $\lambda^\dfrak_4$-Cohen reals (on $2^{\lambda^\dfrak_4}$); and the value forced to $2^\dfrak$ can also be larger than (or equal to) $\cfrak$. More details are presented at the beginning of \autoref{SecConstr}.

In the direction of the previous results, the third author~\cite{mejiamatrix} used a forcing matrix iteration to force that $\add(\Nwf)<\cov(\Nwf)<\non(\Nwf)<\cof(\Nwf).$
Afterward, the second and third authors~\cite{CM} established the consistency of 
$\add(\Iwf_f)<\cov(\Iwf_f)<\non(\Iwf_f)<\cof(\Iwf_f)$ for any $f$ above some fixed $f^*$ again using a forcing matrix iteration. Later, Goldstern, Kellner, and Shelah~\cite{GKScicmax} used four strongly compact cardinals to obtain the consistency of Cicho\'n's maximum. In this model, $\add(\Mwf)<\non(\Mwf)<\cov(\Mwf)<\cof(\Mwf)$. This result was improved by the authors~\cite{BCM}, who developed a new method of two-dimensional iterations with ultrafilters (called~\emph{${<}\mu$-uf-extendable matrix iterations}, see~\autoref{Defmatuf}) to show the consistency with ZFC of $\add(\Mwf)<\non(\Mwf)<\cov(\Mwf)<\cof(\Mwf)$ and $\add(\Iwf_f)<\cov(\Iwf_f)<\non(\Iwf_f)<\cof(\Iwf_f)$ for all $f$, without large cardinals.  
Later, Goldstern, Kellner,  the third author, and Shelah~\cite{GKMS} proved, by intersecting ccc forcings with $\sigma$-closed models, that no large cardinals are needed for the consistency of Cicho\'n's maximum.   

More recently, the ${<}\mu$-uf-extendable matrix iterations have been used by the second author~\cite{Car4E} to obtain the consistency of $\add(\Ewf)<\non(\Ewf)<\cov(\Ewf)<\cof(\Ewf)$ with ZFC, where $\Ewf$ is the ideal generated by the $F_\sigma$ measure zero subsets of the reals. On the other hand, the first author~\cite{Brshatt} introduced a powerful technique called~\emph{Shattered iterations} to obtain a model of ZFC satisfying $\aleph_1<\cov(\Mwf)<\non(\Mwf)$, in which $\add(\Nwf)<\non(\Nwf)<\cov(\Nwf)<\cof(\Nwf)$ holds. The consistency of $\add(\Mwf)<\cov(\Mwf)<\non(\Mwf)<\cof(\Mwf)$ and $\add(\Ewf)<\cov(\Ewf)<\non(\Ewf)<\cof(\Ewf)$ are still open.

The proof of~\autoref{Main4SN} has a similar flow as the proof of Cicho\'n's maximum~\cite{GKMS}: we first force the constellation with $\cov(\Mwf)=\cfrak$, while separating everything we need on the left side, and then apply the method of intersections with $\sigma$-closed models for the final forcing construction. The first step uses iterations with ultrafilter limits as in~\cite{GMS,GKScicmax,BCM}, but we follow the two-dimensional version from~\cite{BCM}; the second step's method is original by~\cite{GKMS} and it is reviewed in~\cite{Brmodtec,CM22}, but our presentation is closer to~\cite{CM22}.

In the resulting constellation (\autoref{cichonmaxSN2}), $\cov(\SNwf)$ and $\non(\SNwf)$ are different from everything else, but $\add(\SNwf) = \add(\Nwf)$ and $\cof(\SNwf) = \cof(\Nwf)$. The problem about forcing in addition $\add(\Nwf) < \add(\SNwf)$ is discussed in \autoref{sec:Q}. 
Although $\add(\Iwf_f) = \add(\Nwf)$ and $\cof(\Iwf_f)= \cof(\Nwf)$ for all $f$, not all $\cov(\Iwf_f)$ may be forced to be the same, likewise for $\non(\Iwf_f)$ (see \autoref{rem:covIf} and~\ref{remcovnon}). Despite this, the four cardinal characteristics associated with the Yorioka ideals are pairwise different. 

The constellations we force can be adapted to methods from~\cite{GKMSnoreal,GKMSsplit} to separate, in addition, cardinal characteristics like the \emph{pseudo-intersection number} $\pfrak$, the \emph{distributivity number} $\hfrak$, the \emph{group-wise density number} $\gfrak$, the \emph{splitting number} $\sfrak$, the \emph{reaping number} $\rfrak$ and some cardinal characteristics associated with variations of Martin's axiom. Since the details can be understood from the references, we do not work with those cardinals in this paper.


\subsection*{Structure of the paper} In~\autoref{SecPre}, we review all the essentials related to relational systems and the Tukey order, measure and category on $\prod b$, and related forcing notions and their properties. We review in~\autoref{SecPres} the second's and third's author~\cite{CM} preservation theory of unbounded families, which is a generalization of Judah's and Shelah’s~\cite{JS} and the first author’s~\cite{Br} preservation theory.

\autoref{sec:good} is devoted to our general goodness support theory, which formalizes a technical device introduced by the first author and Switzer~\cite[Subsec.~4.4]{BS22}. In~\autoref{preaddSN}, we introduce the relational system $\Rbf^f_\Gwf$ and prove~\autoref{mainpresaddSN} using the goodness theory introduced in~\autoref{sec:good}. 
We present applications of~\autoref{mainpresaddSN} in \autoref{SecConstr}, where we prove~\autoref{4SN} along with several constellations of Cicho\'n's diagram. In \autoref{sec:subm} we review the method of forcing-intersection with $\sigma$-closed submodels and show new results about its effect on $\SNwf$, which works to prove \autoref{Main4SN} and several variants. 
Discussions and open questions are presented in \autoref{sec:Q}.

    



\section{Relational systems and cardinal characteristics}\label{SecPre}

This section summarizes the notation and preliminary facts we need to present our main theory and results. \autoref{Ebhdense} and~\ref{EDsepfam} are facts original to this paper. 
%
%
%
We use standard set-theoretic notation as in~\cite{kunen80,Ke2,jech,kunen}. 
%
Given a formula $\phi$,
\begin{itemize}
    \item[{}] $\forall^\infty\, n<\omega\colon \phi$ means that all but finitely many natural numbers satisfy $\phi$; and
    \item[{}] $\exists^\infty\, n<\omega\colon \phi$ means that infinitely many natural numbers satisfy $\phi$.
\end{itemize}

    
    



\subsection{Relational systems and Tukey order}
 \ 

We closely follow the presentation in~\cite[Sect.~1]{CM22}, which is based on~\cite{Vojtas,BartInv,blass}. 
%
We say that $\Rbf=\la X, Y, \sqsubset\ra$ is a \textit{relational system} if it consists of two non-empty sets $X$ and $Y$ and a relation $\sqsubset$.
\begin{enumerate}[label=(\arabic*)]
    \item A set $F\subseteq X$ is \emph{$\Rbf$-bounded} if $\exists\, y\in Y\ \forall\, x\in F\colon x \sqsubset y$. 
    \item A set $D\subseteq Y$ is \emph{$\Rbf$-dominating} if $\forall\, x\in X\ \exists\, y\in D\colon x \sqsubset y$. 
\end{enumerate}

We associate two cardinal characteristics with this relational system $\Rbf$: 
\begin{align*}
    \bfrak(\Rbf) & :=\min\{|F|:\, F\subseteq X  \text{ is }\Rbf\text{-unbounded}\} \text{ the \emph{unbounding number of $\Rbf$}, and}\\
    \dfrak(\Rbf) & :=\min\{|D|:\, D\subseteq Y \text{ is } \Rbf\text{-dominating}\} \text{ the \emph{dominating number of $\Rbf$}.}
\end{align*}

As in~\cite{GKMS,CM22}, we also look at relational systems given by directed preorders. 
%
We say that $\la S,\leq_S\ra$ is a \emph{directed preorder} if it is a preorder (i.e.\ $\leq_S$ is a reflexive and transitive relation on $S$) such that 
\[\forall\, x, y\in S\ \exists\, z\in S\colon x\leq_S z\text{ and }y\leq_S z.\] 
A directed preorder $\la S,\leq_S\ra$ is seen as the relational system $S=\la S, S,\leq_S\ra$, and their associated cardinal characteristics are denoted by $\bfrak(S)$ and $\dfrak(S)$. The cardinal $\dfrak(S)$ is actually the \emph{cofinality of $S$}, typically denoted by $\cof(S)$ or $\cf(S)$.

The typical example is $\omega^\omega$ with the relation $x\leq^* y$ iff $x(n)\leq y(n)$ for all but finitely many $n<\omega$. Its associated cardinal characteristics are the classical \emph{bounding number} $\bfrak$ and the \emph{dominating number} $\dfrak$.


The cardinal characteristics associated with an ideal can be characterized by relational systems as well. 
%
For $\Iwf\subseteq\pts(X)$, define the relational systems: 
\begin{enumerate}[label=(\arabic*)]
    \item $\Iwf:=\la\Iwf,\Iwf,\subseteq\ra$, which is a directed preorder when $\Iwf$ is closed under unions (e.g.\ an ideal).
    
    \item $\Cbf_\Iwf:=\la X,\Iwf,\in\ra$.
\end{enumerate}

\begin{fact}
If $\Iwf$ is an ideal on $X$ containing $[X]^{<\aleph_0}$, then
\begin{multicols}{2}
\begin{enumerate}[label= \rm (\alph*)]
    \item $\bfrak(\Iwf)=\add(\Iwf)$. 
    
    \item $\dfrak(\Iwf)=\cof(\Iwf)$.
    
    \item $\dfrak(\Cbf_\Iwf)=\cov(\Iwf)$. 
    
    \item $\bfrak(\Cbf_\Iwf)=\non(\Iwf)$.
\end{enumerate}
\end{multicols}
\end{fact}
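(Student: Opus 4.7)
This is a routine unpacking of definitions once one notices that all four cardinal characteristics of the ideal can be phrased as extremal sizes of subfamilies of $\Iwf$ (or of $X$) witnessing certain boundedness/covering properties, which in turn match verbatim the definitions of $\bfrak$ and $\dfrak$ applied to the relational systems $\la \Iwf, \Iwf, \subseteq\ra$ and $\Cbf_\Iwf = \la X, \Iwf, \in\ra$. The hypothesis that $\Iwf$ contains $[X]^{<\aleph_0}$ (in particular, is nonempty and $X\notin\Iwf$ in the nontrivial case) guarantees that all the families in question can be chosen nonempty and proper, so the minima are well defined.

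For (a) and (b), I would observe that a family $F\subseteq\Iwf$ is $\subseteq$-bounded (in the sense of the relational system $\Iwf$) exactly when there exists $B\in\Iwf$ with $A\subseteq B$ for every $A\in F$; by upward closure of $\Iwf$ this is equivalent to $\bigcup F\in\Iwf$. Hence the smallest unbounded subfamily has size $\add(\Iwf)$, giving (a). Dually, $D\subseteq\Iwf$ is $\Rbf$-dominating exactly when every $A\in\Iwf$ is contained in some member of $D$, which is the definition of a cofinal family in $\Iwf$, so $\dfrak(\Iwf)=\cof(\Iwf)$.

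For (c) and (d), I would expand $\Cbf_\Iwf = \la X,\Iwf,\in\ra$ similarly: a set $D\subseteq\Iwf$ is $\in$-dominating precisely when $\bigcup D = X$, i.e.\ when $D$ is a covering of $X$ by members of $\Iwf$, yielding $\dfrak(\Cbf_\Iwf)=\cov(\Iwf)$. A set $F\subseteq X$ is $\in$-unbounded exactly when no $A\in\Iwf$ contains $F$, equivalently $F\notin\Iwf$ (again using upward closure of $\Iwf$); the minimum size of such $F$ is $\non(\Iwf)$, giving (d).

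There is no real obstacle: the only things to be careful about are that one uses upward closure of the ideal to pass from ``contained in some member of $\Iwf$'' to ``belongs to $\Iwf$'', and that $[X]^{<\aleph_0}\subseteq\Iwf$ is invoked only to rule out degenerate cases (e.g.\ to ensure $\Iwf\neq\vacio$ and that finite sets never witness $\non$ or unboundedness of $\Cbf_\Iwf$, so the cardinals are infinite as usually defined). Accordingly, the write-up should be a single short paragraph per pair of equalities, with no calculation beyond rewriting $\sqsubset$ as $\subseteq$ or $\in$.
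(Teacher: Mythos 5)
Your argument is correct and is exactly the standard definition-unpacking; the paper states this as a bare \textit{Fact} with no proof, so there is nothing to diverge from. One small slip to fix: in both places where you write ``upward closure of $\Iwf$'' you are actually using \emph{downward} closure (closure under subsets): from $\bigcup F \subseteq B \in \Iwf$ you deduce $\bigcup F \in \Iwf$, and from $F \subseteq A \in \Iwf$ you deduce $F \in \Iwf$. Ideals are downward closed; filters are upward closed. The logic of your two equivalences is right, only the name of the closure property is wrong. Your reading of the hypothesis $[X]^{<\aleph_0}\subseteq\Iwf$ is also apt: together with properness it guarantees that $\Iwf$-unbounded subsets of $X$ exist (so $\bfrak(\Cbf_\Iwf)$ and $\bfrak(\Iwf)$ are over nonempty families), that $\Iwf$ covers $X$ (so $\dfrak(\Cbf_\Iwf)$ is well defined and equals $\cov(\Iwf)$), and that all four cardinals come out infinite as is conventional.
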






Inequalities between cardinal characteristics associated with relational systems can be determined by the dual of a relational system and also via Tukey connections. 
%
Fix a relational system $\Rbf=\la X,Y,\sqsubset\ra$. The \emph{dual of $\Rbf$} is the relational system $\Rbf^\perp:=\la Y,X,\sqsubset^\perp\ra$ where $y \sqsubset^\perp x$ iff $\neg(x \sqsubset y)$. 
%
Note that $\dfrak(\Rbf^\perp)=\bfrak(\Rbf)$ and $\bfrak(\Rbf^\perp)=\dfrak(\Rbf)$.

Let 
$\Rbf'=\la X',Y',\sqsubset'\ra$ be another relational system. We say that $(\Psi_-,\Psi_+)\colon\Rbf\to\Rbf'$ is a \emph{Tukey connection from $\Rbf$ into $\Rbf'$} if 
 $\Psi_-\colon X\to X'$ and $\Psi_+\colon Y'\to Y$ are functions such that  \[\forall\, x\in X\ \forall\, y'\in Y'\colon \Psi_-(x) \sqsubset' y' \Rightarrow x \sqsubset \Psi_+(y').\]
The \emph{Tukey order} between relational systems is defined by
$\Rbf\leqT\Rbf'$ iff there is a Tukey connection from $\Rbf$ into $\Rbf'$. \emph{Tukey equivalence} is defined by $\Rbf\eqT\Rbf'$ iff $\Rbf\leqT\Rbf'$ and $\Rbf'\leqT\Rbf$. 

Recall that $\Rbf\leqT\Rbf'$ implies $(\Rbf')^\perp\leqT\Rbf^\perp$, $\dfrak(\Rbf)\leq\dfrak(\Rbf')$ and $\bfrak(\Rbf')\leq\bfrak(\Rbf)$. 

    

    
    
    

In our forcing applications, we show that some cardinal characteristics have certain values (in a generic extension) by forcing a Tukey connection between their relational systems and some simple relational systems like $\Cbf_{[\lambda]^{<\theta}}$ and $[\lambda]^{<\theta}$ for some cardinals $\theta\leq\lambda$ with $\theta$ uncountable regular. 
 
For instance, if $\Rbf$ is a relational system and we force $\Rbf\eqT\Cbf_{[\lambda]^{<\theta}}$, then we obtain $\bfrak(\Rbf)=\non([\lambda]^{<\theta})=\theta$ and $\dfrak(\Rbf)=\cov([\lambda]^{<\theta})=\lambda$, the latter when either $\theta$ is regular or $\lambda>\theta$. 
%
%
%
%
This discussion motivates the following characterizations of the Tukey order between $\Cbf_{[X]^{<\theta}}$ and other relational systems.

\begin{lemma}[{\cite[Lemma~1.16]{CM22}}]\label{lem:TukeyCtheta}
Let $\theta$ be an infinite cardinal, $I$ a set of size ${\geq}\theta$ and let $\Rbf=\la X,Y,\sqsubset\ra$ be a relational system. Then: 
\begin{enumerate}[label=\rm(\alph*)]
    \item\label{Ctheta:a} $\Rbf\leqT\Cbf_{[X]^{<\bfrak(\Rbf)}}$. In particular, if $|X|\geq\theta$ then 
    $\Rbf\leqT\Cbf_{[X]^{<\theta}}$ iff $\bfrak(\Rbf)\geq\theta$.
    
    \item If $\bfrak(\Rbf) = |X|$ then $\dfrak(\Rbf) \leq \cf(|X|)$.
    
    \item\label{c:str} $\Cbf_{[I]^{<\theta}}\leqT\Rbf$ iff $\exists\,\{ x_i:\, i\in I\} \subseteq X\ \forall\, y\in Y\colon |\{i\in I:\, x_i\sqsubset y\}|<\theta$. In this case:
    
    \begin{enumerate}[label=\rm (\roman*),topsep=1ex]
        \item $\bfrak(\Rbf)\leq\theta$;
        \item $|I|\leq\dfrak(\Rbf)$ whenever $|I|>\theta$, otherwise $\cf(\theta)\leq\dfrak(\Rbf)$.
    \end{enumerate}    
\end{enumerate}
\end{lemma}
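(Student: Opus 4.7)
The plan is to handle each part by either exhibiting an explicit Tukey connection or extracting the data of such a connection from the hypothesis, and then to read off the numerical consequences from the easy computation $\bfrak(\Cbf_{[X]^{<\theta}}) = \theta$ (when $|X|\geq\theta$) and $\dfrak(\Cbf_{[X]^{<\theta}}) = \cov([X]^{<\theta})$.

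For part~\ref{Ctheta:a}, I would set $\Psi_- := \id_X$ and, for each $A\in[X]^{<\bfrak(\Rbf)}$, use the very definition of $\bfrak(\Rbf)$ to pick (by AC) an $\Rbf$-bound $y_A\in Y$ of $A$, declaring $\Psi_+(A):=y_A$. If $\Psi_-(x) = x \in A$, then $x\sqsubset\Psi_+(A)$, giving the Tukey connection. For the ``in particular'' clause: when $\theta\leq\bfrak(\Rbf)$ we have $[X]^{<\theta}\subseteq[X]^{<\bfrak(\Rbf)}$, so the same construction restricted to $[X]^{<\theta}$ yields $\Rbf\leqT\Cbf_{[X]^{<\theta}}$; conversely, since $|X|\geq\theta$ implies $\bfrak(\Cbf_{[X]^{<\theta}})=\theta$ (a set $F\subseteq X$ fails to sit inside any element of $[X]^{<\theta}$ precisely when $|F|\geq\theta$), the contravariant monotonicity $\bfrak(\Rbf)\geq\bfrak(\Cbf_{[X]^{<\theta}})$ under $\leqT$ forces $\bfrak(\Rbf)\geq\theta$.

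For part~(b), set $\kappa:=|X|$ and fix a decomposition $X=\bigcup_{i<\cf(\kappa)}X_i$ with $|X_i|<\kappa$ for every~$i$. Since $\bfrak(\Rbf)=\kappa$, each $X_i$ is $\Rbf$-bounded; pick witnesses $y_i\in Y$. Every $x\in X$ lies in some $X_i$, so $\{y_i:i<\cf(\kappa)\}$ is $\Rbf$-dominating and hence $\dfrak(\Rbf)\leq\cf(\kappa)$.

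For part~\ref{c:str} I would verify the equivalence directly. Forward: given a Tukey connection $(\Psi_-,\Psi_+)\colon\Cbf_{[I]^{<\theta}}\to\Rbf$, set $x_i:=\Psi_-(i)$; then by the defining property of a Tukey connection, $\{i\in I:x_i\sqsubset y\}\subseteq\Psi_+(y)\in[I]^{<\theta}$, so the family $\{x_i\}$ has the claimed smallness property. Backward: given such a family, set $\Psi_-(i):=x_i$ and $\Psi_+(y):=\{i\in I:x_i\sqsubset y\}$, which lies in $[I]^{<\theta}$ by hypothesis; the Tukey implication is then tautological. The consequences (i) and (ii) follow from Tukey monotonicity applied to $\bfrak(\Cbf_{[I]^{<\theta}})=\theta$ and $\dfrak(\Cbf_{[I]^{<\theta}})=\cov([I]^{<\theta})$, the latter being $|I|$ when $|I|>\theta$ and $\cf(\theta)$ when $|I|=\theta$ (partition $I$ into $\cf(\theta)$ pieces of size $<\theta$). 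There is no substantive obstacle here: the entire content of the lemma is bookkeeping around the two relational systems $\Cbf_{[X]^{<\theta}}$ and $[X]^{<\theta}$, and the only mild care needed is to invoke the Axiom of Choice when selecting bounds $y_A$ and $y_i$ in parts (a) and (b).
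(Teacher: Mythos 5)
Your proof is correct in all three parts, and it is the standard argument: part (a) via choosing bounds for small sets, part (b) via a cofinal decomposition of $X$, and part (c) by unwinding what a Tukey connection from $\Cbf_{[I]^{<\theta}}$ means, together with the computation $\bfrak(\Cbf_{[I]^{<\theta}})=\theta$ and $\dfrak(\Cbf_{[I]^{<\theta}})=\cov([I]^{<\theta})$. The paper does not supply its own proof of this lemma (it is cited from \cite{CM22}), so there is nothing internal to compare against; the only cosmetic nitpick is that in (c)(ii) your parenthetical ``partition $I$ into $\cf(\theta)$ pieces'' justifies $\cov([I]^{<\theta})\leq\cf(\theta)$, whereas the direction you actually use is $\cov([I]^{<\theta})\geq\cf(\theta)$ (fewer than $\cf(\theta)$ many sets of size $<\theta$ have union of size $<\theta$), but that fact is equally elementary.
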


A family $\{ x_i:\, i\in I\}$ as in~\ref{c:str} has a special role for forcing different values to cardinal characteristics. They are also called \emph{strong witnesses of $\bfrak(\Rbf)$} (as in~\cite{GKScicmax,GKMS}).

\begin{definition}\label{def:strwit}
    Let $\Rbf=\la X,Y,\sqsubset\ra$ be a relational system, and let $\theta$ be a cardinal. We say that a family $\{ x_i:\, i\in I\} \subseteq X$ is \emph{(strongly) $\theta$-$\Rbf$-unbounded}\footnote{``Strongly $\theta$-$\Rbf$-unbounded" and ``$\theta$-$\Rbf$-unbounded" are different notions in~\cite{CM}, but since the latter is not used in this paper, we use ``$\theta$-$\Rbf$-unbounded" for ``strongly $\theta$-$\Rbf$-unbounded".} if $|I|\geq\theta$ and, for any $y\in Y$, $|\set{i\in I}{x_i \sqsubset y}|<\theta$. 
\end{definition}

So \autoref{lem:TukeyCtheta}~\ref{c:str} states that $\Cbf_{[I]^{<\theta}} \leqT \Rbf$ iff there is some $\theta$-$\Rbf$-unbounded family indexed by $I$.

We finish this subsection with the following special types of objects, usually added as generic objects of a forcing notion.

\begin{definition}\label{def:domreal}
Let $\Rbf=\la X,Y,\sqsubset\ra$ be a relational system and let $M$ be a set. 
\begin{enumerate}[label=(\arabic*)]
    \item Say that $y\in Y$ is \emph{$\Rbf$-dominating over $M$} if $\forall\, x \in  X\cap M\colon x \sqsubset y$.
    \item Say that $x$ is \emph{$\Rbf$-unbounded over $M$} if $\forall\, y \in  Y\cap M\colon \neg(x \sqsubset y)$, i.e.\ it is $\Rbf^\perp$-dominating over $M$.
\end{enumerate}
\end{definition}

\subsection{Measure and category}
\ 

Given a sequence $b = \Seq{b(n)}{n\in \omega}$ of non-empty sets and an $h\colon \omega\to\omega$ define: 
\begin{align*}
\prod b&:=\prod_{n<\omega}b(n),\\
\Scal(b,h)&:=\prod_{n<\omega} [b(n)]^{\leq h(n)},\\
\seq_{<\omega}(b)&:=\bigcup_{n<\omega}\prod_{k<n}b(k).
\end{align*}
For each $s\in\seq_{<\omega}(b)$, define
\[[s]:=[s]_b:=\set{x\in\prod b}{ s\subseteq x}.\]

As a topological space, $\prod b$ has the product topology with each $b(n)$ endowed with the discrete topology. Note that $\set{[s]_b}{s\in \Seqw b}$ forms a base of clopen sets for this topology. When each $b(n)$ is countable we have that $\prod b$ is a Polish space, and $\prod b$ is perfect iff $|b(n)|\geq 2$ for infinitely many $n$. 

Recall that $T\subseteq\sqw(b)$ is a \emph{tree} if $\forall\, t\in T\ \forall\, s\in\sqw(b)\colon s\subseteq t \imp s\in T$. Denote by $[T]:=\{x\in\prod b:\, \forall\, n<\omega \colon x{\upharpoonright}n\in T\}$ the set of infinite branches of $T$. Note that $F\subseteq\prod b$ is closed iff $F=[T]$ for some (well-pruned) tree $T\subseteq\sqw(b)$.


\begin{remark}\label{ex:tukeysmall}
 Let $X$ be a Polish space and $\Bwf(X)$ the $\sigma$-algebra of Borel subsets of $X$.
    \begin{enumerate}[label= \rm (\arabic*)]

    \item If $X$ is perfect then $\Mwf(X)\eqT \Mwf(\R)$ and $\Cbf_{\Mwf(X)}\eqT \Cbf_{\Mwf(\R)}$, where $\Mwf(X)$ denotes the ideal of meager subsets of $X$ (see~\cite[Ex.~8.32 \&~Thm.~15.10]{Ke2}). Therefore, the cardinal characteristics associated with the meager ideal are independent of the perfect Polish space used to calculate it. When the space is clear from the context, we write $\Mwf$ for the meager ideal.
    
    \item Assume that $\mu\colon\Bwf(X)\to [0,\infty]$ is a $\sigma$-finite measure such that $\mu(X)>0$ and every singleton has measure zero.
    Denote by $\Nwf(\mu)$ the ideal generated by the $\mu$-measure zero sets, which is also denoted by $\Nwf(X)$ when the measure on $X$ is clear.
    Then $\Nwf(\mu)\eqT \Nwf(\Lb)$ and $\Cbf_{\Nwf(\mu)}\eqT \Cbf_{\Nwf(\Lb)}$ where $\Lb$ denotes the Lebesgue measure on $\R$ (see~\cite[Thm.~17.41]{Ke2}). Therefore, the four cardinal characteristics associated with both measure zero ideals are the same. 
    %
    %
    When the measure space is understood, we just write $\Nwf$ for the null ideal.
\end{enumerate}
\end{remark}

\subsection{Strong measure zero sets}
\ 

For combinatorial purposes, we use the notion of strong measure zero in $\prod b$ when each $b(n)$ is finite and $\prod b$ is perfect.

\begin{definition}\label{defsmz}
For $\sigma \in (\seq_{<\omega}(b))^\omega$, define the \emph{height of $\sigma$}, $\hgt_\sigma\colon \omega\to\omega$, by $\hgt_\sigma(n):=|\sigma(n)|$ for all $n$.

A set $X\subseteq \prod b$ has \emph{strong measure zero} (in $\prod b$) if
\[\forall\, f\in\omega^\omega\ \exists\, \sigma\in(\seq_{<\omega}(b))^\omega\colon f\leq \hgt_\sigma \text{ and }X\subseteq\bigcup_{i<\omega}[\sigma(i)].\]
Denote by $\SNwf(\prod b)$ the collection of strong measure zero subsets of $\prod b$.
\end{definition}

We have that $\SNwf(\prod b) \eqT\SNwf([0,1]) \eqT\SNwf(\R)$ and $\Cbf_{\SNwf(\prod b)} \eqT \Cbf_{\SNwf(\R)} \eqT \Cbf_{\SNwf([0,1])}$, see details in~\cite{CMR}. Therefore, the cardinal characteristics associated with the strong measure zero ideal do not depend on such a space.

Denote
\[[\sigma]_\infty:=[\sigma]_{b,\infty}=\set{x \in \prod b}{\exists^{\infty}\, n < \omega\colon \sigma(n) \subseteq x}=\bigcap_{n<\omega} \bigcup_{k \geqslant n}[\sigma(k)].\]
The following characterization of $\SNwf(\prod b)$, in terms of the sets above, is quite practical.

\begin{lemma}\label{charSN}
    Let $X\subseteq\prod b$ and let $D\subseteq\omega^\omega$ be a dominating family. Then $X\subseteq\prod b$ has strong measure zero in $\prod b$ iff  
    \[\forall\ f\in D\ \exists\sigma\in (\seq_{<\omega}(b))^\omega\colon f\leq^*\hgt_\sigma\textrm{\ and\ } X\subseteq[\sigma]_\infty.\]
\end{lemma}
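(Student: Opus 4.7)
\textbf{Proof plan for \autoref{charSN}.}

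The plan is to prove both implications, with the right-to-left direction using the dominating family to reduce to arbitrary $f$, and the left-to-right direction using a partition trick to upgrade a single cover $\bigcup_i [\sigma(i)]$ to an \emph{infinitely-often} cover $[\sigma]_\infty$.

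For the left-to-right direction ($X \in \SNwf(\prod b)$ implies the characterization), fix any $f \in D$ (in fact, any $f\in\omega^\omega$ suffices; the hypothesis that $D$ is dominating is not needed here). Partition $\omega$ into infinitely many infinite sets $\Seq{A_n}{n<\omega}$, and enumerate each in increasing order as $A_n = \set{a^n_i}{i<\omega}$. For each fixed $n$, define $g_n\in\omega^\omega$ by $g_n(i) := f(a^n_i)$ and apply the definition of strong measure zero to $g_n$ to obtain some $\sigma_n \in (\Seqw b)^\omega$ with $g_n \leq \hgt_{\sigma_n}$ and $X\subseteq \bigcup_{i<\omega}[\sigma_n(i)]$. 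Now amalgamate by defining $\sigma(a^n_i) := \sigma_n(i)$. By construction, $\hgt_\sigma(a^n_i) = \hgt_{\sigma_n}(i) \geq g_n(i) = f(a^n_i)$, so $f \leq \hgt_\sigma$ pointwise (in particular $f \leq^* \hgt_\sigma$). For the covering claim, each $x\in X$ lies in some $[\sigma_n(i_n)]$ for \emph{every} $n$, hence in $[\sigma(a^n_{i_n})]$ for every $n$; since the positions $a^n_{i_n}$ range over an infinite set (they lie in distinct $A_n$), we conclude $x \in [\sigma]_\infty$.

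For the right-to-left direction, suppose the characterization holds and fix an arbitrary $f\in\omega^\omega$. Since $D$ is dominating, pick $g \in D$ with $f \leq^* g$, and apply the hypothesis to obtain $\sigma \in (\Seqw b)^\omega$ with $g \leq^* \hgt_\sigma$ and $X\subseteq [\sigma]_\infty$. Combining, $f \leq^* \hgt_\sigma$, so there is some $n_0<\omega$ with $f(k)\leq \hgt_\sigma(k)$ for all $k\geq n_0$. To upgrade $\leq^*$ to pointwise $\leq$, define $\sigma' \in (\Seqw b)^\omega$ by setting $\sigma'(k) := \sigma(k)$ for $k\geq n_0$ and, for $k<n_0$, letting $\sigma'(k)$ be any element of $\prod_{j<f(k)}b(j)$ (which exists because each $b(j)$ is non-empty). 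Then $\hgt_{\sigma'}(k)\geq f(k)$ for all $k$, and since $[\sigma]_\infty \subseteq \bigcup_{k\geq n_0}[\sigma(k)] = \bigcup_{k\geq n_0}[\sigma'(k)] \subseteq \bigcup_{k}[\sigma'(k)]$, we get $X \subseteq \bigcup_k [\sigma'(k)]$. This exhibits the witness required by the definition of strong measure zero.

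The only mild obstacle is the left-to-right direction: one might naïvely try to use a single $\sigma$ from the SMZ definition and argue $x$ must land in infinitely many $[\sigma(i)]$, but of course $\bigcup_i [\sigma(i)]$ need not equal $[\sigma]_\infty$. The partition-and-amalgamate construction neatly circumvents this by producing a new $\sigma$ that encodes $\omega$-many independent covers, each of which contributes one ``hit'' per point $x\in X$; the bookkeeping $g_n(i) = f(a^n_i)$ is precisely what makes the height condition transfer to $\sigma$ itself.
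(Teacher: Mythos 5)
Your proof is correct. The paper states \autoref{charSN} without proof (it is treated as folklore), so there is no author argument to compare against, but your reasoning closes both directions cleanly. In the forward direction, the partition-and-amalgamate device is exactly the right move: you cannot simply argue that a single cover $\bigcup_i[\sigma(i)]$ hits each point infinitely often (it need not), so you run the SMZ definition $\omega$-many times along a partition $\omega=\bigsqcup_n A_n$ with $A_n=\{a^n_0<a^n_1<\cdots\}$, and the bookkeeping $g_n(i):=f(a^n_i)$ makes the height condition pass through to the amalgamated $\sigma$; the pairwise disjointness of the $A_n$'s guarantees the infinitely-many-hits property of $[\sigma]_\infty$. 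In the reverse direction, picking $g\in D$ dominating $f$, extracting $n_0$ where $f\leq\hgt_\sigma$ stabilizes, padding the finitely many initial entries of $\sigma$ by arbitrary strings of the right length, and observing $[\sigma]_\infty\subseteq\bigcup_{k\geq n_0}[\sigma(k)]$ is precisely what is needed. Your side remark that the dominating hypothesis is used only in the right-to-left direction is also accurate.
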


From now on, we work with $\SNwf:=\SNwf(2^\omega)$. 

\begin{definition}[Yorioka~{\cite{Yorioka}}]\label{DefYorio}
Set $\omega^{\uparrow\omega}:=\{d\in\omega^{\omega}:\, d\text{\ is increasing}\}$.
For $f\in\omega^{\uparrow\omega}$, define the \emph{Yorioka ideal}
\[\Iwf_f:=\{A\subseteq 2^\omega:\, \exists\, \sigma\in(2^{<\omega})^\omega\colon f\ll \hgt_\sigma\text{\ and }A\subseteq[\sigma]_\infty\},\]
where the relation $x \ll y$  denotes $\forall\, k<\omega\ \exists\, m_k<\omega\ \forall\, i\geq m_k\colon x(i^k)\leq y(i)$.
\end{definition}

The reason why it is used $\ll$ instead of $\leq^*$ is that the latter would not yield an ideal, as proved by Kamo and Osuga~\cite{KO08}.
Yorioka~\cite{Yorioka} has proved (indeed) that $\Iwf_f$ is a $\sigma$-ideal when $f$ is increasing. By~\autoref{charSN} it is clear that $\SNwf=\bigcap\{\Iwf_f:\, f\in \omega^{\uparrow\omega}\}$ and $\Iwf_f \subseteq \Nwf$ for any $f\in\baireincr$.


Denote
\begin{align*}
    \minadd & :=\min\set{\add(\Iwf_f)}{f\in\omega^{\uparrow\omega}}, &
    \supcov & :=\sup\set{\cov(\Iwf_f)}{ f\in\omega^{\uparrow\omega}},\\
    \minnon & :=\min\set{\non(\Iwf_f)}{ f\in\omega^{\uparrow\omega}}, &
    \supcof & :=\sup\set{\cof(\Iwf_f)}{ f\in\omega^{\uparrow\omega}}.
\end{align*}
It is known from Miller that $\minnon = \non(\SNwf)$.
On the other hand, $\add(\Iwf_\id)$ is the largest of the additivities of Yorioka ideals and $\cof(\Iwf_\id)$ is the smallest of the cofinalities, similarly for $\non(\Iwf_\id)$ and $\cof(\Iwf_\id)$, where $\id$ denotes the identity function on $\omega$. See~\cite[Sec.~3]{CM} for a summary of the cardinal characteristics associated with the Yorioka ideals.

We present some results from~\cite{CM23} that help calculate $\cof(\SNwf)$. To do this, we first review products and powers of relational systems, as developed in~\cite[Sec.~3]{CM23}.

\begin{definition}\label{def:idpow}
Let $w$ be a set and
let $\Rbf_i = \la X_i,Y_i,\sqsubset_i\ra$ be a relational system for any $i\in w$. 
Define the \emph{product relational system}
\[\prod_{i\in w} \Rbf_i = \Big\la\prod_{i\in w}X_i,\prod_{i\in w}Y_i,\sqsubset\Big\ra,\]
where $x \sqsubset y$ iff $x_i \sqsubset_i y_i$ for all $i\in w$. 
When all $\Rbf_i$ are the same $\Rbf$, we denote the product by $\Rbf^w$ and call it a 
\emph{power of $\Rbf$.}

Given an ideal $\Iwf$ on $X$, define $\Iwf^{(w)}$ as the ideal on $X^w$ generated by the sets of the form $\prod_{i\in w}A_i$ with $\Seq{A_i}{i\in w}\in\Iwf^{w}$. Denote $\add(\Iwf^w):=\bfrak(\Iwf^{(w)})$, $\cof(\Iwf^w):=\dfrak(\Iwf^{(w)})$, $\non(\Iwf^w):=\bfrak(\Cbf_{\Iwf^{(w)}})$ and $\cov(\Iwf^w):=\dfrak(\Cbf_{\Iwf^{(w)}})$.
\end{definition}

It is not hard to show that 
\[\bfrak\left(\prod_{i\in w} \Rbf_i\right) = \min_{i\in w}\bfrak(\Rbf_i) \text{ and } \sup_{i\in w}\dfrak(\Rbf_i)\leq \dfrak\left(\prod_{i\in w} \Rbf_i\right) \leq \prod_{i\in w} \dfrak(\Rbf_i).\]
As a consequence:
\begin{fact}\label{fct:idpow}
Let $w$ be a set and let $\Iwf$ be an ideal on $X$. Then:
\begin{enumerate}[label=\rm (\alph*)]
    \item $\Iwf^w\eqT\Iwf^{(w)}$. 
    
    \item $\Cbf_\Iwf^w\eqT\Cbf_{\Iwf^{(w)}}$.
    
    \item $\add(\Iwf^w)=\add(\Iwf)$ and $\non(\Iwf^w)=\non(\Iwf)$.
    
    \item $\cov(\Iwf)\leq\cov(\Iwf^w)\leq \cov(\Iwf)^{|w|}$ and $\cof(\Iwf)\leq\cof(\Iwf^w)\leq \cof(\Iwf)^{|w|}$.
\end{enumerate}
\end{fact}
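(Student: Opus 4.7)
The plan is to establish (a) and (b) as Tukey equivalences by exhibiting explicit connections, and then obtain (c) and (d) mechanically from the formulas $\bfrak(\prod_{i\in w}\Rbf_i)=\min_{i\in w}\bfrak(\Rbf_i)$ and $\sup_{i\in w}\dfrak(\Rbf_i)\leq \dfrak(\prod_{i\in w}\Rbf_i)\leq \prod_{i\in w}\dfrak(\Rbf_i)$ quoted immediately before the statement, applied to the powers $\Iwf^w$ and $\Cbf_\Iwf^w$.

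For (a), the natural maps are the \emph{product} $\Pi\colon \Iwf^w\to \Iwf^{(w)}$ given by $\Pi((A_i)_i):=\prod_i A_i$, and a \emph{projection} $\Pi'\colon \Iwf^{(w)}\to \Iwf^w$ defined as follows: every $B\in \Iwf^{(w)}$ is covered by a finite union $\bigcup_{k<n}\prod_i A_{k,i}$ of generators, so set $\Pi'(B):=(C_i)_i$ with $C_i:=\bigcup_{k<n}A_{k,i}\in \Iwf$ (closure under finite unions); note $B\subseteq \prod_i C_i$. The pair $(\Pi',\Pi)$ witnesses $\Iwf^{(w)}\leqT \Iwf^w$, since $(C_i)\leq(D_i)$ componentwise forces $B\subseteq \prod_i C_i\subseteq \prod_i D_i=\Pi((D_i))$. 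For the reverse direction, the naive choice $(\Pi,\Pi')$ fails because an empty coordinate in $(A_i)_i$ makes $\Pi((A_i))=\emptyset\subseteq B$ hold trivially while the other $A_i$ can be arbitrary elements of $\Iwf$. The fix is to thicken: pick any nonempty $A^*\in \Iwf$ (if none exists, $\Iwf=\{\emptyset\}$ and the whole fact is trivial) and replace $\Pi$ with $\Pi^*\colon (A_i)\mapsto \prod_i(A_i\cup A^*)\in \Iwf^{(w)}$. Every coordinate of $\Pi^*((A_i))$ is now nonempty, so $\Pi^*((A_i))\subseteq B$ forces $A_i\cup A^*$ (the $i$-th projection of $\Pi^*((A_i))$) to lie in the $i$-th projection of $B$, hence in $C_i=\Pi'(B)_i$; therefore $(A_i)\leq \Pi'(B)$, and $(\Pi^*,\Pi')$ witnesses $\Iwf^w\leqT \Iwf^{(w)}$.

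For (b), combine the same $\Pi$ and $\Pi'$ in the ideal slot with the identity on $X^w$ on the point slot. The equivalence $x\in \prod_i D_i \Leftrightarrow \forall i\colon x_i\in D_i$ handles both Tukey directions directly, and no empty-coordinate subtlety arises since membership is tested coordinatewise.

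For (c) and (d), the quoted product formulas do the work. By (a), $\add(\Iwf^w):=\bfrak(\Iwf^{(w)})=\bfrak(\Iwf^w)=\min_{i\in w}\bfrak(\Iwf)=\add(\Iwf)$ whenever $w\neq\emptyset$, and $\cof(\Iwf^w)=\dfrak(\Iwf^{(w)})=\dfrak(\Iwf^w)$ is sandwiched between $\sup_i\dfrak(\Iwf)=\cof(\Iwf)$ and $\prod_i\dfrak(\Iwf)=\cof(\Iwf)^{|w|}$. Applying the same reasoning to $\Cbf_\Iwf^w$ via (b) yields the corresponding identities and bounds for $\non$ and $\cov$. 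The only real obstacle is the empty-coordinate subtlety in (a), dispatched by the $A^*$-thickening trick; everything else is bookkeeping.
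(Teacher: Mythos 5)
Your proof is correct and follows the route the paper intends: the paper states this Fact as ``a consequence'' of the product formulas for $\bfrak$ and $\dfrak$ quoted just before it, which is exactly your derivation of (c) and (d) once the Tukey equivalences (a) and (b) are in hand. Your thickening by a fixed nonempty $A^*\in\Iwf$ to repair the empty-coordinate failure in the direction $\Iwf^w\leqT\Iwf^{(w)}$ is a genuine and easily overlooked necessity; the remaining steps are the expected ones.
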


\begin{theorem}[{\cite[Thm.~4.8]{CM23}}]\label{new_upperb}
$\SNwf\leqT\Cbf_{[\supcof]^{<\minadd}}^\dfrak$. In particular, 
\[\cof(\SNwf)\leq\cov\left(\left([\supcof]^{<\minadd}\right)^\dfrak\right).\]
\end{theorem}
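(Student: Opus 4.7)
My plan is to exhibit a Tukey connection witnessing $\SNwf \leqT \Cbf_{[\supcof]^{<\minadd}}^\dfrak$; the cardinal inequality then follows from $\dfrak(\SNwf)=\cof(\SNwf)$ together with \autoref{fct:idpow}, which identifies $\dfrak(\Cbf_{[\supcof]^{<\minadd}}^\dfrak)$ with $\cov(([\supcof]^{<\minadd})^\dfrak)$.

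The first step is to express $\SNwf$ as an intersection of only $\dfrak$-many Yorioka ideals. The key observation is that $d \geq^* (n\mapsto f(n^n))$ already implies $f\ll d$, since for each fixed $k$, eventually $n\geq k$ and then $f(n^k)\leq f(n^n)\leq d(n)$. Hence, starting from any $\leq^*$-dominating family $\{d_\alpha:\alpha<\dfrak\}\subseteq\omega^{\uparrow\omega}$, every $f\in\omega^{\uparrow\omega}$ satisfies $f\ll d_\alpha$ for some $\alpha$, which yields $\Iwf_{d_\alpha}\subseteq \Iwf_f$, and therefore
\[\SNwf = \bigcap_{f\in\omega^{\uparrow\omega}}\Iwf_f = \bigcap_{\alpha<\dfrak}\Iwf_{d_\alpha}.\]

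Next, for each $\alpha<\dfrak$ fix a cofinal family $\{A^\alpha_\beta:\beta<\supcof\}\subseteq\Iwf_{d_\alpha}$ (possible since $\cof(\Iwf_{d_\alpha})\leq\supcof$, padding with repetitions if necessary). Define the Tukey pair by
\begin{align*}
\Psi_-(X)(\alpha) & := \text{some } \beta<\supcof \text{ with } X\subseteq A^\alpha_\beta,\\
\Psi_+(Y) & := \bigcap_{\alpha<\dfrak}\bigcup_{\beta\in Y(\alpha)}A^\alpha_\beta.
\end{align*}
For any $Y\colon\dfrak\to[\supcof]^{<\minadd}$, since $|Y(\alpha)|<\minadd\leq\add(\Iwf_{d_\alpha})$, each inner union lies in $\Iwf_{d_\alpha}$, so $\Psi_+(Y)\in\bigcap_\alpha\Iwf_{d_\alpha}=\SNwf$. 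The Tukey condition is immediate: if $\Psi_-(X)(\alpha)\in Y(\alpha)$ for every $\alpha$, then $X\subseteq A^\alpha_{\Psi_-(X)(\alpha)}\subseteq\bigcup_{\beta\in Y(\alpha)}A^\alpha_\beta$ for every $\alpha$, hence $X\subseteq \Psi_+(Y)$.

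The main subtlety is the passage from $\leq^*$- to $\ll$-domination, which is what keeps the index set of size $\dfrak$; once this is noticed, the rest is a routine factoring through cofinal families of the individual $\Iwf_{d_\alpha}$ and an appeal to \autoref{fct:idpow} to translate the Tukey bound into the claimed covering number.
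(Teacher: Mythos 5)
Your proof is correct, and since the paper states this result as a citation from [CM23, Thm.~4.8] rather than proving it in-house, you have supplied a self-contained argument where the paper defers to the reference. The decomposition $\SNwf=\bigcap_{\alpha<\dfrak}\Iwf_{d_\alpha}$ over a $\leq^*$-dominating family, followed by factoring each $\Iwf_{d_\alpha}$ through a cofinal family of size $\leq\supcof$ and invoking the additivity bound $\minadd\leq\add(\Iwf_{d_\alpha})$, is precisely the expected route, and the reduction from $\ll$-domination to $\leq^*$-domination via $n\mapsto f(n^n)$ (using monotonicity of $f$) is exactly the point that keeps the index set down to $\dfrak$ rather than $\cfrak$. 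Two minor sanity checks that you implicitly use and that are fine: $[\supcof]^{<\minadd}$ is an ideal on $\supcof$ because $\minadd$ is regular (a minimum of additivity numbers of $\sigma$-ideals), and $\ll$ is transitive so $f\ll d_\alpha$ indeed gives $\Iwf_{d_\alpha}\subseteq\Iwf_f$. The final translation of the Tukey bound into $\cof(\SNwf)\leq\cov\left(([\supcof]^{<\minadd})^\dfrak\right)$ via \autoref{fct:idpow} and \autoref{def:idpow} is routine and correctly applied.
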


For the lower bound of $\cof(\SNwf)$, we use the directed preorder $\la \lambda^\lambda,\leq\ra$ for a (regular) cardinal $\lambda$, where $\leq$ denotes poitwise inequality. Here $\bfrak(\lambda^\lambda,\leq)=\cf(\lambda)$ and $\dfrak(\lambda^\lambda,\leq)=\dfrak_\lambda$ is the typical dominating number of $\lambda^\lambda$.

\begin{theorem}[{\cite[Cor.~4.25]{CM23}}]\label{cor:lowSN}
Assume $\cov(\Mwf)=\dfrak$ and $\non(\SNwf)=\supcof=\mu$. If $\lambda:=\cf(\dfrak)=\cf(\mu)$ then $\la \lambda^\lambda,\leq\ra\leqT\SNwf$, in particular, $\add(\SNwf)\leq \lambda$ and $\dfrak_\lambda\leq \cof(\SNwf)$. Moreover, $\dfrak_\lambda\neq\mu$ and $\mu<\cof(\SNwf)$.
\end{theorem}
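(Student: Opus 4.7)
The plan is to produce a Tukey connection $(\Psi_-,\Psi_+)\colon\la\lambda^\lambda,\leq\ra\to\SNwf$, from which $\add(\SNwf)\le\bfrak(\la\lambda^\lambda,\leq\ra)=\lambda$ and $\dfrak_\lambda=\dfrak(\la\lambda^\lambda,\leq\ra)\le\cof(\SNwf)$ follow immediately. The ``moreover'' clause is then extracted from cardinal arithmetic: a König-style diagonal argument on $(\lambda^\lambda,\leq)$ yields $\cf(\dfrak_\lambda)>\lambda$ (for any purported size-$\dfrak_\lambda$ dominating family decomposed into $\lambda$ smaller pieces, pick $g_\beta$ not dominated by the $\beta$-th piece, then the pointwise supremum lies in $\lambda^\lambda$ by regularity of $\lambda$ and contradicts cofinality); since $\cf(\mu)=\lambda$ this forces $\dfrak_\lambda\ne\mu$, and combining with $\cof(\SNwf)\ge\dfrak_\lambda$ and $\cof(\SNwf)\ge\mu$ gives $\mu<\cof(\SNwf)$.

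For the Tukey connection itself, the three hypotheses naturally provide a common $\lambda$-scale. Fix strictly increasing continuous sequences $\Seq{\mu_\xi}{\xi<\lambda}$ cofinal in $\mu$ and $\Seq{\delta_\xi}{\xi<\lambda}$ cofinal in $\dfrak$. Using $\supcof=\mu$, for each $\xi<\lambda$ pick $f_\xi\in\omega^{\uparrow\omega}$ with $\cof(\Iwf_{f_\xi})\ge\mu_\xi$ together with a $\subseteq$-cofinal chain $\la C_\xi^\alpha:\alpha<\cof(\Iwf_{f_\xi})\ra$ in $\Iwf_{f_\xi}$. Using $\non(\SNwf)=\mu$, fix a non-SMZ witness $Z\subseteq 2^\omega$ of size $\mu$ and a filtration $Z=\bigcup_{\xi<\lambda}Z_\xi$ with $|Z_\xi|\le\mu_\xi$. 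Finally, $\cov(\Mwf)=\dfrak$ combined with $\cf(\dfrak)=\lambda$, when organized via the $\delta_\xi$-scale, affords a coherent compression of each Yorioka chain $\la C_\xi^\alpha\ra$ into $\lambda$ ``levels.'' Define $\Psi_+(A)(\xi)$ as the least $\eta<\lambda$ for which $A$ is covered at the $\eta$-th compressed level of the $\xi$-th chain; and define $\Psi_-(g)$ as a diagonal selection of points from the $Z_\xi$'s chosen to sit above the $g(\xi)$-th compressed level at each coordinate. Then $\Psi_-(g)\subseteq A$ forces $g(\xi)\le\Psi_+(A)(\xi)$ for every $\xi$, and $\Psi_-(g)\in\SNwf$ because $|\Psi_-(g)|\le\lambda\le\mu=\non(\SNwf)$ (with a finer argument needed only in the boundary case $\lambda=\mu$).

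The main obstacle is precisely the compression step of the preceding paragraph. The Yorioka chain $\la C_\xi^\alpha\ra$ may have length approaching $\mu$, yet $\Psi_+$ must take values in $\lambda$; similarly the Tukey contract ``$\Psi_-(g)\subseteq A\Rightarrow g\le\Psi_+(A)$'' has to survive \emph{uniformly} over all $\xi<\lambda$. It is here that $\cov(\Mwf)=\dfrak$ is indispensable, as it provides enough ``genericity'' along the $\lambda$-scale to define the compressed levels coherently. The coordination of three independent scales---indexed by $\cof(\Iwf_{f_\xi})$, by $\dfrak$, and by $\mu$---through the common cofinality $\lambda$, together with verifying SMZ-ness of the diagonal $\Psi_-(g)$ in the singular edge case, are the two genuinely delicate technical points.
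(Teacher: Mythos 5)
The statement you are asked to prove is cited in the paper from \cite[Cor.~4.25]{CM23}; the paper does not supply its own proof. From the paper's proof of the later theorem where this machinery reappears, one can see that the argument in \cite{CM23} factors through a combinatorial principle $\DS(\delta)$: roughly, $\cov(\Mwf)=\dfrak$ yields $\DS(\delta)$ for a suitable $\delta$ of cofinality $\lambda$, and a separate theorem of \cite{CM23} turns $\DS(\delta)$ together with $\non(\SNwf)=\supcof$ and the cofinality match into the Tukey connection $\la\lambda^\lambda,\leq\ra\leqT\SNwf$. Your proposal does not mention or reconstruct any such intermediate principle; it tries to build the connection directly.

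The decisive gap in your proposal is exactly what you yourself call ``the compression step.'' Everything before that paragraph is just a setup: choosing $f_\xi$ with $\cof(\Iwf_{f_\xi})\ge\mu_\xi$, fixing cofinal chains $\la C_\xi^\alpha\ra$, and filtering a non-$\SNwf$ set $Z$. The theorem's actual content is to turn chains whose lengths approach $\mu$ into $\lambda$-indexed data in a way compatible with membership in $\SNwf$, and you never define the ``compressed levels'' nor explain what structure on them makes $\Psi_+$ well-defined or makes the Tukey contract $\Psi_-(g)\subseteq A\Rightarrow g\le\Psi_+(A)$ hold; you only assert that $\cov(\Mwf)=\dfrak$ ``provides enough genericity.'' This is not a proof sketch with a routine step deferred; it is an outline that stops precisely where the mathematics begins. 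In addition, two secondary points in the surrounding argument are left open or incorrect as written: (i) you acknowledge that $|\Psi_-(g)|\le\lambda$ only gives $\Psi_-(g)\in\SNwf$ when $\lambda<\mu$, so the regular case $\lambda=\mu$ is genuinely unresolved, not a ``finer argument''; (ii) the deduction of $\mu<\cof(\SNwf)$ from $\dfrak_\lambda\neq\mu$, $\dfrak_\lambda\le\cof(\SNwf)$ and $\mu\le\cof(\SNwf)$ only yields a strict inequality if $\dfrak_\lambda>\mu$, which you have not established and which is not automatic when $\mu$ is singular of cofinality $\lambda$. Finally, the K\"onig argument for $\cf(\dfrak_\lambda)>\lambda$ is essentially right but the pointwise supremum as you describe it does not quite work for pointwise $\leq$; you need to pick each $g_\beta$ so that it is undominated by $D_\beta$ already on the tail $[\beta,\lambda)$ before taking the diagonal supremum, otherwise the witnessing coordinate $\xi_0$ may fall below $\beta$ and the sup does not see it.
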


\subsection{Forcing}\label{Subsforcing} 
\ 

We review the elements and notation of forcing theory we use throughout this text.
When $\Por_\pi=\la\Por_\xi,\Qnm_\xi:\, \xi<\pi\ra$ is a FS (finite support) iteration, we denote $V_\pi:=V[G]$ when $G$ is $\Por_\pi$-generic over $V$ and, for $\xi<\pi$, denote $V_\xi:=V[\Por_\xi\cap G]$.

Concerning FS iterations, we recall the following useful result to force statements of the form $\Rbf\leqT\Cbf_{[X]^{<\theta}}$ for the following type of relational systems.

\begin{definition}\label{def:defrel}
We say that $\Rbf=\la X, Y, \sqsubset\ra$ is a \textit{definable relational system of the reals} if both $X$ and $Y$ are non-empty and analytic in Polish spaces $Z$ and $W$, respectively, and $\sqsubset$ is analytic in $Z\times W$.\footnote{In general, we need that $X$, $Y$ and $\sqsubset$ are definable and that the statements ``$x\in X$", ``$y\in Y$" and ``$x\sqsubset y$" are absolute for the arguments we need to carry on.} The interpretation of $\Rbf$ in any model corresponds to the interpretation of $X$, $Y$ and $\sqsubset$.
\end{definition}

\begin{theorem}[{\cite[Thm.~2.12]{CM22}}]\label{itsmallsets}
Let $\Rbf=\la X,Y,\sqsubset\ra$ be a definable relational system of the reals, $\theta$ an uncountable regular cardinal, and let  $\Por_\pi=\la\Por_\xi,\Qnm_\xi:\,  \xi<\pi\ra$ be a FS iteration of $\theta$-cc posets with $\cf(\pi)\geq\theta$. Assume that, for all $\xi<\pi$ and any $A\in[X]^{<\theta}\cap V_\xi$, there is some $\eta\geq\xi$ such that $\Qnm_\eta$ adds an $\Rbf$-dominating real over $A$. Then $\Por_\pi$ forces $\Rbf\leqT\Cbf_{[X]^{<\theta}}$, i.e.\ $\theta\leq\bfrak(\Rbf)$. 
\end{theorem}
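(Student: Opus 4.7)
The goal is to show that in the $\Por_\pi$-extension $V_\pi$ we have $\Rbf\leqT\Cbf_{[X]^{<\theta}}$, which by~\autoref{lem:TukeyCtheta}~\ref{Ctheta:a} is equivalent to $\bfrak(\Rbf)\ge\theta$. My plan is to prove that, in $V_\pi$, every $A\in[X]^{<\theta}$ is $\Rbf$-bounded; the Tukey connection is then obtained by setting $\Psi_-:=\id_X$ and letting $\Psi_+(A)\in Y$ be any bound of $A$ produced below.

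I would begin with a reflection step. Fix $A=\set{x_\alpha}{\alpha<\kappa}\in V_\pi$ with $\kappa<\theta$. Since each iterand $\Qnm_\xi$ is $\theta$-cc and $\theta$ is uncountable regular, the FS iteration $\Por_\pi$ is itself $\theta$-cc, so every real in $V_\pi$ is coded by a nice $\Por_\pi$-name whose antichains have size $<\theta$. Combined with the fact that conditions in a FS iteration have finite support, this gives each $\dot x_\alpha$ a support $s_\alpha\subseteq\pi$ of size $<\theta$. By the regularity of $\theta$, the union $S:=\bigcup_{\alpha<\kappa}s_\alpha$ still has $|S|<\theta$, and the assumption $\cf(\pi)\ge\theta$ forces $S$ to be bounded in $\pi$. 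Pick $\xi<\pi$ above $\sup S$; then $A\in V_\xi$. The hypothesis now furnishes an $\eta\ge\xi$ such that $\Qnm_\eta$ adds a $y\in Y$ that is $\Rbf$-dominating over $A$, i.e.\ $V_{\eta+1}\models x\sqsubset y$ for every $x\in A$. Because $\Rbf$ is a definable relational system of the reals, $\sqsubset$ is analytic and hence absolute between transitive extensions, so $x\sqsubset y$ continues to hold in $V_\pi$. Thus $y$ witnesses that $A$ is $\Rbf$-bounded in $V_\pi$, completing the argument.

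The main obstacle is the reflection step: one must verify that a small family living in the final model $V_\pi$ genuinely appears in some intermediate $V_\xi$ with $\xi<\pi$, and this relies on both the $\theta$-cc of the full iteration (to keep each name-support of size $<\theta$) and the cofinality assumption $\cf(\pi)\ge\theta$ (to guarantee that a union of $<\theta$ many $<\theta$-sized supports is still bounded in $\pi$). Once this capture is secured, everything else --- invoking the dominating-real hypothesis and transferring ``$x\sqsubset y$'' upward via analytic absoluteness --- is routine.
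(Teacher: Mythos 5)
Your proof is correct, and it is the standard reflection argument that the cited source~\cite[Thm.~2.12]{CM22} uses; note that the present paper does not re-prove this theorem but simply cites it. The key chain of ideas --- $\theta$-cc of the iteration gives $<\theta$-sized name-supports, regularity of $\theta$ bounds the union of $<\theta$ many such supports by size $<\theta$, $\cf(\pi)\geq\theta$ then bounds that union in $\pi$, the hypothesis supplies a dominating real at a later stage, and $\Sigma^1_1$-absoluteness of the analytic relation $\sqsubset$ carries the domination up to $V_\pi$ --- is exactly right, and the extraction of the Tukey connection via $\Psi_-=\id_X$ and $\Psi_+(A)=$ a bound of $A$ is the intended packaging. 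The only stylistic remark worth making is that the reflection step is most cleanly stated in $V$: one works with a $\Por_\pi$-name $\dot A$, strengthens a condition to decide $\kappa=|\dot A|<\theta$, fixes nice names $\dot x_\alpha$ (a sequence that lives in $V$, so that the evaluated sequence lives in $V_\xi$ once $\xi>\sup S$), and then runs your argument; this removes the informal ``work in $V_\pi$'' framing, but the mathematical content is unchanged.
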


To force $\cov(\Nwf)<\cov(\SNwf)$ in our main results, we use forcing notions that modify the following relational system.

\begin{definition}
 Let $b$ be a function with domain $\omega$ such that $b(i)\neq\emptyset$ for all $i<\omega$, and let $h\in\omega^\omega$. Denote $\aLc(b,h):=\la\Scal(b,h),\prod b,\not\ni^\infty\ra$ the relational system where $x\in^\infty\varphi$ iff $\exists^\infty\, n<\omega\colon x(n)\in \varphi(n)$  ($\aLc$ stands for \emph{anti-localization}). The  \emph{anti-localization cardinals} are defined by $\balc_{b,h}:=\bfrak(\aLc(b,h))$ and $\dalc_{b,h}:=\dfrak(\aLc(b,h))$.
\end{definition}

It is known that, whenever $h\geq^* 1$, $\balc_{\omega,h}=\non(\Mwf)$ and $\dalc_{\omega,h}=\cov(\Mwf)$ (see \autoref{ExmPrs}~\ref{ExmPrsa}), where $\omega$ denotes the constant sequence $\omega$.
We now introduce a forcing to increase $\balc_{b,h}$, which typically adds \emph{eventually different reals}. We present a modification of a poset from Kamo and Osuga~\cite{KO} (see~\cite{CM, BCM}).

\begin{definition}[ED forcing]\label{DefEDforcing}
  Let $b=\la b(n):\, n<\omega\ra$ be a sequence of non-empty sets and $h\in\omega^\omega$ such that $\lim_{i\to\infty}\frac{h(i)}{|b(i)|}$ \sloppy $=0$ (when $b(i)$ is infinite, interpret $\frac{h(i)}{|b(i)|}$ as $0$). Define the \emph{$(b,h)$-ED (eventually different real) forcing} $\Eor^h_b$ as the poset whose conditions are of the form $p=(s^p,\varphi^p)$ such that, for some $m:=m_{p}<\omega$,  
         \begin{enumerate}[label= \rm (\roman*)]
            \item $s^p\in\seq_{<\omega}(b)$, $\varphi^p\in\Swf(b,m h)$, and
            \item $m h(i)<b(i)$ for every $i\geq|s^p|$,
         \end{enumerate}
      ordered by $(t,\psi)\leq(s,\varphi)$ iff $s\subseteq t$, $\forall\, i<\omega\colon \varphi(i)\subseteq\psi(i)$, and $t(i)\notin\varphi(i)$ for all $i\in|t|\menos|s|$.
      
      When $G$ is $\Eor_{b}^{h}$-generic over $V$, we denote the generic real by $e:=\bigcup_{p\in G}s^p$, which we usually refer to as the \emph{eventually different generic real (over $V$)}.
      
      Denote $\Eor_{b}:=\Eor_b^1$ and $\Eor:=\Eor_\omega$. For $p\in\Eor_b^h$ define \[C(p):=\set{x\in\prod b}{s^p\subseteq x\text{ and }\forall\,  i\geq|s^p|\colon x(i)\notin\varphi^p(i)},\]
      which is a closed subset of $\prod b$.
\end{definition}
 

When each $b(n)$ is finite and $\prod b$ is perfect, we consider \emph{the Lebesgue measure on $\prod b$}, denoted by $\Lb_b$, as the (completion) of the product measure of the uniform probability measure of $b(n)$ for all $n<\omega$. In this case, 
the set $C(p)$ has positive measure under the condition 
$\sum_{i\in\omega}\frac{h(i)}{|b(i)|}<\infty$.

\begin{lemma}\label{Ebhdense}
Let $b=\la b(n):\, n<\omega\ra$ be a sequence of finite sets such that $\prod b$ is perfect, and $h\in\omega^\omega$ satisfying $\sum_{i\in\omega}\frac{h(i)}{|b(i)|}<\infty$. Then:
\begin{enumerate}[label= \rm (\alph*)]
    \item\label{it:densebh} The set $D_b^h$ defined below is a dense subset of $\Eor_b^h$:
    \[D_b^h:=\largeset{p\in\Eor_b^h}{\sum_{i\geq|s^p|}\frac{|\varphi^p(i)|}{|b(i)|}<1}.\]
    \item\label{it:posmeas} $\Lb_b(C(p))>0$ for any $p\in \Eor_b^h$.
\end{enumerate}
\end{lemma}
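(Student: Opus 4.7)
The plan is to prove (a) first by a direct stem extension, and then (b) falls out of (a) together with a standard product-measure calculation.

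For (a), given $p = (s^p, \varphi^p) \in \Eor_b^h$ with $\varphi^p \in \Scal(b, m_p h)$, the summability hypothesis $\sum_{i<\omega} h(i)/|b(i)| < \infty$ allows me to choose $N \geq |s^p|$ with
\[\sum_{i \geq N} \frac{m_p\, h(i)}{|b(i)|} < 1.\]
For each $i \in [|s^p|, N)$, clause (ii) in the definition of $\Eor_b^h$ guarantees $|\varphi^p(i)| \leq m_p h(i) < |b(i)|$, so $b(i) \setminus \varphi^p(i) \neq \emptyset$ and I can pick $t(i)$ in it. Setting $q := (t, \varphi^p)$ with $|t| = N$, one checks directly that $q \leq p$, and $|\varphi^q(i)| = |\varphi^p(i)| \leq m_p h(i)$ for $i \geq N$ gives $\sum_{i \geq |s^q|} |\varphi^q(i)|/|b(i)| < 1$, so $q \in D_b^h$.

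For (b), first observe that $q \leq p$ in $\Eor_b^h$ implies $C(q) \subseteq C(p)$ --- this combines $s^p \subseteq s^q$, $\varphi^p(i) \subseteq \varphi^q(i)$ for all $i$, and the ordering requirement $s^q(i) \notin \varphi^p(i)$ for $i \in |s^q| \setminus |s^p|$. Hence, by (a), it suffices to show $\Lb_b(C(q)) > 0$ for $q \in D_b^h$. Since the coordinates of $\prod b$ are independent under the product probability $\Lb_b$ and $C(q)$ is a product of coordinate conditions,
\[\Lb_b(C(q)) \;=\; \prod_{i < |s^q|} \frac{1}{|b(i)|} \;\cdot\; \prod_{i \geq |s^q|}\left(1 - \frac{|\varphi^q(i)|}{|b(i)|}\right).\]
The first factor is a finite product of positive numbers, hence positive. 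For the second, I invoke the elementary inequality $\prod_i(1 - a_i) \geq 1 - \sum_i a_i$, valid whenever $a_i \in [0,1]$, which bounds it below by $1 - \sum_{i \geq |s^q|} |\varphi^q(i)|/|b(i)| > 0$ thanks to the defining strict inequality of $D_b^h$.

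There is no serious obstacle here: the whole argument is driven by the single observation that summability of $h(i)/|b(i)|$ ensures positivity of the relevant infinite product. Part (a) is precisely the syntactic statement that a sufficient truncation makes the tail sum strictly less than $1$, which is the exact strict inequality the product lower bound in (b) demands.
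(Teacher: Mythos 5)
Your proof is correct and takes essentially the same route as the paper: prove density in (a) by extending the stem far enough that the tail sum $\sum_{i\geq N}\frac{m_p h(i)}{|b(i)|}$ drops below $1$ (the paper merely asserts this), and obtain (b) from the monotonicity $q\leq p\Rightarrow C(q)\subseteq C(p)$ together with positivity of $\Lb_b(C(q))$ for $q\in D_b^h$. The only cosmetic difference in (b) is that the paper bounds $\Lb_b\bigl([s^q]\menos C(q)\bigr)$ by a union bound, whereas you expand $\Lb_b(C(q))$ as an infinite product and invoke $\prod_i(1-a_i)\geq 1-\sum_i a_i$; the two estimates are interchangeable.
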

\begin{proof}
Item~\ref{it:densebh} follows directly from the assumption $\sum_{i\in\omega}\frac{h(i)}{|b(i)|}<\infty$. We now show~\ref{it:posmeas}. If $p\in \Eor^h_b$ then there is some $q\in D^h_b$ stronger than $p$ by~\ref{it:densebh}, so
\[\Lb_b\left([s^q]\menos C(q)\right)\leq \Lb_b([s^q])\sum_{i\geq|s^q|}\frac{|\varphi^q(i)|}{|b(i)|}<\Lb_b([s^q]),\]
i.e.\ $\Lb_b(C(q))>0$. On the other hand, $q\leq p$ implies $C(q)\subseteq C(p)$, so $\Lb_b(C(p))>0$.
\end{proof}

Motivated by~\cite[Lemma~1.19]{KST}, we have the following lemma for $\Eor_b^h$. Here $[X]_\Nwf$ denotes the equivalence class of the equivalence relation $\sim_\Nwf$ defined by $X\sim_\Nwf Y$ iff $X\symdf Y \in \Nwf$ (where $\symdf$ denotes the symmetric difference).

\begin{lemma}\label{EDsepfam}
Let $b,h\in\omega^\omega$ as in \autoref{Ebhdense} and define  $\Qor^h_b:=\largeset{[C(p)]_\Nwf}{p\in \Eor_b^h}$, which is a subposet of $\Bwf(\prod b)/\Nwf(\prod b)$ (random forcing). Denote by $\Bor^h_b$ the subalgebra of $\Bwf(\prod b)/\Nwf(\prod b)$ generated by $\Qor^h_b$. Then:
\begin{enumerate}[label= \rm (\alph*)]
    \item\label{it:Qbh} $\Qor^h_b$ is dense in $\Bor^h_b$ and forcing equivalent with $\Eor^h_b$.
    \item For $s\in\seq_{<\omega}(b)$, let $p^*_s:=(s,\overline{\emptyset})\in \Eor^h_b$, where 
        $\overline{\emptyset}:=\la\emptyset,\emptyset,\ldots\ra$. Then
    \vspace{-7pt}
    \begin{multicols}{2}
    \begin{enumerate}[label= \rm (\roman*)]
        \item $C(p^*_s)=[s]$ and
        \item $[C(p^*_s)]_\Nwf\in\Q^h_b$.
    \end{enumerate}
    \end{multicols}    
\end{enumerate}
\end{lemma}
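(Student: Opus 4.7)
My plan is to establish (a) by constructing a dense embedding $\iota\colon\Eor^h_b\to\Bor^h_b$ via $\iota(p):=[C(p)]_\Nwf$, whose image is $\Qor^h_b$; this simultaneously yields the denseness of $\Qor^h_b$ in $\Bor^h_b$ and the forcing equivalence with $\Eor^h_b$. Part (b) is a direct unfolding of the definitions.

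First I would verify that $\iota$ lands in the nonzero elements of $\Qor^h_b$ (by \autoref{Ebhdense}~\ref{it:posmeas}) and is order-preserving: given $q\leq p$ and $x\in C(q)$, one reads off $s^p\subseteq s^q\subseteq x$, while $x(i)=s^q(i)\notin\varphi^p(i)$ for $i\in|s^q|\setminus|s^p|$ and $x(i)\notin\varphi^q(i)\supseteq\varphi^p(i)$ for $i\geq|s^q|$, so $x\in C(p)$. The crucial step is preservation of incompatibility: if $[C(p)]_\Nwf\wedge[C(q)]_\Nwf\neq 0$, I will show $p,q$ are compatible in $\Eor^h_b$. Picking $x\in C(p)\cap C(q)$ (nonempty because nonnull), the sequences $s^p$ and $s^q$ are comparable initial segments of $x$, say $s^p\subseteq s^q$, and then $s^q(i)=x(i)\notin\varphi^p(i)$ on $|s^q|\setminus|s^p|$. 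The natural candidate $(s^q,\varphi^p\cup\varphi^q)$ with $m:=m_p+m_q$ may fail the clause $mh(i)<b(i)$, but since $\sum_i h(i)/|b(i)|<\infty$ there is $N$ with $mh(i)<b(i)$ for all $i\geq N$; extending $s^q$ along $x$ to $t:=x\upharpoonright N'$ for $N'\geq\max(N,|s^q|)$ then produces a legitimate condition $r:=(t,\varphi^p\cup\varphi^q)\in\Eor^h_b$ that refines both $p$ and $q$ (the new coordinates lie on $x$, so avoid $\varphi^p(i)\cup\varphi^q(i)$). Combining this with the fact that $\iota[\Eor^h_b]=\Qor^h_b$ is dense in the subalgebra $\Bor^h_b$ it generates (using closure of $\Qor^h_b$ under nontrivial meets together with a standard disjunctive-normal-form argument) gives the dense embedding and hence the forcing equivalence.

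For (b), unfolding yields $C(p^*_s)=\{x\in\prod b:s\subseteq x\text{ and }x(i)\notin\emptyset\text{ for }i\geq|s|\}=[s]$, so $[C(p^*_s)]_\Nwf=[[s]]_\Nwf$ is a nonzero class; membership in $\Qor^h_b$ requires only $p^*_s\in\Eor^h_b$, which holds because $\overline\emptyset\in\Swf(b,mh)$ trivially and the summability of $h(i)/|b(i)|$ supplies some $m$ with $mh(i)<b(i)$ on $i\geq|s|$.

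The main obstacle I anticipate is the compatibility argument above: the clause $mh(i)<b(i)$ in the definition of $\Eor^h_b$ is not preserved under the natural sum $m_p+m_q$, and one must invoke the summability hypothesis $\sum_i h(i)/|b(i)|<\infty$ (in the spirit of \autoref{Ebhdense}~\ref{it:densebh}) to extend $s^q$ along $x$ past the finitely many bad coordinates and produce a legitimate common refinement.
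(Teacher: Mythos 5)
Your argument for order-preservation, for incompatibility-preservation, and for part (b) is correct and matches the paper's in spirit (the compatibility argument via a point $x\in C(p)\cap C(q)$, extending $s^q$ along $x$ past the finitely many coordinates where $(m_p+m_q)h(i)\geq b(i)$, is exactly the right move). However, there is a genuine gap in your treatment of density of $\Qor^h_b$ in $\Bor^h_b$, which you dispatch in a parenthetical via ``closure of $\Qor^h_b$ under nontrivial meets together with a standard disjunctive-normal-form argument.''

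Two problems. First, $\Qor^h_b$ is not closed under nontrivial meets: $C(p)\cap C(q)$ is generally not of the form $C(r)$; your compatibility argument only produces an $r$ with $C(r)\subseteq C(p)\cap C(q)$, a strictly weaker statement. Second, and more importantly, even that weaker closure property does not yield density in the generated algebra, because a DNF argument reduces the problem to intersections of the form $\bigcap_{p\in F_0}C(p)\cap\bigcap_{p\in F_1}\bigl(\textstyle\prod b\menos C(p)\bigr)$, and the terms coming from $F_1$ are \emph{complements} — nothing in ``closure under meets'' tells you how to place some $C(q)$ below such an intersection. (As a sanity check: in the free Boolean algebra on $a,b$, the set $\{a,b,a\wedge b\}$ is closed under nontrivial meets and generates the whole algebra, yet nothing in it lies below $a\wedge\neg b$.) The paper's proof handles exactly this point: it picks $x$ in the nonzero disjunct, records for each $p\in F\menos F_0$ a coordinate $i_p$ witnessing $x\notin C(p)$, and then chooses the cut-off $n^*$ for the new stem $s^q=x\frestr n^*$ to be large enough to absorb all these $i_p$ as well as all the $|s^p|$ for $p\in F_0$; only then does $\varphi^q:=\bigcup_{p\in F_0}\varphi^p$ produce a $q$ with $C(q)\subseteq\bigcap_{p\in F_0}C(p)$ \emph{and} $C(q)\cap C(p)=\emptyset$ for $p\in F\menos F_0$. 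Your stem-extension device is precisely the ingredient needed, but you must extend past the witnessing coordinates $i_p$ too, and you must say so explicitly; as written, the complement case is not addressed.
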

\begin{proof}
We only show~\ref{it:Qbh}.
Note that, for $p,q\in \Eor^h_b$, $q\leq p$ implies $C(q)\subseteq C(p)$. Also, if $C(p)\cap C(q)\neq\emptyset$ then $p$ and $q$ are compatible in $\Eor^h_b$. Therefore, the map $p\mapsto [C(p)]_\Nwf$ is a dense embedding from $\Eor^h_b$ onto $\Qor^h_b$, so $\Eor^h_b$ is forcing equivalent with $\Qor^h_b$.

To conclude~\ref{it:Qbh}, it remains to show that $\Qor^h_b$ is dense in $\Bor^h_b$. Let $a\in\Bor^h_b$ be non-zero, so there are some finite $F\subseteq \Eor^h_b$ and some $e\colon F\to \{0,1\}$ such that $[\emptyset]_\Nwf<\left[\bigcap_{p\in F}C(p)^{e(p)}\right]_\Nwf\leq a$, where $C(p)^0:=C(p)$ and $C(p)^1=\prod b\menos C(p)$. Then there is some $x\in\bigcap_{p\in F}C(p)^{e(p)}$. Let $F_0:=\set{p\in F}{e(p)=0}$. For $p\in F\menos F_0$ choose some $i_p\in\omega$ witnessing that $x\notin C(p)$, concretely, either $i_p<|s^p|$ and $x(i_p)\neq s^p(i_p)$, or $i_p\geq|s^p|$ and $x(i_p)\in\varphi^p(i_p)$. Define $m^*:=\sum_{p\in F_0}m^p$ and choose $n^*<\omega$ larger than $\max\left(\{|s^p|:\, p\in F_0\}\cup\{i_p:\, p\in F\menos F_0\}\right)$ such that $m^*\,  h(i)<b(i)$ for all $i\geq n^*$. So set
$q:=(s^q,\varphi^q)$ where $s^q:=x\frestr n^*$  and $\varphi^q(i):=\bigcup_{p\in F_0}\varphi^p(i)$. Then $q\in\Eor^h_b$, it is stronger than all $p\in F_0$ and incompatible with all $p\in F\menos F_0$ (because of the choice of $i_p$). Therefore, $C(q)\subseteq C(p)$ for all $p\in F_0$ and $C(q)\cap C(p)=\emptyset$ for all $p\in F\menos F_0$, i.e.\ $C(q)\subseteq \bigcap_{p\in F}C(p)^{e(p)}$, so $[C(q)]_\Nwf\leq a$.
\end{proof}

We have that $\Eor$ is $\sigma$-centered, but the same does not hold for $\Eor^h_b$ when $b\in\omega^\omega$. However, the following linkedness property, due to Kamo and Osuga, is satisfied by such posets.

\begin{definition}[{\cite{KO}}]\label{link}
Let $\rho,\varrho \in \omega^\omega$. A forcing notion $\Pbb$ is \textit{$(\rho,\varrho)$-linked} if there exists a sequence $\langle Q_{n,j}:\, n<\omega,\ j<\rho(n)\rangle$ of subsets of $\Pbb$ such that
\begin{enumerate}[label= \rm (\roman*)]
\item\label{it:rhopi1} $Q_{n,j}$ is $\varrho(n)$-linked for all $n<\omega$ and $j<\rho(n)$, and 
\item\label{it:rhopi2} $\forall\, p\in \Pbb\ \forall^{\infty}\, n<\omega\ \exists\, j<\rho(n)\colon p\in Q_{n,j}$.
\end{enumerate}
\end{definition}

\begin{lemma}[{\cite[Lemma~2.21]{CM}}]\label{genlink}
   Let $b,h\in\omega^\omega$ with $b\geq 1$. Let $\varrho,\rho\in\omega^\omega$ and assume that there is a non-decreasing function $f\in\omega^\omega$ going to infinity and an $m^*<\omega$  such that, for all but finitely many $k<\omega$,
   \begin{enumerate}[label= \rm (\roman*)]
       \item $k\varrho(k)h(i)<b(i)$ for all $i\geq f(k)$ and
       \item $k\prod_{i=m^*}^{f(k)-1}((\min\{k,f(k)\}-1)h(i)+1)\leq\rho(k)$.
   \end{enumerate}
   Then $\Eor^h_b$ is $(\rho,\varrho)$-linked.
\end{lemma}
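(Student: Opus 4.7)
The plan is to exhibit the required sequence $\langle Q_{k,j}:k<\omega,\,j<\rho(k)\rangle$ by partitioning $\Eor^h_b$ according to a finite combinatorial invariant $\ell_k(p)$ computed from each condition's finitary data up to level $f(k)$, and then verifying compatibility of any $\varrho(k)$ conditions sharing the same invariant.

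Concretely, for each $k$ large enough that hypotheses~(i),~(ii) apply, I attach to every $p\in\Eor^h_b$ with $m^p<\min\{k,f(k)\}$ a label $\ell_k(p)$ that records $m^p$, the stem $s^p$, and the cardinality pattern $(|\varphi^p(i)|)_{i\in[m^*,f(k))}$ of the slalom on that range; conditions with $m^p\geq\min\{k,f(k)\}$ are placed in a fixed default bucket, which is harmless since each fixed $p$ eventually satisfies $m^p<\min\{k,f(k)\}$. By hypothesis~(ii) the number of possible labels is at most
\[k\cdot\prod_{i=m^*}^{f(k)-1}\bigl((\min\{k,f(k)\}-1)h(i)+1\bigr)\leq\rho(k),\]
so I index the labels by $j<\rho(k)$ and let $Q_{k,j}$ be the set of all conditions with label $j$. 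Each fixed $p$ then belongs to some $Q_{k,\ell_k(p)}$ for all sufficiently large $k$ (as $\min\{k,f(k)\}\to\infty$), yielding the cofinite coverage clause of \autoref{link}.

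For $\varrho(k)$-linkedness within $Q_{k,j}$, given $p_0,\ldots,p_{\varrho(k)-1}\in Q_{k,j}$ sharing the label, I build a common extension $q=(s^q,\varphi^q)$ by taking $s^q$ of length $f(k)$ extending the common stem, with $s^q(i)\in b(i)\menos\bigcup_l\varphi^{p_l}(i)$ for $i\in[|s^{p_l}|,f(k))$ --- possible because each $m^{p_l}h(i)<b(i)$ holds by the validity of $p_l$ and the cardinality pattern keeps the union bounded away from $b(i)$ --- and setting $\varphi^q(i):=\bigcup_{l<\varrho(k)}\varphi^{p_l}(i)$ for $i\geq f(k)$, with parameter $m^q:=\varrho(k)\cdot k$. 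Hypothesis~(i) then yields $m^q h(i)<b(i)$ for all $i\geq f(k)=|s^q|$, so $q\in\Eor^h_b$ is a valid common extension of $p_0,\ldots,p_{\varrho(k)-1}$.

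The main obstacle is the calibration of the label $\ell_k(p)$: it must be fine enough to guarantee $\varrho(k)$-compatibility on the range $[|s^p|,f(k))$, where hypothesis~(i) does not directly apply, yet coarse enough that the label count stays within the bound of hypothesis~(ii). The explicit shape of the product in~(ii) --- the factor $(\min\{k,f(k)\}-1)h(i)+1$ being the number of possible values of the cardinality $|\varphi^p(i)|$ rather than the much larger number of subsets of $b(i)$ of that size --- signals that the cardinality pattern (together with the stem and $m^p$) is all that should be encoded, with mismatches between slaloms of equal cardinality absorbed by lengthening the stem to $f(k)$ and inflating the width bound to $m^q=\varrho(k)k$ past $f(k)$.
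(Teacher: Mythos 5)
Your overall plan --- bound the label classes by $\rho(k)$ and verify $\varrho(k)$-linkedness inside each --- is the right shape, and you correctly flag $[|s^p|,f(k))$ as the range hypothesis~(i) does not reach, but the label you chose fails on both fronts. First, the counting: you include $s^p$ in $\ell_k(p)$, yet the bound $k\prod_{i=m^*}^{f(k)-1}\bigl((\min\{k,f(k)\}-1)h(i)+1\bigr)$ only accounts for $m^p$ and the cardinality pattern; $s^p$ ranges over an unbounded family of finite sequences in $\seq_{<\omega}(b)$, so the number of your labels exceeds $\rho(k)$.

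Second, and decisively, the buckets are not $\varrho(k)$-linked. Sharing cardinalities on $[m^*,f(k))$ does not bound $\bigl|\bigcup_l\varphi^{p_l}(i)\bigr|$ there: this union can have size $\varrho(k)\cdot c_i$ for $c_i$ the shared cardinality, and nothing in the hypotheses controls $\varrho(k)c_i$ against $b(i)$ on $[|s^p|,f(k))$ --- validity of each individual $p_l$ gives only $m^{p_l}h(i)<b(i)$, and hypothesis~(i) applies only for $i\geq f(k)$. Your phrase ``the cardinality pattern keeps the union bounded away from $b(i)$'' is not a proof and is in fact false: nothing prevents $b(i_0)\leq\varrho(k)$ at some $i_0\in[m^*,f(k))$ with $h(i_0)\geq1$, and then the $b(i_0)$ conditions with a fixed stem $s$ of length $\leq i_0$, $m^{p_l}=1$, $\varphi_l(i_0)=\{l\}$ and $\varphi_l$ empty elsewhere, for $l<b(i_0)\leq\varrho(k)$, all receive your label, yet $\bigcup_l\varphi_l(i_0)=b(i_0)$, so they have no common extension: extending $s^q$ past $i_0$ is blocked outright, and otherwise $|\varphi^q(i_0)|=b(i_0)$ forces $m^qh(i_0)\geq b(i_0)$, contradicting the requirement $m^qh(i_0)<b(i_0)$ for $i_0\geq|s^q|$. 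Lengthening $s^q$ to $f(k)$ and setting $m^q=\varrho(k)k$ only tames the width constraint past $f(k)$; it does nothing to make the choice of $s^q(i)$ possible on $[|s^p|,f(k))$, which is precisely where the work has to happen. The actual proof must encode something sharper than slalom cardinalities on $[m^*,f(k))$, which is what $m^*$ and the exact shape of the product in~(ii) are calibrated for.
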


\section{Review of Preservation Theory}\label{SecPres}

We review the preservation theory of unbounded families presented in~\cite[Sect.~4]{CM}. This a generalization of Judah's and Shelah's~\cite{JS} and the first author's~\cite{Br} preservation theory.

\begin{definition}\label{def:Prs}
We say that $\Rbf=\langle X,Y,\sqsubset\rangle$ is a \textit{generalized Polish relational system (gPrs)} if
\begin{enumerate}[label=\rm(\arabic*)]
\item $X$ is a Perfect Polish space,
\item $Y=\bigcup_{e\in \Omega}Y_e$ where $\Omega$ is a non-empty set and, for some Polish space $Z$, $Y_e$ is non-empty and analytic in $Z$ for all $e\in \Omega$, and
\item \label{def:Prsc}$\sqsubset=\bigcup_{n<\omega}\sqsubset_{n}$ where $\langle\sqsubset_{n}: n<\omega\rangle$  is some increasing sequence of closed subsets of $X\times Z$ such that, for any $n<\omega$ and for any $y\in Y$,
$(\sqsubset_{n})^{y}=\set{x\in X}{x\sqsubset_{n}y }$ is closed nowhere dense.
\end{enumerate}
If $|\Omega|=1$, we just say that $\Rbf$ is a \emph{Polish relational system (Prs)}.
\end{definition}

\begin{remark}\label{Prsremark}
By~\autoref{def:Prs}~\ref{def:Prsc}, $\Cbf_{\Mcal(X)} \leqT \Rbf$. Therefore, $\bfrak(\Rbf)\leq \non(\Mcal)$ and $\cov(\Mcal)\leq\dfrak(\Rbf)$.
\end{remark}

For the rest of this section, fix a gPrs $\Rbf=\langle X,Y,\sqsubset\rangle$ and an infinite cardinal $\theta$.

\begin{definition}\label{def:good}
A poset $\Por$ is \textit{$\theta$-$\Rbf$-good} if, for any $\Por$-name $\dot{h}$ for a member of $Y$, there is a non-empty set $H\subseteq Y$ (in the ground model) of size ${<}\theta$ such that, for any $x\in X$, if $x$ is $\Rbf$-unbounded over  $H$ then $\Vdash x\nsqsubset \dot{h}$.

We say that $\Por$ is \textit{$\Rbf$-good} if it is $\aleph_1$-$\Rbf$-good.  
\end{definition}

Notice that $\theta<\theta_0$
implies that any $\theta$-$\Rbf$-good poset is $\theta_0$-$\Rbf$-good. Also, if $\Por \lessdot\Qor$ and $\Qor$ is $\theta$-$\Rbf$-good, then $\Por$ is $\theta$-$\Rbf$-good.

Good posets allow us to preserve the Tukey order as follows.

\begin{lemma}[{\cite[Lemma~4.7~(b)]{CM}}]\label{goodpres}
Assume that $\theta$ is regular uncountable and assume that $\Por$ is a $\theta$-$\Rbf$-good poset. If $\mu$ is a cardinal preserved by $\Por$, $\cf(\mu)\geq\theta$ and $|I|\geq\mu$, then $\Por$ preserves $\mu$-$\Rbf$-unbounded families indexed by $I$. In particular,
if $\Cbf_{[I]^{<\mu}}\leqT\Rbf$ (in the ground model), then $\Por$ forces that $\Cbf_{[I]^{<\mu}}\leqT\Rbf$.
\end{lemma}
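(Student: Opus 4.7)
The plan is to unravel the definitions and let $\theta$-$\Rbf$-goodness do all the work. Given a $\mu$-$\Rbf$-unbounded family $\Seq{x_i}{i\in I}\subseteq X$ in $V$ and an arbitrary $\Por$-name $\dot{y}$ for an element of $Y$, I would apply $\theta$-$\Rbf$-goodness to $\dot{y}$ to obtain a capture set $H\subseteq Y$ lying in the ground model, with $|H|<\theta$, such that every $x\in X$ which is $\Rbf$-unbounded over $H$ satisfies $\Por\Vdash x\nsqsubset\dot{y}$. The target to prove is then $\Por\Vdash |\set{i\in I}{x_i\sqsubset \dot{y}}|<\mu$.

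The main step is a cofinality count on the ground-model set $B_H:=\bigcup_{y\in H}\set{i\in I}{x_i\sqsubset y}$. By $\mu$-$\Rbf$-unboundedness each set in this union has size ${<}\mu$, and since $|H|<\theta\leq\cf(\mu)$, a union of fewer than $\cf(\mu)$ sets of size ${<}\mu$ cannot reach $\mu$, so $|B_H|<\mu$ in $V$. For every $i\in I\menos B_H$ the witness $x_i$ is $\Rbf$-unbounded over $H$, so goodness yields $\Por\Vdash x_i\nsqsubset \dot{y}$; equivalently $\Por\Vdash\set{i\in I}{x_i\sqsubset \dot{y}}\subseteq B_H$. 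Since the ground-model cardinality $|B_H|<\mu$ is witnessed by an injection that survives into $V[G]$, the ``bad'' set retains cardinality ${<}\mu$ in the extension, which is precisely the required preservation of $\mu$-$\Rbf$-unboundedness.

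For the ``in particular'' clause, I would first invoke \autoref{lem:TukeyCtheta}~\ref{c:str} in $V$ to convert the hypothesis $\Cbf_{[I]^{<\mu}}\leqT\Rbf$ into a $\mu$-$\Rbf$-unbounded family indexed by $I$, apply the preservation just established, and then invoke \autoref{lem:TukeyCtheta}~\ref{c:str} in the extension. The small side-check is $|I|_{V[G]}\geq\mu$: if $|I|_V=\mu$ then any bijection of $I$ with a smaller ordinal in $V[G]$ would compose with the $V$-surjection $\mu\twoheadrightarrow I$ to collapse $\mu$, contradicting its preservation, while if $|I|_V>\mu$ an injection $\mu\hookrightarrow I$ is already present in $V$.

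The only genuine subtlety is the cofinality count in the second paragraph: the assumption $\cf(\mu)\geq\theta$ is used exactly once, but decisively, to keep $B_H$ below $\mu$. Without it, one could have $|H|<\theta$ yet $|B_H|=\mu$, and the conclusion would fail. Everything else is a clean transfer through the goodness definition and the standard fact that ground-model cardinalities cannot increase under forcing.
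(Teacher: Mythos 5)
Your proof is correct and is exactly the standard argument one would expect here (the paper itself cites this lemma from [CM, Lemma~4.7(b)] without reproducing a proof). You unravel $\theta$-$\Rbf$-goodness to obtain the capture set $H$, use the cofinality hypothesis $|H|<\theta\le\cf(\mu)$ precisely where it is needed to keep the ground-model bad set $B_H$ below $\mu$, observe that goodness forces every $x_i$ with $i\notin B_H$ to remain unbounded by $\dot y$, and then pass through the preservation of $\mu$ to transfer the bound to the extension; the \emph{in particular} clause is correctly dispatched via \autoref{lem:TukeyCtheta}~\ref{c:str} applied in both models together with the persistence of $|I|\ge\mu$. No gaps.
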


Small posets are automatically good.

\begin{lemma}[{\cite[Lemma~4.10]{CM}}]\label{smallgoodness}
If $\theta$ is a regular cardinal then any poset of size ${<}\theta$
is $\theta$-$\Rbf$-good. In particular, the Cohen forcing is $\Rbf$-good.
\end{lemma}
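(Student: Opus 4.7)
My plan is to produce the witness $H\subseteq Y$ by choosing one element $y_{p,n}\in Y$ for each pair $(p,n)\in\Por\times\omega$, where $y_{p,n}$ serves as a single $\sqsubset_n$-bound for the set of $x$'s that $p$ could force to satisfy $x\sqsubset_n\dot{h}$. Since $|\Por|<\theta$ and (assuming $\theta$ is uncountable regular) $|\Por\times\omega|<\theta$, the resulting $H$ automatically has the required cardinality. The key analytical tool is $\Sigma^1_1$-absoluteness between $V$ and $V[G]$ (Mostowski/Shoenfield), which transfers the existence of such a bound in the generic extension down to $V$.

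Concretely, fix a $\Por$-name $\dot{h}$ for a member of $Y$. For each $p\in\Por$ and $n<\omega$, put $A_{p,n}:=\set{x\in X}{p\Vdash x\sqsubset_n\dot{h}}$. A short verification shows that $A_{p,n}$ is closed in $X$: if $x_k\to x$ with $x_k\in A_{p,n}$ and $G$ is any generic through $p$, then $x_k\sqsubset_n\dot{h}[G]$ for all $k$, and closedness of the section $(\sqsubset_n)^{\dot{h}[G]}$ yields $x\sqsubset_n\dot{h}[G]$, so $p\Vdash x\sqsubset_n\dot{h}$. Thus $A_{p,n}$ is coded by a real in $V$. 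Next I would set
\[S_{p,n}:=\bigcap_{x\in A_{p,n}}\{y\in Z:\, x\sqsubset_n y\},\]
which is closed in $Z$ (its complement is open by second countability of $X\times Z$) and coded by a real in $V$.

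For any generic $G\ni p$ we have $\dot{h}[G]\in Y$ and, by construction of $A_{p,n}$, $\dot{h}[G]\in S_{p,n}$. Writing $Y=\bigcup_{e\in\Omega}Y_e$, there is some $e\in\Omega$ with $\dot{h}[G]\in Y_e$; since $\Omega\in V$ this $e$ lies in $V$. Because $S_{p,n}$ is closed and $Y_e$ is analytic with codes in $V$, the statement ``$S_{p,n}\cap Y_e\neq\emptyset$'' is $\Sigma^1_1$ with parameters in $V$, hence absolute. So I pick $y_{p,n}\in V\cap S_{p,n}\cap Y$ and form $H:=\{y_{p,n}:\, p\in\Por,\ n<\omega\}$. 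To verify the goodness property, suppose $x$ is $\Rbf$-unbounded over $H$ yet $\not\Vdash x\nsqsubset\dot{h}$. Then some $q\in\Por$ forces $x\sqsubset\dot{h}$ and, by density together with $\sqsubset=\bigcup_n\sqsubset_n$, there exist $r\leq q$ and $n<\omega$ with $r\Vdash x\sqsubset_n\dot{h}$, i.e., $x\in A_{r,n}$. Then $x\sqsubset_n y_{r,n}$, so $x\sqsubset y_{r,n}\in H$, contradicting unboundedness. Cohen forcing has size $\aleph_0<\aleph_1$, so this specializes to $\Rbf$-goodness.

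I expect the main obstacle to be the absoluteness step, which relies on $A_{p,n}$ being closed so that $S_{p,n}$ has a $V$-coded closed (not merely $\Pi^1_1$) definition; this in turn rests crucially on $\sqsubset_n$ being closed with closed sections, as built into the definition of a gPrs. A secondary subtlety is ensuring that the index $e\in\Omega$ witnessing $\dot{h}[G]\in Y_e$ in $V[G]$ actually lives in $V$, but this is immediate from $\Omega$ being a set (so $\Omega\subseteq V$). The cardinality bound on $H$ and the density manipulation at the end of the verification are routine bookkeeping.
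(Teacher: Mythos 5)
Your proof is correct, but it follows a genuinely different route from the one in \cite{CM} (Lemma~4.10). The paper's argument is built around \emph{interpretations}: for each $p\in\Por$ one fixes $p'\leq p$, an index $e_p\in\Omega$, a name $\dot z_p$ with $p'\Vdash \dot h = f_{e_p}(\dot z_p)$, and an interpretation $(\bar q^p, g_p)$ of $\dot z_p$ below $p'$; then $H:=\set{f_{e_p}(g_p)}{p\in\Por}$ works, and the verification reduces to a direct application of \autoref{intrlem} to produce, below any purported witness $r\Vdash x\sqsubset_n\dot h$, some $q^{r}_k\leq r$ forcing the opposite. Your argument replaces this with a closed-set/absoluteness computation: you isolate the closed sets $A_{p,n}$ of ground-model reals that $p$ forces into the section $(\sqsubset_n)^{\dot h}$, pass to the closed intersection $S_{p,n}$, and pull a witness down to $V$ by $\Sigma^1_1$-absoluteness. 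This is conceptually tidier and makes the role of the closedness clauses in the gPrs definition transparent, at the cost of some coding care the paper's route avoids. Two points deserve a cleaner treatment. First, the claim that the complement of $S_{p,n}$ is open ``by second countability'' is misleading: openness of the complement follows directly from closedness of $\sqsubset_n$ (a box around a failing pair $(x_0,y_0)$ gives a neighbourhood missing $S_{p,n}$); second countability is only relevant for coding the set by a real. Second, and more substantively, the assertion ``$\dot h[G]\in S_{p,n}$'' silently uses that the $\Pi^0_1$-code of $S_{p,n}$ reinterprets in $V[G]$ to a set containing $\dot h[G]$. This is true, but needs the standard dense-set reduction: fix a countable $D\subseteq A_{p,n}$ dense in $A_{p,n}$; since $(\sqsubset_n)^y$ is closed one has $S_{p,n}=\bigcap_{d\in D}(\sqsubset_n)_d$ in $V$, and the formula $\forall k\,(d_k\sqsubset_n y)$ is genuinely $\Pi^0_1$ with $V$-code and is satisfied by $\dot h[G]$ because $p\Vdash d_k\sqsubset_n\dot h$ for each $k$. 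Without this reduction, $\forall x\in A_{p,n}\,(x\sqsubset_n y)$ is only $\Pi^1_1$ in $y$ (so the existence statement jumps to $\Sigma^1_2$), or worse, the reinterpreted $A_{p,n}$ could acquire new elements in $V[G]$. With these two repairs the argument is complete; the rest (cardinality bound $|\Por\times\omega|<\theta$ for uncountable regular $\theta$, and the density step in the final verification) is sound.
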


For two posets $\Por$ and $\Qor$, we write $\Por\subsetdot\Qor$ when $\Por$ is a complete suborder of $\Qor$, i.e.\ the inclusion map from $\Por$ into $\Qor$ is a complete embedding.

\begin{definition}[Direct limit]\label{def:limdir}
We say that $\la\Por_i:\, i\in S\ra$ is a \emph{directed system of posets} if $S$ is a directed preorder and, for any $j\in S$, $\Por_j$ is a poset and $\Por_i\subsetdot\Por_j$ for all $i\leq_S j$.

For such a system, we define its \emph{direct limit} $\limdir_{i\in S}\Por_i:=\bigcup_{i\in S}\Por_i$ ordered by
\[q\leq p \sii \exists\, i\in S\colon p,q\in\Por_i\text{ and }q\leq_{\Por_i} p.\]
\end{definition}

The Cohen reals added along an iteration are usually used as witnesses for Tukey connections, as they form strong witnesses. For example:

\begin{lemma}[{\cite[Lemma~4.14]{CM}}]\label{lem:strongCohen}
Let $\mu$ be a cardinal with uncountable cofinality and let $\la\Por_{\alpha}:\, \alpha<\mu\ra$ be a $\subsetdot$-increasing sequence of $\cf(\mu)$-cc posets such that  $\Por_\mu=\limdir_{\alpha<\mu}\Por_{\alpha}$. If $\Por_{\alpha+1}$ adds a Cohen real $\dot{c}_\alpha\in X$ over $V^{\Por_\alpha}$ for any $\alpha<\mu$, then $\Por_{\mu}$ forces that $\{\dot{c}_\alpha:\alpha<\mu\}$ is a $\mu$-$\Rbf$-unbounded family. In particular, $\Por_\mu$ forces that $\mu\leqT\Cbf_\Mwf \leqT \Rbf$.
\end{lemma}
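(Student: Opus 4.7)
The plan is to verify the defining condition of $\mu$-$\Rbf$-unboundedness (\autoref{def:strwit}) directly, using the classical fact that a Cohen real avoids every closed nowhere-dense set coded in its ground model together with clause~\ref{def:Prsc} of \autoref{def:Prs}, which writes $\sqsubset$ as a countable union of such fibres.

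Fix a $\Por_\mu$-name $\dot y$ for an element of $Y\subseteq Z$, which I view as a name for a real in $Z$. The first step is to locate $\alpha^*<\mu$ with $\dot y\in V^{\Por_{\alpha^*}}$. I would argue as follows: the direct-limit $\Por_\mu$ inherits $\cf(\mu)$-cc from its $\cf(\mu)$-cc factors (using that $\cf(\mu)$ is regular uncountable); hence each of the countably many maximal antichains of $\Por_\mu$ deciding the coordinates of $\dot y$ has size $<\cf(\mu)$; their union therefore has cardinality $<\cf(\mu)$; and since each condition lies in some $\Por_\alpha$ with $\alpha<\mu$, regularity of $\cf(\mu)$ produces a common upper bound $\alpha^*<\mu$.

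Once $\alpha^*$ is obtained, every $\alpha\geq\alpha^*$ satisfies $\dot y\in V^{\Por_\alpha}$, so by \autoref{def:Prs}~\ref{def:Prsc} each fibre $(\sqsubset_n)^{\dot y}$ is closed nowhere dense in $X$ and coded in $V^{\Por_\alpha}$. Since $\dot c_\alpha$ is Cohen over $V^{\Por_\alpha}$ (added by $\Por_{\alpha+1}$), it lies outside each such set, so $\dot c_\alpha\not\sqsubset_n\dot y$ for every $n<\omega$ and hence $\dot c_\alpha\not\sqsubset\dot y$. Therefore $\{\alpha<\mu:\dot c_\alpha\sqsubset\dot y\}\subseteq\alpha^*$, a set of cardinality $<\mu$, which shows $\{\dot c_\alpha:\alpha<\mu\}$ is $\mu$-$\Rbf$-unbounded in the $\Por_\mu$-extension.

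For the ``in particular'' part, \autoref{lem:TukeyCtheta}~\ref{c:str} with $\theta=\mu$ and $I=\mu$ turns the $\mu$-$\Rbf$-unbounded family into $\Cbf_{[\mu]^{<\mu}}\leqT\Rbf$, which is equivalent to $\mu\leqT\Rbf$ when $\mu$ is Tukey-equivalent as a directed preorder to $[\mu]^{<\mu}$ under the uncountable-cofinality hypothesis; specializing the same argument to the gPrs $\Cbf_{\Mwf}$ yields $\mu\leqT\Cbf_{\Mwf}$, while $\Cbf_{\Mwf}\leqT\Rbf$ is \autoref{Prsremark}, closing the chain. The main obstacle is the localization step in the previous paragraph: one needs to extract $\cf(\mu)$-cc of the direct-limit $\Por_\mu$ from $\cf(\mu)$-cc of its factors and the uncountable-regular hypothesis on $\cf(\mu)$, which is the sole genuinely technical ingredient beyond the Cohen-real avoidance lemma.
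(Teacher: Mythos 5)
The paper does not reprove this lemma (it cites \cite[Lemma~4.14]{CM}), so I assess the proposal on its own. Your overall plan --- Cohen reals avoid ground-model meager sets, and a localization bounds $\{\alpha : c_\alpha\sqsubset y\}$ --- is the right one, but your localization step has a genuine gap. You assert that the direct limit $\Por_\mu$ inherits $\cf(\mu)$-cc from its $\cf(\mu)$-cc factors. This is not a theorem: for a general $\subsetdot$-increasing chain of $\kappa$-cc posets whose length has cofinality exactly $\kappa$, an antichain $A$ of $\Por_\mu$ decomposes as the increasing union $\bigcup_{\xi<\kappa}(A\cap\Por_{\gamma_\xi})$ along a cofinal sequence, each piece of size ${<}\kappa$, and nothing rules out $|A|=\kappa$. (The ccc-preservation result for FS iterations uses the $\Delta$-system lemma, which is unavailable here since the $\Por_\alpha$ are arbitrary complete suborders.) This is exactly why \autoref{thm:cfcovSN} and \autoref{thm:precaliber} carry the \emph{explicit} hypothesis that the direct limit $\Por_\pi$ has the $\cf(\pi)$-cc, whereas the present lemma deliberately omits it. So your in-$V$ localization of the nice name $\dot y$ into a single $\Por_{\alpha^*}$ is unjustified as written.

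The correct localization is done inside $V[G]$ and needs only $\cf(\mu)>\omega$. Each of the countably many coordinates of $y=\dot y[G]$ is decided by some $p_n\in G\cap\Por_{\alpha_n}$, and $\alpha^*:=\sup_n\alpha_n<\mu$ since $\cf(\mu)$ is uncountable; then $y$ is computable in $V[G\cap\Por_{\alpha^*}]$ from $G\cap\Por_{\alpha^*}$, $\dot y$ and the $\Por_\mu$-forcing relation (all available there), so $y\in V[G\cap\Por_{\alpha^*}]$. From here your avoidance argument runs verbatim: for $\alpha\geq\alpha^*$, $c_\alpha$ is Cohen over $V[G\cap\Por_\alpha]\supseteq V[G\cap\Por_{\alpha^*}]\ni y$ (an upward-absolute statement), hence evades every $(\sqsubset_n)^{y}$, so $\{\alpha:c_\alpha\sqsubset y\}\subseteq\alpha^*$ has cardinality ${<}\mu$. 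Note that $\alpha^*$ depends on $G$, which is harmless since the conclusion is a statement about the extension. This also repairs your ``in particular'' paragraph: the detour through $\mu\eqT[\mu]^{<\mu}$ requires $\mu$ regular, while the lemma allows $\mu$ singular of uncountable cofinality; but the localization already gives the Tukey connection directly, sending $\alpha\mapsto c_\alpha$ and a meager Borel $B$ to some $\alpha^*_B<\mu$ with $B$ coded in $V[G\cap\Por_{\alpha^*_B}]$, yielding $\mu\leqT\Cbf_\Mwf$, and $\Cbf_\Mwf\leqT\Rbf$ is \autoref{Prsremark}.
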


\begin{example}\label{ExmPrs}
The following are Prs's that describe cardinal characteristics in Cicho\'n's diagram.
  \begin{enumerate}[label= \rm (\arabic*)]
     \item\label{ExmPrsa} Consider the Polish relational system $\Ed:=\la\omega^\omega,\omega^\omega,\neq^\infty\ra$ where $x =^\infty y$ means that $x(n)=y(n)$ for infinitely many $n$ (so $x \neq^\infty y$ means that $x$ and $y$ are eventually different). 
     By~\cite[Thm.~2.4.1 \& Thm.~2.4.7]{BJ}, $\bfrak(\Ed)=\non(\Mwf)$ and $\dfrak(\Ed)=\cov(\Mwf)$, moreover, $\Ed\eqT \aLc(\omega,h)$ for any $h\geq^* 1$ (see e.g.~\cite[Thm.~3.17]{CMlocalc}).  
     
     \item\label{ExmPrsb}  The relational system $\Dbf:=\la\omega^\omega,\omega^\omega,\leq^*\ra$ is Polish. Typical examples of $\Dbf$-good sets are $\Eor^h_b$ and random forcing. More generally, $\sigma$-$\Fr$-linked posets are $\Dbf$-good (see~\cite{mejiavert,BCM}).




     \item\label{ExmPrsd}  
     For $\Hcal\subseteq\omega^\omega$ non-empty and countable,
     let $\Lc^*_\Hcal:=\la\omega^\omega, \Scal(\omega, \Hcal), \in^*\ra$ be the Polish relational system where \[\Scal(\omega, \Hcal):=\set{\varphi\colon \omega\to[\omega]^{<\aleph_0}}{ \exists\, h\in\Hcal\ \forall\, i<\omega\colon |\varphi(i)|\leq h(i)}\]
     and $x\in^*\varphi$ is defined by $\forall^\infty\, n<\omega\colon x(n)\in\varphi(n)$.
     
     As a consequence of~\cite{BartInv}, if $\Hcal=\{\id^{k+1}:k<\omega\}$ where 
      $\id^k(i):=i^k$ for all $i<\omega$, then $\Lc^*_\Hcal\eqT\Nwf$, so $\bfrak(\Lc^*_\Hcal)=\add(\Nwf)$ and $\dfrak(\Lc^*_\Hcal)=\cof(\Nwf)$. Denote this particular case by $\Lc^*$.
      
      Any $\mu$-centered poset is $\mu^+$-$\Lc^*$-good (see~\cite{Br,JS}) so, in particular, $\sigma$-centered posets are $\Lc^*$-good. Besides,  Kamburelis~\cite{Ka} showed that any Boolean algebra with a strictly positive finitely additive measure is $\Lc^*$-good (in particular, subalgebras of random forcing). 
  \end{enumerate}
\end{example}

In \autoref{preaddSN} we show that Boolean algebras with a strictly positive finitely additive measure do not increase $\add(\SNwf)$, which strengthens Kaburelis' result cited above.

We fix the following notation, which we use as in the following example. For functions $x,y\colon D\to \omega$, we define the functions $x+y$, $x\cdot y$ and $x^y$ from $D$ to $\omega$ in the natural way, i.e.\ $(x+y)(k):= x(k)+y(k)$, $(x\cdot y)(k):= x(k)\cdot y(k)$ and $(x^y)(k):=x(k)^{y(k)}$. We expand the notation when using a constant $c\in\omega$, i.e.\ maps $x+c$, $c\cdot x$, $c^x$ and $x^c$ are defined similarly (like $\id^k$ in \autoref{ExmPrs}~\ref{ExmPrsd}). For the product, we may omit the symbol ``$\cdot$".

\begin{example}[{\cite[Ex.~4.19]{CM}}]\label{KOpre}
Kamo and Osuga~{\cite{KO}} define a gPrs with parameters $\varrho,\rho\in\omega^\omega$, which we denote by $\aLc^*(\varrho,\rho)$. For the purposes of this paper, it is just enough to review its properties. Assume that $\varrho>0$ and $\rho\geq^* 1$.
\begin{enumerate}[label = \rm (\alph*)]
    \item\label{KOa} $\aLc^*(\varrho,\rho)\leqT \aLc(\varrho,\rho^{\id})$~\cite[Lem.~4.21]{CM}.
    \item\label{KOb} If $\sum_{i<\omega}\frac{\rho(i)^i}{\varrho(i)}<\infty$ then $\aLc(\varrho,\rho^{\id})\leqT\Cbf_{\Nwf}^\perp$~\cite[Lem.~2.3]{KM21}, so $\cov(\Nwf)\leq\bfrak(\aLc^*(\varrho,\rho))$ and $\dfrak(\aLc^*(\varrho,\rho))\leq\non(\Nwf)$
    \item\label{KOc} If $\varrho\not\leq^*1$ and $\rho\geq^*1$, then any $(\rho,\varrho^{\rho^{\id}})$-linked poset is $\aLc^*(\varrho,\rho)$-good (see~\cite[Lem.~10]{KO} and~\cite[Lem.~4.23]{CM}). 
    \item\label{KOd} Any $\theta$-centered poset is $\theta^+$-$\aLc^*(\varrho,\rho)$-good~\cite[Lem.~4.24]{CM}.
\end{enumerate}
\end{example}

We close this section with the following preservation results for the covering of $\SNwf$. This was originally introduced by Pawlikowski~\cite{P90} and was generalized and improved by the second and third authors~\cite{CM23}. Here, we use the notion of the \emph{segment cofinality} of an ordinal $\pi$:
\[\scf(\pi):=\min\set{|c|}{c\subseteq \pi \text{ is a non-empty final segment of }\pi}.\]

\begin{theorem}[{\cite{P90},~\cite[Thm.~5.4~(c)]{CM23}}]\label{thm:cfcovSN}
    Let $\la \Por_\xi:\, \xi\leq\pi\ra$ be a $\subsetdot$-increasing sequence of posets such that $\Por_\pi = \limdir_{\xi<\pi}\Por_\xi$. Assume that $\cf(\pi)>\omega$, $\Por_\pi$ has the $\cf(\pi)$-cc and $\Por_{\xi+1}$ adds a Cohen real over the $\Por_\xi$-generic extension for all $\xi<\pi$. Then $\pi\leqT \Cbf_\Nwf^\perp$, in particular $\cov(\SNwf) \leq \cf(\pi) \leq \non(\SNwf)$.
\end{theorem}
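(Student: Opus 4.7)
The plan is to produce, in $V_\pi$, a Tukey connection $(\Psi_-,\Psi_+)\colon \pi \to \Cbf_\SNwf^\perp$; this formally yields the stated $\pi\leqT\Cbf_\Nwf^\perp$ (since $\SNwf\subseteq\Nwf$ gives $\Cbf_\SNwf^\perp\leqT\Cbf_\Nwf^\perp$) and delivers
\[\cov(\SNwf)=\bfrak(\Cbf_\SNwf^\perp)\leq\bfrak(\pi)=\cf(\pi)=\dfrak(\pi)\leq\dfrak(\Cbf_\SNwf^\perp)=\non(\SNwf).\]
Define $\Psi_-(\xi) := V_\xi\cap 2^\omega$ and $\Psi_+(x) := \min\set{\eta<\pi}{x\in V_\eta}$. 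The minimum exists: a nice name for $x$ involves only countably many antichains of $\Por_\pi$, each of size ${<}\cf(\pi)$, so by $\cf(\pi)>\omega$ they all live in some $\Por_\eta$ with $\eta<\pi$. If $x\notin \Psi_-(\xi) = V_\xi\cap 2^\omega$, then $\Psi_+(x)>\xi$, yielding the Tukey condition.

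The substance is the claim $V_\xi\cap 2^\omega\in\SNwf^{V_\pi}$ for every $\xi<\pi$. Given $f\in\omega^\omega\cap V_\pi$, apply $\cf(\pi)$-cc to pick $\eta\in[\xi,\pi)$ with $f\in V_\eta$, and exploit the Cohen real $c\in V_{\eta+1}$ over $V_\eta$: partition $\omega$ into consecutive intervals $\la I_n:n<\omega\ra\in V_\eta$ with $|I_n|=f(n)$, and set $\sigma(n) := c\frestr I_n$ (reindexed as an element of $2^{f(n)}$ via the order-isomorphism $I_n\cong f(n)$). Then $\sigma\in V_{\eta+1}$, $\hgt_\sigma = f$, and it suffices to show $V_\eta\cap 2^\omega \subseteq [\sigma]_\infty$; via~\autoref{charSN} this gives $V_\xi\cap 2^\omega \subseteq V_\eta\cap 2^\omega \in \SNwf^{V_\pi}$.

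For each $x\in V_\eta\cap 2^\omega$ and each $k<\omega$, the set
\[M_{x,k} := \set{c'\in 2^\omega}{\forall n\geq k\colon c'\frestr I_n \neq x\frestr f(n)}\]
is closed nowhere dense and coded in $V_\eta$: any finite $s\in 2^{<\omega}$ admits an extension forcing $c'\frestr I_n = x\frestr f(n)$ for some $n$ with $\min I_n \geq |s|$. Cohen genericity of $c$ over $V_\eta$ yields $c\notin M_{x,k}$, which gives infinitely many $n$ with $\sigma(n) = x\frestr f(n)$, hence $x\in[\sigma]_\infty$. The main obstacle is precisely this verification: the family $\set{M_{x,k}}{x\in V_\eta\cap 2^\omega,\ k<\omega}$ can have size continuum, but each $M_{x,k}$ is a \emph{separately} $V_\eta$-coded meager set, and Cohen genericity handles every $V_\eta$-coded meager set individually --- no simultaneous or uniform avoidance over all $x$ is required.
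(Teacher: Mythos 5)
Your proof is correct, and it is essentially the standard Pawlikowski argument — the paper cites this result from~\cite{P90} and~\cite{CM23} rather than proving it. The key point, that for each $f\in V_\pi\cap\omega^\omega$ one finds $\eta\in[\xi,\pi)$ with $f\in V_\eta$ and then chops the Cohen real of stage $\eta+1$ into $f$-blocks, using genericity against the $V_\eta$-coded nowhere dense sets $M_{x,k}$ \emph{one at a time}, is exactly what makes $V_\xi\cap 2^\omega$ strong measure zero in $V_\pi$, and your Tukey pair packages this correctly. You are also right that what one actually gets is $\pi\leqT\Cbf_{\SNwf}^\perp$: the stated $\pi\leqT\Cbf_\Nwf^\perp$ follows since $\Cbf_{\SNwf}^\perp\leqT\Cbf_\Nwf^\perp$, but the ``in particular'' part $\cov(\SNwf)\leq\cf(\pi)\leq\non(\SNwf)$ needs the $\SNwf$ version, which your argument establishes directly. (A trivial point worth tidying: if $f(n)=0$ for some $n$ the interval $I_n$ is empty; replacing $f$ by $n\mapsto f(n)+1$, or taking $|I_n|=f(n)+1$, avoids this without affecting anything.)
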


\begin{theorem}[{\cite{P90},~\cite[Cor.~5.9]{CM23}}]\label{thm:precaliber}
Assume that $\theta\geq\aleph_1$ is regular. Let
$\Por_\pi=\la \Por_\xi,\Qnm_\xi:\, \xi<\pi\ra$ be a FS iteration of non-trivial precaliber $\theta$ posets such that $\cf(\pi)>\omega$ and $\Por_\pi$ has $\cf(\pi)$-cc,
and let $\lambda:=\scf(\pi)$. Then $\Por_\pi$ forces $\Cbf_{[\lambda]^{<\theta}}\leqT\Cbf_{\SNwf}^{\perp}$. In particular, whenever $\scf(\pi)\geq\theta$, $\Por_\pi$ forces $\cov(\SNwf)\leq\theta$ and $\scf(\pi)\leq\non(\SNwf)$.
\end{theorem}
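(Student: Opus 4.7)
By \autoref{lem:TukeyCtheta}~\ref{c:str}, it suffices to produce in $V_\pi$ a family $\{\dot X_i:i<\lambda\}$ of $\Por_\pi$-names for strong measure zero sets such that, for every $\Por_\pi$-name $\dot y$ for a real,
\[
\Vdash_{\Por_\pi}\ \bigl|\{i<\lambda:\dot y\notin\dot X_i\}\bigr|<\theta.
\]
The plan follows the template of \autoref{thm:cfcovSN}, but refines the bound from $\cf(\pi)$ down to the pair $(\theta,\scf(\pi))$ via the precaliber-$\theta$ hypothesis.

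\textbf{Step 1 (cofinal sequence and Cohen reals).} Using $\scf(\pi)=\lambda$, I fix an increasing sequence $\la\xi_i:i<\lambda\ra$ whose range is a final segment of $\pi$. Since $\cf(\pi)>\omega$ and the iteration is of finite support, for each $i<\lambda$ I can locate a cofinality-$\omega$ substage $\zeta_i\in(\xi_i,\xi_{i+1})$ at which a Cohen real $\dot c_i$ is added generically over $V_{\zeta_i}$.

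\textbf{Step 2 (Pawlikowski SMZ construction).} Following~\cite{P90}, I use $\dot c_i$ to build a SMZ set $\dot X_i$ containing $V_{\xi_i}\cap 2^\omega$. I fix a dominating family $\dot D_i\subseteq V_{\xi_i}\cap\baireincr$ and, for each $f\in\dot D_i$, use $\dot c_i$ to code a sequence $\dot\sigma^{i,f}\in(2^{<\omega})^\omega$ with $f\leq\hgt_{\dot\sigma^{i,f}}$ such that every $y\in V_{\xi_i}\cap 2^\omega$ lies in $[\dot\sigma^{i,f}]_\infty$ (this is a standard Cohen-genericity density argument). Setting
\[
\dot X_i:=\bigcap_{f\in\dot D_i}[\dot\sigma^{i,f}]_\infty,
\]
we obtain a set that is SMZ by \autoref{charSN} and contains $V_{\xi_i}\cap 2^\omega$.

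\textbf{Step 3 (pinching $\cf(\pi)$ down to $\theta$ via precaliber).} Fix $\dot y$ and suppose, for contradiction, that some $p\in\Por_\pi$ forces $|\{i:\dot y\notin\dot X_i\}|\geq\theta$. Pick indices $\la i_\alpha:\alpha<\theta\ra$ and conditions $q_\alpha\leq p$ with $q_\alpha\Vdash\dot y\notin\dot X_{i_\alpha}$. Since $\Por_\pi$ inherits precaliber $\theta$ from its iterands (a standard fact about FS iterations for $\theta$ regular uncountable), extract $J\in[\theta]^\theta$ for which $\{q_\alpha:\alpha\in J\}$ is centered. Combining centeredness with the $\cf(\pi)$-cc of $\Por_\pi$, the name $\dot y$ is equivalent to a $\Por_{\xi_{i_\alpha}}$-name for some $\alpha\in J$ (via the standard reduction of real names under a centered family of conditions), so $\dot y\in V_{\xi_{i_\alpha}}\cap 2^\omega\subseteq\dot X_{i_\alpha}$, contradicting $q_\alpha\Vdash\dot y\notin\dot X_{i_\alpha}$.

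\textbf{Main obstacle.} The crux lies in the interface between Steps 2 and 3: (i)~guaranteeing that the SMZ property of $\dot X_i$, established locally in $V_{\zeta_i+1}$, persists in $V_\pi$ (delicate since SMZ is not generally upward absolute and requires the $\ll$-domination structure to remain stable under further forcing), and (ii)~extracting from the centered family in Step 3 the specific reduction that binds $\dot y$ to some $V_{\xi_{i_\alpha}}$ with $\alpha\in J$, rather than merely to some $V_\eta$ with $\eta<\pi$ (which would only recover the weaker bound $\cf(\pi)$ of \autoref{thm:cfcovSN}). Both issues are to be handled by the careful coordination between the Pawlikowski coding and the precaliber reduction developed in~\cite[Cor.~5.9]{CM23}.
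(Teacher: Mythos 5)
The paper does not prove this result itself: it is cited from~\cite{P90} and~\cite[Cor.~5.9]{CM23}, so there is no in-paper argument to compare against line by line. Judged on its own merits, your proposal correctly identifies the right \emph{shape} of a Pawlikowski-style argument (cofinal sequence of stages, Cohen-coded SMZ sets, precaliber extraction), but the two ``main obstacles'' you flag are not merely technical wrinkles to be finessed --- as written, both steps are actually broken, and fixing them requires changing the construction.

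The first problem is in Step~2, and it is fatal to the single-stage design. You take $\dot D_i\subseteq V_{\xi_i}\cap\baireincr$ and code $\dot\sigma^{i,f}$ for $f\in\dot D_i$ using the Cohen real $\dot c_i$ added at a single stage $\zeta_i$. For $\dot X_i=\bigcap_{f\in\dot D_i}[\dot\sigma^{i,f}]_\infty$ to be strong measure zero \emph{in $V_\pi$}, \autoref{charSN} requires that the heights $\hgt_{\dot\sigma^{i,f}}$ range over a family dominating in $V_\pi$. A family $\dot D_i$ living inside one intermediate model $V_{\xi_i}$ never dominates $V_\pi$ (FS iterations of non-trivial ccc posets add unbounded reals cofinally often). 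So $\dot X_i$ will typically fail to be SMZ in the final model, and the construction collapses before Step~3 even starts. The remedy is structural: the Cohen reals producing the $\dot\sigma$'s for a fixed index $i$ must be distributed across a \emph{cofinal} set of stages (with a book-keeping device guaranteeing that their heights enumerate a dominating family in $V_\pi$), exactly as in the proof of \autoref{thm:Paddsn} where each $Z^0_\beta$ is cofinal in $\pi_0$. That in turn changes the combinatorics of which ordinal $\pi$ admits the construction; the theorem must implicitly rely on a partition of a final segment of $\pi$ into $\lambda$ cofinal pieces, which is not visible in your sketch.

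The second problem is the decisive step of your Step~3 and it is simply not a valid inference. You assert that centeredness of $\{q_\alpha:\alpha\in J\}$, together with the $\cf(\pi)$-cc, yields that $\dot y$ ``is equivalent to a $\Por_{\xi_{i_\alpha}}$-name for some $\alpha\in J$.'' There is no such reduction: the name $\dot y$ has a fixed countable support (by ccc and finite support), hence is a $\Por_\eta$-name for a fixed $\eta<\pi$ that has nothing to do with which conditions are compatible with which. Centeredness of a family of conditions does not move the support of an unrelated name downward. Indeed, if this step worked one would only recover a bound by $\cf(\pi)$ (the number of stages below $\eta$ in a cofinal sequence), which is \autoref{thm:cfcovSN}, not the sharper $\theta$-bound. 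The genuine use of precaliber must be aimed elsewhere: one must show that a $\theta$-sized family of conditions cannot simultaneously defeat the Cohen genericity of the $\dot\sigma$'s coding $\theta$-many of the $\dot X_i$'s, by extracting a centered subfamily and playing it against the density arguments that guarantee $\dot y\in[\dot\sigma]_\infty$. As set up, your proposal has no mechanism connecting the centered family back to the Cohen density arguments, so the contradiction never materializes. (Your preliminary claim that FS iterations of precaliber-$\theta$ posets have precaliber $\theta$ is fine for regular uncountable $\theta$, but is not where the difficulty lies.)
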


\section{Goodness support}\label{sec:good}

We generalize and formalize a notion of \emph{support} original from the first author and Switzer \cite[Subsec.~4.4]{BS22}, which we call \emph{goodness support}. They developed this framework for the iteration of Hechler forcing to force $\Cbf_{[\R]^{<\aleph_1}}\leqT \Cbf_\Nwf^\perp$ (i.e.\ the existence of a Rothberger family for $\Nwf$) without applying \autoref{goodpres}, by using that Hechler forcing is $\aLc(2^{\id+1},1)$-good (note that $\aLc(2^{\id+1},1)\leqT \Cbf_\Nwf^\perp$). We expand their results and show that any FS iteration of $\theta$-cc $\theta$-$\Rbf$-good posets forces, basically, $\Cbf_{[\R]^{<\theta}}\leqT \Rbf$, where $\theta$ is an uncountable regular cardinal and $\Rbf$ is a gPrs.

For a FS iteration, we define the \emph{support} of a nice name of a real as follows.


\begin{definition}\label{def:history}
Let $\Por$ be a poset and let $Z$ be a Polish space (in the ground model). A Polish space is the completion of some metric space $\la \gamma,d \ra$ where $\gamma\leq\omega$ is an ordinal, hence 
any member of $Z$ is determined by a Cauchy sequence from $\la\gamma,d\ra$. We say that $\dot y$ is a \emph{nice $\Por$-name of a member of $Z$} if it is a nice $\Por$-name of a Cauchy sequence from $\la\gamma,d\ra$, i.e.\ it is constructed from a sequence of maximal antichains $\la A_n:\, n<\omega\ra$ such that each member of $A_n$ decides the $n$-th term of the Cauchy sequence. When using the forcing relation, or in generic extensions, we identify such a nice name with the limit of the Cauchy sequence.

When dealing with nice $\Por$-names of Cauchy sequences, we fix a well-order $H_\chi$ for some large enough regular $\chi$ to pick a \emph{code} for every nice $\Por$-name of a member of $Z$. Precisely, such a code is a sequence $\Seq{A_n,h_n}{n<\omega}$ such that each $A_n$ is a maximal antichain in $\Por$ and $h_n$ is a function with domain $A_n$ such that each $p\in A_n$ forces that $h_n(p)$ is the $n$-th term of the Cauchy sequence with limit the name that is being encoded. Something like this is required to make sense of e.g.\ \autoref{def:gsupp}.

Let $\la \Por_\xi,\Qnm_\xi:\, \xi<\pi\ra$ be a FS iteration. For any nice $\Por_\pi$-name $\dot y$ of a member of $Z$, define $\supp(\dot y) := \bigcup_{n<\omega}\bigcup_{p\in A_n} \dom p$ (where each $A_n$ is as in the previous paragraph). 
\end{definition}

           

\begin{lemma}\label{lem:hist}
Let $\xi\leq\pi$ and let $\dot y$ be a nice $\Por_\pi$-name of a member of $Z$. Then $\dot y$ is a $\Por_\xi$-name iff $\supp(\dot y)\subseteq \xi$.
\end{lemma}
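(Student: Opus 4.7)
The statement is essentially a formal verification that ``nice $\Por_\xi$-name of a member of $Z$'' and ``nice $\Por_\pi$-name of a member of $Z$ whose support lies below $\xi$'' designate the same objects. The two ingredients I will use are standard facts about finite support iterations: (i) $\Por_\xi = \{p\in\Por_\pi:\, \dom(p)\subseteq\xi\}$ and $\Por_\xi\subsetdot\Por_\pi$, and (ii) if $\Por_\xi\subsetdot\Por_\pi$ and $A\subseteq\Por_\xi$ is an antichain, then $A$ is maximal in $\Por_\xi$ iff it is maximal in $\Por_\pi$ (the nontrivial direction uses that $p,q\in\Por_\xi$ are compatible in $\Por_\xi$ iff they are compatible in $\Por_\pi$, which is the defining feature of the complete suborder relation for FS iterations).

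For the direction ($\Rightarrow$), assume $\dot y$ is a nice $\Por_\xi$-name of a member of $Z$, coded by $\la A_n,h_n:\, n<\omega\ra$ with each $A_n\subseteq\Por_\xi$ a maximal antichain of $\Por_\xi$. By (ii) the same sequence is a valid code for $\dot y$ as a nice $\Por_\pi$-name; and by the construction of canonical codes out of the fixed well-order of $H_\chi$ (which is chosen once and for all, independently of whether we view $\dot y$ over $\Por_\xi$ or over $\Por_\pi$), the canonical $\Por_\pi$-code of $\dot y$ is this same sequence. Since every $p\in A_n$ lies in $\Por_\xi$, i.e.\ satisfies $\dom(p)\subseteq\xi$, I obtain $\supp(\dot y)=\bigcup_n\bigcup_{p\in A_n}\dom(p)\subseteq\xi$.

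For ($\Leftarrow$), suppose $\supp(\dot y)\subseteq\xi$ and let $\la A_n,h_n:\, n<\omega\ra$ be the canonical $\Por_\pi$-code of $\dot y$. Every $p\in A_n$ has $\dom(p)\subseteq\supp(\dot y)\subseteq\xi$, so by (i) it belongs to $\Por_\xi$; hence each $A_n\subseteq\Por_\xi$. By (ii), each $A_n$ is a maximal antichain of $\Por_\xi$, so the same sequence codes a nice $\Por_\xi$-name of a Cauchy sequence from $\la\gamma,d\ra$, whose value in any $\Por_\xi$-generic extension coincides with the value of $\dot y$ in the corresponding $\Por_\pi$-generic extension. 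Thus $\dot y$ may be identified with a nice $\Por_\xi$-name, finishing the equivalence.

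\textbf{Main obstacle.} The only genuinely delicate point is ensuring that the canonical code is insensitive to whether $\dot y$ is regarded as a $\Por_\xi$-name or a $\Por_\pi$-name; this is handled because the coding convention is fixed via a single well-order of $H_\chi$ and the candidate codes for $\dot y$ in the two settings are literally the same sequences in $H_\chi$. Everything else reduces to the complete-embedding facts (i)--(ii), which are routine for FS iterations.
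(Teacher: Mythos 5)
The paper states this lemma (and the following \autoref{smallhist}) without proof, treating both as immediate from \autoref{def:history}; so there is no author's argument to compare against, and this is a review of your proof on its own terms.

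Your ($\Leftarrow$) direction is fine: if $\supp(\dot y)\subseteq\xi$ then every condition in each $A_n$ of the canonical $\Por_\pi$-code lies in $\Por_\xi$, and since $\Por_\xi\subsetdot\Por_\pi$ in a FS iteration, an antichain $A\subseteq\Por_\xi$ is maximal in $\Por_\xi$ iff it is maximal in $\Por_\pi$. Hence $\la A_n,h_n\ra$ also serves as a $\Por_\xi$-code, and $\dot y$ can be identified with the corresponding nice $\Por_\xi$-name, exactly as you argue.

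The ($\Rightarrow$) direction, however, has a gap at the very spot you flag as delicate, and the justification you offer for it is incorrect. You assert that ``the candidate codes for $\dot y$ in the two settings are literally the same sequences in $H_\chi$.'' They are not: a $\Por_\pi$-code for $\dot y$ is \emph{any} sequence of maximal antichains of $\Por_\pi$ that decide the successive terms of the Cauchy sequence, together with the corresponding decision functions, and once $\xi<\pi$ this family is strictly larger than the family of $\Por_\xi$-codes. For instance, replace a single member $p$ of some $A_n\subseteq\Por_\xi$ by a maximal antichain below $p$ whose conditions have nontrivial coordinates in $[\xi,\pi)$; each such refinement still decides the $n$-th term (it extends $p$), so it produces a perfectly valid $\Por_\pi$-code whose support is \emph{not} contained in $\xi$. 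Consequently the $H_\chi$-least $\Por_\pi$-code of a nice $\Por_\xi$-name need not be a $\Por_\xi$-code, and your conclusion $\supp(\dot y)\subseteq\xi$ does not follow from the argument you gave. To repair this one must build coherence into the coding convention: for example, select the canonical code for $\dot y$ by recursion on $\xi\leq\pi$, so that whenever $\dot y$ admits a $\Por_\xi$-code the chosen $\Por_\pi$-code is one of those; or well-order codes first by the supremum of their support and only then by the fixed $H_\chi$-order. This is a mild stipulation and surely what the authors intend by fixing a well-order ``to pick a code,'' but your proof does not actually impose it, and without it the forward implication is not automatic.
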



For this whole section, fix an uncountable regular cardinal $\theta$, a Polish space $Z$ and a FS iteration $\la \Por_\xi,\Qnm_\xi:\, \xi<\pi\ra$ of $\theta$-cc posets. 
In this context, we obtain:

\begin{lemma}\label{smallhist}
    For any nice $\Por_\pi$-name $\dot y$ of a member of $Z$, $|\supp(\dot y)| < \theta$.
\end{lemma}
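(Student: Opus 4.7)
The plan is to show that each piece of the union defining $\supp(\dot y)$ is small, and then use the uncountable regularity of $\theta$ to bound the total size.

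First, I would argue that $\Por_\pi$ itself has the $\theta$-cc. This is the standard fact that finite support iterations of $\theta$-cc posets preserve the $\theta$-cc whenever $\theta$ is regular uncountable (the usual $\Delta$-system argument on the finite supports goes through). This is the only slightly delicate step, but it is a classical result that the authors are surely treating as background.

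Once we have $\theta$-cc of $\Por_\pi$, the argument is a size-counting chain. Let $\Seq{A_n,h_n}{n<\omega}$ be a code for $\dot y$ as in \autoref{def:history}, so each $A_n$ is a maximal antichain in $\Por_\pi$. By the $\theta$-cc, $|A_n|<\theta$ for every $n<\omega$. Because the iteration has finite support, $\dom p$ is finite for every $p\in \Por_\pi$, so
\[\Big|\bigcup_{p\in A_n}\dom p\Big|\leq \aleph_0\cdot|A_n|<\theta,\]
using that $\theta>\aleph_0$.

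Finally, by definition $\supp(\dot y)=\bigcup_{n<\omega}\bigcup_{p\in A_n}\dom p$, which is a countable union of sets of size ${<}\theta$. Since $\theta$ is regular and uncountable, such a union has size ${<}\theta$, giving $|\supp(\dot y)|<\theta$ as required. The main (and essentially only) obstacle is justifying the $\theta$-cc of $\Por_\pi$; beyond that the proof is a bookkeeping exercise on finite supports and cardinal arithmetic.
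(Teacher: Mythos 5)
Your proof is correct and is exactly the argument the paper has in mind: the paper omits the proof of this lemma entirely, treating it as routine. Your decomposition — $\theta$-cc of the FS iteration via the $\Delta$-system lemma (valid since $\theta$ is regular uncountable and supports are finite), then a countable union of ${<}\theta$-size antichains with finite domains, bounded using regularity and uncountability of $\theta$ — is the standard and intended argument.
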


For the rest of this section, fix a gPrs $\Rbf=\la X,Y,\sqsubset\ra$ where $Y=\bigcup_{e\in\Omega}Y_e$ is a union of non-empty analytic subsets of $Z$. We also fix a continuous surjection $f_e\colon \omega^\omega\to Y_e$ for each $e\in\Omega$.

We now aim to define the \emph{$\Rbf$-goodness support} of a name of a real in $Y$. For this, we need a couple of preliminary lemmata.

\begin{definition} Let $\Pbb$ be a forcing notion and let $\dot{z}$ be a $\Pbb$-name for a real in $\omega^\omega$. A  pair $(\bar p,g)$ is called an \textit{interpretation of $\dot{z}$ in $\Pbb$} if $g\in\omega^{\omega}$ and $\bar p=\la p_k:\, k<\omega\ra$ is a decreasing sequence of conditions in $\Por$ such that  $p_k\Vdash \dot{z}\frestr k=g\frestr k$ for any $k<\omega$.
\end{definition}

\begin{lemma}[{\cite[Lemma~4.9]{CM}}]\label{intrlem}
Assume that $\Pbb$ is a poset, $e\in \Omega$, $f \colon\omega^\omega\to Y_e$ is a continuous function, $\dot{z}$ is a $\Pbb$-name for a real in $\omega^\omega$ and $(\bar p,g)$ is an interpretation of $\dot{z}$ in $\Pbb$. If $x\in X$, $n<\omega$ and $\neg(x \sqsubset_n f(g))$, then there is a $k<\omega$ such that $p_{k}\Vdash \neg(x \sqsubset_{n} f(\dot{z}))$.
\end{lemma}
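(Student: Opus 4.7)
The plan is purely topological, leveraging the two key features built into the setup: $\sqsubset_n$ is closed in $X\times Z$ by \autoref{def:Prs}~\ref{def:Prsc}, and $f\colon\omega^\omega\to Y_e$ is continuous.

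First, I would pass to the horizontal section $S_x := \set{y\in Z}{x\sqsubset_n y}$, which is closed in $Z$. The hypothesis $\neg(x\sqsubset_n f(g))$ then says that $f(g)$ lies in the open complement $U := Z\menos S_x$. By continuity of $f$, the preimage $W := f^{-1}[U]$ is an open neighborhood of $g$ in $\omega^\omega$. Since the cylinders $\set{[g\frestr k]}{k<\omega}$ form a neighborhood basis of $g$ in the Baire space, I would pick some $k<\omega$ with $[g\frestr k]\subseteq W$. This means
\[\forall\, z\in \omega^\omega\colon g\frestr k \subseteq z \imp \neg(x\sqsubset_n f(z)),\]
a statement in the ground model about Borel codes (a clopen cylinder, an open set $U$ coded by the closed set $S_x$, and a continuous function $f$), so it transfers absolutely to any generic extension.

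Finally, by the definition of interpretation, $p_k\Vdash \dot z\frestr k = g\frestr k$, equivalently $p_k\Vdash \dot z\in [g\frestr k]$. Combining this with the absolute statement of the previous paragraph, $p_k\Vdash f(\dot z)\in U$, i.e.\ $p_k\Vdash \neg(x\sqsubset_n f(\dot z))$, which is exactly what is required. There is no substantial obstacle here; the only delicate point is the absoluteness observation, which is routine because $\sqsubset_n$, $f$, and the cylinder $[g\frestr k]$ all admit ground-model codes, and the key hypothesis ``$\sqsubset_n$ is closed'' is exactly what makes $U$ open and hence the argument robust under forcing.
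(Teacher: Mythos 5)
Your proof is correct and follows the standard topological argument for this type of lemma: closedness of the horizontal section $S_x$, continuity of $f$ pulling back an open neighborhood of $g$, a cylinder $[g\frestr k]\subseteq W$, and $\Pi^1_1$-absoluteness to transfer the ground-model inclusion to the extension where $p_k$ forces $\dot z \in [g\frestr k]$. The paper itself only cites \cite[Lemma~4.9]{CM} without reproducing the argument, and your approach is exactly the expected one there.
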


\begin{lemma}\label{limitgoodlem}
Let $\delta\leq\pi$ be a limit ordinal, $\la \gamma_\eta:\, \eta<\cf(\delta)\ra$ increasing and cofinal in $\delta$, and let $\dot y$ be a nice $\Por_\delta$-name of a member of $Y$.\footnote{I.e.\ a nice name of a member of $Z$ which is forced to be in $Y$.} Then there is a sequence $\la W_\eta:\, \eta<\cf(\delta)\ra$ such that each $W_\eta$ is a non-empty set of nice $\Por_{\gamma_\eta}$-names of members of $Y$, $|W_\eta|<\theta$, and $\Por_\delta$ forces:
\begin{center}
    for any $\xi<\delta$ and $x\in X\cap V_\xi$, 
    if $x$ is $\Rbf$-unbounded over $W_\eta$ for all $\eta$ in some cofinal subset of $\cf(\delta)$, then $\neg(x \sqsubset \dot y)$.
\end{center}
\end{lemma}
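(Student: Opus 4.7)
The strategy is to proceed in three stages: first reduce $\dot y$ to a name for a real in $\baire$ using the decomposition $Y=\bigcup_{e\in\Omega}Y_e$, then produce a bounded pool of ``$\Por_{\gamma_\eta}$-interpretations'' of this name at each stage, and finally derive the conclusion via the interpretation lemma \autoref{intrlem}.

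\emph{Reduction.} Since $\dot y$ is forced into $Y=\bigcup_{e\in\Omega}Y_e$ and each $f_e\colon\baire\to Y_e$ is continuous and onto, a density argument shows: below any condition there is an extension that both decides $\dot y\in Y_e$ for some $e\in\Omega$ and supplies, via the continuous surjection $f_e$, a nice $\Por_\delta$-name $\dot z$ for a real in $\baire$ with $\dot y=f_e(\dot z)$. Using the $\theta$-cc of $\Por_\delta$ I extract a maximal antichain $A\subseteq\Por_\delta$ of size ${<}\theta$ together with maps $p\mapsto e_p$ and $p\mapsto \dot z_p$ such that $p\Vdash \dot y=f_{e_p}(\dot z_p)$ for every $p\in A$. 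By \autoref{smallhist}, $|\supp(\dot z_p)|<\theta$ for each such $p$.

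\emph{Construction of $W_\eta$.} Fix $\eta<\cf(\delta)$. In $V^{\Por_{\gamma_\eta}}$, the quotient $\Por_\delta/\dot G_{\gamma_\eta}$ is $\theta$-cc (since $\theta$-cc is preserved under intermediate extensions), and every condition $q$ in it below $p$ admits a decreasing sequence $\bar r\leq q$ with $r_k$ deciding $\dot z_p\restriction k$; this yields an interpretation $(\bar r,g)$ of $\dot z_p$ in the sense of \autoref{intrlem}, where $g\in V^{\Por_{\gamma_\eta}}$. I would collect canonical nice $\Por_{\gamma_\eta}$-names for these interpretation reals --- enough to cover every possible starting condition $q$ below $p$ --- into a set $G^p_\eta$, and set $W_\eta:=\{f_{e_p}(\dot g):p\in A,\,\dot g\in G^p_\eta\}$, which by the size of $A$ and $G^p_\eta$ has size ${<}\theta$ in $V$. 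Keeping $|G^p_\eta|<\theta$ requires exploiting that $\dot z_p$ has support of size ${<}\theta$, so that, modulo $\Por_{\gamma_\eta}$, it depends on only a small fragment of $\Por_\delta$ above $\gamma_\eta$, together with the $\theta$-cc of the quotient.

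\emph{Verification.} Suppose, for contradiction, that in $V[G]$ there are $\xi<\delta$ and $x\in X\cap V_\xi$ such that $x$ is $\Rbf$-unbounded over $W_\eta$ for every $\eta$ in some cofinal $C\subseteq\cf(\delta)$, and yet $x\sqsubset\dot y[G]$. Pick $n<\omega$ and $q\in G$ with $q\Vdash x\sqsubset_n\dot y$, and pick $p\in A\cap G$ with $q\leq p$. Choose any $\eta\in C$ with $\gamma_\eta>\xi$, so $x\in V[G_{\gamma_\eta}]$. By construction of $G^p_\eta$, in $V[G_{\gamma_\eta}]$ some $\dot g\in G^p_\eta$ is witnessed as an interpretation of $\dot z_p$ below $q/G_{\gamma_\eta}$ in the quotient, say by the sequence $\bar r$. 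Since $q$ forces $x\sqsubset_n f_{e_p}(\dot z_p)$, no $r_k\leq q/G_{\gamma_\eta}$ can force the negation, so the contrapositive of \autoref{intrlem} (applied in $V[G_{\gamma_\eta}]$ with $f=f_{e_p}$) gives $x\sqsubset_n f_{e_p}(\dot g[G_{\gamma_\eta}])$. Hence $f_{e_p}(\dot g)\in W_\eta$ witnesses that $x$ is \emph{not} $\Rbf$-unbounded over $W_\eta$, contradicting the assumption.

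\emph{Main obstacle.} The delicate point is the second stage: controlling $|G^p_\eta|<\theta$ while still providing an interpretation below every potential witness $q\leq p$. A single canonical interpretation below $p$ itself does not suffice because the specific $q$ forcing $x\sqsubset_n\dot y$ may be incompatible with the canonical sequence; the resolution hinges on a careful canonical bookkeeping using the $\theta$-cc of both $\Por_\delta$ and the quotient $\Por_\delta/\Por_{\gamma_\eta}$ together with the support bound from \autoref{smallhist}.
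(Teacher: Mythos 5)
There is a genuine gap, and you have actually identified exactly where it lies, but the proposed resolution does not work, and the structural choice that creates the gap is the divergence from the paper.

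\textbf{Where your plan breaks.} Your reduction step (decomposing $\dot y$ via an antichain $A$ into names $\dot z_p$ with $\dot y = f_{e_p}(\dot z_p)$) is essentially the paper's first paragraph and is fine. The problem is the demand in the second stage that $G^p_\eta$ supply, for \emph{every} $q\leq p$ that may later land in $G$, an interpretation of $\dot z_p$ \emph{starting below} $q/G_{\gamma_\eta}$. This is required because your verification runs directly in the extension: you need, for the specific $q\in G$ witnessing $x\sqsubset_n\dot y$, a decreasing sequence $\bar r$ with $r_0\leq q/G_{\gamma_\eta}$, so that the contrapositive of \autoref{intrlem} can fire (no $r_k$ can force $\neg(x\sqsubset_n\dot y)$). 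But conditions below $p$ can form an antichain of size $\geq\theta$ (indeed of size $\cfrak$), and the restriction to $\supp(\dot z_p)$ does not control this: even with a small support, the poset restricted to that support can have size continuum (random forcing, say). The $\theta$-cc bounds antichains, not the set of conditions, and a maximal antichain $A''$ below $p$ of size ${<}\theta$ only gives interpretations below each $a\in A''$; for the actual $q\in G$, the interpretation below the $a\in A''\cap G$ need not have its terms compatible with $q/G_{\gamma_\eta}$ (the interpretation sequence is chosen before $G$ and can wander outside the filter). So I do not see how ``careful canonical bookkeeping'' closes the gap; the obstacle you flag is a real obstruction to this verification scheme.

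\textbf{How the paper avoids it.} The paper takes $W_\eta$ to consist of a \emph{single} interpretation per $\eta$ (per antichain piece of $A$), chosen from the trivial condition and with no dependence on a starting condition. It can afford this because the verification is not a direct contradiction in the extension but a density argument in the ground model: fix $p$ forcing the antecedent and fix $n$; choose $\eta_0$ with $\gamma_{\eta_0}$ large enough, extend $p$ to $p'$ deciding $\neg(\dot x\sqsubset\dot w_{\eta_0})$, pass to $p_0 = p'{\restriction}\gamma_{\eta_0}$, and in the $\Por_{\gamma_{\eta_0}}$-extension apply \autoref{intrlem} (the forward direction, not the contrapositive) to obtain \emph{some} condition $p^{\eta_0}_k$ in the quotient forcing $\neg(x\sqsubset_n\dot y)$; pulling back gives one $q\leq p_0\leq p$ forcing $\neg(\dot x\sqsubset_n\dot y)$. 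Crucially, this $q$ need not be compatible with any other extension of $p$; one such $q$ already shows $p\not\Vdash x\sqsubset_n\dot y$, which is enough to force the implication. This is the key idea your proposal is missing: by arguing via density below $p$ rather than inside a fixed generic filter containing a fixed $q$, the interpretation sequence does not have to sit below anything, and the need for a large $G^p_\eta$ evaporates.
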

\begin{proof}
Choose some maximal antichain $A$ in $\Por_\delta$ and some $e'\colon A\to\Omega$ ($e_r:=e'(r)$) such that $r\Vdash_\delta \dot y\in Y_{e_r}$ for all $r\in A$. In addition, pick a nice $\Por_\delta$-name $\dot{y}^r$ of a member of $Y_{e_r}$ such that $r\Vdash \dot y^r=\dot y$.

It is enough to obtain the desired $W^r_\eta$ for each $\dot y^r$ because $W_\eta=\bigcup_{r\in A}W^r_\eta$ works. So we assume wlog that $\dot y$ is a nice $\Por_\delta$-name of a member of some $Y_e$. Choose a nice $\Por_\delta$-name $\dot z$ of a member of $\omega^\omega$ such that $\Por_\delta$ forces $f_e(\dot z)=\dot y$.

Fix $\eta<\cf(\delta)$. Choose $\Por_{\gamma_\eta}$-names $\dot{\bar{p}}^\eta$ and $\dot z^\eta$ such that $\Por_{\gamma_\eta}$ forces that $(\dot{\bar{p}}^\eta,\dot z^\eta)$ is an interpretation of $\dot z$ in $\Por_\delta/\Por_{\gamma_\eta}$. Let $\dot w_\eta$ be a nice $\Por_{\gamma_\eta}$-name of $f_e(\dot z^\eta)$ and $W_\eta:=\{\dot w_\eta\}$.

We show that $\la W_\eta:\, \eta<\cf(\delta)\ra$ is as required. Let $\xi<\delta$, $n<\omega$, $p\in\Por_\delta$, let $\dot x$ be a nice $\Por_\xi$-name of a member of $X$, and assume that $p$ forces that $\neg(\dot x \sqsubset w_\eta)$ for $\eta$ in some cofinal subset of $\cf(\delta)$. It is enough to show that there is some $q\leq p$ forcing $\neg(\dot x \sqsubset_n \dot y)$.

Pick some $\xi'<\delta$ such that $\xi'\geq \xi$ and $p\in\Por_{\xi'}$, and find some $p'\leq p$ in $\Por_\delta$ and some $\eta_0 < \cf(\delta)$ such that $\xi_0:=\gamma_{\eta_0}\geq \xi'$ and $p'$ forces $\neg(\dot x \sqsubset \dot w_{\eta_0})$. Then $p_0:=p'\frestr \xi_0$ forces the same with respect to $\Por_{\xi_0}$ (by absoluteness of $\sqsubset$ because both $\dot x$ and $w_{\eta_0}$ are $\Por_{\xi_0}$-names) and $p_0\leq p$ in $\Por_{\xi_0}$.


Let $G$ be $\Por_{\xi_0}$-generic over $V$ with $p_0\in G$, and work in $V[G]$. Then $\neg(x \sqsubset w_{\eta_0})$, so $\neg(x \sqsubset_n w_{\eta_0})$. By \autoref{intrlem}, there is some $k<\omega$ such that $p^{\eta_0}_k\Vdash_{\Por_\delta/\Por_{\xi_0}}\neg( x \sqsubset_n \dot y)$. Back in $V$, we can obtain some $q\leq p_0$ in $\Por_\delta$ forcing $\neg(\dot x \sqsubset_n \dot y)$.
\end{proof}

We are now ready to define the \emph{$\Rbf$-goodness support} $\gsupp_\Rbf(\dot y)$ for a nice name $\dot y$ of a member of $Y$. Here, we require that the iterands are (forced to be) $\theta$-$\Rbf$-good. The goodness support $\gsupp_\Rbf(\dot y)$ not only contains the support of the name, but also the support of the names involved in obtaining the \emph{good} set for $\dot y$. In more detail, recall that goodness indicates that we can choose some non-empty $W\subseteq Y$ (in the ground model) of size ${<}\theta$ such that any $x\in X$ $\Rbf$-unbounded over $W$ is forced to be unbounded by $\dot y$, so the goodness support of $\dot y$ must contain the goodness support of all (nice names of the) members of $W$. With the goodness support, we can find such $W$'s in intermediate steps with stronger features, where the support (and even goodness support) of their members are contained in $\gsupp_\Rbf(\dot y)$. This is stated concretely in \autoref{itegsupp}.


\begin{definition}\label{def:gsupp}
Assume (in addition) that $\Por_\xi$ forces that $\Qnm_\xi$ is $\theta$-$\Rbf$-good for all $\xi<\pi$. We define the \emph{$\Rbf$-goodness support $\gsupp_\Rbf(\dot y)$} for any nice $\Por_\xi$-name $\dot y$ of a member of $Y$ by recursion on $\xi\leq\pi$:
\begin{enumerate}[label=\rm (\roman*)]
    \item when $\xi=0$, $\gsupp_\Rbf(\dot y)=\emptyset$;
    \item\label{gsuppsuc} when $\xi=\zeta+1$, pick a non-empty set $W_{\dot y}$ of nice $\Por_\zeta$-names of members of $Y$ such that $|W_{\dot y}|<\theta$ and the members of $W_{\dot y}$ witness the $\Rbf$-goodness of $\dot y$ in $V_\zeta$ (with respect to $\Qnm_\zeta$), and define
    \[\gsupp_\Rbf(\dot y):=\supp(\dot y)\cup\bigcup_{\tau\in W_{\dot y}}\gsupp_\Rbf(\tau);\]
    
    \item\label{gsuppsmallcof} when $\xi$ is limit and $\cf(\xi)<\theta$, pick some increasing cofinal sequence $\la \gamma_\eta:\, \eta<\cf(\xi)\ra$ in $\xi$, for each $\eta<\cf(\xi)$ choose some non-empty set $W_{\dot y,\eta}$ of nice $\Por_{\gamma_\eta}$-names of members of $Y$ as in \autoref{limitgoodlem}, and define
    \[\gsupp_\Rbf(\dot y):=\supp(\dot y)\cup\bigcup_{\eta<\cf(\xi)}\bigcup_{\tau\in W_{\dot y,\eta}}\gsupp_\Rbf(\tau);\]
    \item when $\cf(\xi)\geq\theta$, $\dot y$ is a nice $\Por_\zeta$-name for some $\zeta<\xi$, so $\gsupp_\Rbf(\dot y)$ is already defined.
\end{enumerate}
When $\Rbf$ is understood, we write $\gsupp(\dot y)$ for the goodness support. 
It is clear that $\supp(\dot y)\subseteq \gsupp(\dot y)\subseteq\xi$.
\end{definition}

\begin{lemma}\label{smallgsupp}
Under the assumptions in \autoref{def:gsupp}, for any $\xi\leq\pi$ and for any nice $\Por_\xi$-name $\dot y$ of a member of $Y$, $|\gsupp(\dot y)|<\theta$.
\end{lemma}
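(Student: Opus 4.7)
The proof is by induction on $\xi \leq \pi$, following the four clauses of \autoref{def:gsupp}. The base case $\xi = 0$ is trivial since $\gsupp(\dot y) = \emptyset$. For all cases, note that since $\theta$ is regular uncountable, a union of strictly fewer than $\theta$ sets, each of cardinality strictly less than $\theta$, again has cardinality strictly less than $\theta$; this is the only arithmetic fact we need.

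For the successor case $\xi = \zeta + 1$, first apply \autoref{smallhist} to get $|\supp(\dot y)| < \theta$. By the clause~\ref{gsuppsuc} of \autoref{def:gsupp}, we have $|W_{\dot y}| < \theta$, and by the inductive hypothesis each $\tau \in W_{\dot y}$ satisfies $|\gsupp(\tau)| < \theta$. Hence
\[\gsupp(\dot y) = \supp(\dot y) \cup \bigcup_{\tau \in W_{\dot y}} \gsupp(\tau)\]
is a union of ${<}\theta$ many sets of cardinality ${<}\theta$, so $|\gsupp(\dot y)| < \theta$. The limit case with $\cf(\xi) < \theta$ is handled the same way: the double union over $\eta < \cf(\xi)$ and $\tau \in W_{\dot y,\eta}$ (each of size ${<}\theta$) of sets $\gsupp(\tau)$ (each of size ${<}\theta$ by the inductive hypothesis) is still of cardinality ${<}\theta$ by the regularity of $\theta$, and $|\supp(\dot y)| < \theta$ by \autoref{smallhist}.

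The step requiring most care is the limit case with $\cf(\xi) \geq \theta$: we must justify the assertion in clause (iv) that $\dot y$ is already a $\Por_\zeta$-name for some $\zeta < \xi$, so that $\gsupp(\dot y)$ has been defined at an earlier stage. The nice name $\dot y$ is built from countably many maximal antichains $\Seq{A_n}{n<\omega}$ in $\Por_\xi$; since $\Por_\xi$ is $\theta$-cc, each $|A_n| < \theta$, so $\supp(\dot y) = \bigcup_{n<\omega}\bigcup_{p \in A_n} \dom p$ is a union of ${<}\theta$ many finite sets, giving $|\supp(\dot y)| < \theta$ (this is exactly \autoref{smallhist}). Now because $\cf(\xi) \geq \theta$ and $|\supp(\dot y)| < \theta$, the supremum $\zeta := \sup \supp(\dot y)$ is strictly less than $\xi$; by \autoref{lem:hist}, $\dot y$ is a nice $\Por_\zeta$-name, so the inductive hypothesis at $\zeta < \xi$ gives $|\gsupp(\dot y)| < \theta$.

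The only potential obstacle is verifying consistently that each $W_{\dot y,\eta}$ in clause~\ref{gsuppsmallcof} contains only names whose goodness supports are defined at strictly earlier stages. But by construction in \autoref{limitgoodlem}, $W_{\dot y,\eta}$ consists of nice $\Por_{\gamma_\eta}$-names with $\gamma_\eta < \xi$, so the inductive hypothesis applies directly, and we are done.
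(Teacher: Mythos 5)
Your proof is correct and takes the same route as the paper, which disposes of this lemma with the single line ``Proceed by induction on $\xi$''; you have simply spelled out the four cases of \autoref{def:gsupp} and the regularity-of-$\theta$ counting that makes each step go through. The only micro-nit is in the $\cf(\xi)\geq\theta$ case: since the supremum of $\supp(\dot y)$ may be attained, to guarantee $\supp(\dot y)\subseteq\zeta$ as \autoref{lem:hist} requires one should take $\zeta:=\sup\supp(\dot y)+1$, which is still below the limit ordinal $\xi$.
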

\begin{proof}
Proceed by induction on $\xi$.
\end{proof}

The main use of the goodness support is illustrated in the following results, which generalize~\cite[Fact~4.7]{BS22}.

\begin{theorem}\label{itegsupp}
Let $\xi\leq\xi'\leq\delta\leq \pi$ and let $\dot y$ be a nice $\Por_\delta$-name of a member of $Y$. If $\gsupp(\dot y)\cap[\xi,\xi')=\emptyset$ then there is a non-empty set $W^*$ of nice $\Por_\xi$-names of members of $Y$ with $|W^*|<\theta$ such that $\bigcup_{\tau\in W^*}\gsupp(\tau)\subseteq\gsupp(\dot y)$ and $\Por_{\xi'}$ forces:
\begin{center}
    For any $x\in X$ $\Rbf$-unbounded over $W^*$, $\Por_\delta/\Por_{\xi'}$ forces $\neg(x\sqsubset \dot y)$.
\end{center}
\end{theorem}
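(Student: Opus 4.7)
The plan is to prove the statement by induction on $\delta \leq \pi$, establishing it simultaneously for all $\xi \leq \xi' \leq \delta$. First, I would dispose of the case $\gsupp(\dot y) \subseteq \xi$: then $\supp(\dot y) \subseteq \xi$ as well, so $\dot y$ is already a nice $\Por_\xi$-name and $W^* := \{\dot y\}$ trivially does the job (``$x$ $\Rbf$-unbounded over $\{\dot y\}$'' in $V_{\xi'}$ reads as $\neg(x \sqsubset \dot y)$, which, being a Borel statement about $x$ and the fixed $\Por_\xi$-name $\dot y$, is absolute to $V_\delta$). This immediately settles every situation with $\xi' = \delta$, since $\gsupp(\dot y) \subseteq \delta$ together with $\gsupp(\dot y) \cap [\xi, \delta) = \emptyset$ forces $\gsupp(\dot y) \subseteq \xi$. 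Henceforth I may assume $\xi' < \delta$ and $\gsupp(\dot y) \not\subseteq \xi$, and split by how $\gsupp(\dot y)$ was defined.

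For the successor step $\delta = \zeta + 1$ (so $\xi' \leq \zeta$), I would invoke the set $W_{\dot y}$ chosen in clause~\ref{gsuppsuc} of \autoref{def:gsupp}. Each $\tau \in W_{\dot y}$ is a nice $\Por_\zeta$-name with $\gsupp(\tau) \subseteq \gsupp(\dot y)$, whence $\gsupp(\tau) \cap [\xi, \xi') = \emptyset$, and the induction hypothesis (applied at $\zeta$) furnishes a set $W^*_\tau$ of nice $\Por_\xi$-names of size ${<}\theta$ with $\bigcup_{\sigma \in W^*_\tau} \gsupp(\sigma) \subseteq \gsupp(\tau)$. Setting $W^* := \bigcup_{\tau \in W_{\dot y}} W^*_\tau$ preserves the cardinality bound by regularity of $\theta$ and $|W_{\dot y}| < \theta$, and the goodness-support inclusion is immediate. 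If $x \in X$ is $\Rbf$-unbounded over $W^*$ in $V_{\xi'}$, the induction hypothesis forces $\neg(x \sqsubset \tau)$ at the $V_\zeta$-level for each $\tau \in W_{\dot y}$; hence in $V_\zeta$ the real $x$ is $\Rbf$-unbounded over $W_{\dot y}$, and the $\Rbf$-goodness of $\Qnm_\zeta$ witnessed by $W_{\dot y}$ forces $\neg(x \sqsubset \dot y)$ in $V_\delta$, as required.

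For the limit step with $\cf(\delta) < \theta$ (and $\xi' < \delta$), I would work with the cofinal sequence $\la \gamma_\eta : \eta < \cf(\delta) \ra$ and the sets $W_{\dot y, \eta}$ from clause~\ref{gsuppsmallcof} of \autoref{def:gsupp}. Pick $\eta_0 < \cf(\delta)$ with $\gamma_\eta \geq \xi'$ for all $\eta \geq \eta_0$; for each such $\eta$ and each $\tau \in W_{\dot y, \eta}$, the induction hypothesis at level $\gamma_\eta < \delta$ yields a set $W^*_{\tau,\eta}$, and I take
\[
W^* := \bigcup_{\eta_0 \leq \eta < \cf(\delta)} \; \bigcup_{\tau \in W_{\dot y, \eta}} W^*_{\tau,\eta},
\]
of size ${<}\theta$ because $\cf(\delta) < \theta$ and $\theta$ is regular uncountable. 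If $x$ is $\Rbf$-unbounded over $W^*$ in $V_{\xi'}$, then for each $\eta \geq \eta_0$ and each $\tau \in W_{\dot y, \eta}$ the induction hypothesis forces $\neg(x \sqsubset \tau)$ at the $V_{\gamma_\eta}$-level, so $x$ is $\Rbf$-unbounded over $W_{\dot y, \eta}$ in $V_{\gamma_\eta}$ (and hence in $V_\delta$, since the names in $W_{\dot y, \eta}$ are interpreted identically there). As this holds cofinally in $\cf(\delta)$, \autoref{limitgoodlem} delivers $\neg(x \sqsubset \dot y)$ in $V_\delta$. The remaining subcase $\cf(\delta) \geq \theta$ collapses to a strictly smaller ordinal: by \autoref{smallgsupp} combined with the final clause of \autoref{def:gsupp}, $\dot y$ is a nice $\Por_\zeta$-name for some $\zeta < \delta$ which may be chosen with $\zeta \geq \xi'$, so the induction hypothesis at $\zeta$ applies directly.

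The main delicate point is the small-cofinality limit step. The cardinality bookkeeping is routine given regularity of $\theta$ and $\cf(\delta) < \theta$, but the real content is to marshal, for cofinally many $\eta < \cf(\delta)$, the $V_{\gamma_\eta}$-level unboundedness of $x$ over $W_{\dot y, \eta}$ obtained from the induction hypothesis, so that \autoref{limitgoodlem} can be invoked to conclude $\neg(x \sqsubset \dot y)$ in $V_\delta$. This is precisely what the $W_{\dot y, \eta}$ from the definition of $\gsupp$ were tailored for; everything else is a straightforward propagation of the induction hypothesis through the recursive definition of $\gsupp$.
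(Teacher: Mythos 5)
Your proof is correct and follows essentially the same inductive strategy as the paper's: handle the trivial case $\xi'=\delta$ (where $\dot y$ is already a $\Por_\xi$-name), then in the successor and small-cofinality limit cases pull back via the $W_{\dot y}$ (resp. $W_{\dot y,\eta}$) fixed in the definition of $\gsupp$, invoking goodness of $\Qnm_\zeta$ (resp. \autoref{limitgoodlem}), and reduce the case $\cf(\delta)\geq\theta$ to a smaller index. The only cosmetic difference is that you discard the indices $\eta<\eta_0$ with $\gamma_\eta<\xi'$ up front, whereas the paper keeps all $\eta$ and applies the induction hypothesis at $\min\{\xi,\gamma_\eta\}$ and $\min\{\xi',\gamma_\eta\}$; both versions feed \autoref{limitgoodlem} a cofinal set of indices and yield the same conclusion.
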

\begin{proof}
When $\delta=\xi'$ we have by \autoref{lem:hist} that $\dot y$ is a $\Por_\xi$-name, so $W^*=\{\dot y\}$ works. So we prove the theorem by induction on $\delta\leq\pi$ for $\xi\leq\xi'<\delta$. The case $\delta=0$ is vacuous (no $\xi'<0$).

Assume $\delta=\zeta+1$ and $\xi\leq\xi'<\delta$ (so $\xi'\leq\zeta$). Choose $W_{\dot y}$ as in \autoref{def:gsupp}~\ref{gsuppsuc}. By induction hypothesis, since each $\tau\in W_{\dot y}$ is a nice $\Por_\zeta$-name of a member of $Y$ and $\gsupp_\Rbf(\tau)\subseteq\gsupp_\Rbf(\dot y)$ is disjoint with $[\xi,\xi')$, we obtain a non-empty set $W^\tau$ of size ${<}\theta$ of $\Por_\xi$-names of members of $Y$ such that $\bigcup_{\sigma\in W^\tau}\gsupp(\sigma)\subseteq\gsupp(\tau)$ and $\Por_{\xi'}$ forces:
\begin{center}
    For any $x\in X$ $\Rbf$-unbounded over $W^\tau$, $\Por_\zeta/\Por_{\xi'}$ forces $\neg(x\sqsubset \tau)$.
\end{center}
Let $W^*:=\bigcup_{\tau\in W_{\dot y}}W^\tau$, which is as required: in the $\Por_{\xi'}$ extension, if $x\in X$ is $\Rbf$-unbounded over $W^*$, then $\Por_\zeta/\Por_{\xi'}$ forces that $x$ is $\Rbf$-unbounded over $W_{\dot y}$, so by goodness of $\Qnm_\zeta$, $\Por_\delta/\Por_{\xi'}$ forces $\neg(x\sqsubset \dot y)$. On the other hand, it is clear that $\bigcup_{\sigma\in W^*}\gsupp(\sigma)\subseteq\bigcup_{\tau\in W_{\dot y}}\gsupp(\tau)\subseteq\gsupp(\dot y)$.

Assume now that $\delta$ is limit, $\cf(\delta)<\theta$ and $\xi\leq\xi'<\delta$. Let $\la\gamma_\eta:\, \eta<\cf(\delta)\ra$ and $\la\dot W_{\dot y,\eta}:\, \eta<\cf(\delta)\ra $ be as in \autoref{def:gsupp}~\ref{gsuppsmallcof}. For each $\eta<\cf(\delta)$ and $\tau\in W_{\dot y,\eta}$, since $\gsupp(\tau)\subseteq \gsupp(\dot y)$, by induction hypothesis applied to $\xi_\eta:=\min\{\xi,\gamma_\eta\}$ and $\xi'_\eta:=\min\{\xi',\gamma_\eta\}$, we can find a non-empty set $W^\tau$ of size ${<}\theta$ of $\Por_{\xi_\eta}$-names of members of $Y$ such that $\bigcup_{\sigma\in W^\tau}\gsupp(\sigma)\subseteq\gsupp(\tau)$ and $\Por_{\xi'_\eta}$ forces:
\begin{center}
    For any $x\in X$ $\Rbf$-unbounded over $W^\tau$, $\Por_{\gamma_\eta}/\Por_{\xi'_\eta}$ forces $\neg(x\sqsubset \tau)$.
\end{center}
Then $W^*:=\bigcup_{\eta<\cf(\delta)}\bigcup_{\tau\in W_{\dot y, \eta}}W^\tau$ works. It is clear that $\gsupp(\sigma)\subseteq\gsupp(\dot y)$ for any $\sigma\in W^*$. Now, let $G_{\xi'}$ be $\Por_{\xi'}$-generic over $V$, and work in $V_{\xi'}$. Assume that $x\in X$ is $\Rbf$-unbounded over $W^*$ (after evaluating with $G_\xi=G_{\xi'}\cap\Por_\xi$). When $\gamma_\eta>\xi'$ we obtain $\xi_\eta=\xi$ and $\xi'_\eta=\xi'$, and since $x$ is $\Rbf$-unbounded over $W^\tau$ (after evaluating) for any $\tau\in W_{\dot y,\eta}$, we get that $\Por_{\gamma_\eta}/\Por_{\xi'}$ forces $\neg(x\sqsubset \tau)$, and the same is forced by $\Por_{\delta}/\Por_{\xi'}$. Therefore, by \autoref{limitgoodlem}, since $\Por_{\delta}/\Por_{\xi'}$ forces that $x$ is $\Rbf$-unbounded over $W_{\dot y,\eta}$ for $\eta$ in some cofinal segment of $\cf(\delta)$, we conclude that $\Por_{\delta}/\Por_{\xi'}$ forces $\neg(x\sqsubset \dot y)$.

In the case $\cf(\delta)\geq \theta$, pick some $\delta_0<\delta$ such that $\xi'<\delta_0$ and $\dot y$ is a nice $\Por_{\delta_0}$-name. The conclusion follows after applying the induction hypothesis to $\delta_0$.
\end{proof}

Although the following is well known, it follows directly from the previous result.

\begin{corollary}\label{FSIgoodness}
Any FS iteration of $\theta$-cc $\theta$-$\Rbf$-good posets is $\theta$-$\Rbf$-good.
\end{corollary}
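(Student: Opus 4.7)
The plan is to derive this as a direct consequence of the already-proved \autoref{itegsupp}. Let $\Por_\pi = \la \Por_\xi, \Qnm_\xi : \xi < \pi\ra$ be a FS iteration of $\theta$-cc $\theta$-$\Rbf$-good posets. Recall that FS iterations of $\theta$-cc posets (with $\theta$ uncountable regular) are $\theta$-cc, so the hypotheses of \autoref{def:gsupp} are met and goodness supports are defined; by \autoref{smallgsupp}, every goodness support has size ${<}\theta$.

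To verify that $\Por_\pi$ is $\theta$-$\Rbf$-good, let $\dot y$ be a $\Por_\pi$-name for a member of $Y$. Without loss of generality assume $\dot y$ is nice (if $\dot y$ is arbitrary, choose a maximal antichain deciding $\dot y$ and patch together nice names on the antichain, taking the union of the resulting $H$'s; since the antichain has size ${<}\theta$, the total $H$ will still be of size ${<}\theta$). Apply \autoref{itegsupp} with $\delta := \pi$ and $\xi := \xi' := 0$. The side condition $\gsupp(\dot y) \cap [0,0) = \emptyset$ is vacuous, so we obtain a non-empty family $W^*$ of nice $\Por_0$-names of members of $Y$ with $|W^*| < \theta$ such that $\Por_0$ forces:
\begin{center}
for any $x \in X$ that is $\Rbf$-unbounded over $W^*$, $\Por_\pi / \Por_0$ forces $\neg(x \sqsubset \dot y)$.
\end{center}

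Because $\Por_0$ is the trivial poset, a nice $\Por_0$-name of a member of $Y$ is (canonically identified with) a ground-model element of $Y$, and $\Por_\pi / \Por_0 = \Por_\pi$. Thus the set $H := \{\tau^{\emptyset} : \tau \in W^*\} \subseteq Y$ lies in the ground model, has $|H| < \theta$, and satisfies: whenever $x \in X$ is $\Rbf$-unbounded over $H$, then $\Vdash_{\Por_\pi} \neg(x \sqsubset \dot y)$. This is exactly \autoref{def:good}, so $\Por_\pi$ is $\theta$-$\Rbf$-good.

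I do not anticipate a real obstacle here, since all the work has been absorbed into \autoref{itegsupp}; the only delicate point is the identification of nice $\Por_0$-names with ground-model objects and the transparent observation that $\Por_\pi / \Por_0 \cong \Por_\pi$, so that the conclusion of \autoref{itegsupp} at $\xi = \xi' = 0$ literally matches the definition of $\theta$-$\Rbf$-goodness.
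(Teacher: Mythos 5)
Your proof is correct and follows exactly the paper's argument: apply \autoref{itegsupp} with $\xi=\xi'=0$ and $\delta=\pi$. The extra remarks you add (passing to nice names via an antichain of size ${<}\theta$, identifying nice $\Por_0$-names with ground-model elements of $Y$, and $\Por_\pi/\Por_0\cong\Por_\pi$) are sound housekeeping details that the paper leaves implicit.
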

\begin{proof}
Apply \autoref{itegsupp} to $\xi=\xi'=0$ and $\delta=\pi$ (the length of the iteration).
\end{proof}

As an application of \autoref{itegsupp}, we can force $\Cbf_{[\pi]^{<\theta}}\leqT \Rbf$ directly. Typically, to force such a statement, Cohen reals are added to force the Tukey connection, afterward the iteration of $\theta$-$\Rbf$-good posets is performed, and the Tukey connection is preserved by \autoref{goodpres}. But now, thanks to the following result, we do not need the first Cohen reals to force the Tukey connection. Instead, the Cohen reals that are added at the limit steps take care of this job.

\begin{theorem}\label{Comgood}
Let $\la \Por_\xi,\Qnm_\xi:\, \xi<\pi\ra$ be a FS iteration such that $\Por_\xi$ forces that $\Qnm_\xi$ is a non-trivial $\theta$-cc $\theta$-$\Rbf$-good poset. 
Let $\set{\gamma_\alpha}{\alpha<\delta}$ be an increasing enumeration of $0$ and all limit ordinals smaller than $\pi$ (note that $\gamma_\alpha=\omega\alpha$), and for $\alpha<\delta$ let $\dot c_\alpha$ be a $\Por_{\gamma_{\alpha+1}}$-name of a Cohen real in $X$ over $V_{\gamma_\alpha}$. 

If (in $V$) $I\subseteq\delta$ and $|I|\geq \theta$ then $\Por_\pi$ forces that $\set{\dot c_\alpha}{\alpha\in I}$ is $\theta$-$\Rbf$-unbounded. In particular, if $\pi\geq\theta$ then $\Cbf_{[\pi]^{<\theta}}\leqT\Rbf$, $\bfrak(\Rbf)\leq\theta$ and $|\pi|\leq\dfrak(\Rbf)$.
\end{theorem}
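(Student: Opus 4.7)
The plan is to combine the goodness-support machinery of \autoref{itegsupp} with the classical fact that a Cohen real in $X$ avoids every meager Borel set coded in its ground model, which, by \autoref{def:Prs}~\ref{def:Prsc}, is precisely what $\Rbf$-unboundedness of such a real over a single ground-model member of $Y$ amounts to.

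I would first fix a nice $\Por_\pi$-name $\dot y$ for a member of $Y$ and invoke \autoref{smallgsupp} to obtain $|\gsupp_\Rbf(\dot y)| < \theta$. Since the blocks $[\gamma_\alpha,\gamma_{\alpha+1})$ are pairwise disjoint of order type $\omega$, the set
\[B(\dot y) := \set{\alpha < \delta}{[\gamma_\alpha,\gamma_{\alpha+1}) \cap \gsupp_\Rbf(\dot y) \neq \emptyset}\]
has size at most $|\gsupp_\Rbf(\dot y)| < \theta$. The core of the argument is to show that for every $\alpha \in I \setminus B(\dot y)$, $\Por_\pi \Vdash \neg(\dot c_\alpha \sqsubset \dot y)$; granting this, $\set{\alpha \in I}{c_\alpha \sqsubset y} \subseteq B(\dot y)$ in $V_\pi$, which is precisely the condition for $\set{\dot c_\alpha}{\alpha \in I}$ to be $\theta$-$\Rbf$-unbounded.

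For such an $\alpha$, I would apply \autoref{itegsupp} with $\xi := \gamma_\alpha$, $\xi' := \gamma_{\alpha+1}$, and $\delta := \pi$: this produces a non-empty set $W^*$ of $\Por_{\gamma_\alpha}$-names of members of $Y$, with $|W^*| < \theta$, such that in $V_{\gamma_{\alpha+1}}$ any $x \in X$ which is $\Rbf$-unbounded over $W^*[G_{\gamma_\alpha}]$ is forced by $\Por_\pi/\Por_{\gamma_{\alpha+1}}$ to satisfy $\neg(x \sqsubset \dot y)$. Since $W^*[G_{\gamma_\alpha}] \subseteq V_{\gamma_\alpha}$ and, by \autoref{def:Prs}~\ref{def:Prsc}, $\bigcup_{n<\omega}(\sqsubset_n)^y$ is meager with a Borel code in $V_{\gamma_\alpha}$ for each $y \in W^*[G_{\gamma_\alpha}]$, the Cohen real $\dot c_\alpha$ avoids each such set and is therefore $\Rbf$-unbounded over $W^*[G_{\gamma_\alpha}]$; hence $\Por_\pi \Vdash \neg(\dot c_\alpha \sqsubset \dot y)$, as desired. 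The ``In particular'' clause is then immediate from \autoref{lem:TukeyCtheta}~\ref{c:str} by choosing any $I \subseteq \delta$ with $|I| = |\pi| \geq \theta$.

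The step requiring the most care in this proposal is aligning the Cohen-real unboundedness with the hypothesis of \autoref{itegsupp}: one single Cohen real must be $\Rbf$-unbounded over a set of up to ${<}\theta$ elements of $Y$ simultaneously. The gPrs structure saves us because each $y \in W^*[G_{\gamma_\alpha}]$ contributes just a single meager obstacle, and Cohenness handles meager sets one at a time; no cardinal arithmetic beyond the regularity of $\theta$ is needed. All other heavy lifting --- in particular, the recursive packaging of goodness witnesses through limit and successor stages --- has already been discharged inside \autoref{itegsupp}.
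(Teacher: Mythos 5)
Your proposal is correct and follows essentially the same route as the paper's proof: both bound the ``bad'' set of indices by the size of $\gsupp_\Rbf(\dot y)$, apply \autoref{itegsupp} at each block $[\gamma_\alpha,\gamma_{\alpha+1})$ disjoint from it, and use that the Cohen real $\dot c_\alpha$ is $\Rbf$-unbounded over $Y\cap V_{\gamma_\alpha}$ (hence over $W^*$) via the gPrs meagerness condition. The only difference is that you spell out the Cohen/meager step in more detail, whereas the paper cites it in one line; the structure is otherwise identical.
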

\begin{proof}
In the final extension, let $c_\alpha\in X\cap V_{\gamma_{\alpha+1}}$ be the evaluation of $\dot c_\alpha$. Since it is a Cohen real over $V_{\gamma_\alpha}$, $c_\alpha$ is $\Rbf$-unbounded over $Y\cap V_{\gamma_\alpha}$.

Work in $V$ and let $\dot y$ be a nice $\Por_\pi$-name of a member of $Y$. Since $|\gsupp(\dot y)|<\theta$, we get that 
\[L:=\set{\alpha\in I}{\gsupp(\dot y)\cap[\gamma_\alpha,\gamma_{\alpha+1})\neq\emptyset} \text{ has size }{<}\theta.\]
For $\alpha\in I\menos L$, by \autoref{itegsupp} applied to $\xi=\gamma_\alpha$, $\xi'=\gamma_{\alpha+1}$ and $\delta=\pi$, we obtain that $\Por_\pi$ forces that $\neg(\dot c_\alpha \sqsubset \dot y)$. Therefore, in the final extension, $\set{\alpha\in I}{c_\alpha\sqsubset y}\subseteq L$, so it has size ${<}\theta$.

If $\pi\geq\theta$ then $|\delta|=|\pi|$, which implies $\Cbf_{[\pi]^{<\theta}}\leqT\Rbf$ when $I=\delta$.
\end{proof}



\begin{remark}\label{rem:smallH}
    Fuchino and the third author have an unpublished proof of \autoref{Comgood} that does not use goodness support (cf.~\cite{FuMe}).
\end{remark}

\section{Forcing the additivity of \texorpdfstring{$\SNwf$}{} small}\label{preaddSN}

To force $\add(\SNwf)$ small, we find a suitable Polish relational system such that FS iterations as in \autoref{Comgood} (for this relational system) force $\Cbf_{[\pi]^{<\theta}}\leqT \SNwf$, which corresponds to our main technical result \autoref{mainpresaddSN} (\autoref{thm:Paddsn}). It is consistent that $\SNwf$ cannot be represented by a definable relation system of the reals (\autoref{def:defrel}), e.g.\ CH implies that $\cof(\SNwf)>\aleph_1$ while $\dfrak(\Rbf)\leq\cfrak$ holds for any relational system of the reals $\Rbf$.\footnote{The opposite follows by Borel's Conjecture, i.e.\ $\SNwf=[\R]^{<\aleph_1}$ is a relational system of the reals. We thank the referee for this observation.} For this reason, 
we have to work more to force the desired Tukey connection. 

We motivate our main result as follows. We perform a FS iteration $\Por = \la \Por_\xi,\Qnm_\xi:\, \xi<\pi\ra$ of length $\pi$, where $\pi$ has uncountable cofinality and all iterands have the ccc. Let $\theta\leq\lambda$ be uncountable cardinals with $\theta$ regular, and assume that we have constructed $\Por$-names $\la \dot X_\beta:\, \beta<\lambda\ra$ such that $\dot X_\beta = \bigcap_{\alpha<\lambda}[\dot \sigma^\beta_\alpha]_\infty$ is in $\SNwf$ for some $\Por$-names $\dot\sigma^\beta_\alpha$ ($\alpha<\lambda$) of members of $(2^{<\omega})^\omega$. Any $\dot\sigma^\beta_\alpha$ is taken as a Cohen real (over some intermediate extension). We deduce the requirements of the iteration to guarantee that $\{\dot X_\beta:\, \beta<\lambda\}$ is forced to be $\theta$-$\SNwf$-unbounded (meaning that any $Y\in \SNwf$ only contains ${<}\theta$-many of the $X_\beta$'s). We even aim for a stronger statement: we fix some increasing function $f\in\omega^\omega$ in the ground model and aim to show that, in the final extension, for any $\tau\in 2^f$,
\[\left|\set{\beta<\lambda}{X_\beta\subseteq \bigcup_{k<\omega}[\tau(k)]}\right|<\theta.\]
In the final extension, let $T\subseteq 2^{<\omega}$ be the well-pruned tree such that $[T]=2^\omega\menos \bigcup_{k<\omega}[\tau(k)]$. Assuming $f(i+1) - f(i)\geq 2$ for infinitely many $i<\omega$, we have that $T$ is a perfect tree. Now, $T$ lives in some intermediate extension, and as any FS iteration adds Cohen reals, we have some intermediate step where we add a Cohen real $c\in[T]$, i.e.\ $c\notin \bigcup_{k<\omega}[\tau(k)]$. So it would be enough to show that $|\set{\beta<\lambda}{c\notin X_\beta}|<\theta$, i.e., that $c\in X_\beta$ for most of the $\beta$'s. Fixing $\beta$, we will have that many $\sigma^\beta_\alpha$'s are added after $c$, so $c\in[\sigma^\beta_\alpha]_\infty$ (recall that each $\sigma^\beta_\alpha$ is a Cohen real), but for the $\sigma^\beta_\alpha$'s added before $c$, to prove $c\in[\sigma^\beta_\alpha]_\infty$ we require to show that the set $\set{\sigma^\alpha_\beta(n)}{n\geq n_0}$ is dense in $T$ for all $n_0<\omega$ (hence the generic set producing $c$ intersects it). In the following lemma, we dissect the elements we need to guarantee such density. We remark that the choice of $\beta$ should guarantee that no $\sigma^\beta_\alpha$ is added at the same stage as $c$.

\begin{lemma}\label{magiclem}
    Let $f\colon \omega\to\omega$ be an increasing function and let $\tau\in 2^f$ and $\sigma\in(2^{<\omega})^\omega$. Assume that, for all $t\in\bigcup_{i<\omega}2^{f(i)}$,
    \begin{equation}\label{eq:dense}
    \exists^\infty\, n<\omega\colon \sigma(n)\supseteq t \text{ and } \forall\, k<\omega\colon |t|<f(k)\leq|\sigma(n)| \imp \sigma(n){\upharpoonright}f(k)\neq \tau(k).\tag{\faBolt}
    \end{equation}
    Then, $\set{\sigma(n)\in T}{n\geq n_0}$ is dense in $T$ for all $n_0 < \omega$, where $T\subseteq 2^{<\omega}$ is the well-pruned tree such that $[T]=2^\omega\menos \bigcup_{k<\omega}[\tau(k)]$.
\end{lemma}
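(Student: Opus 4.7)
The goal is to prove: for any $s \in T$ and any $n_0 < \omega$, there exists $n \geq n_0$ with $\sigma(n) \in T$ and $\sigma(n) \supseteq s$. My plan proceeds in three phases: extend $s$ inside $T$ to a node $t$ whose length has the form $f(i)$, apply the hypothesis \eqref{eq:dense} to $t$ to produce a suitable $n$, and verify that the resulting $\sigma(n)$ lies in $T$.

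For the first two phases, I exploit that $T$ is well-pruned. Since $s \in T$, fix some $x \in [T]$ with $x \supseteq s$ and set $t := x \upharpoonright f(i)$ for any $i$ with $f(i) \geq |s|$. Then $t \in T$, $t \supseteq s$, and $|t| = f(i)$. Applying \eqref{eq:dense} to $t$ provides an $n \geq n_0$ such that $\sigma(n) \supseteq t$ and $\sigma(n) \upharpoonright f(k) \neq \tau(k)$ whenever $|t| < f(k) \leq |\sigma(n)|$.

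The third phase is the substantial one. I first observe that $\tau(k) \nsubseteq \sigma(n)$ for every $k$ with $f(k) \leq |\sigma(n)|$: for $f(k) \leq |t|$ this uses $t \in T$ (the inclusion $\tau(k) \subseteq t$ would force $[t] \subseteq [\tau(k)]$, contradicting $t \in T$), and for $|t| < f(k) \leq |\sigma(n)|$ it is precisely what \eqref{eq:dense} yields. To exhibit a branch $y \in [\sigma(n)] \cap [T]$, I consider the splice $y$ defined by $y \upharpoonright |\sigma(n)| := \sigma(n)$ and $y(j) := x(j)$ for $j \geq |\sigma(n)|$, then verify $\tau(k) \nsubseteq y$ by cases on $k$. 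The cases $f(k) \leq |\sigma(n)|$ (handled by the previous observation) and $f(k) > |\sigma(n)|$ with $\tau(k) \nsupseteq \sigma(n)$ (so $\tau(k) \upharpoonright |\sigma(n)| \neq \sigma(n) = y \upharpoonright |\sigma(n)|$) are immediate.

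The main obstacle is the remaining subcase $f(k) > |\sigma(n)|$ with $\tau(k) \supseteq \sigma(n) \supseteq t$: then $\tau(k)$ and $y$ already agree on $[0, |\sigma(n)|)$, and $y \upharpoonright f(k) = \tau(k)$ reduces to the equality $x \upharpoonright [|\sigma(n)|, f(k)) = \tau(k) \upharpoonright [|\sigma(n)|, f(k))$. Since $x \notin [\tau(k)]$, some disagreement between $x \upharpoonright f(k)$ and $\tau(k)$ must occur in $[|t|, f(k))$; but a priori this disagreement could be confined entirely to the window $[|t|, |\sigma(n)|)$ where $y$ tracks $\sigma(n)$ rather than $x$, and then the naive splice fails. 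Overcoming this should come from enlarging $i$ (shrinking the problematic window), refining the choice of $x$ inside $[T] \cap [t]$, or a K\"onig-style / compactness argument on the tree of extensions of $\sigma(n)$ that are initial segments of no $\tau(k)$, exploiting the infinite supply of candidates $n$ afforded by \eqref{eq:dense}.
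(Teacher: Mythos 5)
There is a genuine gap, though one you accurately diagnose. In the subcase $f(k)>|\sigma(n)|$ with $\tau(k)\supseteq\sigma(n)$, the splice $y$ built from the original branch $x$ may indeed equal $\tau(k)$ on $[0,f(k))$: the disagreement between $x$ and $\tau(k)$ is only known to lie in $[|t|,f(k))$, and it may fall entirely inside the window $[|t|,|\sigma(n)|)$ where $y$ follows $\sigma(n)$ rather than $x$. Of your three suggested remedies, ``refining the choice of $x$ inside $[T]\cap[t]$'' is circular (to make the splice robust you would need $x\in[T]\cap[\sigma(n)]$, which is exactly what you are trying to produce), ``enlarging $i$'' changes $\sigma(n)$ as well and does not obviously converge, and the compactness idea can be made to work but is overkill.

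The clean fix is to forget $x$ entirely once $\sigma(n)$ is in hand. Having checked that $\sigma(n)\upharpoonright f(k)\neq\tau(k)$ for every $k$ with $f(k)\leq|\sigma(n)|$, extend $\sigma(n)$ forward stage by stage: at each subsequent length $f(k)$, since $f$ is strictly increasing, the new window $[f(k-1),f(k))$ (or $[|\sigma(n)|,f(k_0))$ for the first stage) contains at least one free bit, so you can choose the extension to differ from $\tau(k)$ whenever they have not already diverged. The resulting branch lies in $[T]$ and passes through $\sigma(n)$. The paper compresses this into the single observation that $u\in T$ iff $u\upharpoonright f(k)\neq\tau(k)$ for all $k$ with $f(k)\leq|u|$ --- the backward direction being precisely the forward-extension argument just sketched --- from which $\sigma(n)\in T$ is immediate. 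Your first two phases and the case analysis up to the obstacle match the paper's implicit argument; the flaw lies only in insisting on reusing the fixed branch $x$.
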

\begin{proof}
    Note that $t\in T$ iff, for all $k<\omega$ such that $f(k)\leq |t|$, $t\frestr f(k)\neq \tau(k)$. Hence,~\eqref{eq:dense} implies that, for any $t\in T$, there are infinitely many $n<\omega$ such that $\sigma(n)\supseteq t$ and $\sigma(n)\in T$ (for the latter, we require that $t\in T$). Therefore, the conclusion follows.
\end{proof}

Now, to guarantee~\eqref{eq:dense} for the density argument, we define a Polish relational system that describes~\eqref{eq:dense} and, thanks to \autoref{Comgood}, we can conclude our proof when the iterands of the iteration are ccc and $\theta$-good with respect to this relational system (see details in \autoref{thm:Paddsn}). To guarantee goodness in the relevant cases (e.g.\ for random forcing), using a $\tau\in 2^f$ is not enough, so we need to expand to slaloms instead, and we also need further parameters $\la h_n:\, n<\omega\ra$ as indicated below.

\begin{definition}\label{def:Sf}
Fix an increasing function $f\in\omega^\omega$ with $f>0$. Let $\Gwf:=\set{h_n}{n<\omega}\subseteq\omega^\omega$ be a family of increasing functions which satisfies, for all $n, i<\omega$,
\begin{enumerate}[label= \rm (\roman*)]
    \item $2^{i+1}h_n(i)\leq h_{n+1}(i)$, and 
    \item if $i\geq n$ then $h_n(i)<2^{f(i)-f(i-1)}$ (here $f(-1):=0$).
\end{enumerate}
Note that such a $\Gwf$ exists whenever $f(i) \geq f(i-1)+(i+1)^2$ for all $i<\omega$, 
e.g.\ $f(i) = \sum_{j\leq i+1}j^2 = \frac{(i+1)(i+2)(2i+3)}{6}$ and  $h_n(i) := (2^{i+1})^n\, 2^i$.

Define 
$\Swf_f(\Gwf):=\set{\psi\in \bigcup_{g\in \Gwf} \prod_{i<\omega}[2^{f(i)}]^{\leq g(i)}}{\forall\, i<\omega\colon |\psi(i)|<2^{f(i)-f(i-1)}}$.

For $t\in\bigcup_{i<\omega}2^{f(i)}$, $\sigma\in(2^{<\omega})^\omega$ and $\psi\in \Swf_f (\Gwf)$, 
we define the relation
\[(t, \sigma) \sqsubset^f\psi\text{ iff }\forall^\infty\, n\colon \sigma(n)\supseteq t\imp \exists\, k<\omega\colon |t|<f(k)\leq|\sigma(n)| \text{ and } \sigma(n){\upharpoonright}f(k)\in\psi(k).\]
Put $\Rbf^f_\Gwf:=\la\bigcup_{i<\omega}2^{f(i)}\times(2^{<\omega})^\omega, \Swf_f(\Gwf),  \sqsubset^f\ra$, which is a relational system.
\end{definition}

The negation of $\sqsubset^f$ describes the situation in~\eqref{eq:dense} (for $\psi(k)=\{\tau(k)\})$:
\[(t, \sigma) \nsqsubset^f\psi\text{ iff }\exists^\infty\, n\colon \sigma(n)\supseteq t \text{ and } \forall\, k<\omega\colon |t|<f(k)\leq|\sigma(n)| \imp \sigma(n){\upharpoonright}f(k)\notin\psi(k).\]

\begin{lemma}\label{RfPrs}
Let $f$ and $\Gwf$ as in \autoref{def:Sf}. Then:
\begin{enumerate}[label= \rm (\alph*)]
    \item\label{RfPrsone} For any $\psi\in\Swf_f(\Gwf)$, $n_0,i,m<\omega$ with $m>f(i)$, and $t\in 2^{f(i)}$, there is some $t'\in 2^{m}$ such that $t'\supseteq t$ and $t'\frestr f(k)\notin\psi(k)$ whenever $|t|<f(k)\leq m$. 
    
    \item\label{RfPrstwo} The relational system $\Rbf^f_\Gwf$ is a Prs. 
    
    \item\label{RfPrsthree} If $t\in \bigcup_{i<\omega} 2^{f(i)}$ and $\sigma\in (2^{<\omega})^\omega$ is Cohen over a transitive model $M$ of $\thzfc$ with $f,\Gwf\in M$, then $(t,\sigma)\nsqsubset^f \psi$ for all $\psi\in\Swf_f(\Gwf)\cap M$.
\end{enumerate}
\end{lemma}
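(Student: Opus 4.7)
The combinatorial backbone is (a), from which (b) and (c) follow by essentially the same density/nowhere-density argument. For (a), I proceed by a greedy step-by-step extension. Write $|t|=f(i)$ and $i^*:=\max\set{k<\omega}{f(k)\leq m}$ (well-defined since $m>f(i)$). Starting from $t_i:=t$, I inductively pick $t_{k+1}\in 2^{f(k+1)}$ extending $t_k$ with $t_{k+1}\notin\psi(k+1)$ for $i\leq k<i^*$, then extend $t_{i^*}$ arbitrarily to length $m$ to get $t'$. The inductive step is possible because there are $2^{f(k+1)-f(k)}$ possible extensions of $t_k$ while $|\psi(k+1)|<2^{f(k+1)-f(k)}$ by the defining constraint on $\Swf_f(\Gwf)$ (using $f(-1):=0$ at the base). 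Every index $k$ with $|t|<f(k)\leq m$ lies in $(i,i^*]$, so $t'\frestr f(k)=t_k\notin\psi(k)$.

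For (b), I identify the ambient Polish structure. The first coordinate space $\bigcup_{i<\omega}2^{f(i)}\times(2^{<\omega})^\omega$ is a countable disjoint union of copies of the perfect Polish space $(2^{<\omega})^\omega$, hence itself a perfect Polish space. The witness space sits in the compact Polish space $\prod_{i<\omega}[2^{f(i)}]^{<2^{f(i)-f(i-1)}}$; the extra condition $\exists\, g\in\Gwf\,\forall\, i\,|\psi(i)|\leq g(i)$ is $F_\sigma$, so $\Swf_f(\Gwf)$ is analytic and non-empty (e.g.\ $\psi\equiv\emptyset$). For the relation I take
\[(t,\sigma)\sqsubset^f_{n_0}\psi\iff \forall\, n\geq n_0\colon \sigma(n)\supseteq t\imp \exists\, k\,(|t|<f(k)\leq|\sigma(n)|\wedge \sigma(n)\frestr f(k)\in\psi(k)),\]
which gives an increasing sequence of closed subsets of $X\times Z$ whose union is $\sqsubset^f$; closedness follows because the complement of $\sqsubset^f_{n_0}$ is a union over $n\geq n_0$ of basic clopen conditions involving only $\sigma(n)$ and finitely many coordinates of $\psi$. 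To see $(\sqsubset^f_{n_0})^\psi$ is nowhere dense, given any basic neighborhood fixing $t$ and $\sigma\frestr N$, pick $n\geq\max(n_0,N)$ and apply (a) with $m\geq f(i+1)$ (so at least $k=i+1$ is activated) to produce $t'\in 2^m$ extending $t$ with $t'\frestr f(k)\notin\psi(k)$ for all $k$ with $|t|<f(k)\leq m$; setting $\sigma'(n):=t'$ and filling other coordinates arbitrarily lands inside the neighborhood and fails $\sqsubset^f_{n_0}$ at witness $n$.

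For (c), the same density argument repackages directly: for each $n_0<\omega$ and $\psi\in\Swf_f(\Gwf)\cap M$, the set
\[D_{n_0,\psi}:=\largeset{\sigma\in(2^{<\omega})^\omega}{\exists\, n\geq n_0\colon \sigma(n)\supseteq t\text{ and }\forall\, k\,(|t|<f(k)\leq|\sigma(n)|\imp \sigma(n)\frestr f(k)\notin\psi(k))}\]
lies in $M$ and is open dense in Cohen forcing (density is exactly (a) applied at a coordinate beyond any prescribed initial segment of $\sigma$). Since $\sigma$ is Cohen over $M$, it meets every such $D_{n_0,\psi}$, whence $(t,\sigma)\nsqsubset^f\psi$.

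The only subtlety is the nowhere-density step in (b): one must ensure the modification length $m$ triggers at least one constraint index $|t|<f(k)\leq m$, otherwise the switch to $t'$ would vacuously satisfy $\sqsubset^f_{n_0}$; imposing $m\geq f(i+1)$ resolves this.
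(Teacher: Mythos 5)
Your proof is correct and fills in the details that the paper compresses into ``Easy to check'' for (b) and ``follows directly'' for (c). For (a) you take a minor variant of the paper's argument: you directly extend $t_k$ to a $t_{k+1}\in 2^{f(k+1)}$ avoiding $\psi(k+1)$ (using $|\psi(k+1)|<2^{f(k+1)-f(k)}$), while the paper instead finds a single suffix $t_k\in 2^{f(k)-f(k-1)}$ such that $s^\frown t_k\notin\psi(k)$ for \emph{every} prefix $s\in 2^{f(k-1)}$; both counts are the same and both work. The Polish-space bookkeeping in (b) and the genericity argument in (c) match what is needed.

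One remark: your closing ``subtlety'' is actually not a subtlety, and the worry is inverted. If $m$ is so small that no index $k$ satisfies $|t|<f(k)\leq m$, then after setting $\sigma'(n):=t'$ the existential $\exists\,k\,(|t|<f(k)\leq|\sigma'(n)|\wedge\sigma'(n){\upharpoonright}f(k)\in\psi(k))$ is vacuously \emph{false}, so the implication in $\sqsubset^f_{n_0}$ fails at $n$ and $(t,\sigma')$ already lies outside $(\sqsubset^f_{n_0})^\psi$; one could even take $\sigma'(n)=t$. Imposing $m\geq f(i+1)$ is therefore harmless but unnecessary for the nowhere-density step. The rest of the argument stands as written.
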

\begin{proof}
\ref{RfPrsone}: Let $j:=\min\set{k<\omega}{m\leq f(k)}$. We define $u\frestr f(k)$ by recursion on $k\in[i,j]$. We start with $u\frestr f(i):=t$.
Assume we have defined $u\frestr f(k-1)$ ($i<k\leq j$).  Since $|\psi(k)|<2^{f(k)-f(k-1)}$, we can find some $t_k\in 2^{f(k)-f(k-1)}$ such that $s^{\frown} t_k \notin \psi(k)$ for all $s\in 2^{f(k-1)}$, so we set $u\frestr f(k):=u\frestr f(k-1)^{\frown} t_k$. At the end, set $t':=u\frestr m$.

\ref{RfPrstwo}: Easy to check; condition~\ref{def:Prsc} of \autoref{def:Prs} follows from~\ref{RfPrsone}.


\ref{RfPrsthree}: It follows directly from~\ref{RfPrstwo}. 
\end{proof}

We present the natural examples of $\Rbf^f_\Gwf$-good posets, which include random forcing. These are the Boolean algebras with a strictly positive (probability) finitely additive measure, 
as in the case of Kamburelis' result to force $\add(\Nwf)$ small (see \autoref{ExmPrs}~\ref{ExmPrsd}). The first step to prove this is to show the result for finitely additive measures with one additional property.

\begin{definition}
For a Boolean algebra $\Aor$, say that $\mu\colon \Aor\to[0,1]$ is a \textit{strictly positive probability finitely additive measure (pfam)} if it fulfills:
\begin{enumerate}[label= \rm (\roman*)]
    \item $\mu(\mathbf{1}_{\Aor}) = 1$, 
    \item $\mu(a\vee a') = \mu(a)+\mu(a')$ for all $a,a'\in\Aor$ such that $a\wedge a'=\zerobf_{\Aor}$, and  
    \item $\mu(a) = 0$ iff $a = \mathbf{0}_{\Aor}$.
\end{enumerate}
Note that any Boolean algebra with a pfam is ccc. 

Say that a pfam $\mu$ has the \emph{density property} if there is some countable $S\subseteq\Aor\smallsetminus\{\zerobf_\Aor\}$ such that 
\begin{equation}
\tag{$\spadesuit$}
\forall\, a\in\Aor\smallsetminus\{\zerobf_{\Aor}\}\ \forall\, \varepsilon>0\ \exists\, s\in S\colon \mu(a\wedge s)>\mu(s)(1-\varepsilon). \label{Lbdproperty}
\end{equation}
\end{definition}

\begin{lemma}\label{exm:densepfam}
The following Boolean algebras have a pfam with the density property.
\begin{enumerate}[label=\rm (\roman*)]
    \item\label{it:famrandom} Any subalgebra of random forcing containing all the clopen sets. 
    \item\label{it:famED} Any subalgebra of $\Bor^h_b$  containing all the clopen sets in $\prod b$ when $b,h\in\omega^\omega$ satisfy $\sum_{i\in\omega}\frac{h(i)}{|b(i)|}<\infty$ (see~\autoref{EDsepfam}). 
    \item\label{it:famsc} Any $\sigma$-centered Boolean algebra.
\end{enumerate}
\end{lemma}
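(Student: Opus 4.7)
My plan is to exhibit in each case an explicit pfam $\mu$ on $\Aor$ together with a countable $S\subseteq\Aor\smallsetminus\{\zerobf_\Aor\}$ witnessing the density property $(\spadesuit)$.

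For \ref{it:famrandom} I take $\mu$ to be Lebesgue measure on $\Bwf(2^\omega)/\Nwf$ restricted to $\Aor$, and $S$ the countable set of equivalence classes of basic clopens $[t]$ for $t\in 2^{<\omega}$, which sits inside $\Aor$ by the clopen hypothesis. Condition $(\spadesuit)$ is then a direct translation of the Lebesgue density theorem applied to a Borel representative of $a$: for any positive-measure Borel $A\subseteq 2^\omega$ and any $\varepsilon>0$ there is $t\in 2^{<\omega}$ with $\Lb(A\cap[t])>(1-\varepsilon)\Lb([t])$. Case \ref{it:famED} follows the same template with $\mu:=\Lb_b{\upharpoonright}\Aor$ and $S$ the countable set of (equivalence classes of) basic clopens $[s]_b$ for $s\in\Seqw(b)$, which lie in $\Bor^h_b\supseteq\Aor$ because $[s]_b=C(p^*_s)$ by \autoref{EDsepfam}; density reduces to the analogue of Lebesgue density for the product measure $\Lb_b$.

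For \ref{it:famsc} my plan is the standard ultrafilter construction. Writing $\Aor\smallsetminus\{\zerobf_\Aor\}=\bigcup_{n<\omega}C_n$ with each $C_n$ centered, extend each $C_n$ to an ultrafilter $U_n$ of $\Aor$; after identifying any coinciding ultrafilters, I may assume the $U_n$'s are pairwise distinct. Fixing positive weights $w_n>0$ with $\sum_n w_n=1$, define $\mu(a):=\sum\{w_n : a\in U_n\}$, which is finitely additive, normalized, and strictly positive because every nonzero $a$ lies in some $C_n\subseteq U_n$. By Hausdorffness of the Stone space of $\Aor$, for every pair of disjoint finite $I,J\subseteq\omega$ I can pick an element $s_{I,J}\in\Aor$ with $s_{I,J}\in U_i$ for $i\in I$ and $s_{I,J}\notin U_j$ for $j\in J$; let $S$ collect all such $s_{I,J}$. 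To verify $(\spadesuit)$ for $a\neq\zerobf_\Aor$ and $\varepsilon>0$, pick $n_0$ least with $a\in U_{n_0}$, choose $N\geq n_0$ so large that $\sum_{k>N}w_k<\tfrac{\varepsilon}{2}w_{n_0}$, and set $I:=\{n_0\}$ and $J:=\{k\leq N : k\neq n_0,\ a\notin U_k\}$. A straightforward check splitting on $k\leq N$ (where, by the definitions of $I$ and $J$, the contributions to $\mu(s_{I,J})$ and $\mu(a\wedge s_{I,J})$ match) versus $k>N$ (controlled by the tail) then yields $\mu(s_{I,J})-\mu(a\wedge s_{I,J})\leq\sum_{k>N}w_k$ and $\mu(a\wedge s_{I,J})\geq w_{n_0}$, so the ratio exceeds $1-\varepsilon/2>1-\varepsilon$.

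The routine cases \ref{it:famrandom} and \ref{it:famED} only invoke the Lebesgue density theorem; the only genuinely combinatorial step is \ref{it:famsc}, where the main (mild) obstacle is to produce the separating $s_{I,J}$ \emph{inside $\Aor$}, which is precisely what Stone duality supplies once duplicate $U_n$'s have been consolidated.
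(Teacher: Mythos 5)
Your proof is correct and takes essentially the same route as the paper in all three cases. For \ref{it:famrandom} both proofs invoke the Lebesgue density theorem on basic clopens; for \ref{it:famED} the paper simply cites \autoref{EDsepfam} to reduce to \ref{it:famrandom}, while you rerun the density argument directly for $\Lb_b$ — a cosmetic difference, since the two measure spaces are measure-isomorphic. For \ref{it:famsc} both arguments build $\mu$ by assigning positive summable weights to a covering sequence of ultrafilters and then produce $S$ from elements realizing prescribed membership patterns: the paper indexes $S$ by finite binary strings $s\in2^{<\omega}$, picking $q_s$ forcing $q_s\in G_k\Leftrightarrow s(k)=1$ for $k<|s|$ (defaulting to $\onebf_\Aor$ when no such element exists, which never happens for the strings actually used), whereas you index by disjoint finite pairs $(I,J)$ and only constrain membership for indices in $I\cup J$, citing Hausdorffness of the Stone space for existence. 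Your version is slightly more economical (no over-constraining, no need for a default value), at the small cost of having to note that uncontrolled indices $k\notin I\cup J$ with $k\le N$ contribute equally to $\mu(s_{I,J})$ and $\mu(a\wedge s_{I,J})$, which you do correctly. The weight choice ($2^{-(n+1)}$ vs.\ an arbitrary positive sequence summing to $1$) and the preliminary consolidation of coinciding ultrafilters are immaterial. Both estimates arrive at $\mu(a\wedge s)/\mu(s)>1-\varepsilon$ by bounding the error by the tail of the weight series against the guaranteed contribution $w_{n_0}$.
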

\begin{proof}
\ref{it:famrandom}: Clear thanks to the Lebesgue density theorem.

\ref{it:famED}:  Immediate by \autoref{EDsepfam} and~\ref{it:famrandom}.

\ref{it:famsc}: Note that a Boolean algebra $\Aor$ is $\sigma$-centered iff there exists a sequence $\set{G_n}{n<\omega}$ of ultrafilters on $\Aor$ such that $\Aor\smallsetminus\{\zerobf_{\Aor}\}=\bigcup_{n<\omega}G_n$. For $a\in\Aor$, define 
\[\mu(a):=\sum\set{\frac{1}{2^{n+1}}}{a\in G_n,\,n<\omega},
\]
Clearly, this is a pfam on $\Aor$. We show that it has the density property. For each $s\in2^{<\omega}$ choose some $q_s\in\Aor\menos\{\zerobf_{\Aor}\}$ such that 
\[\text{for all $k<|s|$, $s(k)=1$ iff $q_s\in G_k$}\] 
if a $q_s$ satisfying the above exists, otherwise set $q_s:=\onebf_\Aor$. We show that $S:=\{q_s:\, s\in2^{<\omega}\}$ witnesses the density property. For $a\in\Aor\menos\{\zerobf_\Aor\}$ let $z_a\in 2^\omega$ such that $z_a(k)=1$ iff $a\in G_k$, which is not the constant zero function, i.e.\ $z_a(k_0)=1$ for some $k_0<\omega$. For $\varepsilon>0$, choose some $N>k_0$ such that $\frac{1}{2^{N-(k_0+1)}+1} < \varepsilon$, and let $s:=z_a\frestr N$. Then $q_s\in G_k$ iff $a\in G_k$ for all $k<N$, so $\mu(q_s\smallsetminus a)\leq \sum_{n\geq N}2^{-(n+1)}=2^{-N}$ and $\mu(q_s)\geq 2^{-(k_0+1)} + \mu(q_s\menos a)$. Therefore,
\[\frac{\mu(q_s \smallsetminus a)}{\mu(q_s)}\leq \frac{\mu(q_s\smallsetminus a)}{2^{-(k_0+1)} + \mu(q_s\smallsetminus a)}= \frac{1}{\frac{2^{-(k_0+1)}}{\mu(q_s\smallsetminus a)}+1} \leq \frac{1}{2^{N-(k_0+1)}+1}<\varepsilon.
\qedhere\]
\end{proof}

It is not hard to show that any Boolean algebra with a pfam satisfying the density property is $\sigma$-$n$-linked for all $2\leq n<\omega$. Therefore, any Boolean algebra of size ${>}\cfrak$ cannot have such a pfam. By results of Kamburelis~\cite[Prop.~2.6]{Ka}, the completion of the poset adding $\cfrak^+$-many Cohen reals has a pfam, but it cannot have one with the density property by the previous observation.

We now prove that we obtain $\Rbf^f_\Gwf$-good sets from Boolean algebras with a pfam satisfying the density property.

\begin{mainlemma}\label{mainlemma}
Let $f\in\omega^\omega$ and $\Gwf$ be as in \autoref{def:Sf} and let $\Aor$ be a Boolean algebra with a pfam $\mu$. If $\mu$ has the density property, then $\Aor$ is $\Rbf^f_\Gwf$-good.
\end{mainlemma}
\begin{proof}
Without loss of generality, we may assume that $\Aor$ is a complete Boolean algebra: if $\Aor'$ is a complete Boolean algebra such that $\Aor$ is a dense subalgebra of $\Aor'$, then we can extend $\mu$ to a finitely additive measure $\mu'$ on $\Aor'$ (see e.g.~\cite[Cor~3.3.6]{BRBR}). Since $\Aor$ is dense in $\Aor'$, $\mu'$ is a pfam and any set witnessing the density property of $\mu$ clearly witnesses the density property of $\mu'$.

So let $\Aor$ be a complete Boolean algebra and let $\mu$ be a pfam on $\Aor$ with the density property witnessed by some countable $S\subseteq\Aor\menos\{\zerobf_\Aor\}$. Assume that $\dot\psi$ is an $\Aor$-name for a member of $\Swf_f(\Gwf)$, wlog we can even assume that, for some $n^*<\omega$, $\Vdash_\Aor \forall\, n<\omega\colon |\dot\psi(n)|\leq h_{n^*}(n)$. 

Find a maximal antichain $A\subseteq\Aor\menos\{\zerobf_\Aor\}$ and some $g\colon A\to ([\omega]^{<\omega})^{n^*+1}$ such that, for all $a\in A$, \[a\Vdash\dot\psi{\upharpoonright}(n^*+1)=g_a.\]  
Fix $0<\varepsilon<1-2^{-(n^*+1)}$.
For $a\in A$, let $\la s_m^a:m<\omega\ra$ be an enumeration of the members of $S$ satisfying
\[\mu(a\wedge s_m^a)>\mu(s_m^a)(1-\varepsilon).\] 
This allows us to define a function $\psi_m^a$ with domain $\omega$ such that, for each $n<\omega$,
\[\psi_m^a(n)
   := 
    \begin{cases}
    \set{t\in 2^{f(n)}}{\mu(\| t\in\dot\psi(n)\|\wedge s_m^a)>\frac{1}{2^{n+1}}\mu(s_m^a)} & \textrm{if $n>n^*$,}\\
       g_a(n) & \textrm{if $n\leq n^*$.}\\
    \end{cases}
\]
We claim that $\psi_m^a\in\Swf_f(\Gwf)$ for any $a\in\Aor$ and $m<\omega$, in fact $\psi_m^a\in\Swf(2^f,h_{n^*+1})$. If $n\leq n^*$ then $\psi_m^a(n)=g_a(n)$ which is forced by $a$ to be  equal to $\dot\psi(n)$, so $|\psi_m^a(n)|\leq h_{n^*}(n)\leq h_{n^*+1}(n)$ and $|\psi_m^a(n)|<2^{f(n)-f(n-1)}$. Now assume $n\geq n^*+1$. We show that $|\psi^a_m(n)|\leq 2^{n+1}h_{n^*}(n)$, which implies $|\psi^a_m(n)|\leq h_{n^*+1}(n)<2^{f(n)-f(n-1)}$ by the properties of $h_{n^*+1}$ (see \autoref{def:Sf}). For each $t\in\psi^a_m(n)$ let $b_t:=\|t\in\dot\psi(n)\|\wedge s^a_m$. Since $2^{f(n)}$ is finite, we can find some $\delta>0$ such that $\mu(b_t)\geq2^{-(n+1)}\mu(s^a_m)+\delta$ for all $t\in\psi^a_m(n)$. Then, by~\cite[Prop.~1]{Kelley} applied to $\mu$ relative to $s^a_m$,
\[\inf\set{\frac{\iota(\Seq{b_{t}}{t \in c})}{|c|}}{c\in[\psi^a_m(n)]^{<\aleph_0}\menos \left\{\emptyset\right\}}\geq\frac{1}{\mu(s^a_m)}\left(\frac{1}{2^{n+1}}\mu(s^a_m)+\delta\right)>\frac{1}{2^{n+1}},\]
where $\iota(\la b_t:\, t\in c\ra)$ is the maximum size of a $d\subseteq c$ such that $\bigwedge_{t\in d} b_t\neq \zerobf_\Aor$. On the other hand, $\iota(\la b_t:\, t\in\psi^a_m(n)\ra)\leq h_{n^*}(n)$, otherwise some condition in $\Aor\menos\{\zerobf_\Aor\}$ would force $|\dot\psi(n)|>h_{n^*}(n)$, which is contradictory. Thus $2^{-(n+1)}|\psi^a_m(n)|<\iota(\la b_t:\, t\in\psi^a_m(n)\ra)\leq h_{n^*}(n)$, which implies the desired inequality.

Therefore, $H:=\set{\psi^a_m}{a\in A,\ m<\omega}$ is a countable subset of $S_f(\Gwf)$. We prove that this set witnesses the $\Rbf^f_\Gwf$-goodness for $\dot\psi$.
So let $t\in\bigcup_{m<\omega}2^{f(m)}$ and  $\sigma\in(2^{<\omega})^\omega$, and assume that $(t, \sigma)\nsqsubset^f\psi_m^a$ for all $a\in A$ and $m<\omega$. We prove that $\Vdash_\Aor (t, \sigma)\nsqsubset^f\dot\psi$, that is, 
\[\Vdash_\Aor\exists^\infty\, n\colon \sigma(n)\supseteq t \text{ and } \forall\, k<\omega\colon |t|<f(k)\leq|\sigma(n)| \imp \sigma(n){\upharpoonright}f(k)\notin \dot\psi(k).\]

Let $b\in \Aor\menos\{\zerobf_\Aor\}$ and $n_0<\omega$. It is enough to show that there are some $b'\leq b$ in $\Aor\menos\{\zerobf_\Aor\}$ and $n\geq n_0$ such that $b'$ forces the above for $n$.
Find $a\in A$ such that $a':=a\wedge b\neq \zerobf_\Aor$. By the density property of $\mu$, there is an $s\in S$ such that 
\[\mu(s\wedge a')>\mu(s)(1-\varepsilon).\]
Then $s=s_m^{a}$ for some $m<\omega$. Since $(t, \sigma)\nsqsubset^f\psi_m^{a}$, we can find some $n>n_0$  such that $\sigma(n)\supseteq t$ and, for all $k<\omega$, if $|t|<f(k)\leq|\sigma(n)|$, then  
\[\mu\Big(\|\sigma(n){\upharpoonright}f(k)\in\dot\psi(k)\|\wedge s\Big)\leq\frac{1}{2^{k+1}}\mu(s) \text{ when }k>n^*,\]
and $a\Vdash\sigma(n){\upharpoonright}f(k)\not\in\dot\psi(k)$ when $k\leq n^*$. 

Let $a^*:=s\wedge\bigvee \set{\|\sigma(n){\upharpoonright}f(k)\in\dot\psi(k)\|}{k>n^*,\, |t|<f(k)\leq|\sigma(n)|}$. Notice that
\[
    \mu(a^*) \leq\sum_{\mathclap{\substack{k>n^*\\ f(k)\leq|\sigma(n)|}}}\mu(s\wedge\|\sigma(n){\upharpoonright}f(k)\in \dot\psi(k)\|)
    \leq\sum_{\mathclap{\substack{k>n^*\\ f(k)\leq|\sigma(n)|}}}\frac{1}{2^{k+1}}\mu(s)
    <\sum_{k>n^*}\frac{1}{2^{k+1}}\mu(s)
    =\frac{\mu(s)}{2^{n^*+1}}.
\]
Hence $\mu(a^*)<\frac{\mu(s)}{2^{n^*+1}}$. Now consider $b':=s\wedge a'\smallsetminus a^*$ in $\Aor$. Observe that 
\[
    \mu(b') \geq\mu(s\wedge a')-\mu(a^*)
    >\mu(s)(1-\varepsilon)-\frac{\mu(s)}{2^{n^*+1}}
    =\mu(s)\bigg(1-\varepsilon-\frac{1}{2^{n^*+1}}\bigg)>0,
\]
so $b'>\zerobf_\Aor$, $b'\leq s\wedge a'$ and $b'\perp a^*$. Then $b'\Vdash\sigma(n){\upharpoonright}f(k)\not\in\dot\psi(k)$ for all $k>n^*$ such that $|t|<f(k)\leq|\sigma(n)|$. On the other hand, $b'\Vdash\sigma(n){\upharpoonright}f(k)\not\in \dot\psi(k)$ for all $k\leq n^*$ because $b'\leq a'$. Hence $b'\leq b$ is as desired.
\end{proof}

For an infinite cardinal $\kappa$, let $\Bor_\kappa$ be the complete Boolean algebra that adds $\kappa$-many random reals side by side. Although $\Bor_\kappa$ does not have a pfam with the density property when $\kappa \geq \aleph_1$, we have that: 

\begin{corollary}\label{cor:Bkappa}
    $\Bor_\kappa$ is $\Rbf^f_\Gwf$-good for any infinite cardinal $\kappa$.
\end{corollary}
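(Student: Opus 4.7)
The plan is to reduce to the Main Lemma (\autoref{mainlemma}) by exploiting that any name for a member of $\Swf_f(\Gwf)$ in $\Bor_\kappa$ is captured by a countable subalgebra generated by countably many coordinates, and that such countable subalgebras are isomorphic to random forcing on a countable index set, which does admit a pfam with the density property.

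First I would recall that $\Bor_\kappa$ is the measure algebra of $2^\kappa$ and that, for any $I\subseteq\kappa$, the natural restriction gives a complete subalgebra $\Bor_I \lessdot \Bor_\kappa$ corresponding to the measure algebra of $2^I$. Since every element of $\Bor_\kappa$ is (the equivalence class of) a Borel subset of $2^\kappa$, it depends only on countably many coordinates. Combined with the ccc of $\Bor_\kappa$, this implies that any nice $\Bor_\kappa$-name $\dot\psi$ for a member of $\Swf_f(\Gwf)$ (which is built from countably many maximal antichains, each countable) is in fact a $\Bor_I$-name for some countable $I\subseteq\kappa$.

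Next I would note that $\Bor_I$ has a pfam with the density property: if $I$ is countably infinite, $\Bor_I$ is isomorphic to standard random forcing on $2^\omega$, which fits \autoref{exm:densepfam}~\ref{it:famrandom}; if $I$ is finite, $\Bor_I$ is a finite Boolean algebra, hence $\sigma$-centered, and \autoref{exm:densepfam}~\ref{it:famsc} applies. By \autoref{mainlemma}, $\Bor_I$ is $\Rbf^f_\Gwf$-good. Therefore, there is a countable $H\subseteq \Swf_f(\Gwf)$ such that for every $(t,\sigma)\in\bigcup_{i<\omega}2^{f(i)}\times(2^{<\omega})^\omega$ that is $\Rbf^f_\Gwf$-unbounded over $H$, we have $\Vdash_{\Bor_I} (t,\sigma)\nsqsubset^f \dot\psi$.

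Finally, I would transfer this witness to $\Bor_\kappa$. Since $\Bor_I\lessdot\Bor_\kappa$ and $\dot\psi$ is a $\Bor_I$-name, any $\Bor_\kappa$-generic filter $G$ induces a $\Bor_I$-generic $G\cap \Bor_I$ giving the same interpretation of $\dot\psi$; hence the statement $(t,\sigma)\nsqsubset^f \dot\psi$ being forced by $\Bor_I$ lifts to being forced by $\Bor_\kappa$. Thus $H$ witnesses $\Rbf^f_\Gwf$-goodness of $\Bor_\kappa$ for $\dot\psi$, as required. The only potentially delicate step is the claim that $\dot\psi$ lives in a countable subalgebra; this is standard for ccc measure algebras but is the essential point, since $\Bor_\kappa$ itself need not carry a pfam with the density property when $\kappa\geq\aleph_1$.
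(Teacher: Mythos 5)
Your proof is correct and follows essentially the same route as the paper: reduce the name to a countable complete subalgebra $\Bor_I$, which is random forcing (or finite), hence has a pfam with the density property, then invoke the Main Lemma and transfer the witnessing set through the complete embedding $\Bor_I\lessdot\Bor_\kappa$. The paper's proof is a terse one-sentence version of exactly this argument.
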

\begin{proof}
    Any $\Bor_\kappa$-name of a slalom in $\Swf_f(\Gwf)$ depends on some countable support, so when restricting $\Bor_\kappa$ to this support (which is basically random forcing), we can use goodness and find the good countable set in the ground model.
\end{proof}

As a consequence, we have that the ``density property" is inessential for $\Rbf^f_\Gwf$-goodness.

\begin{theorem}\label{supermain}
    Let $f\in\omega^\omega$ and $\Gwf$ be as in \autoref{def:Sf}. Then, any Boolean algebra with a pfam is $\Rbf^f_\Gwf$-good.
\end{theorem}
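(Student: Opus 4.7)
The strategy is to reduce \autoref{supermain} to \autoref{mainlemma} by embedding $\Aor$ into a measure algebra and invoking the observation, noted right after \autoref{def:good}, that $\Rbf^f_\Gwf$-goodness descends along complete subforcings $\Pbb \lessdot \Qor$. As in the opening of the proof of \autoref{mainlemma}, we may assume $\Aor$ is complete by extending $\mu$ to the completion.

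For the embedding, let $X := \mathrm{St}(\Aor)$ be the Stone space of $\Aor$, so that $\Aor$ is identified with $\mathrm{Clop}(X)$. The finitely additive $\mu$ extends to a regular Borel probability measure $\tilde\mu$ on $X$ by the Riesz representation theorem applied to the positive linear functional $\int{\cdot}\, d\mu$ on $C(X)$. Setting $\Aor^* := \Bwf(X)/\Nwf(\tilde\mu)$, the map $a \mapsto [\hat a]_{\tilde\mu}$ embeds $\Aor$ into $\Aor^*$. Because $X$ is compact and zero-dimensional, inner regularity of $\tilde\mu$ by closed---hence clopen---sets makes this image dense in $\Aor^*$, so $\Aor \lessdot \Aor^*$ in the forcing sense.

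It then remains to show that $\Aor^*$ is $\Rbf^f_\Gwf$-good, which I would prove by the same template as \autoref{cor:Bkappa}. Any $\Aor^*$-name $\dot\psi$ of a slalom in $\Swf_f(\Gwf)$ depends, by ccc, on countably many elements of $\Aor^*$; let $\Aor^*_0 \subseteq \Aor^*$ be the $\sigma$-subalgebra they generate. Then $\Aor^*_0$ is separable, $\sigma$-complete, and carries the strictly positive $\sigma$-additive measure $\tilde\mu \frestr \Aor^*_0$, so any countable subset of $\Aor^*_0 \menos \{\zerobf\}$ dense in the measure metric witnesses the density property. Applying \autoref{mainlemma} to $\Aor^*_0$ with respect to $\dot\psi$ yields a countable $H \subseteq \Swf_f(\Gwf)$ witnessing $\Rbf^f_\Gwf$-goodness of $\dot\psi$ in $\Aor^*_0$; since $\Aor^*_0 \lessdot \Aor^*$, the same $H$ works for $\Aor^*$. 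Combining with $\Aor \lessdot \Aor^*$ and the transfer principle, $\Aor$ is $\Rbf^f_\Gwf$-good.

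The main obstacle is the verification that $\Aor^*_0 \lessdot \Aor^*$ in the forcing sense: a maximal antichain of $\Aor^*_0$ must remain maximal in $\Aor^*$. This reduces to showing that a countable (by ccc) antichain of $\Aor^*_0$ with supremum $\onebf$ in $\Aor^*_0$ also has supremum $\onebf$ in $\Aor^*$, which is immediate from $\sigma$-additivity of $\tilde\mu$ on both algebras. An alternative, bypassing the Stone-space detour entirely, would be to invoke the classical result of Kelley and Kamburelis that any Boolean algebra with a pfam is a complete subforcing of some $\Bor_\kappa$, and then apply \autoref{cor:Bkappa} directly.
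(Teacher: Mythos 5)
Your Stone-space route has a gap at the claim that $\Aor$ is a complete (indeed dense) subalgebra of $\Aor^* := \Bwf(X)/\Nwf(\tilde\mu)$. Inner regularity of $\tilde\mu$ gives approximation from inside by \emph{closed} sets, and in a compact zero-dimensional space closed sets are merely intersections of clopen sets --- an \emph{outer} approximation that does not place any clopen set below a given positive Borel class. In fact the claim fails in general: take $\Aor$ to be the Cohen algebra with any pfam $\mu$. Since $\mu$ cannot be $\sigma$-additive (the Cohen algebra is not a measure algebra), there is a maximal antichain $\{a_n\}$ with $\sum_n\mu(a_n)<1$. In the Stone space, $\bigcup_n\hat{a}_n$ is open dense, yet by genuine $\sigma$-additivity of $\tilde\mu$ one has $\tilde\mu\bigl(\bigcup_n\hat{a}_n\bigr)=\sum_n\mu(a_n)<1$, so the complement is a positive-measure Borel set disjoint from every $\hat{a}_n$. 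Thus the image of $\{a_n\}$ is no longer maximal in $\Aor^*$ and $\Aor\not\lessdot\Aor^*$; the reduction to \autoref{mainlemma} via a measure algebra collapses.

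Your closing ``alternative'' also misremembers the structure theory: Kamburelis' result \cite[Prop.~3.7]{Ka}, which the paper actually invokes, completely embeds $\Aor$ into the two-step iteration $\Bor_\kappa\ast\Qnm$ for some $\Bor_\kappa$-name $\Qnm$ of a $\sigma$-centered poset --- \emph{not} into $\Bor_\kappa$ alone --- and the $\sigma$-centered tail cannot be dropped, for precisely the Cohen-algebra reason above. Once this is corrected, the argument you want is the paper's: apply \autoref{cor:Bkappa} to $\Bor_\kappa$, \autoref{exm:densepfam}~\ref{it:famsc} together with \autoref{mainlemma} to the $\sigma$-centered iterand, \autoref{FSIgoodness} to the iteration $\Bor_\kappa\ast\Qnm$, and finally descend to $\Aor$ as a complete suborder.
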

\begin{proof}
    Let $\Aor$ be a Boolean algebra with a pfam. By \cite[Prop.~3.7]{Ka}, $\Aor$ can be completely embedded into the two step iteration $\Bor_\kappa\ast \Qnm$ for some infinite cardinal $\kappa$ and some $\Bor_\kappa$-name $\Qnm$ of a $\sigma$-centered poset. By \autoref{FSIgoodness},~\ref{cor:Bkappa} and \autoref{exm:densepfam}~\ref{it:famsc}, we have that $\Bor_\kappa\ast \Qnm$ is $\Rbf^f_\Gwf$-good. As a consequence, $\Aor$ is $\Rbf^f_\Gwf$-good.
\end{proof}

By \autoref{exm:densepfam} and \autoref{supermain}, we obtain:

\begin{corollary}\label{exm:Rfgood}
The following posets are $\Rbf^f_\Gwf$-good.
\begin{enumerate}[label=\rm (\roman*)]
    \item\label{exm:Rfgoodi} Any subalgebra of random forcing.
    \item\label{exm:Rfgoodii} $(\Eor^h_b)^N$ and $(\Bor^h_b)^N$ for any transitive model $N$ of (a large enough fragment of) $\thzfc$, when $b,h\in\omega^\omega\cap N$ satisfy $\sum_{i\in\omega}\frac{h(i)}{|b(i)|}<\infty$. 
    \item\label{exm:Rfgoodiii} Any $\sigma$-centered poset.
\end{enumerate}
\end{corollary}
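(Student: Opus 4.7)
The plan is to establish each clause as a direct consequence of \autoref{supermain}: in each case one exhibits (a forcing equivalent of) the poset as completely embeddable into a Boolean algebra carrying a strictly positive probability finitely additive measure. Recall that $\Rbf^f_\Gwf$-goodness descends along $\lessdot$ (as noted after \autoref{def:good}), so in practice it suffices to handle the ambient Boolean algebra.

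Item (i) is immediate: on any subalgebra of random forcing the ambient Lebesgue measure restricts to a pfam (strict positivity is inherited since null elements are precisely $\zerobf$ in the ambient algebra), and \autoref{supermain} then delivers $\Rbf^f_\Gwf$-goodness.

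For item (iii), a $\sigma$-centered poset $\Pbb$ completely embeds into its Boolean completion $\Aor$. Writing $\Pbb=\bigcup_n \Pbb_n$ with centered pieces, each $\Pbb_n$ generates a filter in $\Aor$ which extends to an ultrafilter $G_n$; density of $\Pbb$ in $\Aor$ then forces $\Aor\setminus\{\zerobf_\Aor\}=\bigcup_n G_n$, so $\Aor$ is $\sigma$-centered as a Boolean algebra. Hence \autoref{exm:densepfam}~\ref{it:famsc} equips $\Aor$ with a pfam, \autoref{supermain} yields goodness of $\Aor$, and this descends to $\Pbb$.

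For item (ii), applying \autoref{EDsepfam} inside $N$ shows that $(\Eor^h_b)^N$ is forcing equivalent to $(\Qor^h_b)^N$, which is dense in $(\Bor^h_b)^N$; it therefore suffices to endow $(\Bor^h_b)^N$, regarded as a Boolean algebra in $V$, with a pfam. This is done by restricting the $V$-Lebesgue measure $\Lb_b$ on $\prod b$: well-definedness rests on the fact that sets equivalent mod $\Nwf^N$ are equivalent mod $\Nwf^V$, and strict positivity rests on the absoluteness of ``being $\Lb_b$-null'' for Borel codes. Then \autoref{supermain} gives $\Rbf^f_\Gwf$-goodness of $(\Bor^h_b)^N$, which passes to $(\Eor^h_b)^N$ via the dense embedding. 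This last verification---that the $V$-Lebesgue measure genuinely descends to a strictly positive pfam on the $N$-relativized algebra---is the only step where the absoluteness argument for Borel codes actually intervenes, and it is the mildly delicate point of the proof.
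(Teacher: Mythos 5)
Your proof is correct and takes essentially the same approach as the paper, which derives the corollary in one line from \autoref{exm:densepfam} and \autoref{supermain}. You usefully make explicit the points the paper leaves implicit: that a subalgebra of random forcing inherits a pfam directly (no density property needed once \autoref{supermain} is available), that the $N$-relativized algebra in~(ii) carries the $V$-Lebesgue measure by absoluteness of null-status for Borel codes, and that the Boolean completion of a $\sigma$-centered poset is a $\sigma$-centered Boolean algebra.
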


We are now ready to prove the main technical device of this paper.

\begin{theorem}\label{thm:Paddsn}
Let $\theta_0\leq \theta$ be uncountable regular cardinals, $\lambda=\lambda^{<\theta_0}$ a cardinal and let $\pi=\lambda\delta$ (ordinal product) for some ordinal $0<\delta<\lambda^+$. Assume $\theta \leq \lambda$ and $\cf(\pi)\geq \theta_0$. If $\Por$ is a FS iteration of length $\pi$ of non-trivial $\theta_0$-cc $\theta$-$\Rbf^f_\Gwf$-good posets of size ${\leq}\lambda$, 
then $\Por$ forces $\Cbf_{[\lambda]^{<\theta}}\leqT\SNwf$, in particular, 
$\add(\SNwf)\leq\theta$ and $\lambda\leq\cof(\SNwf)$.
\end{theorem}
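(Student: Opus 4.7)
The plan is to apply \autoref{lem:TukeyCtheta}~\ref{c:str}: I will construct $\Por$-names $\{\dot X_\beta : \beta < \lambda\}$ of SMZ sets and show that every SMZ set in the extension contains fewer than $\theta$ of them. Fix $f$ and $\Gwf$ as in \autoref{def:Sf}, with $f$ growing so fast that, for every $\tau\in 2^f$, the pruned tree $T_\tau := \{s\in 2^{<\omega} : \forall k\,(f(k)\leq|s|\imp s\frestr f(k)\neq \tau(k))\}$ is perfect.

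Using $\lambda = \lambda^{<\theta_0}$ and the standard bookkeeping available in a FS iteration of length $\pi=\lambda\delta$ of ccc posets, I would assign to each pair $(\beta,\alpha)\in\lambda\times\lambda$ a distinct limit ordinal $\gamma_{\beta,\alpha}<\pi$, and let $\dot\sigma^\beta_\alpha$ be a Cohen real in $(2^{<\omega})^\omega$ extracted from the interval $[\gamma_{\beta,\alpha},\gamma_{\beta,\alpha}+\omega)$ (which is always available since any FS iteration of non-trivial posets adds a Cohen real at each countable-cofinality limit). The bookkeeping would also arrange that, for each $\beta$, $\{\hgt_{\dot\sigma^\beta_\alpha}:\alpha<\lambda\}$ is forced to be a dominating family in $V_\pi$. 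Set $\dot X_\beta := \bigcap_{\alpha<\lambda}[\dot\sigma^\beta_\alpha]_\infty$; by the dominating property and \autoref{charSN}, $\Por$ forces $\dot X_\beta\in\SNwf$.

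For the unboundedness, take a $\Por$-name $\dot Y\in\SNwf$, and pick a $\Por$-name $\dot\tau\in 2^f$ with $\dot Y\subseteq\bigcup_k[\dot\tau(k)]$; let $\dot\tau^*\in\Swf_f(\Gwf)$ be given by $\dot\tau^*(k):=\{\dot\tau(k)\}$. By \autoref{smallgsupp}, $|\gsupp(\dot\tau^*)|<\theta$, and by injectivity of $(\beta,\alpha)\mapsto\gamma_{\beta,\alpha}$ the set
\[B := \{\beta<\lambda : \exists\alpha\ [\gamma_{\beta,\alpha},\gamma_{\beta,\alpha}+\omega)\cap\gsupp(\dot\tau^*)\neq\emptyset\}\]
has size $<\theta$. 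Fix $\beta\notin B$; for every $\alpha<\lambda$, I apply \autoref{itegsupp} with $\xi=\gamma_{\beta,\alpha}$, $\xi'=\gamma_{\beta,\alpha}+\omega$, $\delta=\pi$ to obtain a set $W^*_{\beta,\alpha}$ of size $<\theta$ of $\Por_\xi$-names witnessing $\Rbf^f_\Gwf$-goodness of $\dot\tau^*$ at that interval. Since $\sigma^\beta_\alpha$ is Cohen over $V_\xi$, \autoref{RfPrs}~\ref{RfPrsthree} guarantees $(t,\sigma^\beta_\alpha)\nsqsubset^f\psi$ in $V_{\xi'}$ for every ground-model $t\in\bigcup_i 2^{f(i)}$ and every $\psi\in W^*_{\beta,\alpha}$, so $\Por$ forces $(t,\sigma^\beta_\alpha)\nsqsubset^f\dot\tau^*$. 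Because $\bigcup_i 2^{f(i)}$ is countable, \autoref{magiclem} then forces that $\{\sigma^\beta_\alpha(n):n\geq n_0\}$ is dense in $T:=T_\tau$ for every $n_0<\omega$.

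It remains to exhibit $c\in X_\beta\cap[T]$. Let $\zeta := \sup(\gsupp(\dot\tau^*))+\omega<\pi$ (using $|\gsupp(\dot\tau^*)|<\theta\leq\cf(\pi)$). In $V_\zeta$ the tree $T$ is perfect, so a definable homeomorphism $\phi_T\colon 2^\omega\to[T]$ exists there; applying $\phi_T$ to any Cohen real added at a countable-cofinality limit stage $\eta\geq\zeta$ of the iteration yields $c\in[T]$ which is Cohen for the tree forcing on $T$ over $V_\zeta$. For $\alpha$ with $\gamma_{\beta,\alpha}<\zeta$, the density shown above makes the set $\{s\in T : \exists n\geq n_0\ s\supseteq\sigma^\beta_\alpha(n)\}$ dense in $T$, and the generic filter of prefixes of $c$ meets it, giving $\sigma^\beta_\alpha(n)\subseteq c$ for arbitrarily large $n$, hence $c\in[\sigma^\beta_\alpha]_\infty$; for $\alpha$ with $\gamma_{\beta,\alpha}\geq\zeta$, the Cohen-ness of $\sigma^\beta_\alpha$ over a model containing $c$ directly gives $c\in[\sigma^\beta_\alpha]_\infty$. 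Thus $c\in X_\beta\setminus Y$, so $X_\beta\not\subseteq Y$. Consequently $\{\beta:X_\beta\subseteq Y\}\subseteq B$ is of size $<\theta$, yielding $\Cbf_{[\lambda]^{<\theta}}\leqT\SNwf$ and therefore $\add(\SNwf)\leq\theta$ and $\cof(\SNwf)\geq\lambda$. The main obstacle is coordinating the ``before $\zeta$'' density argument (obtained via goodness-support) with the ``after $\zeta$'' Cohen argument, and extracting a $[T]$-Cohen branch from a standard Cohen real of the iteration through the homeomorphism $\phi_T$.
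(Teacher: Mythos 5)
Your overall strategy is the paper's: build strong measure zero sets $X_\beta = \bigcap_\alpha[\sigma^\beta_\alpha]_\infty$ from Cohen reals, use $\theta$-$\Rbf^f_\Gwf$-goodness (via the goodness support) to bound the bad set $B$ of $\beta$'s, and for $\beta\notin B$ exhibit a Cohen branch $c$ through $T_\tau$ lying in $X_\beta$. The one genuinely different stylistic choice is that you unroll \autoref{itegsupp} by hand for each interval $[\gamma_{\beta,\alpha},\gamma_{\beta,\alpha}+\omega)$, whereas the paper applies \autoref{Comgood} as a black box and extracts the set $A\in[\pi_0]^{<\theta}$ from the $\theta$-cc in the ground model; this is a legitimate unfolding. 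However there are concrete gaps in the way you plant the branch $c$, which the paper's indexing avoids.

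First, the step ``$\zeta:=\sup(\gsupp(\dot\tau^*))+\omega<\pi$ (using $|\gsupp(\dot\tau^*)|<\theta\leq\cf(\pi)$)'' invokes a hypothesis the theorem does not grant: we only have $\cf(\pi)\geq\theta_0$, and $\theta_0\leq\theta$ may be strict (e.g.\ in \autoref{mainpresaddSN} the iterands are merely ccc, $\theta_0=\aleph_1$, while $\theta$ can be much larger and $\cf(\pi)$ can be $\aleph_1$). When $\cf(\pi)<\theta$, case (iii) of \autoref{def:gsupp} allows $\gsupp(\dot\tau^*)$ to be cofinal in $\pi$, so no $\zeta<\pi$ lies past it. You do not actually need $\gsupp$ here: to have the tree $T$ available at stage $\zeta$ it suffices that $\supp(\dot\tau)\subseteq\zeta$, and by \autoref{smallhist} (applied with $\theta_0$) the plain support has size ${<}\theta_0\leq\cf(\pi)$, hence is bounded. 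The paper chooses $\xi$ exactly this way, using only that $\tau$ is a name of a real.

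Second, the split ``$\gamma_{\beta,\alpha}<\zeta$ (density) versus $\gamma_{\beta,\alpha}\geq\zeta$ (Cohenness of $\sigma^\beta_\alpha$ over a model containing $c$)'' is wrong at $\zeta$: if $\zeta\leq\gamma_{\beta,\alpha}<\eta$ then $c$ is not yet in $V_{\gamma_{\beta,\alpha}}$, so the Cohenness claim fails; the split must be at $\eta$, and in that intermediate range you are in fact saved by the density argument (which applies to all $\gamma_{\beta,\alpha}<\eta$, since $\sigma^\beta_\alpha\in V_\eta$ and the density statement is arithmetical hence absolute). Third, and related, you must ensure the planting stage $\eta$ is not one of the $\gamma_{\beta,\alpha}$ for the chosen $\beta$, or else $c$ and some $\sigma^\beta_\alpha$ are added in the same block $[\eta,\eta+\omega)$ and neither argument applies. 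This is exactly why the paper partitions the limit blocks into cofinal sets $Z^0_\beta$ of size $\lambda$ and plants $c$ at $\gamma_{\xi+1}$ for a $\xi\in\pi_0\setminus Z^0_\beta$; in your indexing by distinct limits $\gamma_{\beta,\alpha}$ you must add the explicit requirement $\eta\notin\{\gamma_{\beta,\alpha}:\alpha<\lambda\}$ (which is achievable since for fixed $\beta$ only $\lambda$ limits are used and the complement is still cofinal). With these three repairs your argument matches the paper's proof.
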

\begin{proof}
Let $\set{\gamma_\alpha}{\alpha<\pi_0}$ be an increasing enumeration of $0$ and all limit ordinals smaller than $\pi$, and let $I_\alpha:=[\gamma_\alpha,\gamma_{\alpha+1})$. Note that $\la I_\alpha:\, \alpha<\pi_0\ra$ is an interval partition of $\pi$ of sets with order type $\omega$. Even more, $\omega\xi<\pi$ for all $\xi<\pi$, so $\pi_0$ and $\pi$ have the same order type, i.e.\ $\pi_0=\pi$. We still use $\pi_0$ to distinguish indexes corresponding to (zero and) limit ordinals. Note that $|\pi|=\lambda$.

Let $\Seq{Z^0_\beta}{\beta<\lambda}$ be a partition of $\pi_0$ 
into cofinal subsets of $\pi_0$ of size $\lambda$. We can find this partition because $\pi$ is a multiple of $\lambda$. For $\beta<\lambda$, let $Z_\beta:=\bigcup_{\alpha\in Z^0_\beta}I_\alpha$.

For $\beta<\lambda$ find $\set{\dot f_\alpha}{\alpha\in Z^0_\beta}$ such that each $\dot f_\alpha$ is a $\Por_{\gamma_\alpha}$-name for an increasing function in $\omega^\omega$ and $\Por_\pi$ forces that $\set{\dot f_\alpha}{\alpha\in Z^0_\beta}$ is the collection of all increasing functions in $\omega^\omega$. This is possible by book-keeping or counting arguments because $\lambda^{<\theta_0}=\lambda$, $\Por$ is $\theta_0$-cc and $\cf(\pi)\geq \theta_0$, which in fact implies that $\cfrak$ is forced to be $\lambda$. Here $f_0 := f$.

For $\alpha< \pi_0$ denote by $\sigma_\alpha\in(2^{<\omega})^\omega\cap V_{\gamma_{\alpha+1}}$ the Cohen real over $V_{\gamma_\alpha}$ such that  $\hgt_{\sigma_\alpha}=f_\alpha$. Hence $2^\omega\cap V_{\gamma_\alpha}\subseteq[\sigma_\alpha]_\infty$.  Denote its $\Por_{\gamma_{\alpha+1}}$-name by $\dot\sigma_\alpha$.

In the final extension, for $\beta<\lambda$, define $X_\beta:=\bigcap_{\alpha\in Z^0_\beta}[\sigma_\alpha]_\infty$, which is clearly in $\SNwf$. As indicated in the discussion starting this section, it is enough to show in the final extension that, for any $\tau \in 2^f$, $\left|\set{\beta<\lambda}{X_\beta\subseteq\bigcup_{i<\omega}[\tau(i)]}\right|<\theta$.

Work in $V$. Let $\tau$ be a nice $\Por_\pi$-name of a member of $2^f$ and let $\dot\psi$ be a (nice) $\Por_\pi$-name of the slalom defined by $\dot\psi(k):=\{\tau(k)\}$. 
For every $t\in \bigcup_{m<\omega}2^{f(m)}$, $(t,\sigma_\alpha)$ is a Cohen real in $\left(\bigcup_{i<\omega}2^{f(i)}\right)\times 2^{f_\alpha}$ over $V_{\gamma_\alpha}$, so by \autoref{Comgood} $\Por_\pi$ forces that $\set{(t,\dot \sigma_\alpha)}{\alpha<\pi_0}$ is $\theta$-$\Rbf^f_\Gwf$-unbounded. It follows that $\Por_\pi$ forces that \[\set{(t,\dot\sigma_\alpha)}{t\in\bigcup_{m<\omega}2^{f(m)},\ \alpha<\pi_0} \text{ is $\theta$-$\Rbf^f_\Gwf$-unbounded,}\] 
which implies that $\left|\set{\alpha<\pi_0}{\exists\, t\in\bigcup_{m<\omega}2^{f(m)}\colon (t,\dot\sigma_\alpha)\sqsubset^f \dot\psi}\right| <\theta$. Since $\Por_\pi$ has the $\theta$-cc, we can find some $A\in[\pi_0]^{<\theta}$ in the ground model which is forced to contain this set. Therefore $B:=\set{\beta<\lambda}{Z^0_\beta\cap A\neq\emptyset}$ has size ${<}\theta$.

It suffices to prove that, for any $\beta\in\lambda\menos B$, $\Por_\pi$ forces that 
\[\dot X_\beta\nsubseteq\bigcup_{i<\omega}[\tau(i)].\]
Let $\beta\in \lambda\menos B$. Since $\tau$ is a name of a real, there is some $\xi\in \pi_0\menos Z^0_\beta$ such that $\tau$ is a $\Por_{\gamma_\xi}$-name. Let $\dot T$ be a $\Por_{\gamma_\xi}$-name of the well-pruned tree such that $[\dot T]=2^\omega\menos\bigcup_{k<\omega}[\tau(k)]$, and note that this tree is forced to be perfect. Since $\Por_{\gamma_{\xi+1}}$ adds a Cohen real over $V_{\gamma_\xi}$, we pick a $\Por_{\gamma_{\xi+1}}$-name $\dot c$ of a Cohen real in $[\dot T]$ over $V_{\gamma_\xi}$. To conclude the proof, it is enough to show that $\Por_\pi$ forces $\dot c\in \dot X_\beta = \bigcap_{\alpha\in Z^0_\beta}[\dot \sigma_\alpha]_\infty$.

Pick $\alpha\in Z^0_\beta$, so $\alpha\neq\xi$ by the choice of $\xi$. We split into two cases. If $\xi<\alpha$ then it is clear that $\Por_\pi$ forces $\dot c\in[\dot\sigma_\alpha]_\infty$ because $\dot\sigma_{\alpha}$ is forced to be Cohen over $V_{\gamma_\alpha}$, which contains $2^\omega \cap V_{\gamma_{\xi+1}}$. 

In the case $\alpha<\xi$, since $\beta\notin B$, $Z^0_\beta\cap A =\emptyset$, so $\alpha\notin A$, which implies that $\Por_\pi$ forces
\[\forall\, t\in \bigcup_{m<\omega}2^{f(m)}\colon (t,\dot\sigma_\alpha)\nsqsubset^f \dot\psi.\]
Therefore, by \autoref{magiclem}, $\Por_{\gamma_\xi}$ forces that $\set{\dot\sigma_\alpha(n)\in \dot T}{n\geq n_0}$ is dense in $\dot T$ for all $n_0<\omega$, which implies that $\Por_{\gamma_{\xi+1}}$ forces that $\dot c\in [\dot\sigma_\alpha]_\infty$.
\end{proof}

    The proof of \autoref{thm:Paddsn} actually shows that $\Por$ forces $\Cbf_{[\lambda]^{<\theta}} \leqT \Sbf^f$ where
        $\Sbf^f  := \la \SNwf, 2^f, \sqsubseteq\ra$ 
    and $A\sqsubseteq \tau$ iff $A\subseteq \bigcup_{i<\omega}[\tau(i)]$.
    It is clear that $\Sbf^f \leqT \SNwf$. Moreover, for Yorioka ideals, we obtain:

\begin{lemma}\label{SfYorio}
    Let $f,g\colon \omega\to\omega$ be increasing and assume that, for some $0<k<\omega$, $f(i)\leq g(i^k)$ for all but finitely many $i<\omega$. Then $\Sbf^f\leqT \Iwf_g$.
    
    In particular, if $f(i) \leq \frac{(i+1) (i+2) (2i+3)}{6}$ for all (but finitely many $i<\omega$) then $\Sbf^f\leqT \Iwf_\id$.
\end{lemma}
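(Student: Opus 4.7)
\medskip

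\textbf{Plan.} The goal is to exhibit a Tukey connection $(\Psi_-,\Psi_+)\colon \Sbf^f\to \Iwf_g$. The plan is to take $\Psi_-\colon\SNwf\to\Iwf_g$ to be simply the inclusion map, which is legitimate because $\SNwf=\bigcap_{h\in\baireincr}\Iwf_h\subseteq\Iwf_g$. The work lies in defining $\Psi_+\colon\Iwf_g\to 2^f$: given $B\in\Iwf_g$, fix (using a well-order in the background) a witness $\sigma\in(2^{<\omega})^\omega$ with $g\ll \hgt_\sigma$ and $B\subseteq [\sigma]_\infty$, and produce a $\tau=\Psi_+(B)\in 2^f$ by truncating $\sigma$ at the right levels.

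The key quantitative step is to show that $|\sigma(n)|\geq f(n)$ for all sufficiently large $n$. Applying the definition of $g\ll\hgt_\sigma$ with the exponent $k$ from the hypothesis yields $m_k<\omega$ such that $g(n^k)\leq|\sigma(n)|$ for all $n\geq m_k$. Combined with $f(i)\leq g(i^k)$ (holding eventually, say for $i\geq m^*$), this gives $f(n)\leq|\sigma(n)|$ for all $n\geq n_0:=\max\{m_k,m^*\}$. Define $\tau(n):=\sigma(n)\frestr f(n)$ whenever $n\geq n_0$, and let $\tau(n)$ be any element of $2^{f(n)}$ otherwise. Then $[\sigma(n)]\subseteq[\tau(n)]$ for $n\geq n_0$, so
\[
B\subseteq [\sigma]_\infty\subseteq \bigcup_{n\geq n_0}[\sigma(n)]\subseteq \bigcup_{n<\omega}[\tau(n)].
\]

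The Tukey inequality is then immediate: if $A\in\SNwf$ and $B\in\Iwf_g$ satisfy $\Psi_-(A)=A\subseteq B$, then $A\subseteq B\subseteq \bigcup_n[\tau(n)]$, i.e.\ $A\sqsubseteq \Psi_+(B)$. There is no real obstacle here: the only point to be slightly careful about is the dependence of $\tau$ on the chosen witness $\sigma$, which is dealt with by a fixed well-order of the candidates, and the observation that the choice of $\tau(n)$ below $n_0$ is irrelevant for the covering argument.

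For the parenthetical ``in particular'' clause, it suffices to verify that $f(i):=\frac{(i+1)(i+2)(2i+3)}{6}$ satisfies $f(i)\leq i^k$ for some positive integer $k$ and all but finitely many $i$; taking $k=3$ works, since expanding gives $f(i)=\frac{2i^3+9i^2+13i+6}{6}$, so $f(i)\leq i^3$ amounts to $4i^3\geq 9i^2+13i+6$, which holds for $i\geq 4$. Hence $f(i)\leq \id(i^3)$ eventually, and the general statement applied with $g=\id$ and $k=3$ delivers $\Sbf^f\leqT \Iwf_\id$.
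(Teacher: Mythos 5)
Your proof is correct and takes essentially the same approach as the paper: the core observation that $g\ll\hgt_\sigma$ together with $f(i)\leq g(i^k)$ eventually gives $f\leq^*\hgt_\sigma$, and hence the truncations $\sigma(n){\restriction}f(n)$ produce a $\tau\in 2^f$ covering $[\sigma]_\infty$, is exactly the paper's argument. The only cosmetic difference is that the paper factors the Tukey connection through the intermediate relational system $\la\Iwf_g,2^f,\sqsubseteq\ra$ (first $\Sbf^f\leqT\la\Iwf_g,2^f,\sqsubseteq\ra$ via $\SNwf\subseteq\Iwf_g$, then $\la\Iwf_g,2^f,\sqsubseteq\ra\leqT\Iwf_g$), whereas you assemble the same two maps directly into a single connection $(\Psi_-,\Psi_+)$; this changes nothing of substance.
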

\begin{proof}
    First note that the hypothesis is equivalent to say that $g\ll g'$ implies $f\leq^* g'$ for any increasing $g'\in\omega^\omega$. Clearly $\Sbf^f\leqT \la\Iwf_g, 2^f,\sqsubseteq\ra$ so, to show $\la\Iwf_g, 2^f, \sqsubseteq \ra \leqT \Iwf_g$, it is enough to prove that any member of $\Iwf_g$ is contained in $\bigcup_{i<\omega}[\tau(i)]$ for some $\tau\in 2^f$. If $\sigma\in (2^{<\omega})^\omega$ and $\hgt_\sigma \gg g$, then $\hgt_\sigma \geq^* f$, so we can find some $\tau \in 2^f$ such that $\forall^\infty\, i<\omega\colon \tau(i) = \sigma_i\frestr f(i)$. It is clear that $[\sigma]_\infty \subseteq \bigcup_{i<\omega}[\tau(i)]$.
\end{proof}

\begin{corollary}\label{cor:addsn}
    With the assumptions of \autoref{thm:Paddsn}, $\Por$ forces that $\Cbf_{[\lambda]^{<\theta}} \leqT \Iwf_g$ for any increasing $g\in\omega^\omega$ such that, for some $0<k<\omega$, $f(i)\leq g(i^k)$ for all but finitely many $i<\omega$.

    In particular, if $f(i) = \frac{(i+1) (i+2) (2i+3)}{6}$ for any $i<\omega$ (which is allowed), then $\Por$ forces $\Cbf_{[\lambda]^{<\theta}} \leqT \Iwf_g$ for any $g\in\omega^\omega$, so $\add(\Iwf_g)\leq\theta$ and $\lambda\leq\cof(\Iwf_g)$.
\end{corollary}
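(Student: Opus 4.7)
The plan is to derive the corollary by composing two Tukey reductions that are already in hand. First I would invoke the remark stated immediately after \autoref{thm:Paddsn}, which records that the argument for \autoref{thm:Paddsn} in fact yields the sharper conclusion $\Cbf_{[\lambda]^{<\theta}} \leqT \Sbf^f$ in the $\Por$-extension, not merely $\Cbf_{[\lambda]^{<\theta}} \leqT \SNwf$. Then I would apply \autoref{SfYorio}: given an increasing $g \in \omega^\omega$ with $f(i) \leq g(i^k)$ for all but finitely many $i$ and some fixed $0 < k < \omega$, that lemma supplies $\Sbf^f \leqT \Iwf_g$. Transitivity of $\leqT$ then delivers $\Cbf_{[\lambda]^{<\theta}} \leqT \Iwf_g$ in $V^\Por$, which is the first assertion of the corollary.

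For the ``in particular'' clause I would make two observations. First, $f(i) = \frac{(i+1)(i+2)(2i+3)}{6}$ is precisely the function exhibited in \autoref{def:Sf} together with $h_n(i) = (2^{i+1})^n 2^i$; in particular it satisfies $f(i) \geq f(i-1)+(i+1)^2$, so this choice is a legitimate input to the setup of \autoref{thm:Paddsn}. Second, for any increasing $g \in \omega^\omega$, strict monotonicity on $\omega$ forces $g(n) \geq n$ for every $n$, hence $g(i^k) \geq i^k$. Since $f$ is a cubic polynomial in $i$, choosing $k = 4$ gives $f(i) \leq i^4 \leq g(i^4)$ for all sufficiently large $i$, so the hypothesis of the first part of the corollary is automatically verified for \emph{every} increasing $g$. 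Applying the first part with this $k$ yields $\Cbf_{[\lambda]^{<\theta}} \leqT \Iwf_g$ uniformly in $g$.

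The cardinal consequences $\add(\Iwf_g) \leq \theta$ and $\lambda \leq \cof(\Iwf_g)$ drop out of $\Cbf_{[\lambda]^{<\theta}} \leqT \Iwf_g$ via \autoref{lem:TukeyCtheta}\ref{c:str}, together with $\bfrak(\Iwf_g) = \add(\Iwf_g)$ and $\dfrak(\Iwf_g) = \cof(\Iwf_g)$. There is no substantive obstacle here, as all the real work is absorbed into \autoref{thm:Paddsn} and \autoref{SfYorio}; the corollary amounts to chaining those two results and checking the trivial polynomial estimate $i^4 \geq \frac{(i+1)(i+2)(2i+3)}{6}$ for large~$i$. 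The one mild point I would want to flag explicitly is that the particular $f$ used in the ``in particular'' clause fits inside the hypotheses on the iterands of \autoref{thm:Paddsn}; this is unproblematic because $\Rbf^f_\Gwf$-goodness of random forcing and of $\sigma$-centered posets (\autoref{exm:Rfgood}) holds for every admissible pair $(f,\Gwf)$ as in \autoref{def:Sf}.
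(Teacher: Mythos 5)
Your proof is correct and, if anything, a bit more explicit than the paper's own one-line justification. Both you and the paper decompose the argument into the post-\autoref{thm:Paddsn} remark (the sharpened Tukey reduction $\Cbf_{[\lambda]^{<\theta}}\leqT\Sbf^f$) together with \autoref{SfYorio}. The difference is one of emphasis: the paper's proof first localizes the function $g$ to an intermediate extension $V_\xi$ and then says one can ``apply \autoref{thm:Paddsn} and \autoref{SfYorio} to the remaining part of the iteration,'' whereas you work entirely in the final extension $V_\pi$ and simply compose the two Tukey reductions there. Your route is cleaner in two respects. First, since \autoref{SfYorio} is a ZFC lemma about the fixed ground-model function $f$ and the arbitrary increasing function $g$ now in hand, it can be invoked directly in $V_\pi$ without any detour through a tail iteration; transitivity of $\leqT$ closes the argument. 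Second, you are explicit that the sharpened conclusion $\Cbf_{[\lambda]^{<\theta}}\leqT\Sbf^f$ (from the unlabeled remark) and not merely $\Cbf_{[\lambda]^{<\theta}}\leqT\SNwf$ is what composes with $\Sbf^f\leqT\Iwf_g$; the bare statement of \autoref{thm:Paddsn} would not do, since $\SNwf\leqT\Iwf_g$ is not available. Your numerical check for the ``in particular'' clause is also sound: for strictly increasing $g$ one has $g(n)\geq n$, and $f(i)=\frac{(i+1)(i+2)(2i+3)}{6}\leq i^4$ for $i\geq 2$, so $k=4$ always satisfies the growth hypothesis. The final passage from $\Cbf_{[\lambda]^{<\theta}}\leqT\Iwf_g$ to $\add(\Iwf_g)\leq\theta$ and $\lambda\leq\cof(\Iwf_g)$ via \autoref{lem:TukeyCtheta}\ref{c:str} is as stated.
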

\begin{proof}
    Note that any $g\in\omega^\omega$ in the generic extension lives in the intermediate extension, so we can apply \autoref{thm:Paddsn} and \autoref{SfYorio} to the remaining part of the iteration.
\end{proof}

\section{Applications~I}\label{SecConstr}
 
We prove~\autoref{4SN} (\autoref{thm4SN}), i.e.\ the consistency that the four cardinal characteristics associated with $\SNwf$ are pairwise different, along with some constellations of Cicho\'n's diagram.

We fix some notation. 
\begin{enumerate}[label = \rm (\arabic*)]
    \item For an infinite cardinal $\kappa$ denote by $\Fn_{<\kappa}(I,J)$ the poset of partial functions from $I$ into $J$ with domain of size ${<}\kappa$, ordered by $\supseteq$.
    \item We denote by $\Loc$ the standard $\sigma$-linked poset that adds a generic slalom $\varphi_{\gen} \in \Swf(\omega,\id)$ such that $x \in^* \varphi_{\gen}$ for any $x\in \omega^\omega$ in the ground model. (See e.g.~\cite[Def.~2.4]{BCM}.)
    \item $\Bor$ denotes random forcing.
    \item $\Dor$ denotes Hechler's poset for adding a dominating real over the ground model. Recall that it is $\sigma$-centered.
\end{enumerate}

Concerning some of the hypotheses that appear in our theorems, 
we use assumptions of the form $\cof([\lambda]^{<\theta}) = \lambda$, which implies $\cf(\lambda)\geq\cf(\theta)$ (because
$\cf(\cof([\lambda]^{<\theta})) \geq \add([\lambda]^{<\theta}) = \cf(\theta)$); another type of hypothesis has the form $\dfrak_{\cf(\lambda)} = \dfrak_\lambda = \cof\left(([\lambda]^{<\theta})^\lambda\right) = \nu$, which is easy to force. Indeed, ZFC proves that, for infinite cardinals $\theta\leq\lambda$ and $\mu$,
\[ \la\cf(\lambda)^{\cf(\mu)},\leq\ra \leqT \la\cf(\lambda)^{\mu},\leq\ra \eqT \la \lambda^\mu, \leq\ra \leqT \Cbf_{[\lambda]^{<\theta}}^\mu \leqT ([\lambda]^{<\theta})^\mu,\]
so
\[\dfrak(\cf(\lambda)^{\cf(\mu)}) \leq \dfrak(\lambda^\mu) \leq \cov\left(([\lambda]^{<\theta})^{\mu}\right) \leq \cof\left(([\lambda]^{<\theta})^{\mu}\right) \leq (\cof([\lambda]^{<\theta}))^\mu \leq  (\lambda^{<\theta})^\mu \leq (2^\lambda)^\mu.\]
Hence, if $\cf(\lambda)^{<\cf(\lambda)} = \cf(\lambda)$ and $\nu^\lambda=\nu$
then, after forcing with $\Fn_{<\cf(\lambda)}(\nu,2)$,
\[ \dfrak_{\cf(\lambda)}=\dfrak_{\lambda}=
\cof\left(([\lambda]^{<\theta})^{\lambda}\right)=2^{\lambda}=\nu.\]
We can further force $\cof\left(([\lambda]^{<\theta})^{\lambda}\right) < 2^{\lambda}$, but there is no known natural way to do this when $\theta<\lambda$.

\begin{lemma}\label{lem:hyp}
    Let $\theta \leq \lambda < \nu \leq\vartheta$  be infinite cardinals. Assume that $\cf(\lambda)^{<\cf(\lambda)} = \cf(\lambda)$ and $\vartheta^{\lambda} = \vartheta$.
    \begin{enumerate}[label = \rm  (\alph*)]
        \item\label{hyp:a} Assume that there is some cardinal $\theta_0<\cf(\theta)$ such that $\theta_0<\cf(\lambda)$ and $\theta_0^{<\theta_0} = \theta_0$.  If $\nu^\lambda = \nu$ then $\Fn_{<\cf(\lambda)}(\nu,2)\times \Fn_{<\theta_0}(\vartheta,2)$ preserves cofinalities and forces
    \[\dfrak_{\cf(\lambda)}=\dfrak_{\lambda}=
      \cof\left(([\lambda]^{<\theta})^{\lambda}\right)=\nu \leq 2^{\theta_0} = 2^\lambda =\vartheta.\]

      \item\label{hyp:b} If $\theta = \lambda$ and $\nu$ are regular, then  there is a cofinality preserving poset forcing
      \[\dfrak_{\lambda}=
      \cof\left(([\lambda]^{<\lambda})^{\lambda}\right)=\nu \leq 2^\lambda =\vartheta.\]
    \end{enumerate}
\end{lemma}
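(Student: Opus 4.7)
For part (a), set $\kappa:=\cf(\lambda)$ and consider the product $\Pbb:=\Pbb_1\times\Pbb_2$ with $\Pbb_1:=\Fn_{<\kappa}(\nu,2)$ and $\Pbb_2:=\Fn_{<\theta_0}(\vartheta,2)$, viewed as the two-step iteration $\Pbb_1*\check\Pbb_2$. From $\kappa^{<\kappa}=\kappa$ and the standard $\Delta$-system lemma, $\Pbb_1$ is $(<\kappa)$-closed and $\kappa^+$-cc, so it preserves all cofinalities. The $(<\kappa)$-closure of $\Pbb_1$ (together with $\theta_0<\kappa$) preserves $\theta_0^{<\theta_0}=\theta_0$, so over $V^{\Pbb_1}$ the poset $\Pbb_2$ is still $(<\theta_0)$-closed and $\theta_0^+$-cc, and preserves cofinalities. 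For $2^{\theta_0}=2^\lambda=\vartheta$ in $V^\Pbb$, the lower bound comes from the $\vartheta$-many generic subsets of $\theta_0$ added by $\Pbb_2$, and the upper bound from counting nice $\Pbb$-names, via $2^\lambda\leq|\Pbb|^\lambda\leq\vartheta^\lambda=\vartheta$.

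The core is to force the chain
\[\nu\leq\dfrak_\kappa\leq\dfrak_\lambda\leq\cof(([\lambda]^{<\theta})^\lambda)\leq\nu.\]
For $\nu\leq\dfrak_\kappa$, the generic of $\Pbb_1$ canonically yields $\nu$ many $\kappa$-Cohen functions $d_\alpha\in\kappa^\kappa$ (using $\kappa^{<\kappa}=\kappa$), pairwise $\leq^*$-unbounded. If $D\subseteq\kappa^\kappa\cap V^\Pbb$ were dominating with $|D|<\nu$, then by $\kappa^+$-cc each $d\in D$ has a nice name depending on ${\leq}\kappa$ coordinates of $\nu\sqcup\vartheta$, so the total support $S$ has cardinality $|D|\cdot\kappa<\nu$ (using $|D|<\nu$ and $\kappa\leq\lambda<\nu$). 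Choosing $\alpha\in\nu\setminus S$, $d_\alpha$ is Cohen over $V[G\frestr S]$ and hence is not dominated by any $d\in D$---contradiction. The intermediate inequalities are Tukey folklore: a cofinal $\kappa$-sequence in $\lambda$ realizes $\kappa^\kappa\leqT\lambda^\lambda$, and $f\mapsto(\{f(\beta)\})_{\beta<\lambda}$ realizes $\lambda^\lambda\leqT([\lambda]^{<\theta})^\lambda$.

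The delicate piece is $\cof(([\lambda]^{<\theta})^\lambda)\leq\nu$. In $V$, $|([\lambda]^{<\theta})^\lambda|\leq\lambda^\lambda\leq\nu^\lambda=\nu$, so $\mathcal F:=([\lambda]^{<\theta})^\lambda\cap V$ has size $\leq\nu$; it suffices to show $\mathcal F$ remains $\leq$-cofinal in $V^\Pbb$. Given $\vec A\in V^\Pbb$, put $A^*_\beta:=\bigcup\{a\in V\cap[\lambda]^{<\theta}:\exists p\in\Pbb\text{ with }p\Vdash\dot A_\beta=a\}$, an element of $V$. The main obstacle is the uniform bound $|A^*_\beta|<\theta$, handled by a case split: if $\kappa<\theta$, the $\kappa^+$-cc of $\Pbb$ bounds the antichain of values of $\dot A_\beta$ by $\kappa<\theta$, and the union of $\kappa$ many $<\theta$-sets stays $<\theta$ (using $\theta_0<\cf(\theta)$); if $\kappa\geq\theta$, the $(<\kappa)$-closure of $\Pbb_1$ forces all new small subsets through $\Pbb_2$, whose $\theta_0^+$-cc bounds $|A^*_\beta|\leq\theta_0<\theta$, while $(<\kappa)$-closure of $\Pbb_1$ places $A^*_\beta\in V$. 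In either case $\vec{A^*}\in\mathcal F$ dominates $\vec A$, so $\mathcal F$ is cofinal.

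For part (b), when $\lambda\geq\aleph_1$ is regular apply the construction of (a) with $\theta_0:=\aleph_0$; all hypotheses of (a) except possibly $\nu^\lambda=\nu$ are automatic, and the missing assumption can be arranged by a preliminary cofinality-preserving step (e.g.\ forcing $2^\lambda\leq\nu$ if needed). Moreover, for $\theta=\lambda$ regular, $([\lambda]^{<\lambda})^\lambda\eqT\lambda^\lambda$, so the conclusion reduces to forcing $\dfrak_\lambda=\nu$ with $2^\lambda=\vartheta$. In the boundary case $\lambda=\aleph_0$, use the classical ccc construction: a FS iteration of Hechler of length $\nu$ followed by $\vartheta$-many Cohen reals, preserving cofinalities and forcing $\dfrak=\nu$ and $\cfrak=\vartheta$, which gives the conclusion since $(\Fin)^{\aleph_0}\eqT\omega^\omega$.
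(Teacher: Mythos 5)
Your argument for part (a) takes a genuinely different route from the paper's, and it has a gap in the crucial step $\cof\left(([\lambda]^{<\theta})^\lambda\right)\leq\nu$. The paper avoids this step entirely by a two-stage argument: $\Fn_{<\cf(\lambda)}(\nu,2)$ alone forces the equalities (the upper bound because this forcing makes $2^\lambda=\nu$; the lower bound from the $\kappa$-Cohen reals via the ZFC chain stated just before the lemma), and then $\Fn_{<\theta_0}(\vartheta,2)$ \emph{preserves} them because it is $\theta_0^+$-cc, invoking \cite[Lemma~6.6]{CM23}, whose proof is exactly where the hypothesis $\theta_0<\cf(\theta)$ is used. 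You instead try to show directly that the ground-model family $\mathcal{F}$ stays cofinal in $V^\Por$, which is a strictly stronger statement, and your bound on $|A^*_\beta|$ does not go through. First, the set $A^*_\beta:=\bigcup\{a\in V\cap[\lambda]^{<\theta}:\exists\, p,\ p\Vdash\dot A_\beta=a\}$ is the wrong object whenever the forcing adds new members of $[\lambda]^{<\theta}$ (which it does as soon as $\theta>\theta_0$): no condition need decide $\dot A_\beta$ to equal a ground-model set, so this union can be empty and need not cover $\dot A_\beta$; one must take $\{\gamma<\lambda : \exists\, p,\ p\Vdash\gamma\in\dot A_\beta\}$ instead. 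Second, and more seriously, in the case $\kappa<\theta$ the product is only $\kappa^+$-cc, and the supremum of $\leq\kappa$ many cardinals below $\theta$ is $<\theta$ only if $\kappa<\cf(\theta)$; the hypothesis $\theta_0<\cf(\theta)$ that you invoke is not enough, because $\theta_0<\kappa$ and the lemma allows, e.g., $\lambda=\theta=\aleph_{\omega_1}$, $\kappa=\cf(\theta)=\omega_1$, $\theta_0=\aleph_0$, where that supremum can equal $\theta$. The paper's decomposition isolates the $\theta_0^+$-cc factor as the sole preservation step precisely so that the stronger antichain bound $\theta_0$ is available, and never needs the ground-model family to remain cofinal.

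For part (b), your reduction to (a) with $\theta_0:=\aleph_0$ needs the hypothesis $\nu^\lambda=\nu$, which (b) does not grant, and forcing $2^\lambda\leq\nu$ does not produce $\nu^\lambda=\nu$ in general (one would need GCH-like behaviour on the whole interval $[\lambda,\nu)$, a genuinely extra step). The paper instead observes that $([\lambda]^{<\lambda})^\lambda\eqT\lambda^\lambda$ for regular $\lambda=\theta$, so the task reduces to forcing $\dfrak_\lambda=\nu$ and $2^\lambda=\vartheta$, which is done by a ${<}\lambda$-support Hechler iteration of length $\vartheta+\nu$ and does not require any additional arithmetic beyond $\cf(\lambda)^{<\cf(\lambda)}=\cf(\lambda)$ and $\vartheta^\lambda=\vartheta$.
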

\begin{proof}
\ref{hyp:a}: We only explain why $\dfrak_{\cf(\lambda)}=\dfrak_{\lambda}=
      \cof\left(([\lambda]^{<\theta})^{\lambda}\right)=\nu$ is forced. It is clear that this is forced by $\Fn_{<\cf(\lambda)}(\nu,2)$. On the other hand, $\cf(\lambda)$-cc posets preserve the value of $\dfrak_{\cf(\lambda)}$, and $\theta_0^+$-cc posets preserve the values of both $\cof([\lambda]^{<\theta})$ and $\cof\left(([\lambda]^{<\theta})^{\lambda}\right)$ (see e.g.~\cite[Lemma~6.6]{CM23}). Since $\Fn_{<\theta_0}(\vartheta,2)$ has the $\theta_0^+$-cc (in the $\Fn_{<\cf(\lambda)}(\nu,2)$-extension), the result follows.

\ref{hyp:b}: In ZFC, when $\lambda$ is regular, $[\lambda]^{<\lambda} \eqT \lambda$, so $([\lambda]^{<\lambda})^\lambda \eqT \la \lambda^\lambda, \leq\ra$, hence $\dfrak_{\lambda} =     \cof\left(([\lambda]^{<\lambda})^{\lambda}\right)$. Observe that the ${<}\lambda$ support iteration of length $\vartheta+\nu$ (ordinal sum) of the Hechler poset adding a dominating function in $\lambda^\lambda$ does the job. See details in~\cite[Subsec.~4.2]{BBSM}.
\end{proof}

Hence, the hypothesis of the form $\dfrak_{\cf(\lambda)}=\dfrak_{\lambda}=
\cof\left(([\lambda]^{<\theta})^{\lambda}\right)=\nu$ is feasible for the results of this section. Although we do not indicate the value forced to $2^\dfrak$, it equals the ground-model value of $2^\lambda$ (also with $\lambda=\theta_6$ or $\theta_7$). This value can be assumed larger thanks to the previous lemma, with the possible exception that $\theta_0 = \aleph_0$ in the case of~\ref{hyp:a}: we will have $\cfrak = 2^\lambda =\vartheta$, so we cannot force $\cfrak = \lambda$ afterward with a ccc poset (which is the intention in all our results). This limitation is present when we intend to force $\add(\Nwf) = \aleph_1$ (i.e.\ $\theta_1 = \aleph_1$).

We proceed to present and prove the theorems of this section. From now on, fix $f_0\in\baireincr$ defined by $f_0(i) = \frac{(i+1) (i+2) (2i+3)}{6}$.
The following result shows the simpler forcing construction we could find to force that the four cardinal characteristics associated with $\SNwf$ are pairwise different.

\begin{theorem}\label{thm4SN}
Let $\kappa\leq\lambda$ be uncountable cardinals with $\kappa$ regular and assume that $\lambda^{\aleph_0}=\lambda$ and $\cof([\lambda]^{<\kappa}) = \lambda$.  
Then, there is a ccc poset forcing 
\begin{multicols}{2}
\begin{enumerate}[label = \rm (\alph*)]
    \item $\cfrak=\lambda < \cof(\SNwf)$, 
    \item $\Nwf \eqT \Iwf_f \eqT\Cbf_{[\lambda]^{<\aleph_1}} \leqT \SNwf$ for all $f\in\baireincr$,
    \item $\Cbf^\perp_\Nwf\eqT\Cbf^\perp_{\SNwf}\eqT\Cbf_\Mwf \eqT\Cbf_{[\lambda]^{<\kappa}}$,
    \item $\la\cf(\lambda)^{\cf(\lambda)},\leq\ra \leqT \SNwf \leqT \Cbf^{\lambda}_{[\lambda]^{<\aleph_1}}$.
\end{enumerate}
\end{multicols}
 In particular, it is forced that 
\begin{align*}
    \add(\Nwf)=\add(\SNwf)= \add(\Iwf_\id)=\aleph_1 \leq \cov(\Nwf)=\non(\Mwf)=\cov(\SNwf)=\kappa \leq \\
    \leq \cov(\Mwf)=\non(\SNwf)=\cfrak=\lambda < \cof(\SNwf).
\end{align*}
If, additionally, we assume in the ground model that 
$\dfrak_{\cf(\lambda)} = \cof\left(([\lambda]^{<\aleph_1})^{\lambda}\right)=\nu$ 
then 
the previous ccc poset 
forces, in addition, $ \dfrak_{\cf(\lambda)} = \dfrak_\lambda = \cof(\SNwf) = \cof\left(([\lambda]^{<\aleph_1})^{\lambda}\right) =\nu$. (See~\autoref{Figthm4SN}.)
\end{theorem}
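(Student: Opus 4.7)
The plan is to construct $\Por$ as a finite support iteration of length $\pi = \lambda\cdot\delta$ for a limit ordinal $\delta < \lambda^+$ chosen so that $\cf(\pi)=\kappa$ (for instance $\delta=\kappa$ when $\kappa<\lambda$, or $\delta=\lambda$ when $\kappa=\lambda$), using non-trivial ccc $\aleph_1$-$\Rbf^{f_0}_\Gwf$-good posets of size $\leq\lambda$. By \autoref{exm:Rfgood}, the admissible iterands include every subalgebra of random forcing, every $\Eor^h_b$ and $\Bor^h_b$ coming from a small inner model, and every $\sigma$-centered poset. The assumptions $\lambda^{\aleph_0}=\lambda$ and $\cof([\lambda]^{<\kappa})=\lambda$ let us enumerate the relevant nice names cofinally in $\lambda$ steps and run a standard book-keeping, scheduling iterands of each desired type cofinally often.

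Once $\Por$ is in place, \autoref{thm:Paddsn} applied with $\theta_0=\theta=\aleph_1$ directly yields $\Cbf_{[\lambda]^{<\aleph_1}}\leqT\SNwf$, and \autoref{cor:addsn} transfers the same Tukey connection to every $\Iwf_g$. Since every iterand is $\Lc^*$-good (\autoref{ExmPrs}~\ref{ExmPrsd}), $\add(\Nwf)=\aleph_1$ is preserved, and the cofinally added Cohen reals force $\cof(\Nwf)\leq\cfrak=\lambda$, giving (b). For (c), book-keeping ensures we cofinally include iterands adding $\Ed$-dominating and $\aLc(\omega,1)$-dominating reals over every ${<}\kappa$-sized subset of the intermediate extension; \autoref{itsmallsets} then gives $\Cbf_\Mwf,\Cbf_\Nwf\leqT\Cbf_{[\lambda]^{<\kappa}}$, hence $\non(\Mwf),\cov(\Nwf)\geq\kappa$. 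The reverse Tukey inequalities come from the Cohen reals appearing at limit stages: \autoref{lem:strongCohen} yields a $\kappa$-strong witness for $\Cbf_\Mwf$ since $\cf(\pi)=\kappa$, and \autoref{thm:cfcovSN} upgrades the same Cohen reals into strong witnesses for $\Cbf^\perp_\SNwf$, so $\cov(\SNwf)\leq\cf(\pi)=\kappa$. Combined with the ZFC inequalities $\cov(\Nwf)\leq\cov(\SNwf)$ and $\non(\SNwf)\leq\non(\Nwf)$, we obtain $\Cbf^\perp_\Nwf\eqT\Cbf^\perp_\SNwf\eqT\Cbf_\Mwf\eqT\Cbf_{[\lambda]^{<\kappa}}$.

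Clause (d) and the strict bound $\lambda<\cof(\SNwf)$ follow from \autoref{cor:lowSN}, whose hypotheses $\cov(\Mwf)=\dfrak=\lambda$ and $\non(\SNwf)=\supcof=\lambda$ are supplied by the analysis above (note $\minnon=\non(\SNwf)=\lambda$ because each $\non(\Iwf_f)$ lies between $\non(\Mwf)=\kappa$ and $\non(\Nwf)=\lambda$, but the strong witnesses for $\non(\Iwf_f)\geq\lambda$ come from the same Cohen-real argument applied inside each $\Iwf_f$). The upper bound $\SNwf\leqT\Cbf^\lambda_{[\lambda]^{<\aleph_1}}$ is \autoref{new_upperb} with $\minadd=\aleph_1$ and $\supcof=\lambda$. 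Under the additional hypothesis $\dfrak_{\cf(\lambda)}=\cof\big(([\lambda]^{<\aleph_1})^\lambda\big)=\nu$, the lower bound $\dfrak_{\cf(\lambda)}\leq\cof(\SNwf)$ from (d) and the upper bound $\cof(\SNwf)\leq\cov\big(([\lambda]^{<\aleph_1})^\lambda\big)\leq\cof\big(([\lambda]^{<\aleph_1})^\lambda\big)$ collapse onto $\nu$.

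The main obstacle will be the simultaneous calibration: we must force $\cov(\Nwf)=\kappa<\lambda=\cov(\Mwf)$ while keeping every iterand $\Rbf^{f_0}_\Gwf$-good so that \autoref{thm:Paddsn} still delivers $\add(\SNwf)=\aleph_1$. The standard way to prevent $\cov(\Nwf)$ from being driven up to $\lambda$ by the iteration itself is to restrict the random-type iterands to live in sub-models of size $<\kappa$; this is exactly the hypothesis under which \autoref{exm:Rfgood}~\ref{exm:Rfgoodii} still furnishes $\Rbf^{f_0}_\Gwf$-goodness, so the restriction is compatible with the goodness framework. Orchestrating the book-keeping across random iterands, $\Ed$-dominating iterands, and $\sigma$-centered iterands in a way that respects all four relational systems $\Ed$, $\aLc$, $\Rbf^{f_0}_\Gwf$, and the anti-null Prs simultaneously is the technical heart of the argument.
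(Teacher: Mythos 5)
Your high-level plan is right in spirit, but the choice of tools departs from the paper's proof in two places where the substitutions actually fail to deliver what is claimed.

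\textbf{First and most serious: how you force $\Cbf^\perp_\SNwf\eqT\Cbf_{[\lambda]^{<\kappa}}$ in (c).} You lean on \autoref{thm:cfcovSN} together with $\cf(\pi)=\kappa$. That theorem only gives $\pi\leqT\Cbf^\perp_\SNwf$, hence $\cov(\SNwf)\leq\cf(\pi)=\kappa\leq\non(\SNwf)$. This is a pair of cardinal inequalities, not a Tukey connection to $\Cbf_{[\lambda]^{<\kappa}}$, and in particular it never gives you $\non(\SNwf)\geq\lambda$. The ZFC facts you then invoke ($\cov(\Nwf)\leq\cov(\SNwf)$, $\non(\SNwf)\leq\non(\Nwf)$) yield only $\kappa\leq\non(\SNwf)\leq\lambda$, so the Tukey equivalence in (c) is never established, and neither is the cardinal equality $\non(\SNwf)=\lambda$. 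The paper obtains this via \autoref{thm:precaliber}: because \emph{every} iterand is a poset of size ${<}\kappa$ (hence precaliber $\kappa$), that theorem gives the genuine Tukey connection $\Cbf_{[\scf(\pi)]^{<\kappa}}=\Cbf_{[\lambda]^{<\kappa}}\leqT\Cbf^\perp_\SNwf$, which is exactly the missing direction. This step is unavoidable: Tukey equivalence is strictly stronger than matching cardinal values, and no amount of ZFC inequalities between $\Nwf$, $\Mwf$ and $\SNwf$ produces it.

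\textbf{Second: the reverse inequalities for $\Cbf_\Mwf$.} You cite \autoref{lem:strongCohen} with $\cf(\pi)=\kappa$ to get "a $\kappa$-strong witness for $\Cbf_\Mwf$". Applied honestly, \autoref{lem:strongCohen} with $\mu=\kappa$ yields a family of \emph{size} $\kappa$ that is $\kappa$-$\Cbf_\Mwf$-unbounded, giving only $\cov(\Mwf)\geq\kappa$, not $\geq\lambda$; applied with $\mu=\pi$ (an ordinal of cofinality $\kappa$ and cardinality $\lambda$) it again only yields $\cf(\pi)\leq\cov(\Mwf)$. The paper instead takes $\pi=\lambda$ and uses \autoref{Comgood}, whose conclusion is a $\kappa$-$\Cbf_\Mwf$-unbounded family of size $|\pi|=\lambda$ built from all the limit-stage Cohen reals, and this is what yields $\Cbf_{[\lambda]^{<\kappa}}\leqT\Cbf_\Mwf$ and hence $\cov(\Mwf)=\lambda$. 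Since you never actually obtain $\cov(\Mwf)=\lambda$ or $\non(\SNwf)=\lambda$, the subsequent appeal to \autoref{cor:lowSN} (which needs both $\cov(\Mwf)=\dfrak$ and $\non(\SNwf)=\supcof$) also does not go through, so (d) and the strict inequality $\lambda<\cof(\SNwf)$ are not established either.

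Finally, the mixed iteration (random, ED, $\sigma$-centered) is both unnecessary and a liability. The paper uses \emph{only} subalgebras of random forcing coded over transitive models of size ${<}\kappa$. These simultaneously (i) are $\Rbf^{f_0}_\Gwf$-good and $\Lc^*$-good (being subalgebras of random forcing), (ii) are $\kappa$-good for \emph{every} Prs by \autoref{smallgoodness} since they have size ${<}\kappa$, and (iii) add random reals over ${<}\kappa$-sized families of null sets so that \autoref{itsmallsets} yields $\Cbf^\perp_\Nwf\leqT\Cbf_{[\lambda]^{<\kappa}}$; the Rothberger Tukey connection $\Cbf_\Mwf\leqT\Cbf^\perp_\Nwf$ (ZFC) then handles the meager side with no eventually-different iterands at all. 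The extra iterand types you propose would need their own verification of $\Rbf^{f_0}_\Gwf$-goodness, $\kappa$-precaliber, and $\kappa$-$\Ed$-goodness, and the ones not cut to a ${<}\kappa$ model do not satisfy all of these.
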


\begin{figure}[ht]
\begin{center}
  \includegraphics[width=\linewidth]{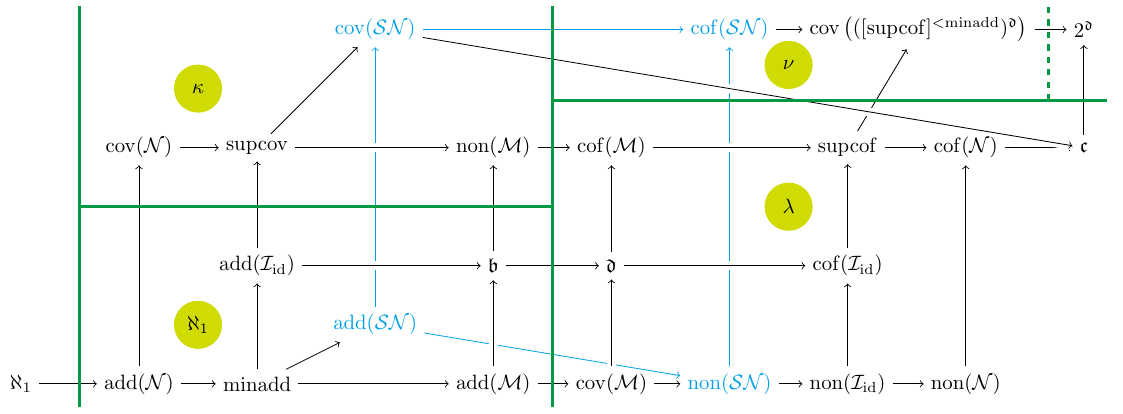}
  \caption{Constellation forced in~\autoref{thm4SN}. The dotted green line indicates that $2^\dfrak$ may be larger (depending on the value of $2^{\lambda}$ in the ground model).}
  \label{Figthm4SN}
\end{center}
\end{figure}

\begin{proof}
Let $\Por:=\Por_\lambda=\la \Por_\alpha,\Qnm_\alpha:\, \alpha<\lambda\ra$ be a FS iteration such that any $\Qnm_\alpha$ is a $\Por_\alpha$-name of a poset of the form $\Bor^{\dot N_\alpha}$, where $\dot N_\alpha$ is a $\Por_\alpha$-name of a transitive model of (a large enough fragment of) ZFC of size ${<}\kappa$. We also ensure, by a book-keeping argument, that in the final extension any set of size ${<}\kappa$ of Borel codes of Borel measure zero sets is included in some $N_\alpha$ (by using that $\cof([\lambda]^{<\kappa})=\lambda$, which is preserved in any ccc forcing extension). 
It is clear that $\Por$ forces $\cfrak=\lambda$.


Notice that the iterands are $\Rbf^{f_0}_\Gwf$-good (for any suitable choice of $\Gwf$) and $\Lc^*$-good by~\autoref{exm:Rfgood}~\ref{exm:Rfgoodi} and~\autoref{ExmPrs}~\ref{ExmPrsd}, respectively. Hence, by applying~\autoref{thm:Paddsn},~\autoref{cor:addsn} and~\autoref{Comgood}, $\Por$ forces that $\Cbf_{[\lambda]^{<\aleph_1}}$ is Tukey-below $\SNwf$, $\Iwf_f$ for all $f\in\baireincr$, and below $\Lc^* \eqT\Nwf$. Moreover, all iterands are of size ${<}\kappa$, so by~\autoref{smallgoodness} they are $\kappa$-$\Rbf^*$-good, where $\Rbf^*$ is a Prs Tukey equivalent with $\Cbf_\Mwf$ (see e.g.~\cite[Ex.~1.21~(1)]{CM22}) so $\Por$ forces $\Cbf_{[\lambda]^{<\kappa}} \leqT \Cbf_\Mwf$ by \autoref{Comgood}.

On the other hand, it is clear that $\Nwf$ and $\Iwf_f$ (for all $f\in\baireincr$) are Tukey-below $\Cbf_{[\R]^{<\aleph_1}}$ (by \autoref{lem:TukeyCtheta}~\ref{Ctheta:a} because both have Borel bases and uncountable additivities), so $\Nwf\eqT\Iwf_f \eqT \Cbf_{[\lambda]^{<\aleph_1}}$ in the $\Por$-extension. The hypothesis of \autoref{itsmallsets} holds for $\theta=\kappa$ and $\Rbf = \Cbf^\perp_{\Bwf(2^\omega)\cap\Nwf}$ (due to the random reals added along the iteration), so
$\Por$ forces $\Cbf^\perp_\Nwf\leqT\Cbf_{[\lambda]^{<\kappa}}$ (note that $\Cbf_{\Bwf(2^\omega)\cap\Nwf}\eqT \Cbf_{\Nwf}$). Therefore $\Cbf_\Mwf\eqT\Cbf^\perp_\Nwf\eqT\Cbf_{[\lambda]^{<\kappa}}$ is forced (recall that $\Cbf_\Mwf\leqT\Cbf^\perp_\Nwf$ in ZFC).

As the iterands of the FS iteration that determine $\Por$ have  
precaliber $\kappa$, by~\autoref{thm:precaliber} $\Por$ forces $\Cbf_{[\lambda]^{<{\kappa}}}\leqT\Cbf_\SNwf^\perp$, and thus $\Cbf_{[\lambda]^{<{\kappa}}}\eqT\Cbf_\SNwf^\perp$ because
$\Cbf^\perp_{\SNwf}\leqT\Cbf^\perp_{\Nwf}$ (in ZFC). 
Next, 
by employing~\autoref{new_upperb}, $\Por$ forces that $\SNwf\leqT\Cbf_{[\supcof]^{<\minadd}}^\dfrak=\Cbf_{[\lambda]^{<\aleph_1}}^{\lambda}$. On the other hand, by \autoref{cor:lowSN}, $\Por$ forces $\la\cf(\lambda)^{\cf(\lambda)},\leq\ra \leqT \SNwf$ and $\lambda<\cof(\SNwf)$.

For the second part of the theorem, recall that 
the values of $\dfrak_{\cf(\lambda)}$ and $\cof\left(([\lambda]^{<\aleph_1})^{\lambda}\right)$ are preserved after forcing with any ccc poset. Therefore, after forcing with $\Por$, $\cof(\SNwf)=\nu$. 
\end{proof}

It is unclear what is the value of $\bfrak$ in the previous construction. Still, modifications allow two possibilities: $\bfrak=\kappa$ can be forced in addition by including, in the iterations, posets of the form $\Dor^N$ where $N$ is a transitive model of ZFC of size ${<}\kappa$, with a book-keeping argument similar to the one used for random forcing; and $\bfrak=\aleph_1$ can be forced in addition by iterating with fams as in~\cite{ShCov,KST}, see also~\cite{uribethesis,CMU} and \autoref{KST4SN}.

For the results that follow, 
we force more different simultaneous values in Cicho\'n's diagram. For this purpose, we employ the method of matrix iterations with ultrafilters from~\cite{BCM},
which we review as follows.

\begin{definition}[{\cite[Def.~2.10]{BCM}}]\label{Defmatsimp}
A~\emph{simple matrix iteration} of ccc posets is composed of the following objects: 
\begin{enumerate}[label=\rm (\Roman*)]
    \item ordinals $\gamma$ (height) and $\pi$ (length);
    \item a function $\Delta\colon \pi\to\gamma$; 
    \item a sequence of posets $\la \Por_{\alpha,\xi}:\, \alpha\leq \gamma,\ \xi\leq \pi\ra$ where $\Por_{\alpha,0}$ is the trivial poset for any $\alpha\leq \gamma$;
    \item for each $\xi<\pi$, $\dot{\Qor}_\xi$ is a 
     $\Por_{\Delta(\xi),\xi}$-name of a poset such that $\Por_{\gamma,\,\xi}$ forces it to be ccc;
    \item $\Por_{\alpha,\xi+1}=\Por_{\alpha,\,\xi}\ast\Qnm_{\alpha,\xi}$, where  
\[\dot{\mathbb{Q}}_{\alpha,\xi}:=
\begin{cases}
    \Qnm_\xi & \textrm{if  $\alpha\geq\Delta(\xi)$,}\\
    \{0\}         & \textrm{otherwise;}
\end{cases}\]
    \item for $\xi$ limit, $\Por_{\alpha,\xi}=\limdir_{\eta<\xi}\Por_{\alpha,\eta}$. 
\end{enumerate}

It is known that $\alpha\leq\beta\leq\gamma$ and $\xi\leq\eta\leq\pi$ imply $\Por_{\alpha,\xi}\subsetdot\Por_{\beta,\eta}$, see e.g.~\cite{B1S} and \cite[Cor.~4.31]{CM}.  If $G$ is $\Por_{\gamma,\pi}$-generic
over $V$, we denote $V_{\alpha,\xi}= [G\cap\Por_{\alpha,\xi}]$ for all $\alpha\leq\gamma$ and $\xi\leq\pi$. 
\end{definition}

      
    
        

\begin{lemma}[{\cite[Lemma~5]{BrF}, see also~\cite[Cor.~2.6]{mejiavert}}]\label{realint}
 Assume that $\Por_{\gamma, \pi}$ is a simple matrix iteration as in~\autoref{Defmatsimp} with $\cf(\gamma)>\omega$. 
Then, for any $\xi\leq\pi$,
\begin{enumerate}[label=\rm (\alph*)]
    \item  $\Por_{\gamma,\xi}$ is the direct limit of $\Seq{\Por_{\alpha,\xi}}{\alpha<\gamma}$, and
    \item if $\dot{f}$ is a $\Por_{\gamma,\xi}$-name of a function from $\omega$ into $\bigcup_{\alpha<\gamma}V_{\alpha,\xi}$ then $\dot{f}$ is forced to be equal to a $\Por_{\alpha,\xi}$-name for some $\alpha<\gamma$.  In particular, the reals in $V_{\gamma,\xi}$ are precisely the reals in $\bigcup_{\alpha<\gamma}V_{\alpha,\xi}$.
\end{enumerate}  
\end{lemma}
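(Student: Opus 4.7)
The plan is to prove~(a) by induction on $\xi\leq\pi$ and then derive~(b) from~(a) via a nice-name argument using the ccc of the iteration together with $\cf(\gamma)>\omega$.

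For~(a) I want to show that every $p\in\Por_{\gamma,\xi}$ belongs to some $\Por_{\alpha,\xi}$ with $\alpha<\gamma$. The base $\xi=0$ is immediate. For $\xi=\eta+1$, a condition $p\in\Por_{\gamma,\eta+1}$ has the form $(p_0,\dot q)$ with $p_0\in\Por_{\gamma,\eta}$ and $\dot q$ a $\Por_{\Delta(\eta),\eta}$-name for a member of $\dot{\Qor}_\eta$; applying the induction hypothesis to $p_0$ yields some $\alpha<\gamma$ with $p_0\in\Por_{\alpha,\eta}$, whence $(p_0,\dot q)\in\Por_{\beta,\eta+1}$ for $\beta:=\max\{\alpha,\Delta(\eta)\}<\gamma$. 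For $\xi$ a limit, $\Por_{\gamma,\xi}=\limdir_{\eta<\xi}\Por_{\gamma,\eta}$, so any $p\in\Por_{\gamma,\xi}$ already lies in $\Por_{\gamma,\eta}$ for some $\eta<\xi$; the induction hypothesis at $\eta$ gives $p\in\Por_{\alpha,\eta}\subseteq\Por_{\alpha,\xi}$ for some $\alpha<\gamma$.

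For~(b), recall that $\Por_{\gamma,\xi}$ is ccc. Given a $\Por_{\gamma,\xi}$-name $\dot f$ for a function $\omega\to\bigcup_{\alpha<\gamma}V_{\alpha,\xi}$, for each $n<\omega$ I choose a maximal antichain $A_n\subseteq\Por_{\gamma,\xi}$ and, for each $p\in A_n$, an ordinal $\alpha_{n,p}<\gamma$ together with a $\Por_{\alpha_{n,p},\xi}$-name $\dot g_{n,p}$ such that $p\Vdash\dot f(n)=\dot g_{n,p}$. Such data exist because $\Por_{\alpha,\xi}\subsetdot\Por_{\gamma,\xi}$ for each $\alpha<\gamma$, so a $\Por_{\gamma,\xi}$-name forced into the smaller extension $V^{\Por_{\alpha,\xi}}$ may be replaced by a $\Por_{\alpha,\xi}$-name. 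By ccc each $A_n$ is countable, and by~(a) I may fix $\beta_{n,p}<\gamma$ with $p\in\Por_{\beta_{n,p},\xi}$. Since $\cf(\gamma)>\omega$, the ordinal
\[\alpha:=\sup\bigl(\{\alpha_{n,p}:\, n<\omega,\ p\in A_n\}\cup\{\beta_{n,p}:\, n<\omega,\ p\in A_n\}\bigr)\]
is strictly below $\gamma$. Each $A_n$ is then contained in $\Por_{\alpha,\xi}$, hence is a maximal antichain there because $\Por_{\alpha,\xi}\subsetdot\Por_{\gamma,\xi}$. Assembling these data yields a $\Por_{\alpha,\xi}$-name $\dot g$ with $\Vdash_{\Por_{\gamma,\xi}}\dot f=\dot g$.

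The ``in particular'' clause follows at once by viewing a real as a function $\omega\to\omega\subseteq V\subseteq\bigcup_{\alpha<\gamma}V_{\alpha,\xi}$. The step I expect to require care is the existence of the names $\dot g_{n,p}$ in~(b): this is the standard fact that if $\Por_0\subsetdot\Por_1$ and a $\Por_1$-name $\dot h$ is forced to take value in $V^{\Por_0}$, then there is a $\Por_0$-name $\dot g$ with $\Vdash_{\Por_1}\dot h=\dot g$. The remainder is bookkeeping, which hinges crucially on $\cf(\gamma)>\omega$ to collapse countably many ordinals below $\gamma$ into a single bound $\alpha<\gamma$.
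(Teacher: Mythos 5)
The paper does not prove this lemma: it is cited from~\cite{BrF} and~\cite{mejiavert}, so there is no in-paper proof to compare against. Your outline is the standard one (induction for~(a), ccc-plus-cofinality bookkeeping for~(b)), but there are two points where the writing glosses over something that does require care.

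First, the ``standard fact'' you invoke for the existence of the $\dot g_{n,p}$ is false as you state it. It is not true that $\Por_0\subsetdot\Por_1$ and $\Vdash_{\Por_1}\dot h\in V^{\Por_0}$ imply the existence of a single $\Por_0$-name $\dot g$ with $\Vdash_{\Por_1}\dot h=\dot g$: take $\Por_0$ trivial, $\Por_1$ Cohen forcing, and $\dot h$ the name for the first bit of the Cohen real; then $\dot h$ is forced to lie in $V$, yet no ground-model object is forced to equal it. What \emph{is} true, and what your construction actually uses, is the density version: given $q$, move to a generic extension containing $q$, note that $\dot h[G]\in V[G\cap\Por_0]=\tau[G]$ for some $\Por_0$-name $\tau$, and pick $p\leq q$ in $G$ forcing $\dot h=\tau$; thus $\{p:\exists\,\Por_0\text{-name }\tau,\ p\Vdash\dot h=\tau\}$ is dense, and the maximal antichain $A_n$ is to be chosen inside this dense set. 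Applied fibrewise, this gives your $\alpha_{n,p}$ and $\dot g_{n,p}$ correctly; but the sentence you wrote citing the global version should be replaced by this dense one, or the proof has a genuine gap.

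Second, the successor step of~(a) silently assumes that the second coordinate $\dot q$ of a condition $(p_0,\dot q)\in\Por_{\gamma,\eta+1}$ is a $\Por_{\Delta(\eta),\eta}$-name. That is a reasonable convention, and if the matrix iteration is set up so that the second coordinates of conditions are always names over the ``source'' column $\Por_{\Delta(\eta),\eta}$, then your successor step is correct verbatim. However, \autoref{Defmatsimp} as written only says $\Por_{\alpha,\xi+1}=\Por_{\alpha,\xi}\ast\Qnm_{\alpha,\xi}$, under which $\dot q$ is a priori an arbitrary $\Por_{\gamma,\eta}$-name forced into $\Qnm_\eta$. In that reading the successor step is not automatic: you would need the same density-plus-ccc-plus-$\cf(\gamma)>\omega$ argument that you use in~(b) (or an appeal to~(b) itself for $\xi=\eta$) to see that $\dot q$ is, below densely many conditions, equal to a $\Por_{\alpha,\eta}$-name with $\alpha<\gamma$, and then $\Por_{\gamma,\eta+1}$ is only the direct limit up to a dense embedding. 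Either make the convention about second coordinates explicit (this is what the cited sources do), or strengthen the successor step accordingly. With those two fixes your argument is the standard and correct proof.
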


\begin{theorem}[{\cite[Thm.~5.4]{CM}}]\label{matsizebd}
Let $\Por_{\gamma, \pi}$ be a simple matrix iteration as in~\autoref{Defmatsimp}. Assume that, for any $\alpha<\gamma$, there is some $\xi_\alpha<\pi$ such that $\Por_{\alpha+1,\xi_\alpha}$ adds a Cohen real $\dot{c}_\alpha\in X$ 
over $V_{\alpha,\xi_\alpha}$. 
   Then, for any $\alpha<\gamma$, $\Por_{\alpha+1,\pi}$ forces that $\dot{c}_{\alpha}$ is Cohen 
   over $V_{\alpha,\pi}$. 

   In addition, if $\cf(\gamma)>\omega_1$ and $f\colon \cf(\gamma)\to\gamma$ is increasing and cofinal, then  $\Por_{\gamma,\pi}$ forces that $\{\dot{c}_{f(\zeta)}:\, \zeta<\cf(\gamma)\}$ is a $\cf(\gamma)$-$\Cbf_\Mwf$-unbounded family. In particular, $\Por_{\gamma,\pi}$ forces $\gamma\leqT \Cbf_\Mwf$ and $\non(\Mwf)\leq\cf(\gamma)\leq\cov(\Mwf)$. 
\end{theorem}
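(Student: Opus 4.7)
The plan is to prove the two statements separately, with the first feeding into the second. For the first, I will show by induction on $\xi\in[\xi_\alpha,\pi]$ that $\Por_{\alpha+1,\xi}$ forces $\dot c_\alpha$ to be Cohen over $V_{\alpha,\xi}$. The base case $\xi=\xi_\alpha$ is the hypothesis. At a successor stage $\xi+1$, if $\Delta(\xi)>\alpha$ then $V_{\alpha,\xi+1}=V_{\alpha,\xi}$ and nothing changes; if $\Delta(\xi)\leq\alpha$, both columns $\alpha$ and $\alpha+1$ extend by the same name $\Qnm_\xi$, which is a $\Por_{\Delta(\xi),\xi}$-name and hence absolute between the two columns. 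The matrix structure gives a commuting square allowing one to rewrite $V_{\alpha+1,\xi+1}$ as $V_{\alpha,\xi+1}[\dot c_\alpha][\dot H]$ for an appropriate quotient generic $\dot H$, from which $\dot c_\alpha$ remains Cohen over $V_{\alpha,\xi+1}$ by the inductive hypothesis. At a limit $\xi$, the ccc of $\Por_{\alpha,\xi}$ together with it being a direct limit will absorb every Borel meager code from $V_{\alpha,\xi}$ into some $V_{\alpha,\eta}$ with $\eta<\xi$, again reducing to the inductive hypothesis.

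For the second statement, fix a strictly increasing cofinal map $f\colon\cf(\gamma)\to\gamma$ and let $\dot M$ be a $\Por_{\gamma,\pi}$-name for a meager Borel set; it is coded by a (nice) name of a real. Since $\cf(\gamma)>\omega$, \autoref{realint} provides some $\beta<\gamma$ such that this code is a $\Por_{\beta,\pi}$-name, so $\dot M$ is coded in $V_{\beta,\pi}$. Because $f$ is strictly increasing and cofinal, the set $\{\zeta<\cf(\gamma):f(\zeta)<\beta\}$ is a bounded initial segment of $\cf(\gamma)$, hence of size ${<}\cf(\gamma)$. For any $\zeta$ with $f(\zeta)\geq\beta$, applying Part~1 at $\alpha=f(\zeta)$ yields that $\dot c_{f(\zeta)}$ is Cohen over $V_{f(\zeta),\pi}\supseteq V_{\beta,\pi}$ and therefore misses every Borel meager set coded there, in particular $\dot M$. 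This establishes the $\cf(\gamma)$-$\Cbf_\Mwf$-unboundedness of $\{\dot c_{f(\zeta)}:\zeta<\cf(\gamma)\}$. The ``in particular'' claim then follows from \autoref{lem:TukeyCtheta} together with the fact that $\gamma\eqT\cf(\gamma)\eqT\Cbf_{[\cf(\gamma)]^{<\cf(\gamma)}}$ as directed preorders.

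The main obstacle is Part~1: showing that a Cohen real $\dot c_\alpha$ added early in column $\alpha{+}1$ survives as a Cohen real after the long tail of ccc forcings carried out in column $\alpha$. This cannot follow from any general Cohen-preservation principle, since generic ccc posets such as random forcing (which is precisely the iterand in the intended applications of the theorem) do routinely destroy Cohen-ness. The argument must exploit the specific commutativity built into the matrix definition together with the fact that each $\Qnm_\xi$ is a $\Por_{\Delta(\xi),\xi}$-name, so that the forcings applied on column $\alpha$ beyond stage $\xi_\alpha$ act ``independently'' of the position occupied by $\dot c_\alpha$ in column $\alpha+1$. Verifying this independence in the successor and limit steps of the induction is the core technical content.
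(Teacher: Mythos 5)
Your Part~2 is correct and is the standard argument: \autoref{realint} absorbs any nice name of a meager code into a column $\beta<\gamma$ (using $\cf(\gamma)>\omega$), and then Part~1 applied at $\alpha=f(\zeta)$ for $f(\zeta)\geq\beta$ yields the $\cf(\gamma)$-$\Cbf_\Mwf$-unboundedness. The ``in particular'' also follows as you say.

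However, Part~1 --- the core of the theorem --- has a genuine gap at the limit step. You claim that ``the ccc of $\Por_{\alpha,\xi}$ together with it being a direct limit will absorb every Borel meager code from $V_{\alpha,\xi}$ into some $V_{\alpha,\eta}$ with $\eta<\xi$.'' This absorption only holds when $\cf(\xi)>\omega$. When $\cf(\xi)=\omega$, a nice $\Por_{\alpha,\xi}$-name of a real is built from countably many maximal antichains whose finite supports may well be cofinal in $\xi$, so the name need not be a $\Por_{\alpha,\eta}$-name for any $\eta<\xi$. Indeed a finite support iteration of ccc posets of length $\xi$ with $\cf(\xi)=\omega$ always adds reals (e.g.\ the ``diagonal'' Cohen real) that lie in $V_{\alpha,\xi}$ but in no $V_{\alpha,\eta}$, and these produce new meager codes. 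Your induction therefore breaks down at every limit of countable cofinality, and since the theorem poses no restriction on the length $\pi$, this is not a corner case. Handling it requires a fusion/interpretation argument along the finite-support structure: roughly, given $p\in\Por_{\alpha+1,\xi}$ and a nice $\Por_{\alpha,\xi}$-name $\dot F$ of a nowhere dense closed set, one ``interprets'' $\dot F$ below $p$ by a nowhere dense closed set living in some $V_{\alpha,\eta}$ with $\eta<\xi$ containing $\mathrm{supp}(p)$, uses the inductive hypothesis to push $c_\alpha$ out of that interpretation, and then exploits closedness of the relation $x\in F$ to transfer this to $\dot F$ (exactly the device underlying \autoref{intrlem} and \autoref{limitgoodlem} in this paper). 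This is precisely the content that your argument is silently assuming.

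Two smaller points on the successor step. First, your description ``rewrite $V_{\alpha+1,\xi+1}$ as $V_{\alpha,\xi+1}[\dot c_\alpha][\dot H]$'' is not accurate as stated: the $\Qnm_\xi$-generic $H$ is the \emph{same} object in both columns (since $\Qnm_\xi$ is a $\Por_{\Delta(\xi),\xi}$-name with $\Delta(\xi)\leq\alpha$), and $V_{\alpha+1,\xi+1}=V_{\alpha+1,\xi}[H]$, while $c_\alpha\in V_{\alpha+1,\xi}$ already. The clean argument is via the product lemma: $c_\alpha$ is Cohen over $V_{\alpha,\xi}$ (IH), and $H$ is $\Qnm_\xi$-generic over $V_{\alpha+1,\xi}\supseteq V_{\alpha,\xi}[c_\alpha]$ with $\Qnm_\xi[G_{\alpha,\xi}]\in V_{\alpha,\xi}$; hence $(c_\alpha,H)$ is $\Cor\times\Qnm_\xi[G_{\alpha,\xi}]$-generic over $V_{\alpha,\xi}$, so $c_\alpha$ is Cohen over $V_{\alpha,\xi}[H]=V_{\alpha,\xi+1}$. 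Second, you should also dispose of the subcase $\Delta(\xi)=\alpha+1$ (where only the $\alpha{+}1$-column acts and $V_{\alpha,\xi+1}=V_{\alpha,\xi}$), which is trivial but should be mentioned alongside the cases $\Delta(\xi)>\alpha+1$ and $\Delta(\xi)\leq\alpha$.
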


In~\cite{mejiavert}, the third author introduced the notion of \emph{ultrafilter-linkedness} (abbreviated uf-linkedness). 
He proved that no $\sigma$-uf-linked poset adds dominating reals and that such a poset preserves a certain type of mad (maximal almost disjoint) families. 
These results where improved in~\cite{BCM}, which motivated the construction of matrix iterations of ${<}\theta$-uf-linked posets to improve the separation of the left-hand side of Cicho\'n's diagram from~\cite{GMS} by including
$\cov(\Mwf)<\dfrak=\non(\Nwf)=\cfrak$.

The following notion formalizes the matrix iterations with ultrafilters from~\cite{BCM}. The property ``${<}\theta$-uf-linked" is used as a black box, i.e.\ there is no need to review its definition, but it is enough to present the relevant examples and facts (with proper citation).

\begin{definition}[{\cite[Def.~4.2]{BCM}}]\label{Defmatuf}
Let $\theta\geq\aleph_1$ and let $\Por_{\gamma, \pi}$ be a simple matrix iteration as in~\autoref{Defmatsimp}. Say that $\Por_{\gamma, \pi}$ is a~\emph{${<}\theta$-uf-extendable matrix iteration} if for each $\xi<\pi$, $\Por_{\Delta(\xi),\xi}$ forces that $\Qnm_\xi$ is a ${<}\theta$-uf-linked poset.
\end{definition}

\begin{example}\label{exm:ufl}
The following are the instances of ${<}\theta$-uf-linked posets that we use in our applications.
\begin{enumerate}[label=\rm (\arabic*)]

    \item Any poset of size $\mu<\theta$ is ${<}\theta$-uf-linked. In particular, Cohen
forcing is $\sigma$-uf-linked (i.e.\ ${<}\aleph_1$-uf-linked), see~\cite[Rem.~3.3~(5)]{BCM}.
    
    \item Random forcing is $\sigma$-uf-linked~\cite[Lem.~3.29 \& Lem.~5.5]{mejiamatrix}. 

    \item Let $b$ and $h$ be as in~\autoref{DefEDforcing}. Then $\Eor_b^h$ is $\sigma$-uf-linked~\cite[Lem.~3.8]{BCM}.
\end{enumerate}    
\end{example}

\begin{theorem}[{\cite[Thm.~4.4]{BCM}}]\label{mainpres}
Assume that $\theta\leq\mu$ are uncountable cardinals with $\theta$ regular. Let $\Por_{\gamma,\pi}$ be a ${<}\theta$-uf-extendable matrix iteration as in~\autoref{Defmatuf} such that
\begin{enumerate}[label = \rm (\roman*)]
    \item $\gamma\geq\mu$ and $\pi\geq\mu$,
    \item for each $\alpha<\mu$, $\Delta(\alpha)=\alpha+1$ and $\Qnm_\alpha$ is Cohen forcing, and
    \item $\dot c_\alpha$ is a $\Por_{\alpha+1,\alpha+1}$-name of the Cohen real in $\omega^\omega$ added by $\Qnm_\alpha$.
\end{enumerate}
Then $\Por_{\alpha,\pi}$ forces that $\set{\dot c_\alpha}{\alpha<\mu}$ is $\theta$-$\Dbf$-unbounded, in particular, $\Cbf_{[\mu]^{<\theta}}\leqT \Dbf$.
\end{theorem}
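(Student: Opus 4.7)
The plan is to prove the statement by showing that, for every $\Por_{\gamma,\pi}$-name $\dot y$ of a real in $\baire$, the forced set $\{\alpha<\mu : \dot c_\alpha \leq^* \dot y\}$ has size ${<}\theta$, from which $\theta$-$\Dbf$-unboundedness and the Tukey connection $\Cbf_{[\mu]^{<\theta}}\leqT\Dbf$ follow by \autoref{lem:TukeyCtheta}~\ref{c:str}. The first move is an application of \autoref{realint}: because $\gamma\geq\mu\geq\theta\geq\aleph_1$ forces $\cf(\gamma)>\omega$ in the situations of interest, we may assume that $\dot y$ is already a $\Por_{\beta,\pi}$-name for some $\beta<\gamma$.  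The difficult case is $\beta<\mu$; for any $\alpha$ with $\beta<\alpha<\mu$, the Cohen real $\dot c_\alpha$ then lives one row above the level at which $\dot y$ is interpreted, and we concentrate on that range of $\alpha$'s.

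For each such $\alpha$, the definition of the matrix ($\Delta(\alpha)=\alpha+1$, $\Qnm_\alpha$ is Cohen) gives that $\dot c_\alpha$ is a Cohen real over $V_{\alpha,\alpha+1}$, so in particular $\dot c_\alpha \not\leq^* z$ for every $z\in V_{\alpha,\alpha+1}\cap\baire$. The remaining task is preservation: to show that $\dot c_\alpha$ is still not ${\leq}^*$-dominated by $\dot y$ when $\dot y$ is interpreted in the full matrix extension. This is where ${<}\theta$-uf-linkedness enters. The key property, developed in~\cite{mejiavert, BCM}, is that a ${<}\theta$-uf-linked iterand partitions into ${<}\theta$-many ``uf-linked'' components, each equipped with an ultrafilter that assigns to any name $\dot y$ of a real a canonical ground-model approximation $y^\ast$ such that, conditions in the component force $\dot y$ to be dominated by $y^\ast$ on a large set; moreover, this property is stable under FS iteration and lifts through the matrix to produce, for each nice name and each ${<}\theta$-uf-linked ``slice'', a witness $y^\ast \in V_{\alpha,\alpha+1}\cap\baire$.

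Given this, the completion of the proof is bookkeeping. Any condition $p\in\Por_{\gamma,\pi}$, together with the support of (the reduction of) $\dot y$, picks out at most ${<}\theta$ many ``bad'' $\alpha$'s, namely those for which the chosen uf-linked interpretation really uses row-$\alpha$ information. For every $\alpha < \mu$ outside this exceptional set $A$, the ${<}\theta$-uf-linked interpretation places a dominating candidate $y^\ast$ for $\dot y$ inside $V_{\alpha, \alpha+1}$, and Cohen-genericity of $\dot c_\alpha$ over that model gives $\dot c_\alpha \not\leq^* y^\ast$ and hence $\dot c_\alpha\not\leq^*\dot y$. Thus $\{\alpha<\mu : \dot c_\alpha\leq^*\dot y\}\subseteq A\cup(\beta+1)$, which has size ${<}\theta$.

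The main obstacle is the preservation lemma itself, i.e.\ verifying that ${<}\theta$-uf-linkedness really does guarantee the ``ground-model interpretation'' of names for reals in the matrix-iteration setting and lifts uniformly across both the horizontal FS iteration and the vertical direct-limit embeddings of the matrix. In the cited~\cite[Thm.~4.4]{BCM} this is exactly the technical heart of the proof, and the present statement is essentially extracted from that preservation theorem via the bookkeeping sketched above; the rest of the argument (application of \autoref{realint} and the final counting) is formal.
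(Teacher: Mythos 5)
The paper itself gives essentially no proof here — it just cites \cite[Thm.~4.4]{BCM} and says ``the same proof works,'' so the comparison is really against what that argument must look like.

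Your high-level plan (fix a name $\dot y$, show $\{\alpha<\mu : \dot c_\alpha\leq^*\dot y\}$ has size ${<}\theta$, conclude via \autoref{lem:TukeyCtheta}~\ref{c:str}) is the right outline, and you are right that the technical heart is the preservation coming from ${<}\theta$-uf-linkedness and the coherent ultrafilter system across the matrix. But there are two concrete problems in the way you assemble it.

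First, the final counting is wrong. After applying \autoref{realint} you obtain $\beta<\gamma$ with $\dot y$ a $\Por_{\beta,\pi}$-name, but \autoref{realint} gives no control whatsoever on the size of $\beta$: it can easily be an ordinal of size $\geq\theta$ (indeed $\geq\mu$). So ``$\{\alpha<\mu : \dot c_\alpha\leq^*\dot y\}\subseteq A\cup(\beta+1)$, which has size ${<}\theta$'' does not follow — $|(\beta+1)\cap\mu|$ need not be ${<}\theta$. The BCM-style argument does not bound the bad set by a row index; it bounds it by a ${<}\theta$-sized ``support'' of $\dot y$ that is extracted from the $\theta$-cc of $\Por_{\gamma,\pi}$ and the ${<}\theta$-uf-linked partition of the iterands (in the same spirit as $\gsupp$ in \autoref{Comgood}), not by $\beta$. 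If your exceptional set $A$ is what does all the work and the bad set is in fact contained in $A$, then you should argue exactly that and drop the $(\beta+1)$; but as written the inclusion and the cardinality claim do not match.

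Second, the formulation of what uf-linkedness gives you is off in a way that matters. You write that for each $\alpha\notin A$ the uf-linked interpretation produces a \emph{dominating candidate} $y^\ast\in V_{\alpha,\alpha+1}$ with $\dot y\leq^* y^\ast$. That cannot be right in general: the iterands above and to the right of $(\alpha,\alpha+1)$ include posets like $\Dor$, so the remaining forcing certainly can add reals dominating all of $V_{\alpha,\alpha+1}\cap\baire$, and then no such $y^\ast$ exists. The correct shape of the preservation is the $\theta$-$\Dbf$-goodness one: there is a ${<}\theta$-sized set $H\subseteq\baire$ of ground-model (respectively $V_{\alpha,\alpha+1}$-model) reals such that any $x$ that is $\Dbf$-unbounded over $H$ remains $\Dbf$-unbounded over $\dot y$; then $\dot c_\alpha$, being Cohen over $V_{\alpha,\alpha+1}$, is unbounded over $H$. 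Making this work in the matrix setting — i.e., in the quotient direction that is neither purely horizontal nor purely vertical — is exactly what the coherent ultrafilter system in \cite{BCM} is for, and it is the part you are treating as a black box. That is acceptable in principle, since the paper does the same, but the way you phrase it misstates what the black box outputs and masks where the $\theta$ in the final count actually comes from.
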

\begin{proof}
    Although the conclusion of~\cite[Thm.~4.4]{BCM} is different, the same proof works.
\end{proof}



We are finally ready to proceed with the next application. Here,
we denote the relational systems (some introduced in~\autoref{ExmPrs}) $\Rbf_1:=\Lc^*$, $\Rbf_2:=\Cbf_{\Nwf}^{\perp}$, $\Rbf_3:=\Cbf_{\SNwf}^{\perp}$ and $\Rbf_4:=\Dbf$.

\begin{theorem}\label{thm:4SN}
Let $\theta_1\leq\theta_2\leq\theta_3\leq\theta_4\leq\theta_5\leq \theta_6 = \theta_6^{\aleph_0}$ be uncountable regular cardinals and assume that $\cof([\theta_6]^{<\theta_i}) = \theta_6$ for $1 \leq i \leq 4$. 
Then there is a ccc poset $\Por$ forcing:
\begin{enumerate}[label=\rm (\alph*)]
    \item\label{thm:c} $\cfrak = \theta_6$;
    \item\label{thm:one} $\Iwf_f \eqT \Cbf_{[\theta_6]^{<\theta_1}}\leqT\SNwf$ for all $f\in \baireincr$;
    \item \label{thm:two} $\Rbf_i\eqT\Cbf_{[\theta_6]^{<{\theta_i}}}$ for $1\leq i\leq4$ and $\Cbf^\perp_{\Iwf_g}\eqT\Cbf_{[\theta_6]^{<{\theta_3}}}$ for some $g$ in the ground model;
    \item \label{thm:three}  $\theta_5\leqT\Cbf_\Mwf$, $\theta_6\leqT \Cbf_\Mwf$, and $\Ed\leqT \theta_6\times\theta_5$;
    \item \label{thm:four} $\theta_6^{\theta_6}\leqT\SNwf\leqT\Cbf_{[\theta_6]^{<\theta_1}}^{\theta_6}$. 
\end{enumerate}
In particular, $\Por$ forces:
\begin{align*}
\add(\Nwf) = \add(\Iwf_\id)=\add(\SNwf)=\theta_1\leq\cov(\Nwf)=\theta_2\leq \cov(\SNwf)= \supcov = \theta_3 \leq\\
\leq \bfrak=\theta_4
\leq\non(\Mwf)=\theta_5 
\leq\cov(\Mwf)=\non(\SNwf)=\dfrak=\cfrak=\theta_6 < \dfrak_{\theta_6} \leq \cof(\SNwf).
\end{align*}
In addition, if we assume in the ground model that $\dfrak_{\theta_6} = \cof\left(([\theta_6]^{<\theta_1})^{\theta_6}\right) = \nu$, then the previous ccc poset forces $\dfrak_{\theta_6} = \cof(\SNwf) = \cof\left(([\theta_6]^{<\theta_1})^{\theta_6}\right) = \nu$. (See \autoref{Fig4SN}.)
\end{theorem}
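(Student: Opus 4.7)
The plan is to construct a ${<}\theta_4$-uf-extendable matrix iteration $\Por=\Por_{\gamma,\pi}$ in the sense of \autoref{Defmatuf}, with matrix height $\gamma$ of cardinality $\theta_6$ and cofinality $\theta_5$, and length $\pi=\theta_6\delta$ for some ordinal $0<\delta<\theta_6^+$. This extends the FS iteration of \autoref{thm4SN} with the vertical structure of matrix iterations with ultrafilters from~\cite{BCM}. The iterands, selected by book-keeping, come from a pool consisting of: Cohen forcing placed at heights $\alpha<\theta_4$ (to invoke \autoref{mainpres} and force $\bfrak\leq\theta_4$); full random forcing $\Bor$ placed at positions with height ${<}\theta_2$ (to force $\cov(\Nwf)\geq\theta_2$); subalgebras of random forcing on transitive submodels of size ${<}\theta_3$ (to bound $\cov(\SNwf)$ exactly as in \autoref{thm4SN}); and eventually different real forcings $\Eor^h_b$ with parameters at the $\aLc^*(\varrho,\rho)$-goodness boundary from \autoref{KOpre}, placed at heights between $\theta_4$ and $\theta_5$. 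All iterands are $\sigma$-uf-linked (\autoref{exm:ufl}), of size $\leq\theta_6$, $\Rbf^{f_0}_\Gwf$-good (\autoref{exm:Rfgood}), $\Lc^*$-good (\autoref{ExmPrs}~\ref{ExmPrsd}), and $\theta_i$-$\Rbf_i$-good for the appropriate $i$ at each height.

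The upper bounds $\bfrak(\Rbf_i)\leq\theta_i$ for $1\leq i\leq 4$ follow from \autoref{goodpres} applied to $\theta_i$-$\Rbf_i$-unbounded families of generic reals (Cohen reals placed at the right heights), using that all iterands are $\theta_i$-$\Rbf_i$-good. The lower bounds come from: \autoref{mainpres} ($\Cbf_{[\theta_6]^{<\theta_4}}\leqT\Dbf$); \autoref{matsizebd} with $\cf(\gamma)=\theta_5$ ($\theta_5\leqT\Cbf_\Mwf$); \autoref{lem:strongCohen} applied to the top-row FS iteration ($\theta_6\leqT\Cbf_\Mwf$); \autoref{thm:precaliber} with precaliber-$\theta_3$ iterands ($\Cbf_{[\theta_6]^{<\theta_3}}\leqT\Cbf_\SNwf^\perp$); and \autoref{itsmallsets} applied to the random reals ($\Cbf_{[\theta_6]^{<\theta_2}}\leqT\Cbf_\Nwf^\perp$). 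The connections $\Cbf_{[\theta_6]^{<\theta_1}}\leqT\SNwf$ and $\Cbf_{[\theta_6]^{<\theta_1}}\leqT\Iwf_f$ follow directly from \autoref{thm:Paddsn} and \autoref{cor:addsn} applied to the top-row FS iteration; the reverse $\Iwf_f\leqT\Cbf_{[\theta_6]^{<\theta_1}}$ uses \autoref{lem:TukeyCtheta}~\ref{Ctheta:a}, since $\add(\Iwf_f)\geq\add(\Nwf)\geq\theta_1$ (by $\Lc^*$-goodness) and $\Iwf_f$ has a Borel base. Finally, $\theta_6^{\theta_6}\leqT\SNwf\leqT\Cbf_{[\theta_6]^{<\theta_1}}^{\theta_6}$ is immediate from \autoref{cor:lowSN} and \autoref{new_upperb}. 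For the addendum, $\dfrak_{\theta_6}$ and $\cof\left(([\theta_6]^{<\theta_1})^{\theta_6}\right)$ are preserved by ccc forcing, so the bounds in~(d)--(e) pin $\cof(\SNwf)=\nu$.

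The main obstacle will be the coordinated book-keeping: the iteration must produce the required generic reals for each lower bound at cofinally many positions in the matrix, while every single iterand simultaneously satisfies all goodness and ${<}\theta_4$-uf-linkedness hypotheses. The most delicate technical points are verifying that the small-model random subalgebras remain $\sigma$-uf-linked and $\Rbf^{f_0}_\Gwf$-good (cf.~\autoref{exm:Rfgood}~\ref{exm:Rfgoodii}), and arranging the $\Eor^h_b$'s (and the specific parameters $\varrho,\rho$) so as to force $\non(\Mwf)\geq\theta_5$ without interfering with the $\theta_i$-$\Rbf_i$-goodness required at other stages.
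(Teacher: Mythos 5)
You have the right overall framework (a ${<}\theta_4$-uf-extendable matrix iteration combining Cohen reals, small transitive-model restrictions, and layered goodness), and the right collection of preservation lemmas, but the choice of iterands breaks down at precisely the place where this theorem goes beyond \autoref{thm4SN}: the separation $\cov(\Nwf)=\theta_2<\theta_3=\cov(\SNwf)$. You propose to use random subalgebras $\Bor^N$ with $|N|<\theta_3$ ``to bound $\cov(\SNwf)$ exactly as in \autoref{thm4SN}''; but book-keeping so that every family of ${<}\theta_3$ null sets lands inside some such $N$ forces $\cov(\Nwf)\geq\theta_3$ (the random real over $N$ avoids all of them), collapsing the very gap you want. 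The paper instead uses $\Eor_b^N$ for transitive $N$ of size ${<}\theta_3$, with $b,\rho,\varrho$ constructed so that $\Eor_b$ is $(\rho,\varrho^{\rho^{\id}})$-linked (\autoref{genlink}) and hence $\aLc^*(\varrho,\rho)$-good; this keeps $\cov(\Nwf)\leq\theta_2$ via $\theta_2$-$\aLc^*(\varrho,\rho)$-goodness, while pushing $\balc_{b,1}\geq\theta_3$, which in turn gives $\cov(\SNwf)\geq\theta_3$ through the ZFC chain $\Cbf^\perp_\SNwf\leqT\Cbf^\perp_{\Iwf_g}\leqT\aLc(b,1)$. In the paper the $\Eor_b^N$ blocks and the \emph{unrestricted} $\Eor$ (responsible for $\non(\Mwf)=\theta_5$, placed at the positions $\eta_\rho$ with heights $t(\rho)$ cofinal in $\theta_6$) play different roles, not the single ``$\Eor^h_b$ at heights between $\theta_4$ and $\theta_5$'' you describe.

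Several secondary mismatches would also need to be fixed. The matrix structure in the paper has regular height $\gamma=\theta_6$ (giving $\theta_6\leqT\Cbf_\Mwf$ via \autoref{matsizebd}) and length $\pi=\theta_6\theta_6\theta_5$ of cofinality $\theta_5$ (giving $\theta_5\leqT\Cbf_\Mwf$ via \autoref{lem:strongCohen}); you have these swapped, with a singular height of cofinality $\theta_5$, which is not the paper's construction and would require additional justification that the uf-extendable matrix machinery tolerates it. Cohen forcing must be placed at the first $\theta_6$ iteration \emph{positions} $\alpha$ with $\Delta(\alpha)=\alpha+1$, as \autoref{mainpres} demands, so that you get $\Cbf_{[\theta_6]^{<\theta_4}}\leqT\Dbf$ (hence $\dfrak\geq\theta_6$ as well as $\bfrak\leq\theta_4$); $\theta_4$ is the uf-linking degree, not the span of Cohen heights, and placing Cohen only at heights ${<}\theta_4$ as you suggest would only yield $\Cbf_{[\theta_4]^{<\theta_4}}\leqT\Dbf$. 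Finally, your pool omits the $\Loc^N$ iterands (with $|N|<\theta_1$) needed for $\add(\Nwf)=\theta_1$ and the $\Dor^N$ iterands (with $|N|<\theta_4$) needed to push $\bfrak$ up to $\theta_4$; and full random forcing ``at heights ${<}\theta_2$'' does not by itself secure $\cov(\Nwf)\geq\theta_2$, since by \autoref{realint} families of ${<}\theta_2$ null sets can live in rows above $\theta_2$ — this is why the paper uses $\Bor^N$ for $|N|<\theta_2$ at arbitrary heights with full book-keeping.
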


\begin{figure}[ht]
\begin{center}
  \includegraphics[width=\linewidth]{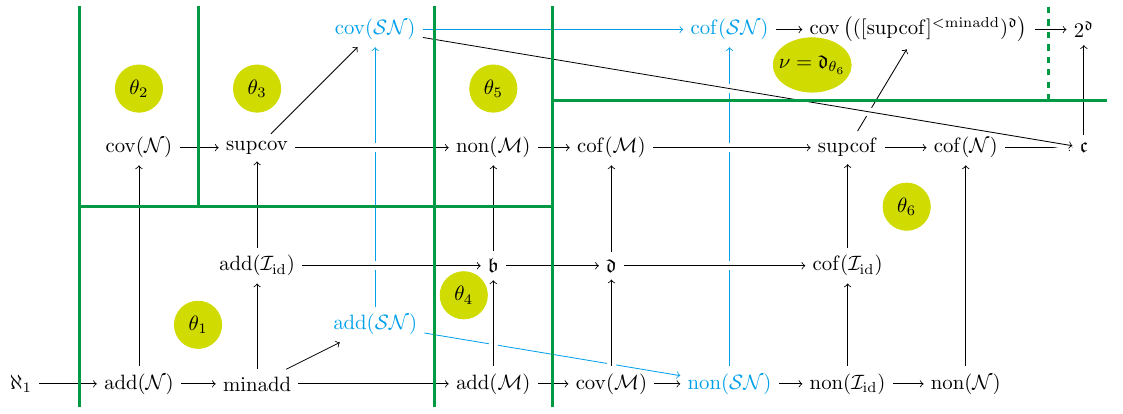}
  \caption{Constellation forced in \autoref{thm:4SN}.}
  \label{Fig4SN}
\end{center}
\end{figure}

\begin{proof}
 For each $\rho<\theta_6\theta_5$ denote $\eta_\rho:=\theta_6\rho$. Fix a bijection $g=(g_0, g_1,g_2):\theta_6\to\{0, 1,2,3\}\times\theta_6\times\theta_6$ and  fix a function $t\colon\theta_6\theta_5\to\theta_6$ such that, for any $\alpha<\theta_6$, the set $\set{\rho<\theta_6\theta_5}{t(\rho)=\alpha}$ is cofinal in $\theta_6\theta_5$. 



We construct a ${<}\theta_4$-uf-extendable matrix iteration $\Por$ as follows. 
%
%

First, construct (by recursion) increasing functions $\rho, \varrho, b \in \omega^\omega$ such that, for all $k<\omega$,
\begin{enumerate}[label=(\roman*)]
    \item $k^{k+1} \leq \rho(k)$,
    \item $\sum_{i<\omega}\frac{\rho(i)^i}{\varrho(i)}<\infty$ and
    \item $k \varrho(k)^{\rho(k)^k} < b(k)$.
\end{enumerate}
By~\autoref{genlink}, we obtain that $\Eor_b=\Eor^1_b$ is $(\rho, \varrho^{\rho^\id})$-linked, and so it is $\aLc^*(\varrho,\rho)$-good.

We now construct the ${<}\theta_4$-uf-extendable matrix iteration $\Por_{\gamma,\,\pi}$ with $\gamma=\theta_6$ and $\pi=\theta_6\theta_6\theta_5$. First set,
\begin{enumerate}[label=\rm (C\arabic*)]
    \item $\Delta(\alpha):=\alpha+1$ and $\Qnm_\alpha$ is Cohen forcing for $\alpha<\theta_6$. 
\end{enumerate}
Define the matrix iteration at each $\xi=\eta_\rho+\varepsilon$ for $0<\rho<\theta_6\theta_5$ and $\varepsilon<\theta_6$ as follows. Denote\footnote{We think of $X_1$ as the set of Borel codes of Borel sets with measure zero.} 
\begin{align*}
\Qor^*_0 & := \Loc, & \Qor^*_1 &:= \Bor, & \Qor^*_2 & := \Eor_b, & \Qor^*_3 &:=\Dor,\\
X_0 & := \omega^\omega, & X_1 & := \Bwf(2^\omega)\cap \Nwf, & X_2 &:= \prod b, & X_3 & := \omega^\omega. 
\end{align*}
For $j<4$, $0<\rho<\theta_6\theta_5$ and $\alpha<\theta_6$, choose 
\begin{enumerate}[label=(E$j$)]
   \item\label{Ej} a collection $\{\Qnm_{j,\alpha,\zeta}^\rho:\zeta<\theta_6\}$ of nice $\Por_{\alpha,\eta_\rho}$-names for posets of the form $(\Qor^*_j)^N$ for some transitive model $N$ of ZFC with $|N|<\theta_{j+1}$
   such that, for any $\Por_{\alpha,\eta_\rho}$-name $\dot F$ of a subset of $X_j$ of size ${<}\theta_{j+1}$, there is some $\zeta<\theta_6$ such that, in $V_{\alpha,\eta_\rho}$, $\Qnm^\rho_{j,\alpha,\zeta} = (\Qor^*_j)^N$ for some $N$ containing $\dot F$ (we explain later why this is possible),
\end{enumerate} 
and set 
\begin{enumerate}[label=\rm (C\arabic*)]
\setcounter{enumi}{1}
    \item $\Delta(\xi):=t(\rho)$ and $\Qnm_{\xi}:=\Eor^{V_{\Delta(\xi),\xi}}$ when $\xi=\eta_\rho$; 

    \item   $\Delta(\xi):=g_1(\varepsilon)$ and $\Qnm_{\xi}:=\Qnm^{\rho}_{g(\varepsilon)}$ when $\xi=\eta_\rho+1+\varepsilon$ for some $\varepsilon< \theta_6$. 
\end{enumerate}

Why is~\ref{Ej} possible? Since $\Por_{\alpha,\eta_\rho}$ has size ${\leq\theta_6}$, $\Por_{\alpha,\eta_\rho}$ forces $\cfrak\leq\theta_6$ and, as $\cof([\theta_6]^{<\theta_{j+1}}) = \theta_6$ is preserved in any ccc forcing extension, in the ground model $V_{0,0}$ we can find a set $\set{\dot F^\rho_{j,\alpha,\zeta}}{\zeta<\theta_6}$ of $\Por_{\alpha,\rho_\eta}$-names of members of $[X_j]^{<\theta_{j+1}}$ which are forced to be cofinal in $[X_j]^{<\theta_{j+1}}$, moreover, it can be found satisfying that, for any $\Por_{\alpha,\eta_\rho}$-name $\dot F$ of a subset of $X_j$ of size ${<}\theta_{j+1}$, there is some $\zeta<\theta_6$ (in the ground model) such that $\Por_{\alpha,\eta_\rho}$ forces $\dot F \subseteq \dot F^\rho_{j,\alpha,\zeta}$. For each $\zeta<\theta_6$ choose some $\Por_{\alpha,\eta_\rho}$-name $\dot N$ of a transitive model of ZFC of size ${<}\theta_{j+1}$ that is forced to contain $\dot F^\rho_{j,\alpha,\zeta}$, and so we let $\Qnm^\rho_{j,\alpha,\zeta}$ be a $\Por_{\alpha,\eta_\rho}$-name for $(\Qor^*_j)^{\dot N}$.
    
Clearly, $\Por:=\Por_{\theta_6,\pi}$ is ccc. We can now show that $\Por$ forces what we want. Note that $\Por$ can be obtained by the FS iteration $\la\Por_{\theta_6,\xi},\Qnm_{\theta_6,\xi}:\xi<\pi\ra$. It should be clear that $\Por$ forces $\cfrak=\theta_6$.

\ref{thm:one}:
To force that $\Cbf_{[\theta_6]^{<\theta_1}}$ is Tukey-below $\Sbf^{f_0}$ and $\Iwf_f$ for all $f$, thanks to \autoref{thm:Paddsn} and \autoref{cor:addsn} it suffices to check that, for each $\xi<\pi$, $\Por_{\gamma, \xi}$ forces that $\Qnm_{\gamma, \xi}$ is $\theta_1$-$\Rbf^{f_0}_\Gwf$-good (for any suitable choice of $\Gwf$). The cases $\xi<\theta_6$ and $\xi=\eta_\rho$ for $\rho>0$ follow by~\autoref{exm:Rfgood}~\ref{exm:Rfgoodiii}; when $\xi=\eta_\rho+1+\varepsilon$ for some $\rho>0$ and $\varepsilon<\theta_6$,  we split into four subcases: the case $g_0(\varepsilon)=0$ is clear by~\autoref{smallgoodness}; when $g_0(\varepsilon)=1$ it follows by~\autoref{exm:Rfgood}~\ref{exm:Rfgoodi}; when $g_0(\varepsilon)=2$, it follows by~\autoref{exm:Rfgood}~\ref{exm:Rfgoodii}; and when $g_0(\varepsilon)=3$, it follows by~\autoref{exm:Rfgood}~\ref{exm:Rfgoodiii}. The Tukey connection $\Iwf_f \leqT \Cbf_{[\theta_6]^{<\theta_1}}$ follows by \autoref{lem:TukeyCtheta}~\ref{Ctheta:a} because $\add(\Iwf_f) \geq \add(\Nwf) \geq \theta_1$ (which is guaranteed in the next item) and $|\R|=\theta_6$.

\ref{thm:two}: 
In a similar way to the previous argument, it can be checked that all iterands are $\theta_1$-$\Rbf_1$-good (see~\autoref{ExmPrs}~\ref{ExmPrsd}, and note that for the case $g_0(\varepsilon)=2$ we use~\autoref{EDsepfam}) so, by~\autoref{Comgood}, $\Por$ forces $\Cbf_{[\theta_6]^{<{\theta_1}}}\leqT\Rbf_1$. 
Now we show that $\Por$ forces $\Rbf_1\leqT\Cbf_{[\theta_6]^{<{\theta_1}}}$. Let $\dot A$ be a $\Por$-name for a subset of $\omega^\omega$ of size ${<}\theta_1$. By employing~\autoref{realint} we can can find $\alpha<\theta_6$ and $\rho<\theta_6\theta_5$ such
that $\dot A$ is $\Por_{\alpha,\eta_\rho}$-name. By~(E0), we can find a $\zeta<\theta_6$ and a $\Por_{\alpha,\eta_\rho}$-name $\dot N$ of a transitive model
of ZFC of size ${<}\theta_1$ such that $\Por_{\alpha,\eta_\rho}$ forces that $\dot N$ contains $\dot A$ as a subset and $\Loc^{\dot N}=\Qnm_{0,\alpha,\zeta}^\rho$, so the
generic slalom added by $\Qnm_{\xi}=\Qnm_{g(\varepsilon)}^\rho$ localizes all the reals in $\dot A$ where $\varepsilon:=g^{-1}(0,\alpha,\zeta)$ and $\xi=\eta_\rho+1+\varepsilon$. 
Then, by applying~\autoref{itsmallsets}, $\Por$ forces that $\Rbf_1\leqT\Cbf_{[\theta_6]^{<{\theta_1}}}$ since $|\omega^\omega|=|\pi|=\theta_6$. 

Notice that $\Por$ forces $\Rbf_i\leqT\Cbf_{[\theta_6]^{<\theta_i}}$ for $i\in\{2,3,4\}$: this is basically the same argument as before but for $i=3$ the forcing $\Eor_b$ is used to show that $\aLc(b,1)\leqT \Cbf_{[\theta_6]^{<\theta_3}}$, and the fact that ZFC proves that $\Cbf_{\SNwf}^\perp \leqT \Cbf^\perp_{\Iwf_g} \leqT \aLc(b,1)$ for some $g\in\baireincr$ (see~\cite[Lem.~2.5]{KM21}). 
On the other hand, for $i=2$, since $\Por$ can be obtained by the FS iteration $\la\Por_{\theta_6,\xi},\Qnm_{\theta_6,\xi}:\xi<\pi\ra$ and 
all its iterands are $\theta_2$-$\aLc^*(\varrho,\rho)$-good (see~\autoref{KOpre}),  $\Por$ forces $\Cbf_{[\theta_6]^{<{\theta_2}}}\leqT \aLc^*(\varrho,\rho) \leqT \Rbf_2$ by applying~\autoref{Comgood}; and for $i=3$, since $\Por$ is obtained by the FS iteration of precaliber $\theta_3$ posets, by~\autoref{thm:precaliber} $\Por$ forces $\Cbf_{[\theta_6]^{<{\theta_3}}}\leqT\Rbf_3$.

Note that, by \autoref{exm:ufl}, the matrix iteration is ${<}\theta_4$-uf-extendable. Therefore, by~\autoref{mainpres}, $\Por$ forces $\Cbf_{[\theta_6]^{<{\theta_4}}}\leqT\Rbf_4$.



\noindent\ref{thm:three}: Since $\cf(\pi)=\theta_5$, by applying~\autoref{lem:strongCohen}, $\Por$ forces that $\theta_5\leqT\Cbf_\Mwf$ and, by~\autoref{matsizebd}, $\Por$ forces $\theta_6\leqT\Cbf_\Mwf$. It remains to prove that $\Por$ forces that $\Ed\leqT \theta_6\times\theta_6\theta_5$ (because $\theta_6\theta_5\eqT \theta_5$). For this purpose, for each $\rho<\theta_6\theta_5$ denote by $\dot e_\rho$ the $\Por_{\Delta(\eta_\rho),\eta_\rho+1}$-name of the eventually different real over $V_{t(\rho),\eta_\rho}$ added by $\Qnm_{t(\rho),\eta_\rho}$. In $V_{\gamma, \pi}$, we are going to define maps $\Psi_-:\omega^\omega\to\theta_6\times\theta_6\theta_5$ and $\Psi_+:\theta_6\times\theta_6\theta_5\to\omega^\omega$ such that, for any $x\in\omega^\omega$ and for any $(\alpha,\rho)\in\theta_6\times\theta_6\theta_5$, if $\Psi_-(x)\leq(\alpha,\rho)$ then $x\neq^\infty\Psi_+(\alpha,\rho)$.

For $x\in V_{\theta_6,\pi}\cap\omega^\omega$, we can find $\alpha_x<\theta_6$ and $\rho_x<\theta_6\theta_5$ such that $x\in V_{\alpha_x,\eta_{\rho_x}}$, so put $\Psi_-(x):=(\alpha_x,\rho_x)$; for $(\alpha, \rho)\in\theta_6\times\theta_6\theta_5$, find some $\rho'<\theta_6\theta_5$ such that $\rho'\geq\rho$ and $t(\rho')=\alpha$, and define $\Psi_+(\alpha,\rho):=\dot e_{\rho'}$. It is clear that $(\Psi_-,\Psi_+)$ is
the required Tukey connection.




\noindent\ref{thm:four}: 
Note that $\Por$ forces that $\minadd=\theta_1$  and $\supcof=\theta_6$. 
Since $\Por$ forces $\cov(\Mwf)=\non(\SNwf)=\dfrak=\cfrak=\theta_6$,  $\Por$ forces $\theta_6^{\theta_6}\leqT\SNwf$ by using~\autoref{cor:lowSN}, and $\SNwf\leqT\Cbf_{[\supcof]^{<\minadd}}^\dfrak=\Cbf_{[\theta_6]^{<\theta_1}}^{\theta_6}$ by~\autoref{new_upperb}.

For the final part of the theorem, since $\dfrak_{\theta_6}=\cof\left(([\theta_6]^{<\theta_1})^{\theta_6}\right)=\nu$ in the ground model and $\Por$ is ccc, the same holds in the final extension, so $\cof(\SNwf)=\nu$ by~\ref{thm:four}. 
%
%
\end{proof}

A different forcing construction as in the previous theorem allows to force the same constellation with $\theta_6$ possibly singular and by replacing~\ref{thm:three} by $\Cbf_\Mwf \eqT \Ed \eqT \Cbf_{[\theta_6]^{<\theta_5}}$, while only getting $\cf(\theta_6)^{\cf(\theta_6)} \leqT \SNwf \leqT \Cbf^{\theta_6}_{[\theta_6]^{<\theta_1}}$ in~\ref{thm:four}, but it requires more hypothesis. Concretely, perform a forcing construction as in~\cite[Thm.~1.35]{GKScicmax} under the same assumptions of \autoref{thm:4SN} but without demanding $\theta_6$ regular, and additionally assuming that $\cof([\theta_6]^{<\theta_5}) = \theta_6$ and either {\rm (i)} $\theta_4 = \theta_5$ or {\rm (ii)} $\theta_5$ is $\aleph_1$-inaccessible\footnote{This means that $\mu^{\aleph_0} < \theta_5$ for any cardinal $\mu<\theta_5$.} and there is some cardinal $\theta<\theta_5$ such that $\forall\, \alpha<\theta_4\colon |\alpha| \leq \theta$,\footnote{The latter just means that, if $\theta_4$ is a successor cardinal, then its predecessor is ${\leq}\theta$, or else $\theta_4 \leq \theta$.} and whenever $2^\theta < \theta_6$, $\theta^{<\theta} = \theta$. Although the cited theorem uses GCH (and more restrictions), the presented assumptions are enough. See the proof of \autoref{KST4SN} for further explanation.

The previous result can be modified to force, in addition, that $\cov(\Mwf)<\dfrak$. However, $\non(\SNwf)<\cof(\SNwf)$ is sacrificed unless we include a (quite undesirable) assumption about the cofinality of (the value we want to force to) the continuum.

\begin{theorem}\label{thm4SNtwo}
    Let $\theta_1\leq\theta_2\leq\theta_3\leq\theta_4 \leq\theta_5\leq \theta_6$ be uncountable regular cardinals, and $\theta_7=\theta_7^{\aleph_0}\geq \theta_6$ satisfying $\cof([\theta_7]^{<\theta_i}) = \theta_7$ for $1\leq i\leq 4$. 
     Then there is a ccc poset forcing:
\begin{enumerate}[label=\rm (\alph*)]
    \item\label{thm:a'} $\cfrak = \theta_7$;
    \item\label{thm:one'} $\Iwf_f \eqT\Cbf_{[\theta_7]^{<\theta_1}}\leqT\SNwf$ for all $f\in\baireincr$;
    \item \label{thm:two'} $\Rbf_i\eqT\Cbf_{[\theta_7]^{<{\theta_i}}}$ for $1\leq i\leq4$ and $\Cbf^\perp_{\Iwf_g} \eqT \Cbf_{[\theta_7]^{<{\theta_3}}}$ for some $g$ in the ground model;
    \item \label{thm:three'}  $\theta_5\leqT\Cbf_\Mwf$, $\theta_6\leqT \Cbf_\Mwf$, and $\Ed\leqT \theta_6\times\theta_5$;
    \item \label{thm:four'} $\SNwf\leqT\Cbf_{[\theta_7]^{<\theta_1}}^{\theta_7}$; and
    \item\label{fin:f} if $\cf(\theta_7) = \theta_5$ then $\theta_5^{\theta_5} \leqT \SNwf$.
\end{enumerate}
In particular,
\begin{align*}
\add(\Nwf) = \add(\Iwf_\id)=\add(\SNwf)=\theta_1\leq\cov(\Nwf)=\theta_2\leq \cov(\SNwf)= \supcov = \theta_3 \leq\\
\leq \bfrak=\theta_4
\leq\non(\Mwf)=\theta_5 
\leq\cov(\Mwf)=\theta_6 \leq \non(\SNwf)=\dfrak=\cfrak=\theta_7.
\end{align*}
In addition, if we assume in the ground model that $\cf(\theta_7) = \theta_5$ and $\dfrak_{\theta_5} = \cof\left(([\theta_7]^{<\theta_1})^{\theta_7}\right) = \nu$, then the previous ccc poset forces $\dfrak_{\theta_5}=\dfrak_{\theta_7} = \cof(\SNwf) = \cof\left(([\theta_7]^{<\theta_1})^{\theta_7}\right) = \nu$. (See~\autoref{Fig4SNtwo}.)
\end{theorem}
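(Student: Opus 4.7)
The plan is to adapt the matrix iteration construction of \autoref{thm:4SN} by enlarging the length to accommodate the cardinal $\theta_7$ while keeping the height at $\theta_6$. Concretely, I would perform a ${<}\theta_4$-uf-extendable simple matrix iteration $\Por_{\gamma,\pi}$ with $\gamma = \theta_6$ and $\pi = \theta_6\theta_7\theta_5$ (ordinal product), so that $|\pi| = \theta_7$ and $\cf(\pi) = \theta_5$. The first $\theta_6$-many coordinates add Cohen reals horizontally as in the proof of \autoref{thm:4SN}, and for each limit position $\eta_\rho = \theta_6\rho$ with $0 < \rho < \theta_7\theta_5$ one places an eventually different real at row $\Delta(\eta_\rho) = t(\rho)$, where $t\colon \theta_7\theta_5 \to \theta_6$ is chosen so that each fiber is cofinal. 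The remaining $\theta_6$-many stages inside each block interleave generic localizations $\Loc^N$, random posets $\Bor^N$, anti-localization posets $\Eor_b^N$, and Hechler posets $\Dor^N$ restricted to transitive models $N$ of size less than the relevant $\theta_i$ via book-keeping, using the assumptions $\cof([\theta_7]^{<\theta_i}) = \theta_7$ for $1\leq i\leq 4$ together with $\theta_7^{\aleph_0} = \theta_7$ to enumerate enough name-parameters per column.

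Once the matrix is in place, the verification of \ref{thm:a'}--\ref{thm:three'} should follow the same template as \autoref{thm:4SN}. Goodness of every iterand in the appropriate Polish relational system (via \autoref{exm:Rfgood}, \autoref{ExmPrs}, \autoref{KOpre}, and \autoref{smallgoodness}) combined with \autoref{Comgood}, \autoref{thm:Paddsn}, and \autoref{cor:addsn} would yield $\Cbf_{[\theta_7]^{<\theta_i}} \leqT \Rbf_i$ for $i \in \{1,2\}$ and the Tukey reductions $\Cbf_{[\theta_7]^{<\theta_1}} \leqT \SNwf$, $\Cbf_{[\theta_7]^{<\theta_1}} \leqT \Iwf_f$ for every $f\in\baireincr$; the reductions $\Cbf_{[\theta_7]^{<\theta_3}} \leqT \Rbf_3$ and $\Cbf_{[\theta_7]^{<\theta_4}} \leqT \Rbf_4$ come from \autoref{thm:precaliber} (precaliber $\theta_3$ of the iterands) and \autoref{mainpres} (the uf-extendability), respectively. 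The dual Tukey reductions $\Rbf_i \leqT \Cbf_{[\theta_7]^{<\theta_i}}$ then follow from \autoref{itsmallsets} applied with the appropriate generic reals cofinally added along the iteration, and the equivalence $\Cbf^\perp_{\Iwf_g} \eqT \Cbf_{[\theta_7]^{<\theta_3}}$ for some ground-model $g\in\baireincr$ uses the same ED-forcing trick as in \autoref{thm:4SN}. The matrix's horizontal Cohen reals give $\theta_6 \leqT \Cbf_\Mwf$ by \autoref{matsizebd}; \autoref{lem:strongCohen} applied to the cofinal chain of length $\cf(\pi) = \theta_5$ gives $\theta_5 \leqT \Cbf_\Mwf$; and the eventually different reals $\dot e_\rho$ yield $\Ed \leqT \theta_6 \times \theta_5$ via the very same Tukey pair $(\Psi_-,\Psi_+)$ constructed in \autoref{thm:4SN}. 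For \ref{thm:four'}, \autoref{new_upperb} applied with $\minadd = \theta_1$, $\supcof = \theta_7$, and $\dfrak = \theta_7$ produces $\SNwf \leqT \Cbf_{[\theta_7]^{<\theta_1}}^{\theta_7}$; for \ref{fin:f}, assuming $\cf(\theta_7) = \theta_5$, the cofinality analysis underlying \autoref{cor:lowSN}, applied with $\cf(\non(\SNwf)) = \theta_5$ and with witnesses supplied by the matrix, should deliver $\la\theta_5^{\theta_5},\leq\ra \leqT \SNwf$.

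The main obstacle will be guaranteeing $\cov(\Mwf) = \theta_6 < \theta_7 = \dfrak = \cfrak$ inside a finite-support construction, where Cohen reals normally accumulate at countable-cofinality limits and push $\cov(\Mwf)$ up to $\cfrak$. The matrix architecture is precisely what circumvents this: the Cohen witnesses for $\cov(\Mwf)$ are confined to the $\theta_6$ horizontal columns (so $\cov(\Mwf) \leq \theta_6$ via \autoref{matsizebd}), while the long coordinate of length $\theta_7\theta_5$ is devoted to adding dominating and ${<}\theta_4$-uf-linked iterands, producing $\dfrak = \cfrak = \theta_7$ without enlarging $\cov(\Mwf)$. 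The secondary difficulty is recovering the precise value $\cof(\SNwf) = \nu$ in the addendum: the bookkeeping that preserves $\dfrak_{\theta_5}$ and $\cof\left(([\theta_7]^{<\theta_1})^{\theta_7}\right)$ under ccc forcing is standard, and combining the upper bound from \autoref{new_upperb} with the lower bound from \ref{fin:f} closes the gap, giving the final cofinality equality $\cof(\SNwf) = \dfrak_{\theta_5} = \nu$.
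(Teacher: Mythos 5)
There are two genuine gaps in your proposal, one structural and one in the proof of~\ref{fin:f}.

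\textbf{Block length.} You set $\eta_\rho = \theta_6\rho$ for $\rho<\theta_7\theta_5$, so each block of the matrix has length $\theta_6$. However, the book-keeping that you yourself invoke (``using $\cof([\theta_7]^{<\theta_i}) = \theta_7$ together with $\theta_7^{\aleph_0}=\theta_7$ to enumerate enough name-parameters per column'') requires enumerating, for each row $\alpha<\theta_6$ and each block $\rho$, a cofinal family of $\theta_7$-many $\Por_{\alpha,\eta_\rho}$-names of members of $[X_j]^{<\theta_{j+1}}$, as in item (E$j$) of the proof of \autoref{thm:4SN}. A block of length $\theta_6<\theta_7$ simply does not have enough slots for this; you would have to spread the enumeration across many blocks, but you do not describe such a scheme, and the construction in \autoref{thm:4SN} explicitly relies on each block being self-contained. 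The paper's actual construction instead keeps the same number of blocks as in \autoref{thm:4SN} (namely $\rho<\theta_6\theta_5$, so $t\colon\theta_6\theta_5\to\theta_6$) but makes each block longer: $\eta_\rho := \theta_7\rho$, giving blocks of length $\theta_7$, which exactly accommodates a bijection $g\colon\theta_7\to\{0,1,2,3\}\times\theta_6\times\theta_7$ and the $\theta_7$-many names per block.

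\textbf{Item~\ref{fin:f}.} You propose to derive $\theta_5^{\theta_5}\leqT\SNwf$ from ``the cofinality analysis underlying \autoref{cor:lowSN}.'' But \autoref{cor:lowSN} has the hypothesis $\cov(\Mwf)=\dfrak$, which fails here: the whole point of this theorem, compared to \autoref{thm:4SN}, is that $\cov(\Mwf)=\theta_6<\theta_7=\dfrak$. There is no way to simply adapt \autoref{cor:lowSN}; the paper needs a genuinely different tool. It invokes the principle $\DS(\delta)$ from \cite{CM23}: by \cite[Cor.~6.2]{CM23} the iteration forces $\DS(\theta_7\theta_5)$, and by \cite[Thm.~4.24]{CM23}, $\DS(\delta)$ together with $\non(\SNwf)=\supcof$ having the same cofinality $\theta$ as $\delta$ yields $\langle\theta^\theta,\leq\rangle\leqT\SNwf$. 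Since $\cf(\theta_7)=\cf(\theta_7\theta_5)=\theta_5$ is assumed in~\ref{fin:f}, this gives the conclusion. Without this replacement your argument for~\ref{fin:f} (and hence for the final addendum computing $\cof(\SNwf)=\nu$) does not go through.

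The remainder of your plan for items~\ref{thm:a'}--\ref{thm:four'} does follow the same template as the paper and would work once the block length is corrected.
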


\begin{figure}[ht]
\begin{center}
  \includegraphics[width=\linewidth]{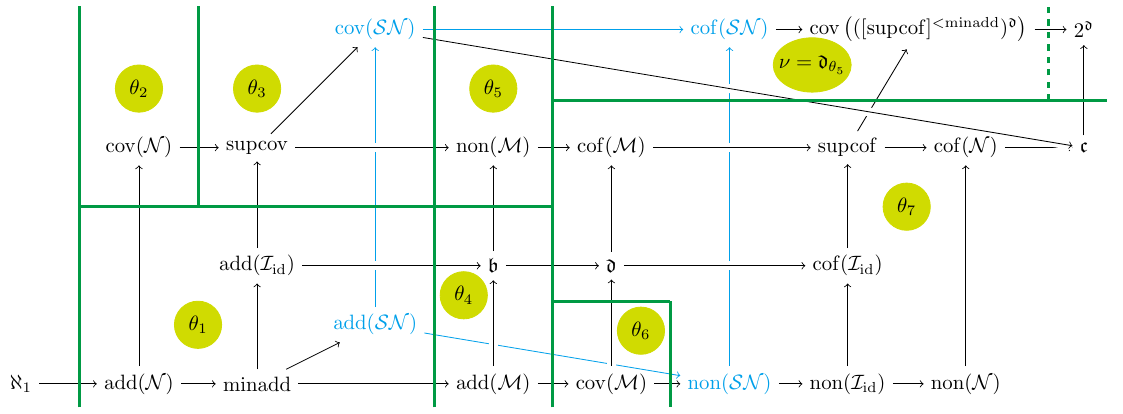}
  \caption{Constellation forced in~\autoref{thm4SNtwo}.}
  \label{Fig4SNtwo}
\end{center}
\end{figure}

\begin{proof}
    Construct a ${<}\theta_4$-uf-extendable matrix iteration $\Por$ as in \autoref{thm:4SN}, but use $\eta_\rho:=\theta_7 \rho$ for $\rho<\theta_6\theta_5$, a bijection $g\colon \theta_7\to \{0,1,2,3\}\times \theta_6\times \theta_7$, and a function $t\colon \theta_6\theta_5\to \theta_6$ such that $\{\rho<\theta_6\theta_5\colon t(\rho)=\alpha\}$ is cofinal in $\theta_6\theta_5$ for all $\alpha<\theta_6$. Then, $\Por$ forces~\ref{thm:a'}--\ref{thm:four'}.

    To show~\ref{fin:f}, we use notions and results from~\cite{CM23} that we do not fully review. In this reference, we define a principle we denote by $\DS(\delta)$, with the parameter an ordinal $\delta$, which has a profound effect on $\cof(\SNwf)$. According to~\cite[Cor.~6.2]{CM23}, $\Por$ forces $\DS(\theta_7\theta_5)$. On the other hand,~\cite[Thm.~4.24]{CM23} states that, if $\DS(\delta)$ holds and $\non(\SNwf)=\supcof$ has the same cofinality as $\delta$, say $\theta$, then $\la\theta^\theta,\leq\ra\leqT \SNwf$. Since $\Por$ forces $\non(\SNwf)=\supcof=\theta_7$ and $\cf(\theta_7) = \cf(\theta_7 \theta_5) = \theta_5$, we conclude~\ref{fin:f}.

    For the final part of the theorem, we proceed as in the previous proofs. 
\end{proof}

In the previous two theorems, we have obtained constellations that can include $\cov(\Nwf)<\cov(\SNwf)<\bfrak$. But, how about constellations including $\bfrak<\cov(\Nwf)<\cov(\SNwf)$ and $\cov(\Nwf)< \bfrak <\cov(\SNwf)$? The model constructed in~\cite[Thm.~2.43]{KST} can be used for the first one, but there $\cov(\SNwf)=\non(\Mwf)$.

\begin{theorem}\label{KST4SN}
Let $\theta_1\leq\theta_2\leq\theta_3\leq\theta_5$  be uncountable regular cardinals, and let $\theta_6 = \theta_6^{\aleph_0} \geq \theta_5$ be a cardinal such that $\cof([\theta_6]^{<\theta_i}) = \theta_6$ for $1 \leq i \leq 5$. 
Assume that either {\rm (i)} $\theta_2=\theta_3$, or {\rm (ii)} both $\theta_3$ and $\theta_5$ are $\aleph_1$-inaccessible, 
and there is some cardinal $\theta<\theta_3$ such that  $\forall\, \alpha<\theta_2\colon |\alpha|\leq\theta$, and whenever $2^\theta<\theta_6$, $\theta^{<\theta} = \theta$.
Then, for some suitable parameters $b,h\in\omega^\omega$, there is a ccc poset forcing:
\begin{multicols}{2}
\begin{enumerate}[label=\rm (\alph*)]
    \item $\cfrak = \theta_6 < \cof(\SNwf)$;
    \item $\Cbf_{[\theta_6]^{<\theta_1}}\leqT\SNwf$;
    \item $\Nwf \eqT \Iwf_f \eqT \Cbf_{[\theta_6]^{<\theta_1}}$ for all $f$;
    \item $\Dbf \eqT \Cbf_{[\theta_6]^{<\theta_2}}$;
    \item $\Cbf^\perp_\Nwf\eqT \Cbf_{[\theta_6]^{<\theta_3}}$;
    \item $\Cbf^\perp_\SNwf \eqT \aLc(b,h) \eqT \Cbf_{[\theta_6]^{<\theta_5}}$; 
    \item $\Cbf_\Mwf \eqT \Cbf_{[\theta_6]^{<\theta_5}}$;
    \item $\cf(\theta_6)^{\cf(\theta_6)}\leqT\SNwf\leqT\Cbf_{[\theta_6]^{<\theta_1}}^{\theta_6}$. 
\end{enumerate}
\end{multicols}
In particular,
\begin{align*}
\add(\Nwf) & = \add(\Iwf_\id) =\add(\SNwf)=\theta_1\leq \bfrak =\theta_2 \leq \cov(\Nwf)=\theta_3\leq\\ 
 & \leq \cov(\SNwf)= \supcov = \non(\Mwf) = \theta_5
\leq\cov(\Mwf)=\cfrak=\theta_6 <  \cof(\SNwf).
\end{align*}
In addition, if we assume in the ground model that $\dfrak_{\cf(\theta_6)} = \cof\left(([\theta_6]^{<\theta_1})^{\theta_6}\right) = \nu$, then the previous ccc poset forces $\dfrak_{\theta_6} = \cof(\SNwf) = \cof\left(([\theta_6]^{<\theta_1})^{\theta_6}\right) = \nu$. (See~\autoref{FigKST4SN}.)
\end{theorem}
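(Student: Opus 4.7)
The proof will closely follow the recipe established in \autoref{thm:4SN}, \autoref{thm4SNtwo}, and \cite[Thm.~2.43]{KST}, but with a crucial new ingredient: in order to simultaneously separate $\bfrak<\cov(\Nwf)<\cov(\SNwf)$, we replace the matrix iteration by a plain FS iteration of ccc posets involving a sequence of strictly positive finitely additive measures (fams) on suitable Boolean algebras, exactly as in the ``$\Lb$-good'' framework developed by Shelah~\cite{ShCov} and refined in~\cite{KST,uribethesis,CMU}. First, I will fix parameters $\rho,\varrho,b\in\omega^\omega$ as in the proof of \autoref{thm:4SN} so that $\Eor_b^h$ is $(\rho,\varrho^{\rho^\id})$-linked (hence $\aLc^*(\varrho,\rho)$-good) and so that $\Eor_b^h$-goodness witnesses $\Cbf_\SNwf^\perp \leqT \aLc(b,h)$ via~\cite[Lem.~2.5]{KM21}. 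A book-keeping function $g\colon\theta_6\to 5\times\theta_6$ will distribute the following four types of iterands: (i)~$\Loc^N$ for small $N$ (to force $\add(\Nwf)\leq\theta_1$), (ii)~$\Bor^N$ for $|N|<\theta_3$ (to force $\cov(\Nwf)\leq\theta_3$), (iii)~$\Eor_b^{h,N}$ for $|N|<\theta_5$ (to force $\cov(\SNwf)\leq\theta_5$ and the Tukey connections for $\Cbf_\SNwf^\perp$ and $\Cbf_\Mwf$), and (iv)~a ``fam-extending'' iterand analogous to the one in~\cite[Sec.~2]{KST} that will preserve a prescribed unbounded family of size $\theta_2$.

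The backbone of the argument splits into four independent verifications. For the upper bound side $\Rbf_i \leqT \Cbf_{[\theta_6]^{<\theta_i}}$: the iterands of type (i)--(iii) produce, at cofinally many stages, $\Rbf_i$-dominating reals over any small enough set of reals already present, so an application of \autoref{itsmallsets} yields the desired Tukey connections for $\Rbf_1=\Lc^*$, $\Rbf_3=\Cbf_\Nwf^\perp$ and $\Rbf_5\eqT\Cbf^\perp_\SNwf\eqT \aLc(b,h)$; the inclusion $\Cbf^\perp_\SNwf\leqT\aLc(b,h)$ follows from~\cite[Lem.~2.5]{KM21}, and precaliber-$\theta_5$ of all iterands combined with \autoref{thm:precaliber} pins down $\cov(\SNwf)=\theta_5$. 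For the lower bound side, all iterands will be $\theta_1$-$\Rbf^{f_0}_\Gwf$-good (by \autoref{exm:Rfgood} and the fact that fam-extending posets are Boolean algebras with strictly positive pfam, hence $\Rbf^{f_0}_\Gwf$-good by \autoref{supermain}); hence \autoref{thm:Paddsn} with $\pi=\theta_6$ gives $\Cbf_{[\theta_6]^{<\theta_1}}\leqT\SNwf$, and also $\Cbf_{[\theta_6]^{<\theta_1}}\leqT \Iwf_f$ for all $f\in\baireincr$ via \autoref{cor:addsn} (choosing $f_0(i)=\tfrac{(i+1)(i+2)(2i+3)}{6}$).

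The delicate part, which I expect to be the main obstacle, is the separation $\bfrak=\theta_2<\cov(\Nwf)=\theta_3$. This cannot be handled by the matrix iterations of the previous two theorems because matrix iterations do not control $\bfrak$ from below. Instead, at the fam-type stages we follow the strategy of~\cite[Thm.~2.43]{KST}: one preselects a carefully chosen unbounded family $\{\dot d_\alpha:\alpha<\theta_2\}$ in $\omega^\omega$ of size $\theta_2$, together with a coherent sequence of fams $\Seq{\Xi_\xi}{\xi\leq\pi}$ on the relevant Boolean algebras such that each $\Xi_\xi$ forces the family to remain unbounded, and each iterand is chosen to extend $\Xi_\xi$ to a fam on the next step. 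Here is precisely where the $\aleph_1$-inaccessibility assumption on $\theta_3$ (and on $\theta_5$ in case (ii)) enters: it ensures that there are cofinally many stages at which the inductive fam-extension argument succeeds, by providing cardinal arithmetic of the form $\mu^{\aleph_0}<\theta_3$ for all $\mu<\theta_3$, so that the countably many antichain choices needed to extend the fam can be absorbed, exactly as in~\cite[Subsec.~2.5]{KST}. The auxiliary cardinal $\theta$ bounding $|\alpha|$ for $\alpha<\theta_2$ is used to guarantee that the unbounded family can be fixed in an initial segment of the iteration without damaging the $\cfrak=\theta_6$ target.

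Once $\bfrak=\theta_2$ is achieved, the remaining values follow automatically from the structural inequalities in Cichoń's diagram together with the Tukey connections above: $\cov(\Mwf)\leq\cfrak=\theta_6$ is an upper bound because the iteration has length cofinal in $\theta_6$ and adds Cohen reals at limit stages, giving $\theta_6\leqT\Cbf_\Mwf$ via \autoref{lem:strongCohen}, and hence $\cov(\Mwf)=\theta_6$. The value $\cof(\SNwf)>\theta_6$ follows from \autoref{cor:lowSN} applied with $\lambda:=\cf(\theta_6)$ (so $\cf(\theta_6)^{\cf(\theta_6)}\leqT\SNwf$), while \autoref{new_upperb} together with $\minadd=\theta_1$ and $\supcof=\theta_6$ (forced by the $\Loc^N$-stages together with the goodness bound) gives $\SNwf\leqT \Cbf^{\theta_6}_{[\theta_6]^{<\theta_1}}$. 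The optional final clause about $\cof(\SNwf)=\nu$ is then standard: the ground-model value of $\dfrak_{\cf(\theta_6)}=\cof(([\theta_6]^{<\theta_1})^{\theta_6})=\nu$ is preserved by any ccc extension, so it transfers through the iteration.
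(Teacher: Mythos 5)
Your outline heads in the same direction as the paper: a Kellner--Shelah--T\u{a}nasie-style FS iteration with fam-limits, invoking $\Rbf^{f_0}_\Gwf$-goodness of the iterands to get $\Cbf_{[\theta_6]^{<\theta_1}}\leqT\SNwf$ via \autoref{thm:Paddsn} and \autoref{Comgood}, and \autoref{cor:lowSN} together with \autoref{new_upperb} for the bounds on $\cof(\SNwf)$. Your observation that the relevant iterands are Boolean algebras with a pfam and are therefore $\Rbf^{f_0}_\Gwf$-good by \autoref{supermain} is a legitimate alternative to the paper's phrasing (the paper notes instead that $\tilde\Eor$ is forcing equivalent to a subalgebra of random forcing, which gives goodness via \autoref{exm:Rfgood}~\ref{exm:Rfgoodi}). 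So the broad strategy matches; the disagreements are in what the hypotheses actually do.

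First, you never address the dichotomy (i)/(ii) in the hypotheses, but this split is structural. In case (i) $\theta_2=\theta_3$, there is nothing between $\bfrak$ and $\cov(\Nwf)$ to separate, so no fam-limit machinery is needed at all: the paper points out that one can iterate $\tilde\Eor$ (or even $\Eor^h_b$) unrestricted because these posets are $\sigma$-uf-linked and hence $\Dor$-good (\autoref{ExmPrs}~\ref{ExmPrsb}). Your sketch proposes fam-limits unconditionally, which is both unnecessary in case (i) and fails to explain why the hypothesis is split into two clauses.

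Second, and more seriously, your reading of the auxiliary cardinal $\theta$ in case (ii) --- ``to guarantee that the unbounded family can be fixed in an initial segment of the iteration without damaging the $\cfrak=\theta_6$ target'' --- is not what $\theta$ is for. The fam-limit construction from~\cite[Thm.~2.43]{KST} requires an arithmetic hypothesis of the form $\theta_5\leq 2^\theta$; the paper's actual mechanism (as in~\cite{modKST,CMU}) is to pass to the Cohen extension by $\Fn_{<\theta}(\theta_5,2)$ --- where $\theta_5\leq 2^\theta$ holds --- build the ccc iteration there, and then pull the poset back down to the ground model, with $\theta^{<\theta}=\theta$ ensuring that the Cohen step is $\theta^+$-cc and does not perturb the targeted cardinal arithmetic (and the step is skipped when $\theta_5\leq 2^\theta$ already). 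Your sketch omits this intermediate-extension-and-pull-down device entirely, so as written it cannot account for how the GCH assumption of~\cite{KST} is weakened to precisely the hypotheses stated. The paper also suggests replacing the $V^{\Por'_\alpha}$ restrictions of~\cite{KST} by small transitive models (as in \autoref{thm4SN}) for the $\theta_1$ and $\theta_2$ targets, which you do gesture at, but the $\Fn_{<\theta}(\theta_5,2)$ step is the key missing piece.
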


\begin{figure}[ht]
\begin{center}
  \includegraphics[width=\linewidth]{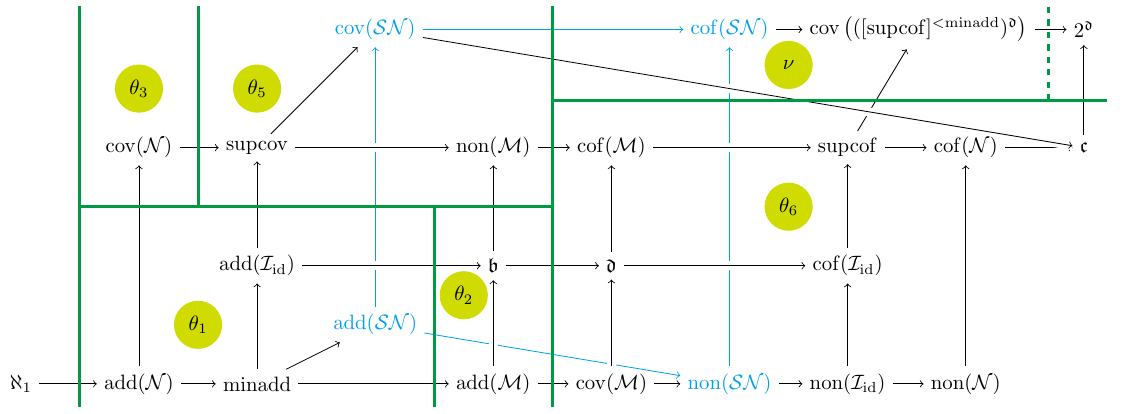}
  \caption{Constellation forced in~\autoref{KST4SN}.}
  \label{FigKST4SN}
\end{center}
\end{figure}

\begin{proof}
  The forcing construction for this theorem uses a FS iteration of length $\theta_5+\theta_5$ with fam-limits and a ccc poset they denote $\Tilde{\Eor}$ to increase $\non(\Mwf)$ (see also~\cite{M24Anatomy}). This poset, like $\Eor^h_b$, increases $\balc_{b,h}$ for some (fast increasing) parameters $b,h\in\omega^\omega$ (also recall that $\Cbf^\perp_{\Iwf_g} \leqT \aLc(b,h)$ for some parameter $g\in\baireincr$), and it is forcing equivalent to a subalgebra of random forcing. Therefore, the same arguments as the previous theorems can be used to understand its effect on $\SNwf$.

  We do not develop the details of this construction, but just explain the reason we can weaken GCH (originally used in~\cite{KST}) to our hypothesis, as similarly done in~\cite{modKST,CMU}. To make sense of them, we suggest to use small transitive models as in \autoref{thm4SN} for the cardinal characteristics corresponding to $\theta_1$ and $\theta_2$, instead of restricting to $V^{\Por'_{\alpha}}$ for some $\Por'_\alpha \subsetdot \Por_\alpha$.\footnote{As done in~\cite{modKST}.} 
  In the case $\theta_2 = \theta_3$ no fam-limits are needed and we can iterate with $\tilde\Eor$ (and even with $\Eor^h_b$) without restricting it to any small model $N$, which is fine because it is $\sigma$-uf-linked (see~\cite[Cor.~4.3.11 \& Thm.~4.2.19]{uribethesis}) and hence $\Dor$-good (see \autoref{ExmPrs}~\ref{ExmPrsb}). For the second case, when $\theta_5 < 2^\theta$ and $\theta^{<\theta} = \theta$, proceed as in the proof of~\cite[Thm.~2.43]{KST} and \cite[Thm.~11.4]{CMU} by first constructing the ccc poset in the $\Fn_{<\theta}(\theta_5,2)$-extension (to use a required assumption $\theta_5\leq 2^\theta$), and then pulling it down to the ground model; when $\theta_5\leq 2^\theta$, there is no need to step in the $\Fn_{<\theta}(\theta_5,2)$-extension.
\end{proof}

The case $\cov(\Nwf)<\bfrak<\cov(\SNwf) = \non(\Mwf)$ can be obtained by a modification of the poset for \autoref{thm:4SN}.

\begin{theorem}\label{thm:4SN3}
Let $\theta_1\leq\theta_2\leq\theta_3\leq \theta_5\leq \theta_6 = \theta_6^{\aleph_0}$ be uncountable regular cardinals 
and assume that $\cof([\theta_6]^{<\theta_i}) = \theta_6$ for $1 \leq i \leq 3$. 
Then, for some suitable parameters $b,h\in\omega^\omega$, there is a ccc poset forcing:
\begin{multicols}{2}
\begin{enumerate}[label=\rm (\alph*)]
    \item $\cfrak = \theta_6$;
    \item $\Cbf_{[\theta_6]^{<\theta_1}}\leqT\SNwf$;
    \item $\Nwf \eqT \Iwf_f\eqT \Cbf_{[\theta_5]^{<\theta_1}}$ for all $f$;
    \item $\Cbf^\perp_\Nwf \eqT \Cbf_{[\theta_5]^{<\theta_2}}$;
    \item $\Dbf\eqT \Cbf_{[\theta_5]^{<\theta_3}}$;
    \item\label{f:Snp} $\theta_5\leqT\Cbf^\perp_\SNwf$ and $\theta_6\leqT\Cbf^\perp_\SNwf$;
    \item  $\theta_5\leqT\Cbf_\Mwf$ and $\theta_6\leqT \Cbf_\Mwf$;
    \item\label{h:Snp} $\Cbf^\perp_{\Iwf_g}\leqT \theta_6\times\theta_5$ for some $g$ in the ground model;
    \item $\theta_6^{\theta_6}\leqT\SNwf\leqT\Cbf_{[\theta_6]^{<\theta_1}}^{\theta_6}$. 
\end{enumerate}
\end{multicols}
In particular, 
\begin{align*}
\add(\Nwf)= \add(\Iwf_\id) =\add(\SNwf)=\theta_1\leq\cov(\Nwf)=\theta_2 \leq\bfrak=\theta_3\leq \cov(\SNwf)= \supcov =\\
=\non(\Mwf)=\theta_5 
\leq \cov(\Mwf)=\non(\SNwf)=\dfrak=\cfrak=\theta_6 < \dfrak_{\theta_6} \leq \cof(\SNwf).
\end{align*}
In addition, if we assume in the ground model that $\dfrak_{\theta_6} = \cof\left(([\theta_6]^{<\theta_1})^{\theta_6}\right) = \nu$, then the previous ccc poset forces $\dfrak_{\theta_6} = \cof(\SNwf) = \cof\left(([\theta_6]^{<\theta_1})^{\theta_6}\right) = \nu$. (See~\autoref{Figthm4SN3}.)
\end{theorem}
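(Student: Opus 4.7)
The plan is to mimic the matrix iteration of \autoref{thm:4SN}, swapping the roles of $\theta_5$ and $\theta_6$ in the matrix geometry. I would take a ${<}\theta_3$-uf-extendable matrix iteration $\Por=\Por_{\gamma,\pi}$ of ccc posets with height $\gamma=\theta_5$ and length $\pi$ satisfying $|\pi|=\theta_6$ and $\cf(\pi)=\theta_6$. The Cohen reals added at successor matrix-heights then yield $\theta_5\leqT\Cbf_\Mwf$ by \autoref{matsizebd}, and the Cohen reals inserted along the length yield $\theta_6\leqT\Cbf_\Mwf$ by \autoref{lem:strongCohen}, settling item~(g). A book-keeping argument schedules the remaining iterands cofinally per matrix-height, interleaving: (I) $\Loc^N$ for transitive $N$ of size ${<}\theta_1$; (II) $\Bor^N$ for transitive $N$ of size ${<}\theta_2$; (III) $\Eor^h_b$ for fast-growing $b,h\in\omega^\omega$ chosen as in \autoref{thm:4SN} to be $(\rho,\varrho^{\rho^{\id}})$-linked with $\sum_{i<\omega}\frac{\rho(i)^i}{\varrho(i)}<\infty$, placed at positions cycling through all matrix-heights via a surjection $t\colon\pi\to\theta_5$; and (IV) auxiliary Cohen iterands that populate the uf-extendable structure needed to invoke \autoref{mainpres}.

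Goodness and smallness of the iterands then deliver the cardinal values in direct analogy with \autoref{thm:4SN}. Every iterand is $\theta_1$-$\Rbf^{f_0}_\Gwf$-good (\autoref{exm:Rfgood}), $\theta_1$-$\Lc^*$-good (\autoref{ExmPrs}~\ref{ExmPrsd}), $\theta_2$-$\aLc^*(\varrho,\rho)$-good (\autoref{KOpre}~\ref{KOc}) and $\sigma$-uf-linked (\autoref{exm:ufl}). Hence the underlying FS iteration satisfies the hypotheses of \autoref{thm:Paddsn} and \autoref{cor:addsn}, forcing $\Cbf_{[\theta_6]^{<\theta_1}}\leqT\SNwf$ and $\leqT\Iwf_f$ for every $f\in\baireincr$ (item~(b)); the $\Loc^N$'s via \autoref{itsmallsets} supply $\Lc^*\eqT\Cbf_{[\theta_6]^{<\theta_1}}$, yielding item~(c); the $\Bor^N$'s via \autoref{itsmallsets} give $\Cbf^\perp_\Nwf\eqT\Cbf_{[\theta_6]^{<\theta_2}}$ (item~(d)); and \autoref{mainpres} applied to the uf-extendable structure gives $\Cbf_{[\theta_6]^{<\theta_3}}\leqT\Dbf$ (item~(e)). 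Since all iterands have size ${<}\theta_5$, they are precaliber $\theta_5$, so \autoref{thm:precaliber} yields $\theta_5\leqT\Cbf^\perp_\SNwf$; \autoref{thm:cfcovSN} applied with $\cf(\pi)=\theta_6$ yields $\theta_6\leqT\Cbf^\perp_\SNwf$, completing item~(f). The $\Eor^h_b$-generic reals $e_\rho$ arranged by $t$ furnish the Tukey connection $\Cbf^\perp_{\Iwf_g}\leqT\theta_6\times\theta_5$ of item~(h) for some ground-model $g$ via \cite[Lem.~2.5]{KM21}, mimicking item~(c) of \autoref{thm:4SN}'s proof. Finally, \autoref{new_upperb} and \autoref{cor:lowSN} give item~(i) using the forced values $\minadd=\theta_1$, $\supcof=\theta_6$, $\non(\SNwf)=\dfrak=\cfrak=\theta_6$.

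The main obstacle is twofold. First, the book-keeping must coordinate the four types of iterands so that their goodness, uf-linkedness, and size properties simultaneously force all the claimed cardinal equalities; in particular one must verify that the insertion of $\Eor^h_b$ (needed for item~(h)) does not destroy the ${<}\theta_3$-uf-extendable structure, which holds because $\Eor^h_b$ is $\sigma$-uf-linked (\autoref{exm:ufl}) and hence does not add $\Dbf$-dominating reals, and that the $\Bor^N$'s do not push $\cov(\Nwf)$ above $\theta_2$. Second, the equalities $\cov(\SNwf)=\supcov=\non(\Mwf)=\theta_5$ rest on a delicate pincer: item~(h) gives $\cov(\Iwf_g)\geq\bfrak(\theta_6\times\theta_5)=\theta_5$; since $\SNwf\subseteq\Iwf_g$ gives $\cov(\Iwf_g)\leq\cov(\SNwf)$, and item~(f) yields $\cov(\SNwf)\leq\theta_5$, one obtains $\cov(\Iwf_g)=\cov(\SNwf)=\supcov=\theta_5$; combining with the ZFC inequality $\cov(\Iwf_g)\leq\non(\Mwf)$ (read from \autoref{Cichonwith_SN}) and the upper bound $\non(\Mwf)\leq\theta_5$ from $\theta_5\leqT\Cbf_\Mwf$ forces $\non(\Mwf)=\theta_5$. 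The final assertion on $\dfrak_{\theta_6}$ and $\cof(\SNwf)$ follows since ccc forcings preserve the ground-model cofinalities $\dfrak_{\cf(\theta_6)}=\dfrak_{\theta_6}$ and $\cof\left(([\theta_6]^{<\theta_1})^{\theta_6}\right)$.
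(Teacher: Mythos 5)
You correctly identify that \autoref{thm:4SN3} is a modification of \autoref{thm:4SN}, but the specific modifications you propose differ from the paper's and introduce two gaps. The paper keeps the matrix geometry of \autoref{thm:4SN} unchanged — height $\gamma=\theta_6$, length $\pi=\theta_6\theta_6\theta_5$ with $\cf(\pi)=\theta_5$ — and only changes the iterands: the restricted posets $(\Eor_b)^N$ are dropped, the unrestricted $\Eor$ at the column-start stages $\eta_\rho$ is replaced by $\Eor_b$, while the restricted Hechler posets $\Dor^N$ (now for $|N|<\theta_3$) are retained. Item~\ref{f:Snp} is then read off by applying \autoref{thm:cfcovSN} to the length iteration $\la\Por_{\theta_6,\xi}:\xi\leq\pi\ra$ (which has $\cf(\pi)=\theta_5$) and to the height sequence $\la\Por_{\alpha,\pi}:\alpha\leq\theta_6\ra$; \autoref{thm:precaliber} is not invoked.

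Your proposed swap of the two directions, $\gamma=\theta_5$ and $\cf(\pi)=\theta_6$, breaks \autoref{mainpres}: its hypothesis requires $\gamma\geq\mu$, so with $\gamma=\theta_5<\theta_6$ it can only give $\Cbf_{[\theta_5]^{<\theta_3}}\leqT\Dbf$, not the $\Dbf\eqT\Cbf_{[\theta_6]^{<\theta_3}}$ needed for item~(e) (and an actual equivalence with $\Cbf_{[\theta_5]^{<\theta_3}}$ would force $\dfrak\leq\theta_5$, contradicting $\dfrak=\theta_6$). A second, independent gap is that your list of iterands omits the restricted Hechler posets $\Dor^N$ entirely; without them nothing in the iteration adds a dominating real over a family of $<\theta_3$ reals, so $\bfrak\geq\theta_3$ is not forced (the $\Loc^N$'s only push $\bfrak$ up to $\theta_1$). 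The correct modification is to keep the geometry of \autoref{thm:4SN}, drop $(\Eor_b)^N$ and replace $\Eor$ by $\Eor_b$ at the $\eta_\rho$'s, but retain $\Dor^N$ for transitive models of size $<\theta_3$.
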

\begin{figure}[ht]
\begin{center}
  \includegraphics[width=\linewidth]{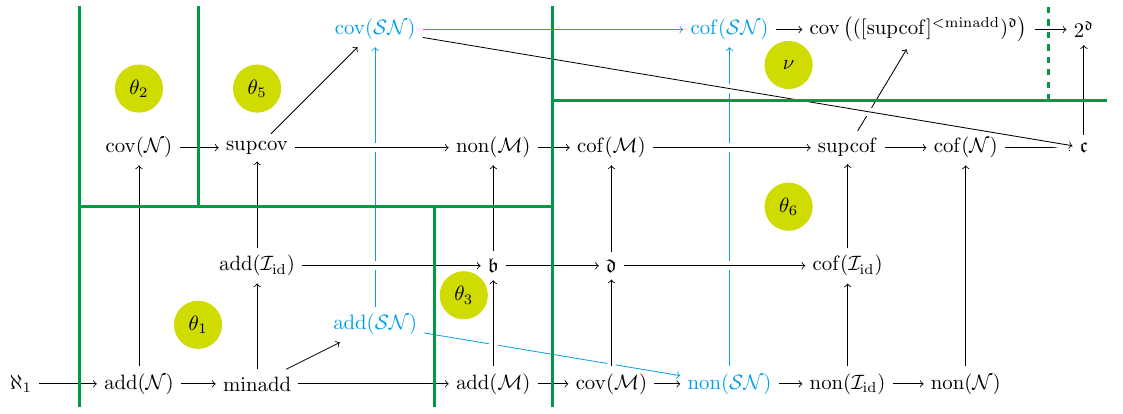}
  \caption{Constellation forced in~\autoref{thm:4SN3}.}
  \label{Figthm4SN3}
\end{center}
\end{figure}
\begin{proof}
    Proceed as in the proof of \autoref{thm:4SN}, but neglect $\Eor^N_b$ and replace $\Eor$ by $\Eor_b$ (which is $\sigma$-$\Fr$-linked, see~\autoref{exm:ufl}), which is the responsible for forcing~\ref{h:Snp}. Item~\ref{f:Snp} follows by \autoref{thm:cfcovSN} applied to $\la \Por_{\theta_6,\xi}:\, \xi\leq \pi\ra$ and
    $\la \Por_{\alpha,\pi}:\, \alpha\leq \theta_6\ra$, respectively.
\end{proof}
    

It is unclear how to modify \autoref{KST4SN} and~\ref{thm:4SN3} to additionally force $\cof(\SNwf)<\non(\Mwf)$.

\begin{remark}\label{rem:covIf}
    In all the constellations proved consistent in this section, we have that the additivities of the Yorioka ideals are the same, likewise for the uniformities and cofinalities. However, we cannot ensure the same about the coverings: while some of them are equal to $\supcov=\cof(\SNwf)$, some others may be equal to $\cov(\Nwf)$. For instance, in \autoref{thm:4SN} the iteration is $\theta_2$-$\aLc^*(\varrho,\rho)$-good, so it forces $\Cbf_{[\theta_6]^{<\theta_2}}\leqT \aLc^*(\varrho,\rho)$. On the other hand, by \autoref{KOpre}, $\aLc^*(\varrho,\rho) \leqT \aLc(\varrho,\rho^\id) \leqT \Cbf^\perp_\Nwf$, but if $\varrho$ is very fast increasing, ZFC proves that there is some $g'\in\baireincr$ such that $\aLc(\varrho,\rho^\id) \leqT \Cbf^\perp_{\Iwf_{g'}}$ (see~\cite[Lem.~2.4]{KM21}), so it is forced that $\cov(\Iwf_{g'}) = \theta_2$. It is unclear to us whether all coverings are forced to be the same when $\varrho$ is not that ``fast-increasing".
\end{remark}

\section{Applications~II: Cicho\'n's maximum}\label{sec:subm}

In the following theorem, we force Cicho\'n's maximum simultaneously with the four cardinal characteristics associated with $\SNwf$ pairwise different. Note that, in the same generic extension, the four cardinal characteristics associated with  $\Iwf_f$ are pairwise different for any $f\in\baireincr$. Most of this section is dedicated to proving this theorem. Note that we do not force a value of $\cof(\SNwf)$, but
only force that $\cof(\Nwf) \leq \cof(\SNwf)$. However, there are some cases where we can force a value of $\cof(\SNwf)$, which we present at the end of this section.

\begin{theorem}\label{thm:4SNmax}
    Let $\lambda = \lambda^{\aleph_0}$ be a cardinal, and for $1\leq i\leq 5$ be $\lambda^\bfrak_i$ and $\lambda^\dfrak_i$ be regular uncountable cardinals such that $\lambda^\bfrak_i\leq \lambda^\bfrak_j \leq \lambda^\dfrak_j \leq \lambda^\dfrak_i\leq\lambda$ for any $i\leq j$. Then there is a ccc poset forcing the constellation in~\autoref{Figthm4SNmax}:
    \begin{align*}
    \aleph_1 \leq & \add(\Nwf)=\add(\SNwf) = \add(\Iwf_\id) = \lambda^\bfrak_1 \leq \cov(\Nwf) = \lambda^\bfrak_2 \leq\\
    \leq & \cov(\SNwf) = \supcov =\lambda^\bfrak_3 \leq \bfrak = \lambda^\bfrak_4 \leq \non(\Mwf) = \lambda^\bfrak_5 \leq\\
    \leq & \cov(\Mwf) = \lambda^\dfrak_5 \leq \dfrak = \lambda^\dfrak_4 \leq \non(\SNwf) = \lambda^\dfrak_3 \leq\\
    \leq & \non(\Nwf) = \lambda^\dfrak_2 \leq \cof(\Nwf) = \cof(\Iwf_\id) = \lambda^\dfrak_1 \leq \cfrak =\lambda \text{ and } \lambda^\dfrak_1 \leq \cof(\SNwf).
    \end{align*}
\end{theorem}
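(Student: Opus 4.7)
The plan is to follow the two-stage blueprint sketched in the introduction: first produce a ccc intermediate model in which every left-hand-side cardinal sits at its prescribed value and the right-hand side is still collapsed to $\lambda^\bfrak_5$; then lift the right-hand side to the desired values by composing with the $\sigma$-closed submodel intersection method of~\cite{GKMS, CM22}, preserving everything already arranged on the left.

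For the first stage I would build a ${<}\lambda^\bfrak_4$-uf-extendable matrix iteration $\Por^0$ following the template of \autoref{thm:4SN} but with the simplification $\theta_5=\theta_6=\lambda^\bfrak_5$. The height of the matrix is $\lambda^\bfrak_5$ so that the side-by-side Cohen reals witness $\non(\Mwf)\geq \lambda^\bfrak_5$ via \autoref{matsizebd}, and the length is a multiple of $\lambda^\bfrak_5$ of cofinality $\lambda^\bfrak_5$. At the limit slots $\eta_\rho$ I insert (the vertical restriction of) $\Eor_b$, which is $\sigma$-uf-linked by \autoref{exm:ufl} and hence responsible for $\bfrak=\lambda^\bfrak_4$ through \autoref{mainpres}; at the successor slots a book-keeping picks iterands from $\Loc^N$, $\Bor^N$ and $\Eor_b^N$ with $|N|<\lambda^\bfrak_1,\,\lambda^\bfrak_2,\,\lambda^\bfrak_3$ respectively. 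The goodness-support machinery of \autoref{sec:good}, applied via \autoref{thm:Paddsn} and \autoref{cor:addsn} together with the goodness examples of \autoref{exm:Rfgood}, pins $\add(\Nwf)=\add(\SNwf)=\add(\Iwf_f)=\lambda^\bfrak_1$; standard uses of \autoref{Comgood} and \autoref{itsmallsets} yield $\Rbf_i\eqT\Cbf_{[\lambda^\bfrak_5]^{<\lambda^\bfrak_i}}$ for $i\leq 4$; the precaliber argument \autoref{thm:precaliber} supplies $\cov(\SNwf)=\supcov=\lambda^\bfrak_3$. At this point every left-hand cardinal is at its final value, while $\non(\Mwf)=\cov(\Mwf)=\dfrak=\cfrak=\lambda^\bfrak_5$.

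For the second stage I would iterate the submodel-intersection construction developed in \autoref{sec:subm}: choose a decreasing chain of $\sigma$-closed elementary submodels $M_5\supseteq M_4\supseteq M_3\supseteq M_2\supseteq M_1$ of a sufficiently large $H_\chi$, with $|M_i|=\lambda^\dfrak_i$, and successively intersect $\Por^0$ against each $M_i$ in the format of~\cite{CM22}. Since each $M_i$ is $\sigma$-closed, the intersected forcings remain ccc, preserve cofinalities, and the standard submodel-absoluteness calculus forces $\cov(\Mwf)=\lambda^\dfrak_5$, $\dfrak=\lambda^\dfrak_4$, $\non(\Nwf)=\lambda^\dfrak_2$ and $\cof(\Nwf)=\lambda^\dfrak_1$, while the outer Cohen reals push $\cfrak$ up to $\lambda$. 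The small left-hand values survive: this is precisely the content of the preservation theorems in \autoref{sec:subm}, and for the Tukey connection $\Cbf_{[\lambda^\bfrak_1]^{<\lambda^\bfrak_1}}\leqT \SNwf$ the key point is that the $\theta$-$\Rbf^f_\Gwf$-unbounded family produced by \autoref{thm:Paddsn} already lives in $M_1$ and remains unbounded in the final extension (this is where the goodness-support framework of \autoref{sec:good} replaces ordinary $\theta$-goodness). Finally, the ZFC inequality $\cof(\Nwf)\leq\cof(\SNwf)$ visible in \autoref{Cichonwith_SN} gives the remaining claim $\lambda^\dfrak_1\leq\cof(\SNwf)$.

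The main obstacle is the effect of the submodel step on the strong-measure-zero cardinals. Unlike $\Nwf$, $\Mwf$ or the Yorioka ideals $\Iwf_f$, the ideal $\SNwf$ need not be represented by a definable relational system of the reals, so the usual preservation theorems do not apply off the shelf. Two new preservation statements must be proved in \autoref{sec:subm}: first, that the submodel intersection does not increase $\cov(\SNwf)$ (so it stays at $\lambda^\bfrak_3$), which should follow by analysing the precaliber-type Cohen argument of \autoref{thm:precaliber} inside $M_3$; and second, that the intersection at level $M_3$ produces a $\lambda^\dfrak_3$-$\Cbf_\SNwf^\perp$-unbounded family, forcing $\non(\SNwf)=\lambda^\dfrak_3$. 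Once these two ingredients are in hand, the rest of the verification is an orchestration of the \cite{GKMS}/\cite{CM22} blueprint with the goodness-support toolkit of \autoref{sec:good} substituted in wherever $\SNwf$ or a Yorioka ideal is involved.
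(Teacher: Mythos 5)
Your overall two-stage outline and your identification of the key new ingredient are both correct: the proof does proceed by first building a ccc poset that separates the ``left'' side and piles the ``right'' side at the continuum, and then passing to a $\sigma$-closed submodel intersection in the style of~\cite{GKMS,CM22}; and you rightly flag that, because $\SNwf$ is not a definable relational system of the reals, new preservation lemmas (exactly the content of \autoref{SN_N}) are needed to see how $K\leqT \Sbf^{f_0}$ and $K\leqT\Cbf^\perp_\SNwf$ survive the intersection.

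However, your implementation of both stages runs in the wrong direction and would not work. In the first stage the paper does \emph{not} take $\theta_5=\theta_6=\lambda^\bfrak_5$ and force the left side to the final $\lambda^\bfrak_i$'s; instead one assumes (after the $L[W]$ reduction) eventual GCH above some $\theta^-_1>\lambda$ and applies \autoref{thm:4SN} with intermediate cardinals $\theta_1<\dots<\theta_6$ all \emph{strictly larger than $\lambda$}, so that the first-stage $\Por$ forces $\cfrak=\theta_6\gg\lambda$ and every right-hand-side cardinal equal to $\theta_6$, while the left side sits at the $\theta_i$'s (still way above the targets). The submodel step then \emph{lowers} everything: a single $\sigma$-closed $N\preceq H_\chi$ of size exactly $\lambda$ is constructed so that $\Por\cap N$ forces $\cfrak=\lambda$, and, via \autoref{KcapN} and \autoref{SN_N}, the relational systems $S_i=[\theta_6]^{<\theta_i}\cap V$ project to $S_i\cap N$ with the prescribed $\bfrak(S_i\cap N)=\lambda^\bfrak_i$ and $\dfrak(S_i\cap N)=\lambda^\dfrak_i$. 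Your version instead forces $\cfrak=\lambda^\bfrak_5$ in the first stage and then tries to \emph{raise} the right side, first by intersecting with a decreasing chain $M_5\supseteq\dots\supseteq M_1$, then by ``outer Cohen reals.'' Intersection with smaller models can only shrink the forcing and hence can only push $\cfrak$ (and everything on the right) \emph{down}, never up; and adding Cohen reals on top would wreck the already-arranged left-hand values (in particular it would raise $\non(\Mwf)$ and $\cov(\Mwf)$ and destroy the $\theta$-$\Rbf^f_\Gwf$-unbounded witnesses). So the second stage as you describe it cannot produce $\cfrak=\lambda>\lambda^\bfrak_5$ with the left side intact. The fix is to invert the architecture: make the first stage huge, and let a \emph{single} carefully built submodel of size $\lambda$ do all the cutting, with \autoref{SN_N} guaranteeing that the $\SNwf$-relevant Tukey connections survive the cut.
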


\begin{figure}[ht!]
\begin{center}
  \includegraphics[width=\linewidth]{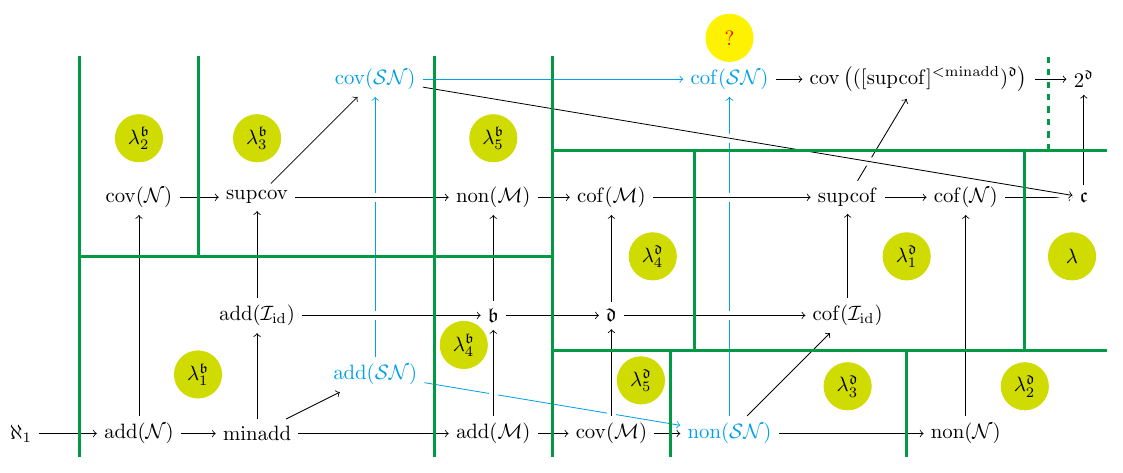}
 \caption{The constellation forced in \autoref{thm:4SNmax}. It is unclear what values are forced to $\cof(\SNwf)$ and $\cov\left(([\supcof]^{<\minadd})^\dfrak\right)$, but $2^\dfrak$ is forced to be the ground-model value of $\lambda^{\lambda^\dfrak_4}$.}
  \label{Figthm4SNmax}
\end{center}
\end{figure}

To prove the theorem, we apply the method of intersections with elementary submodels from~\cite{GKMS}, but we follow the presentation from~\cite[Sec.~4 \&~5]{CM22}. Fix a large enough regular cardinal $\chi$. The goal is to find some $N\preceq H_\chi$, closed under countable sequences, such that $\Por\cap N$ is the desired ccc poset, where $\Por$ is the ccc poset constructed in \autoref{thm:4SN} for large enough $\theta_i$.

However, we do not explicitly construct this model $N$ since it follows the same dynamics as in~\cite[Sec.~3]{GKMS} and~\cite[Sec.~5]{CM22}. More important than the construction of the model $N$ is the effect of $\Por\cap N$ on the Tukey connections forced by $\Por$. Concretely, when $\Rbf$ is a definable relational system of the reals  (see \autoref{def:defrel}) or just sufficiently absolute, with parameters in $N$, and $K = \la A,B,\leqtr\ra\in N$ is a relational system, then $\Vdash_\Por \Rbf\leqT K$ implies $\Vdash_{\Por\cap N} \Rbf\leqT K\cap N$ (see \autoref{KcapN}). Here, $K\cap N:=\la A\cap N, B\cap N,\leqtr\ra$, and both $K$ and $K\cap N$ are \emph{fixed} in the ground model, i.e.\ they are not re-interpreted in generic extensions. A similar result holds for $K\leqT \Rbf$ as well. Therefore, the values that $\Por\cap N$ force to $\bfrak(\Rbf)$ and $\dfrak(\Rbf)$ are determined by the values of $\bfrak(K\cap N)$ and $\dfrak(K\cap N)$, which do not change in ccc forcing extensions for the $K$ of our interest. In~\cite[Sec.~4 \&~5]{CM22}, it is explained in detail how such $N$ should be constructed to get the desired values of $\bfrak(K\cap N)$ and $\dfrak(K\cap N)$, so we consider that there is no need to repeat the whole process to prove our theorem.

To make sense of the above, we must explain the effect of $\Por\cap N$ on the Tukey connections that $\Por$ forces for $\SNwf$ and $\Cbf^\perp_\SNwf$ because these relational systems are \underline{not} ``sufficiently absolute", so we do not necessarily have the same effect explained previously. We prove some partial results for $\SNwf$ in this sense, which happens to be enough for applying the same construction of $N$ as presented in~\cite[Sec.~5]{CM22}.

Generally speaking, let $\kappa$ be an uncountable regular cardinal, $\chi$ a large enough regular cardinal, $N\preceq H_\chi$ a \emph{${<}\kappa$ closed model}, i.e.\ $N^{<\kappa}\subseteq N$, and let $\Por\in N$ be a $\kappa$-cc poset. Then $\Por\cap N\subsetdot \Por$, so $\Por\cap N$ is $\kappa$-cc, too. Even more, there is a one-to-one correspondence between the (nice) $\Por\cap N$-names of members of $H_\kappa$, and the $\Por$-names $\tau\in N$ of members of $H_\kappa$, in particular, we have the same correspondence between the nice names of reals. Moreover, if $G$ is $\Por$-generic over $V$, then $H_\kappa^{N[G]} = H_\kappa^{V[G]}\cap N[G]$. We also have absoluteness results: if $p\in\Por\cap N$, $\varphi(\bar x)$ is a sufficiently absolute formula and $\bar \tau\in N$ is a finite sequence of $\Por$-names of members of $H_\kappa$, then
\[p\Vdash_\Por\varphi(\bar\tau) \text{ iff }p\Vdash_{\Por\cap N}\varphi(\bar\tau).\]

Let $\Rbf=\la X,Y,\sqsubset\ra$ and $K=\la A,B,\leqtr\ra$ be relational systems. Note that
\begin{align*}
    K\leqT \Rbf \text{ iff } & \text{there is a sequence $\la x_a:\, a\in A\ra$ in $X$ such that}\\
     & \forall\, y\in Y\ \exists\, b_y\in B\ \forall\, a\in A\colon a \nleqtr b_y \imp x_a \nsqsubset y;\\[1ex]
    \Rbf \leqT K \text{ iff } & \text{there is a sequence $\la y_b:\, b\in B\ra$ in $Y$ such that}\\
    & \forall\, x\in X\ \exists\, a_x\in A\ \forall\, b\in B\colon a_x \leqtr b \imp x\sqsubset y_b.
\end{align*}

In the first case, we say that the sequence $\la x_a:\, a\in A\ra$ \emph{witnesses $K\leqT\Rbf$}, while in the second case we say that the sequence $\la y_b:\, b\in B\ra$ \emph{witnesses $\Rbf\leqT K$}.

We are interested in the case when $K$ is a fixed relational system in the ground model (i.e.\ it is interpreted in any model of ZFC as the same relational system), while $\Rbf$ may change depending on the model it is interpreted, e.g.\ when $\Rbf$ is a gPrs. If $\Por$ is a poset, then it is clear that $\Vdash_\Por K\leqT \Rbf$ iff there is a sequence $\la \dot x_a :\, a\in A\ra$ of members of $X$ such that $\Por$ forces that $\la \dot x_a :\, a\in A\ra$ witnesses $K\leqT\Rbf$. Here, we say that $\la \dot x_a :\, a\in A\ra$ \emph{witnesses $\Vdash_\Por K\leqT \Rbf$}. In a similar way, we can define ``$\la \dot y_b:\, b\in B\ra$ \emph{witnesses $\Vdash_\Por\Rbf\leqT K$}".

In this context, we know the following result.

\begin{lemma}[{\cite[Lem.~4.5]{CM22}}]\label{KcapN}
    Let $\kappa$ be an uncountable regular cardinal, $\chi$ a large enough regular cardinal, and let $N\preceq H_\chi$ be a ${<}\kappa$-closed model. Assume that $\Por\in N$ is a $\kappa$-cc poset, $K=\la A,B,\leqtr\ra\in N$ is a relational system, and $\Rbf$ is a definable relational system of the reals with parameters in $N$. 
    Then:
    \begin{enumerate}[label = \rm (\alph*)]
        \item If $\la \dot y_b:\, b\in B\ra \in N$ witnesses $\Vdash_{\Por} \Rbf\leqT K$ then $\la \dot y_b:\, b\in B\cap N\ra$ witnesses $\Vdash_{\Por\cap N} \Rbf \leqT K\cap N$.
        \item If $\la \dot x_a:\, a\in A\ra \in N$ witnesses $\Vdash_{\Por} K\leqT \Rbf$ then $\la \dot x_a:\, a\in A\cap N\ra$ witnesses $\Vdash_{\Por\cap N} K\cap N\leqT \Rbf$.
    \end{enumerate}
\end{lemma}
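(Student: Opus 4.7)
The plan is to exploit three standard consequences of $\Por\in N$ being $\kappa$-cc together with $N^{<\kappa}\subseteq N$:
(i) every nice $\Por$-name $\tau\in N$ of a member of $H_\kappa$ is automatically a nice $\Por\cap N$-name, because each antichain appearing in $\tau$ has size ${<}\kappa$, lies in $N$, and hence sits in $\Por\cap N$; moreover $\tau[G]=\tau[G\cap(\Por\cap N)]$, so evaluations agree;
(ii) such a $\tau$ is forced to take values in $N$, because the underlying function has domain of size ${<}\kappa$ and lies in $N$; in particular a nice $\Por$-name $\tau\in N$ of a member of $A$ becomes a $\Por\cap N$-name forced into $A\cap N$;
(iii) if $\varphi(\bar\tau)$ is a sufficiently absolute formula whose parameters $\bar\tau\in N$ are names of members of $H_\kappa$, then $p\Vdash_\Por\varphi(\bar\tau)$ iff $p\Vdash_{\Por\cap N}\varphi(\bar\tau)$ for every $p\in\Por\cap N$; the definability of $\Rbf$ with parameters in $N$ renders ``$\tau\sqsubset\sigma$'' and its Boolean combinations sufficiently absolute.

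For~(a), let $\dot x$ be a nice $\Por\cap N$-name for a member of $X$; by~(i) identify it with a nice $\Por$-name $\tau\in N$ of a member of $X$. Since $\la\dot y_b:\, b\in B\ra\in N$ witnesses $\Vdash_\Por\Rbf\leqT K$ and $\tau\in N$, the statement ``there exists a nice $\Por$-name $\dot a$ of a member of $A$ such that $\Vdash_\Por\forall b\in B\colon \dot a\leqtr b\imp\tau\sqsubset\dot y_b$'' is true in $H_\chi$ with all parameters in $N$; elementarity provides such a witness $\dot a\in N$. By~(ii), $\dot a$ is a $\Por\cap N$-name forced into $A\cap N$. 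For each $b\in B\cap N$, apply~(iii) to turn the forcing relation $\Vdash_\Por \dot a\leqtr b\imp\tau\sqsubset\dot y_b$ into $\Vdash_{\Por\cap N}\dot a\leqtr b\imp\dot x\sqsubset\dot y_b$; since $B\cap N$ is a ground-model set, the universal quantifier over $b\in B\cap N$ passes through the forcing relation, yielding the required witness for $\Vdash_{\Por\cap N}\Rbf\leqT K\cap N$.

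Part~(b) is dual. Given a nice $\Por\cap N$-name $\dot y$ for a member of $Y$, identify it with $\sigma\in N$; elementarity in $N\preceq H_\chi$ produces $\dot b\in N$, a $\Por$-name of a member of $B$, with $\Vdash_\Por\forall a\in A\colon a\nleqtr\dot b\imp\dot x_a\nsqsubset\sigma$. As before, $\dot b$ is in fact a $\Por\cap N$-name forced into $B\cap N$, and absoluteness transfers the forcing relation for each $a\in A\cap N$. The main obstacle is precisely~(ii): ensuring that the name supplied by elementarity, a priori only a name for some element of $A$ (respectively $B$), actually names an element of $A\cap N$ (respectively $B\cap N$). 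This rests crucially on the combination of the $\kappa$-cc of $\Por$ with $N^{<\kappa}\subseteq N$; without ${<}\kappa$-closure a name in $N$ could still have values outside $N$, and the one-to-one correspondence between $\Por$-names in $N$ and $\Por\cap N$-names would collapse.
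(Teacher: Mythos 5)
This lemma is imported from~\cite[Lem.~4.5]{CM22} and is not proved in the present paper; all the paper supplies is the preparatory discussion immediately preceding the statement (the $\Por\cap N\subsetdot\Por$ fact, the bijection between nice $\Por$-names in $N$ and nice $\Por\cap N$-names, and the absoluteness of sufficiently absolute formulas across $\Vdash_\Por$ and $\Vdash_{\Por\cap N}$). Your proof is correct and is exactly the argument that discussion is priming the reader for: you take a name $\dot x\in N$ (resp.\ $\dot y$), use the maximum principle together with $N\prec H_\chi$ to extract a witnessing name $\dot a\in N$ (resp.\ $\dot b\in N$) for an element of the ground-model set $A$ (resp.\ $B$), use $\kappa$-cc plus ${<}\kappa$-closure to see that this name has all its possible values in $N$ and is decided by a maximal antichain lying inside $\Por\cap N$, and then transfer the forcing statement ``$\dot a\leqtr\check b\imp\tau\sqsubset\dot y_b$'' pointwise over $b\in B\cap N$ via the stated absoluteness. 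You also correctly identify the load-bearing step (that names in $N$ for elements of a ground-model set actually take values in $N$), which is where $\kappa$-cc and ${<}\kappa$-closure interact. The only cosmetic looseness is in your item~(ii): strictly one should say that the name is decided by a maximal antichain $D\in N$ of size ${<}\kappa$ and a function $g\in N$ on $D$, and that ${<}\kappa$-closure then puts $D\subseteq N$ and $\operatorname{ran}(g)\subseteq N$; your phrase ``the underlying function has domain of size ${<}\kappa$ and lies in $N$'' compresses this slightly but carries the right content.
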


However, the previous lemma cannot be applied directly to $\SNwf$ and $\Cbf_{\SNwf}$ because the ideal $\SNwf$ is not absolute, in general. So we need to look deeper at $\SNwf$ if we want to prove similar results.

We can code the members (of a cofinal subset) of $\SNwf$ by sequences $\la \sigma^f:\, f\in\baireincr\ra$ such that $\sigma^f\in 2^f$, noting that $\bigcap_{f\in\baireincr}[\sigma^f]_\infty\in\SNwf$.\footnote{The domain of such a sequence can be a dominating family $D\subseteq\baireincr$ instead.} In the context of forcing, we can define nice names of members of $\SNwf$ in the following way. Given a poset $\Por$, we say that a collection of $\Por$-names $\baireincr_\Por$ is a \emph{$\Por$-core of $\baireincr$} if it satisfies:
\begin{enumerate}[label = \rm (\arabic*)]
    \item $\baireincr_\Por$ is a set of $\Por$-nice names of members of $\baireincr$, and
    \item If $\dot x$ is a $\Por$-name and $\Vdash_\Por \dot x\in\baireincr$, then there is a \underline{unique} $\dot f\in \baireincr_\Por$ such that $\Vdash_\Por \dot x = \dot f$.
\end{enumerate}
A $\Por$-core of $\baireincr$ always exists, and we use the notation $\baireincr_\Por$ to refer to one of them. We then say that a sequence $\bar\sigma = \la\sigma^{\dot f} :\, \dot f\in \baireincr_\Por\ra$ is a \emph{nice $\Por$-name of a member of $\SNwf$} if each $\sigma^{\dot f}$ is a nice $\Por$-name of a member of $2^{\dot f}$ (i.e.\ a nice name of a member of $(2^{<\omega})^\omega$ which is forced to be in $2^{\dot f}$).

Within the assumptions of \autoref{KcapN}, when $\bar\sigma = \la\sigma^{\dot f} :\, \dot f\in \baireincr_\Por\ra$ is a nice $\Por$-name of a member of $\SNwf$ and $\bar\sigma\in N$, we can define $\bar\sigma|_N:= \la\sigma^{\dot f} :\, \dot f\in \baireincr_\Por\cap N\ra$ where $\sigma^{\dot f}$ refers to the equivalent $\Por\cap N$-nice name. Since $\baireincr_{\Por\cap N}:=\baireincr_\Por\cap N$ is a $\Por\cap N$-core of $\baireincr$, we have that $\bar\sigma|_N$ is a nice $\Por\cap N$-name of a member of $\SNwf$.

For $\SNwf$, we have the following version of \autoref{KcapN}.

\begin{lemma}\label{SN_N}
    Under the assumptions of \autoref{KcapN}, let $\la\bar\sigma_a:\, a\in A\ra\in N$ be a sequence of nice $\Por$-names of members of $\SNwf$ and $f_0\in\baireincr\cap N$. Then:
    \begin{enumerate}[label = \rm (\alph*)]
        \item\label{SN_Na} If $\la\bar\sigma_a:\, a\in A\ra$ witnesses $\Vdash_\Por K\leqT \Sbf^{f_0}$, then $\la\bar\sigma_a|_N:\, a\in A\cap N\ra$ witnesses $\Vdash_{\Por\cap N} K\cap N\leqT \Sbf^{f_0}$.

        \item\label{SN_Nb} If $\la\bar\sigma_a:\, a\in A\ra$ witnesses $\Vdash_\Por K\leqT \Cbf^\perp_\SNwf$, then $\la\bar\sigma_a|_N:\, a\in A\cap N\ra$ witnesses $\Vdash_{\Por\cap N} K\cap N\leqT \Cbf^\perp_\SNwf$.
    \end{enumerate}
\end{lemma}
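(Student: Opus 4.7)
The plan is to mimic the pattern of~\autoref{KcapN}, with extra care taken because $\SNwf$ is not a definable relational system of the reals. Concretely, I would transfer the witnessing hypothesis from $V$-with-$\Por$ to $V$-with-$\Por\cap N$ via the elementarity of $N$, and then verify that $N$'s internal interpretation of the relevant $\SNwf$-statements matches the external $V^{\Por\cap N}$-interpretation of the corresponding restricted statements.

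For part~\ref{SN_Na}, let $\Phi$ denote the statement
\[\Vdash_\Por\forall \tau\in 2^{f_0}\ \exists b\in B\ \forall a\in A\colon (a\nleqtr b \imp \bar\sigma_a\nsqsubseteq\tau),\]
which holds in $V$ by hypothesis; analogously for~\ref{SN_Nb}. Since $\Por, K, f_0, \la\bar\sigma_a:a\in A\ra \in N$, elementarity of $N\preceq H_\chi$ gives $N\models\Phi$. Under the standing hypotheses $N^{<\kappa}\subseteq N$ and $\Por$ has the $\kappa$-cc, $N$'s internal forcing relation $\Vdash_\Por^N$ coincides with $V$'s external $\Vdash_{\Por\cap N}$: every maximal antichain of $\Por$ in $N$ has size ${<}\kappa$, hence lies in $N$ and is a maximal antichain of $\Por\cap N$ in $V$. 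From $N$'s viewpoint $\bar\sigma_a$, whose domain externally is $\baireincr_\Por$, appears as a function on $\baireincr_\Por\cap N$, so it corresponds precisely to the $\Por\cap N$-name $\bar\sigma_a|_N$; likewise $A$ and $B$ appear as $A\cap N$ and $B\cap N$. Translating $N\models\Phi$ back to $V$ under this correspondence yields
\[\Vdash_{\Por\cap N}\forall \tau\in 2^{f_0}\ \exists b\in B\cap N\ \forall a\in A\cap N\colon (a\nleqtr b\imp\bar\sigma_a|_N\nsqsubseteq\tau),\]
which is the required conclusion of~\ref{SN_Na}; part~\ref{SN_Nb} follows by an identical argument with ``$x\notin\bar\sigma_a$'' in place of ``$\bar\sigma_a\nsqsubseteq\tau$''.

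The hard part will be verifying that the $N$-internal unfolding of ``$\bar\sigma_a\nsqsubseteq\tau$'' really matches the external $V[G_N]$-interpretation of ``$\bar\sigma_a|_N\nsqsubseteq\tau$''. Both unfold to the existence of a real $x$ with $x\in\bigcap_{\dot f\in\baireincr_\Por\cap N}[\sigma^{\dot f}_a]_\infty$ and $x\notin\bigcup_{i<\omega}[\tau(i)]$, but internally the quantifier ranges over $N[G_N]\cap 2^\omega$ and externally over $V[G_N]\cap 2^\omega$. The bridging identity $V[G_N]\cap 2^\omega = N[G_N]\cap 2^\omega$ follows from $N^{<\kappa}\subseteq N$ together with the $\kappa$-cc of $\Por\cap N$: every nice $\Por\cap N$-name of a real has size ${<}\kappa$ and thus lies in $N$, so its $G_N$-evaluation lies in $N[G_N]$. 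Part~\ref{SN_Nb} bypasses this subtlety, since its unfolded existential is over $\dot f\in\baireincr_\Por\cap N$, a set that is common to both interpretations, so agreement is immediate.
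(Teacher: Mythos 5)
Your global strategy --- apply elementarity of $N$ directly to the forcing statement $\Phi$, then unwind --- is in the right spirit and is close to what the paper does (the paper also runs on elementarity and absoluteness of Borel statements about names of reals), but it works more locally: it fixes a condition $p\in\Por\cap N$ and a $\Por\cap N$-name $\dot\tau$, reads off from $N$ a witnessing condition $q$, a ground-model $b\in B\cap N$, and a nice $\Por$-name $\dot x\in N$ of a real, and then transfers the two individual Borel statements ``$\dot x\in[\sigma^{\dot f}_a]_\infty$'' and ``$\dot x\notin\bigcup_i[\dot\tau(i)]$'' from $\Vdash_\Por$ to $\Vdash_{\Por\cap N}$. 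That local style makes it transparent which absoluteness facts are being used; your ``translate $N\models\Phi$ back to $V$'' step compresses all of that into one move, which is exactly where the care has to go.

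There is, however, a genuine reasoning error in the analysis of where the bridge $V[G_N]\cap 2^\omega=N[G_N]\cap 2^\omega$ is needed, and it leaves a gap in part~\ref{SN_Nb}. You locate the ``hard part'' of~\ref{SN_Na} at the inner existential over the witnessing real $x$ in ``$\bar\sigma_a\nsqsubseteq\tau$'', but that direction is trivial: one only needs $N[G_N]\subseteq V[G_N]$ to pass an internal witness outward, and simultaneously the external intersection $\bigcap_{\dot f\in\baireincr_\Por\cap N}[\sigma^{\dot f}_a]_\infty$ is over \emph{fewer} names, hence a larger set, so the internal witness qualifies externally a fortiori. The place where the nontrivial direction $V[G_N]\cap 2^\omega\subseteq N[G_N]\cap 2^\omega$ is actually needed is the \emph{outer} universal quantifier: $\forall\,\tau\in 2^{f_0}$ in part~\ref{SN_Na} and $\forall\, y\in 2^\omega$ in part~\ref{SN_Nb}. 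You then claim that part~\ref{SN_Nb} ``bypasses this subtlety'' because the inner quantifier in ``$y\in\bar\sigma_a$'' runs over $\dot f\in\baireincr_\Por\cap N$, a shared set --- that observation about the inner quantifier is true but irrelevant, since the outer $\forall\, y\in 2^\omega$ quantifier in~\ref{SN_Nb} needs the bridge exactly as much as the $\forall\,\tau$ in~\ref{SN_Na} does. You do prove the bridge fact, so the proof of~\ref{SN_Na} can be repaired by reattaching it to the right quantifier, but as written the justification for~\ref{SN_Nb} is missing: the sentence dismissing the need for the bridge in~\ref{SN_Nb} is false, and without the bridge the translation ``$N\models\Phi$'' to ``$\Vdash_{\Por\cap N}\ldots$'' for~\ref{SN_Nb} does not follow.

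Relatedly, the blanket assertion that ``$N$'s internal forcing relation $\Vdash_\Por^N$ coincides with $V$'s external $\Vdash_{\Por\cap N}$'' holds only for sufficiently absolute formulas with names in $N$; the full statement $\Phi$ is not of that form (it mentions $\SNwf$, which is not absolute), so the coincidence has to be checked clause by clause, which is what the paper's local argument does and what your unwinding should do as well.
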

\begin{proof}
    We prove~\ref{SN_Na} (\ref{SN_Nb} is similar). Let $p\in\Por\cap N$ and let $\dot \tau$ be a nice $\Por\cap N$-name of a member of $2^{f_0}$, which can be seen as a $\Por$-name that belongs to $N$. Since $\Por$ forces that $\la\bar\sigma_a:\, a\in A\ra$ witnesses $K\leqT \Sbf^{f_0}$,
    \[N\models \exists\, q\leq p\ \exists\, b\in B\ \forall\, a\in A\colon a \nleqtr b \imp q\Vdash_\Por \bigcap_{\dot f\in \baireincr_\Por}[\sigma^{\dot f}_a]_\infty \nsqsubseteq \dot \tau,\]
    so pick $q\in\Por\cap N$ stronger than $p$ and $b\in N$ such that
    \[N\models \forall\, a\in A\colon a \nleqtr b \imp q\Vdash_\Por \bigcap_{\dot f\in \baireincr_\Por}[\sigma^{\dot f}_a]_\infty \nsqsubseteq \dot \tau.\]
    It remains to show that $q\Vdash_{\Por\cap N}``\bigcap_{\dot f\in \baireincr_\Por\cap N}[\sigma^{\dot f}_a]_\infty \nsqsubseteq \dot \tau"$ for any $a\in A\cap N$ with $a\nleqtr b$. For such an $a$, we can find a nice $\Por$-name $\dot x\in N$ of a real in $2^\omega$ such that
    \[N\models \text{``} q \Vdash_\Por \dot x\in \bigcap_{\dot f\in \baireincr_\Por}[\sigma^{\dot f}_a]_\infty \text{ and }q \Vdash_\Por \dot x\notin \bigcup_{i<\omega}[\dot \tau(i)]\text{''},\]
    so, for any $\dot f\in \baireincr_\Por \cap N$, $N\models q\Vdash_\Por \dot x\in [\sigma^{\dot f}_a]_\infty$. Therefore, $q\Vdash_{\Por\cap N}\dot x\in [\sigma^{\dot f}_a]_\infty$ and $q \Vdash_{\Por\cap N} \dot x\notin \bigcup_{i<\omega}[\dot \tau(i)]$, which allows us to conclude that $q\Vdash_{\Por\cap N} \bigcap_{\dot f\in \baireincr_\Por\cap N}[\sigma^{\dot f}_a]_\infty \nsqsubseteq \dot \tau$
\end{proof}

The difficulty for dealing with $K\leqT \SNwf$, $\SNwf\leqT K$ and $\Cbf^\perp_\SNwf\leqT K$ lies in the fact that we cannot guarantee that any nice $\Por\cap N$-name of a member of $\SNwf$ has the form $\bar\sigma|_N$ for some nice $\Por$-name $\bar\sigma\in N$ of a member of $\SNwf$.

\begin{proof}[Proof of \autoref{thm:4SNmax}]
    We start with the following observation from Elliot Glazer (private communication): Although Cicho\'n's maximum is originally forced in~\cite{GKMS} by assuming \emph{eventual GCH} (i.e.\ that GCH holds above some cardinal) in the ground model, the latter assumption is inessential for the main result. The same is true for \autoref{thm:4SNmax}, that is, this theorem is proved assuming eventual GCH, but this assumption can be removed as follows:\footnote{Although most set theorists are happy to force statements by just assuming GCH in the ground model, we are more pedantic and minimize, successfully, the strength of the assumptions in the ground model.} \emph{Without assuming eventual GCH}, let $\kappa^*$ be a large enough regular cardinal, and let $W$ be a set of ordinals coding $R_{\kappa^*}$, the $\kappa^*$-th level of the Von Neumann hierarchy of the universe of sets. Since $L[W]$ models ZFC with eventual GCH, \emph{working inside $L[W]$} we can find a ccc poset $\Qor$ as in \autoref{thm:4SNmax} (using the proof under eventual GCH sketched from the next paragraph), which can be constructed of size $\lambda$, and hence, inside $R_{\alpha}$ for some $\alpha$ relatively small with respect to $\kappa^*$. As $\kappa^*$ is \emph{large enough}, we actually have that the collection of nice $\Qor$-names of reals and members of $\SNwf$ are in $R_{\kappa^*}$, so $R_{\kappa^*}$, and hence $V$ (where eventual GCH was not assumed), satisfies that $\Q$ is as required.

    Hence, without loss of generality, we assume that GCH holds above some regular cardinal $\theta^-_1>\lambda$. Let $\theta^-_i$ and $\theta_i$ be ordered as in~\autoref{fig:cichoncoll}, where the inequalities between them are strict.

\begin{figure}[ht]
\centering
\begin{tikzpicture}[xscale=2/1]
\footnotesize{
\node (addn) at (0,3){$\theta_1$};
\node (covn) at (0,6){$\theta_2$};
\node (covsn) at (0.7,7.5){$\theta_3$};
\node (nonn) at (3.8,3) {$\theta_6$} ;
\node (nonsn) at (3.2,3) {$\theta_6$} ;
\node (cfn) at (3.8,6) {$\theta_6$} ;
\node (addm) at (1.4,3) {$\bullet$} ;
\node (covm) at (2.6,3) {$\theta_6$} ;
\node (nonm) at (1.4,6) {$\theta_5$} ;
\node (cfm) at (2.6,6) {$\bullet$} ;
\node (b) at (1.4,4.5) {$\theta_4$};
\node (d) at (2.6,4.5) {$\theta_6$};
\node (c) at (5,6) {$\theta_6$};

\draw[gray]
      (covn) edge [->] (nonm)
      (nonm)edge [->] (cfm)
      (cfm)edge [->] (cfn)
      (addn) edge [->]  (addm)
      (addm) edge [->]  (covm)
      (addm) edge [->] (b)
      (d)  edge[->] (cfm)
      (b) edge [->] (d)
      (nonn) edge [->]  (cfn)
      (cfn) edge[->] (c)
      (covm) edge [->] (d)
      (covm) edge [->]  (nonsn)
      (nonsn) edge [->]  (nonn);

\draw[sub3,line width=.05cm](addn) edge[->] node[left] {$\theta^-_2$} (covn);

\draw[sub3,line width=.05cm] (covn) edge[->] node[above left] {$\theta^-_3$} (covsn);
\draw[sub3,line width=.05cm] (covsn) edge[->] node[below left] {$\theta^-_4$} (b);

\draw[sub3,line width=.05cm] (b)  edge [->] node[midway, right] {$\theta^-_5$} (nonm);

\draw[sub3,line width=.05cm,dashed] (nonm) edge[->] (covm);

\node (aleph1) at (-1,-3) {$\aleph_1$};
\node (addn-f) at (0,-3){$\lambda_1^\bfrak$};
\node (covn-f) at (0,0){$\lambda_2^\bfrak$};
\node (covsn-f) at (0.7,1.5){$\lambda_3^\bfrak$};
\node (nonn-f) at (3.8,-3) {$\lambda_2^\dfrak$} ;
\node (nonsn-f) at (3.2,-3) {$\lambda_3^\dfrak$} ;
\node (cfn-f) at (3.8,0) {$\lambda_1^\dfrak$} ;
\node (addm-f) at (1.4,-3) {$\bullet$} ;
\node (covm-f) at (2.6,-3) {$\lambda_5^\dfrak$} ;
\node (nonm-f) at (1.4,0) {$\lambda_5^\bfrak$} ;
\node (cfm-f) at (2.6,0) {$\bullet$} ;
\node (b-f) at (1.4,-1.5) {$\lambda_4^\bfrak$};
\node (d-f) at (2.6,-1.5) {$\lambda_4^\dfrak$};
\node (c-f) at (5,0) {$\lambda$};

\draw[sub3,line width=.05cm,dashed] (c-f) edge[->] node[above] {$\theta^-_1$} (addn);

\draw[gray]
   (covn-f) edge [->] (nonm-f)
   (nonm-f)edge [->] (cfm-f)
   (cfm-f)edge [->] (cfn-f)
   (addn-f) edge [->]  (addm-f)
   (addm-f) edge [->]  (covm-f)
   (covm-f) edge [->]  (nonsn-f)
   
   (addm-f) edge [->] (b-f)
   (d-f)  edge[->] (cfm-f)
   (b-f) edge [->] (d-f);

\draw[,sub3,line width=.05cm,dashed] 
             (covsn-f) edge [->] (b-f)
              (nonm-f) edge [->] (covm-f)
              (d-f) edge [->] (nonsn-f);

\draw[sub3,line width=.05cm](aleph1) edge[->] (addn-f)
(addn-f) edge[->] (covn-f)
(covn-f) edge [->] (covsn-f)
(b-f)  edge [->] (nonm-f)
(covm-f) edge [->] (d-f)
(nonsn-f) edge [->]  (nonn-f)
(nonn-f) edge [->]  (cfn-f)
(cfn-f) edge[->] (c-f);

}
\end{tikzpicture}
\caption{Strategy to force Cicho\'n's maximum: we construct a ccc poset $\Por$ forcing the constellation at the top, and find a $\sigma$-closed model $N$ such that $\Por\cap N$ forces the constellation at the bottom.}\label{fig:cichoncoll}
\end{figure}
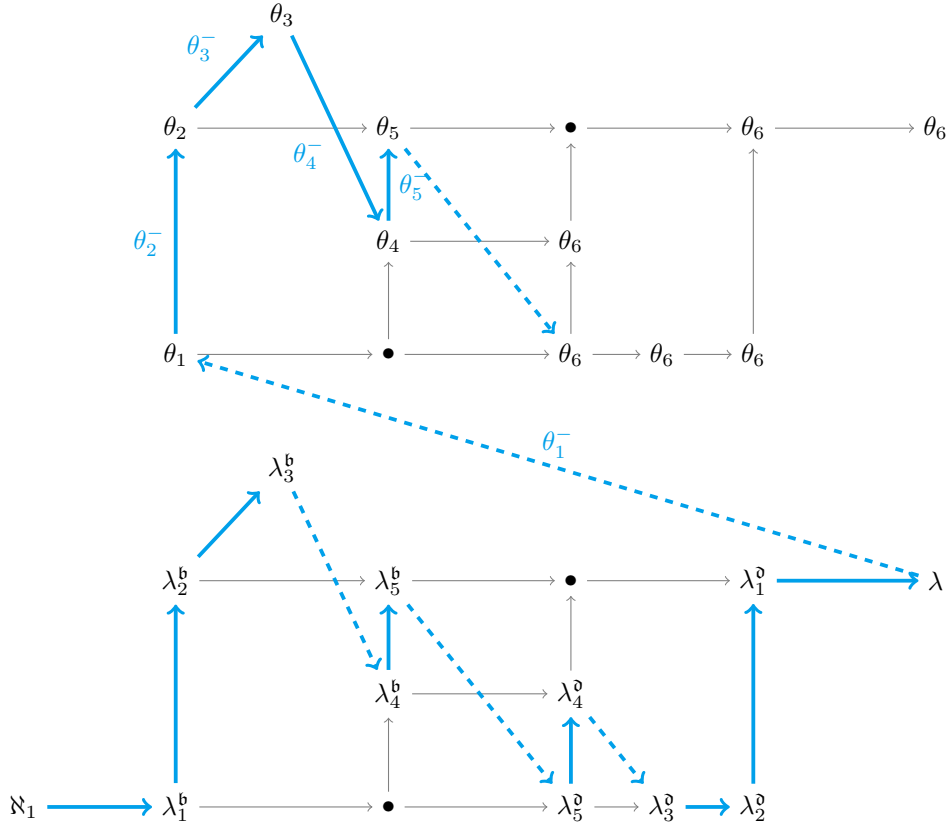

So let $\Por$ be a ccc poset as in \autoref{thm:4SN}. According to the proof, $\Por$ forces more things:
    \begin{enumerate}[label = \rm (\alph*)]
        \item\label{maxa} $\cfrak = \theta_6 = |\Por|$;
        \item\label{maxb} $\Cbf_{[\theta_6]^{<\theta_1}} \leqT \Sbf^{f_0}$;
        \item\label{maxc} $\Rbf_i \eqT \Cbf_{[\theta_6]^{<\theta_i}}$ for $1\leq i\leq 4$, even more,
        \item\label{maxd} for $i=3$, $\aLc(b,1) \eqT \Iwf_g \eqT \Cbf_{[\theta_6]^{<\theta_3}}$;
        \item\label{maxe} $\theta_5 \leqT \Cbf_\Mwf$, $\theta_6 \leqT \Cbf_\Mwf$ and $\Ed\leqT\theta_6\times\theta_5$;
        \item\label{maxf} $\theta_6^{\theta_6}\leqT\SNwf\leqT\Cbf_{[\theta_6]^{<\theta_1}}^{\theta_6}$
    \end{enumerate}
    The parameters $f_0,b,g\in\baireincr$ can be found in the ground model such that $f_0(i)=\frac{(i+1)(i+2)(2i+3)}{6}$ for any $i<\omega$ and such that ZFC proves $\Cbf_{\Iwf_g}^\perp \leqT \aLc(b,1)$. Recall that ZFC proves $\Cbf^\perp_\SNwf \leqT \Cbf^\perp_{\Iwf_g}$ and $\Cbf_\Mwf \leqT \Ed$. 
    
    For each $1\leq i\leq 4$ let $S_i:=[\theta_6]^{<\theta_i}\cap V$, which is a directed preorder. By GCH above $\theta^-_1$, we obtain that $S_i\eqT \Cbf_{[\theta_6]^{<\theta_i}}\cap V$ and that any ccc poset forces $S_i\eqT \Cbf_{[\theta_6]^{<\theta_i}} \eqT [\theta_6]^{<\theta_i}$. Therefore, $\Cbf_{[\theta_6]^{<\theta_i}}$ can be replaced by $S_i$ in~\ref{maxb}--\ref{maxd}.
    
    Following the same dynamics as in~\cite[Sec.~5]{CM22}, we can construct a $\sigma$-closed $N\preceq H_\chi$ of size $\lambda$ such that, for any $1\leq i\leq 4$,
    \begin{enumerate}[label = \rm (\alph*')]
        \item $S_i\cap N \leqT \prod_{j=i}^5 \lambda^\dfrak_j\times \lambda^\bfrak_j$,
        \item $\lambda^\bfrak_i\leqT S_i\cap N$ and $\lambda^\dfrak_i\leqT S_i\cap N$,
        \item $\theta_5\cap N\eqT \lambda^\bfrak_5$ and $\theta_6\cap N \eqT \lambda^\dfrak_5$.
    \end{enumerate}
    Then, by~\ref{maxa}--\ref{maxe}, \autoref{KcapN},~\ref{SN_N} and~\ref{SfYorio}, $\Por\cap N$ is a required.
\end{proof}

We do not know the effect of $\Por\cap N$ on~\cref{maxf}, particularly on $\theta_6^{\theta_6}\leqT\SNwf$. However, we have the following cases where we can force a value to $\cof(\SNwf)$.

\begin{theorem}\label{cor1:4SNmax}
    Under the assumptions of \autoref{thm:4SNmax}, if 
    $\cof\left(([\lambda^\dfrak_1]^{<\lambda^\bfrak_1})^{\lambda^\dfrak_4}\right) = \lambda^\dfrak_1$ 
    then the ccc poset constructed in \autoref{thm:4SNmax} forces $\cof(\SNwf) = \lambda^\dfrak_1$ (see~\autoref{cichonmaxSN2}). 
\end{theorem}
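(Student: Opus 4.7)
The plan is to combine the lower bound already forced by the poset of \autoref{thm:4SNmax} with a matching upper bound extracted from \autoref{new_upperb}. The lower bound $\lambda^\dfrak_1 \leq \cof(\SNwf)$ is immediate: the $\Por\cap N$-extension from \autoref{thm:4SNmax} satisfies $\cof(\Nwf) = \lambda^\dfrak_1$, and ZFC proves $\cof(\Nwf)\leq\cof(\SNwf)$ (see \autoref{Cichonwith_SN}). So only the reverse inequality $\cof(\SNwf) \leq \lambda^\dfrak_1$ requires work.

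First I would pin down the parameters appearing in \autoref{new_upperb} in the $\Por\cap N$-extension. The ZFC chain $\cof(\Iwf_\id)\leq\cof(\Iwf_f)\leq\cof(\Nwf)$ (valid for every $f\in\baireincr$), combined with the fact that both extremes are forced to $\lambda^\dfrak_1$, gives $\supcof = \lambda^\dfrak_1$. Likewise, $\add(\Nwf)\leq\add(\Iwf_f)\leq\add(\Iwf_\id)$ yields $\minadd = \lambda^\bfrak_1$, and $\dfrak = \lambda^\dfrak_4$ is forced directly. Inserting these into \autoref{new_upperb} gives
\[
\cof(\SNwf) \leq \cov\left(([\lambda^\dfrak_1]^{<\lambda^\bfrak_1})^{\lambda^\dfrak_4}\right) \leq \cof\left(([\lambda^\dfrak_1]^{<\lambda^\bfrak_1})^{\lambda^\dfrak_4}\right).
\]

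The one remaining step, which I expect to be the only delicate point, is to check that the ground-model assumption $\cof\left(([\lambda^\dfrak_1]^{<\lambda^\bfrak_1})^{\lambda^\dfrak_4}\right) = \lambda^\dfrak_1$ is preserved by the ccc forcing $\Por\cap N$. This should follow from the exponent-$\mu$ variant of the preservation fact cited in the proof of \autoref{lem:hyp}\ref{hyp:a}: a $\kappa^+$-cc poset preserves the value of $\cof\left(([\lambda]^{<\theta})^\mu\right)$ whenever $\kappa < \cf(\theta)$, essentially because any nice name for a member of $([\lambda]^{<\theta})^\mu$ can be bounded pointwise by a ground-model function of the same type via the ccc. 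Applying this with $\kappa = \aleph_0$, $\theta = \lambda^\bfrak_1$ (uncountable regular by hypothesis), $\lambda = \lambda^\dfrak_1$ and $\mu = \lambda^\dfrak_4$, the cofinality remains $\lambda^\dfrak_1$ in the extension. Hence $\cof(\SNwf) \leq \lambda^\dfrak_1$, and equality follows.
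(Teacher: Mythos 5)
Your proof is correct and follows essentially the same route as the paper's: both sides use the lower bound $\lambda^\dfrak_1 \leq \cof(\SNwf)$ already forced in \autoref{thm:4SNmax}, the upper bound from \autoref{new_upperb} after computing $\supcof = \lambda^\dfrak_1$, $\minadd = \lambda^\bfrak_1$, $\dfrak = \lambda^\dfrak_4$, and the preservation of $\cof\left(([\lambda^\dfrak_1]^{<\lambda^\bfrak_1})^{\lambda^\dfrak_4}\right) = \lambda^\dfrak_1$ under ccc forcing. You simply spell out the details (including the exponent-$\mu$ preservation fact, which the paper invokes implicitly and cites from \cite{CM23} in the proof of \autoref{lem:hyp}) that the paper compresses into one sentence.
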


Note that the additional assumption implies $\lambda^\dfrak_4 < \lambda^\dfrak_1$, see~\cite[Lem.~3.10]{CM23}.

\begin{proof}
Let $\Qor$ be the poset obtained in~\autoref{thm:4SNmax}. Since $\Qor$ forces $\lambda_1^\dfrak\leq\cof(\SNwf)$ along with $\cof\left(([\lambda^\dfrak_1]^{<\lambda^\bfrak_1})^{\lambda^\dfrak_4}\right) = \lambda^\dfrak_1$ (which is preserved by ccc posets), it is forced that $\cof(\SNwf)=\lambda_1^\dfrak$ by~\autoref{new_upperb}.
\end{proof}

We do not know how to force $\cof(\SNwf)$ larger in this context, unless we restrict to $\cov(\Mwf) = \cof(\Nwf)$:

\begin{theorem}\label{cor2:4SNmax}
    Under the assumptions of \autoref{thm:4SNmax}, assume in addition that $\lambda^\dfrak_5=\lambda^\dfrak_1 = \kappa$ and $\dfrak_{\kappa} = \cof\left(([\kappa]^{<\lambda^\bfrak_1})^{\kappa}\right) = \nu$. Then the ccc poset constructed in \autoref{thm:4SNmax} forces $\cof(\SNwf) = \nu$ (see~\autoref{Figcor24SNmax}).
\end{theorem}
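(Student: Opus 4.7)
The plan is to sandwich $\cof(\SNwf)$ between $\nu$ from above and below, using the constellation already produced by \autoref{thm:4SNmax} together with the general upper bound \autoref{new_upperb} and the general lower bound \autoref{cor:lowSN}. Let $\Qor$ denote the ccc poset from \autoref{thm:4SNmax}. Under the additional hypothesis $\lambda^{\dfrak}_5 = \lambda^{\dfrak}_1 = \kappa$, the constellation forced by $\Qor$ collapses the entire right‐hand column of Cicho\'n's diagram together with $\non(\SNwf)$ to the common value $\kappa$, so $\Qor$ forces
\[\cov(\Mwf)=\dfrak=\non(\SNwf)=\non(\Nwf)=\cof(\Nwf)=\kappa,\]
while $\add(\Nwf)=\minadd=\lambda^{\bfrak}_1$ and $\supcof=\cof(\Nwf)=\kappa$ (since every $\Iwf_f$ lies between $\add(\Nwf)$ and $\cof(\Nwf)$ in the diagram).

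For the upper bound, apply \autoref{new_upperb} in the $\Qor$-extension to obtain
\[\cof(\SNwf)\leq\cov\!\left(([\kappa]^{<\lambda^{\bfrak}_1})^{\kappa}\right)\leq\cof\!\left(([\kappa]^{<\lambda^{\bfrak}_1})^{\kappa}\right).\]
The cardinal $\cof\!\left(([\kappa]^{<\lambda^{\bfrak}_1})^{\kappa}\right)$ is a ground-model object (in the sense of \autoref{def:idpow}) and its value is preserved by any ccc forcing, as the relevant cofinal families and antichains have size bounded by $\nu$; so in the $\Qor$-extension it still equals the ground-model value $\nu$. This yields $\cof(\SNwf)\leq\nu$.

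For the lower bound, note that in the $\Qor$-extension the hypotheses of \autoref{cor:lowSN} hold with $\mu=\kappa$ and $\lambda=\cf(\kappa)=\kappa$ (since $\kappa$ is regular uncountable): indeed $\cov(\Mwf)=\dfrak=\kappa$ and $\non(\SNwf)=\supcof=\kappa$. Therefore $\Qor$ forces
\[\dfrak_{\kappa}\leq\cof(\SNwf).\]
Since $\dfrak_{\kappa}=\nu$ in the ground model and this value is again preserved by ccc forcings (a cofinal family in $\la\kappa^\kappa,\leq\ra$ of size $\nu$ in the ground model remains cofinal after ccc forcing because new functions $\kappa\to\kappa$ can be covered by ground-model ones via a standard nice-name argument), we conclude $\nu\leq\cof(\SNwf)$ in the extension.

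Combining both inequalities gives $\cof(\SNwf)=\nu$, as desired. The only delicate point is the preservation of $\dfrak_{\kappa}$ and of $\cof\!\left(([\kappa]^{<\lambda^{\bfrak}_1})^{\kappa}\right)$ by the ccc poset $\Qor$; both are standard consequences of the ccc together with the fact that $\kappa$ is regular and the parameters on the left side ($\lambda^{\bfrak}_1$, $\kappa$) are preserved, so no new obstacle arises beyond what is already handled in the proof of \autoref{thm:4SNmax}.
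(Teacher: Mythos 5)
Your proof is correct and takes the same route as the paper: compute $\supcof=\kappa$, $\minadd=\lambda^\bfrak_1$, $\dfrak=\kappa$ in the extension, then apply \autoref{new_upperb} to get $\cof(\SNwf)\leq\cof\left(([\kappa]^{<\lambda^\bfrak_1})^{\kappa}\right)$ and \autoref{cor:lowSN} (with $\mu=\lambda=\kappa$) to get $\dfrak_\kappa\leq\cof(\SNwf)$, noting that both $\dfrak_\kappa$ and $\cof\left(([\kappa]^{<\lambda^\bfrak_1})^{\kappa}\right)$ keep their ground-model value $\nu$ under the ccc. One small slip worth flagging: for the lower bound you need the direction ``$\dfrak_\kappa$ does not \emph{decrease} under ccc forcing'' (so that $\nu\leq\dfrak_\kappa\leq\cof(\SNwf)$ in the extension), whereas your parenthetical only argues that a ground-model dominating family in $\la\kappa^\kappa,\leq\ra$ stays dominating, which is the non-increase direction; the needed direction also holds (cover a name for a small dominating family by a ground-model family of the same size using regularity of $\kappa$ and the ccc), and both halves are exactly the standard preservation fact the paper invokes.
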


The assumption implies that $\kappa<\nu$, but any of the cases $\nu<\lambda$, $\nu=\lambda$ and $\lambda<\nu$ is allowed. This result strengthens~\cite[Thm.~4.6]{cardona} and \autoref{thm:4SN}.

\begin{figure}[ht!]
\begin{center}
  \includegraphics[width=\linewidth]{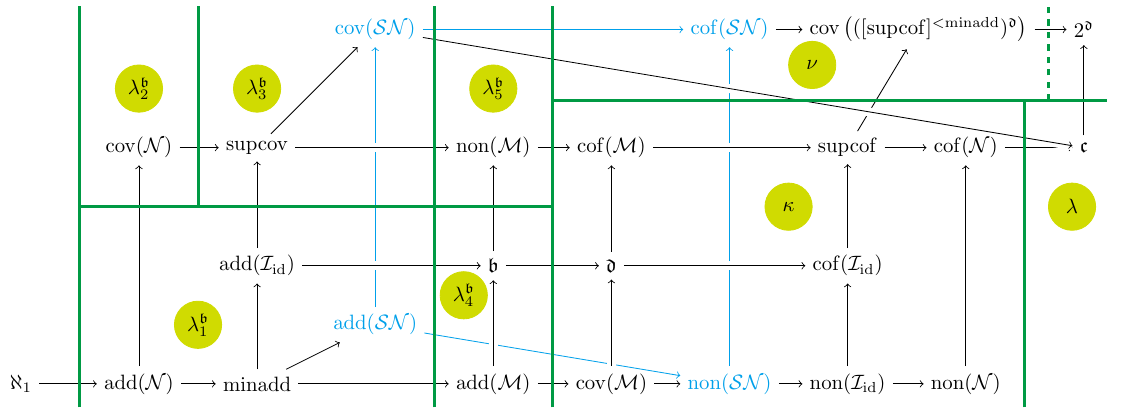}
 \caption{The constellation forced in~\autoref{cor2:4SNmax}.} 
  \label{Figcor24SNmax}
\end{center}
\end{figure}

\begin{proof}
As in~\autoref{cor1:4SNmax}, let $\Qor$ be the poset obtained in~\autoref{thm:4SNmax}. Since $\Qor$
forces $\cov(\Mwf)=\dfrak=\kappa$, together with $\dfrak_{\kappa} = \cof\left(([\kappa]^{<\lambda^\bfrak_1})^{\kappa}\right) = \nu$ (which is preserved by ccc posets), it is forced that $\cof(\SNwf)=\nu$ by~\autoref{new_upperb} and~\ref{cor:lowSN}.   
\end{proof}

\begin{figure}[ht!]
\begin{center}
  \includegraphics[width=\linewidth]{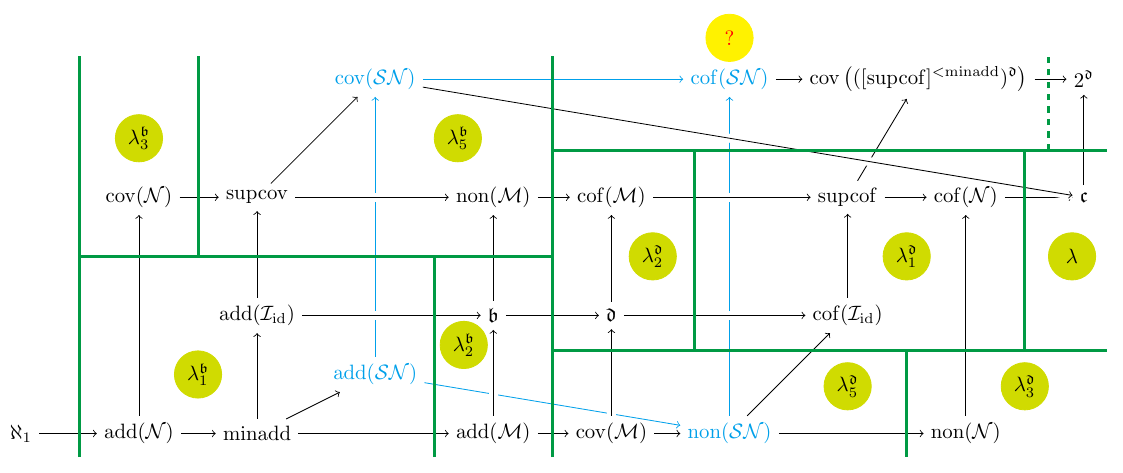}
 \caption{Constellation forced in~\autoref{KST4SNmax}.} 
  \label{FigKST4SNmax}
\end{center}
\end{figure}

In a similar way, \autoref{KST4SN} and~\ref{thm:4SN3} can be expanded as follows.

\begin{theorem}\label{KST4SNmax}
    Under the same assumptions of \autoref{thm:4SNmax}, there is a ccc poset forcing the constellation in~\autoref{FigKST4SNmax}.
    \begin{align*}
    \aleph_1 \leq & \add(\Nwf)=\add(\SNwf) = \add(\Iwf_\id) = \lambda^\bfrak_1 \leq \bfrak = \lambda^\bfrak_2 \leq \cov(\Nwf) = \lambda^\bfrak_3 \leq\\
    \leq & \cov(\SNwf) = \supcov = \non(\Mwf) = \lambda^\bfrak_5 \leq \cov(\Mwf) = \non(\SNwf) = \lambda^\dfrak_5 \leq\\ 
    \leq & \non(\Nwf) = \lambda^\dfrak_3 \leq
    \dfrak = \lambda^\dfrak_2 \leq \cof(\Nwf) = \cof(\Iwf_\id) = \lambda^\dfrak_1 \leq \cfrak =\lambda \text{ and } \lambda^\dfrak_1 \leq \cof(\SNwf).
    \end{align*}
    Furthermore:
    \begin{enumerate}[label = \rm (\alph*)]
        \item If (in the ground model) $\cof\left(([\lambda^\dfrak_1]^{<\lambda^\bfrak_1})^{\lambda^\dfrak_4}\right) = \lambda^\dfrak_1$ then the same ccc poset forces $\cof(\SNwf) = \lambda^\dfrak_1$.

        \item If $\lambda^\dfrak_5=\lambda^\dfrak_1 = \kappa$ and $\dfrak_{\kappa} = \cof\left(([\kappa]^{<\lambda^\bfrak_1})^{\kappa}\right) = \nu$ then the same ccc poset forces $\cof(\SNwf) = \nu$.
    \end{enumerate}
\end{theorem}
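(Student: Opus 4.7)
The plan is to adapt the proof of \autoref{thm:4SNmax} by replacing the base ccc poset from \autoref{thm:4SN} with the one constructed in \autoref{KST4SN}, and then applying the same $\sigma$-closed submodel technique. As before, by Glazer's observation (attributed in the proof of~\autoref{thm:4SNmax}), we may assume without loss of generality that eventual GCH holds in the ground model, specifically above some regular $\theta_1^- > \lambda$. Pick regular cardinals $\theta_1^- < \theta_1 < \theta_2^- < \theta_2 < \theta_3^- < \theta_3 < \theta_5^- < \theta_5 < \theta_6$ (so that, in particular, $\theta_5 < \theta_6 = \theta_6^{\aleph_0}$ and $\cof([\theta_6]^{<\theta_i}) = \theta_6$ for $i\in\{1,2,3,5\}$). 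The $\theta_i^-$'s will play the role of the target values $\lambda_i^\bfrak$ for $i\in\{1,2,3,5\}$, as in the layout of \autoref{fig:cichoncoll} but reshuffled so that $\theta_2^-$ lies below $\bfrak$, $\theta_3^-$ below $\cov(\Nwf)$, and $\theta_5^-$ below $\cov(\SNwf)=\non(\Mwf)$.

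Let $\Por$ be a ccc poset provided by \autoref{KST4SN} with parameters $\theta_1 \leq \theta_2 \leq \theta_3 \leq \theta_5 \leq \theta_6$. Then $\Por$ forces $|\Por|=\cfrak=\theta_6$, together with the Tukey equivalences $\Nwf \eqT \Iwf_f \eqT \Cbf_{[\theta_6]^{<\theta_1}}$ for all $f\in\baireincr$, $\Dbf \eqT \Cbf_{[\theta_6]^{<\theta_2}}$, $\Cbf_\Nwf^\perp \eqT \Cbf_{[\theta_6]^{<\theta_3}}$, and $\Cbf_\Mwf \eqT \aLc(b,h) \eqT \Cbf_{[\theta_6]^{<\theta_5}}$ for suitable $b,h\in\baireincr$. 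Since ZFC proves $\Cbf^\perp_\SNwf \leqT \Cbf^\perp_{\Iwf_g} \leqT \aLc(b,h)$ for some $g\in\baireincr$ in the ground model (see~\cite[Lem.~2.5]{KM21}), and since the proof of \autoref{KST4SN} yields $\Cbf_{[\theta_6]^{<\theta_5}} \leqT \Cbf^\perp_\SNwf$ (via the precaliber-$\theta_5$ limit steps, cf.\ \autoref{thm:precaliber}), $\Por$ also forces $\Cbf^\perp_\SNwf \eqT \Cbf_{[\theta_6]^{<\theta_5}}$. Moreover, $\Por$ forces $\Cbf_{[\theta_6]^{<\theta_1}} \leqT \Sbf^{f_0}$ (for $f_0(i)=\frac{(i+1)(i+2)(2i+3)}{6}$), so via \autoref{SfYorio} we get $\Cbf_{[\theta_6]^{<\theta_1}}\leqT \Iwf_f$ for every increasing $f$, and finally $\cf(\theta_6)^{\cf(\theta_6)} \leqT \SNwf \leqT \Cbf_{[\theta_6]^{<\theta_1}}^{\theta_6}$ by \autoref{new_upperb} and \autoref{cor:lowSN}.

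Next, set $S_i := ([\theta_6]^{<\theta_i})^V$ for $i\in\{1,2,3,5\}$, which by eventual GCH is Tukey-equivalent to $\Cbf_{[\theta_6]^{<\theta_i}}$ in any ccc extension, and is also absolute (being a fixed ground-model relational system). Following the same dynamics as in~\cite[Sec.~5]{CM22}, construct a $\sigma$-closed elementary submodel $N \preceq H_\chi$ of size $\lambda$ such that, for each $i\in\{1,2,3,5\}$, $\bfrak(S_i\cap N) = \lambda_i^\bfrak$, $\dfrak(S_i\cap N) = \lambda_i^\dfrak$, and the full Tukey bounds needed for Cicho\'n's maximum on the cardinals $\lambda_j^\bfrak,\lambda_j^\dfrak$ (as recorded in \cite[Sec.~5]{CM22}) are satisfied. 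Apply \autoref{KcapN} to each of the equivalences $\Rbf \eqT \Cbf_{[\theta_6]^{<\theta_i}}$ for the definable relational systems $\Rbf\in\{\Lc^*,\Dbf,\Cbf_\Nwf^\perp,\aLc(b,h),\Cbf_\Mwf\}$, and apply \autoref{SN_N} to the witnesses of $\Cbf_{[\theta_6]^{<\theta_1}}\leqT \Sbf^{f_0}$ and $\Cbf_{[\theta_6]^{<\theta_5}}\leqT \Cbf_\SNwf^\perp$. This transfers the Tukey structure to $\Por \cap N$ and yields the desired constellation in the $\Por\cap N$-extension; in particular, $\lambda_1^\dfrak \leq \cof(\SNwf)$ since $\cof(\Nwf) = \lambda_1^\dfrak$ is forced and $\cof(\Nwf)\leq\cof(\SNwf)$.

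The two furthermore clauses follow exactly as in \autoref{cor1:4SNmax} and \autoref{cor2:4SNmax}: the upper bound $\cof(\SNwf)\leq \cov\bigl(([\lambda_1^\dfrak]^{<\lambda_1^\bfrak})^{\lambda_4^\dfrak}\bigr)$ comes from \autoref{new_upperb} applied to the forced values of $\supcof$, $\minadd$ and $\dfrak$, while the lower bound $\dfrak_\kappa \leq \cof(\SNwf)$ in the second clause comes from \autoref{cor:lowSN} applied to the forced equality $\cov(\Mwf)=\dfrak=\kappa$ and $\non(\SNwf)=\supcof=\kappa$. The main obstacle is purely bookkeeping: checking that the $\sigma$-closed submodel $N$ from~\cite[Sec.~5]{CM22} can be arranged to simultaneously fix the correct values of $\bfrak(S_i\cap N)$ and $\dfrak(S_i\cap N)$ under the shuffled order $\lambda_1^\bfrak \leq \lambda_2^\bfrak \leq \lambda_3^\bfrak \leq \lambda_5^\bfrak$ (with $\lambda_4^\bfrak$ interpolated afterwards), but this is precisely the scenario that construction is designed to handle, so no new combinatorics is required.
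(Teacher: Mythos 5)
Your overall strategy is the right one and matches what the paper intends: replace the base poset of \autoref{thm:4SNmax} by the one from \autoref{KST4SN}, intersect with a $\sigma$-closed $N\preceq H_\chi$, and transfer Tukey connections via \autoref{KcapN} and \autoref{SN_N}. The invocation of the Glazer observation, the use of $\Sbf^{f_0}$ to handle $\add(\SNwf)$ and $\add(\Iwf_f)$, and the routing of $\Cbf_\SNwf^\perp\leqT K$ through $\Cbf_{\Iwf_g}^\perp$ (which is definable) are all correct.

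Two things deserve scrutiny. First, the parenthetical ``via the precaliber-$\theta_5$ limit steps, cf.~\autoref{thm:precaliber}'' for the direction $\Cbf_{[\theta_6]^{<\theta_5}}\leqT\Cbf_\SNwf^\perp$ is doubtful: in the $\cite{KST}$ construction the iterand $\tilde{\Eor}$ has full continuum size and is only a subalgebra of random forcing ($\sigma$-$n$-linked for all $n$), which does not by itself give precaliber $\theta_5$. You should simply cite item (f) of \autoref{KST4SN} directly rather than re-derive it via \autoref{thm:precaliber}. Second, and more seriously, in the furthermore clause (a) you write $\cof(\SNwf)\leq\cov\bigl(([\lambda_1^\dfrak]^{<\lambda_1^\bfrak})^{\lambda_4^\dfrak}\bigr)$ and attribute it to \autoref{new_upperb} ``applied to the forced values of $\supcof$, $\minadd$ and $\dfrak$'' --- but \autoref{new_upperb} gives the exponent $\dfrak$, which in this constellation is forced to be $\lambda_2^\dfrak$, not $\lambda_4^\dfrak$. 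Since $\lambda_4^\dfrak\leq\lambda_3^\dfrak\leq\lambda_2^\dfrak$ and $\cof(([\kappa]^{<\theta})^{\mu})$ is nondecreasing in $\mu$, the hypothesis $\cof(([\lambda_1^\dfrak]^{<\lambda_1^\bfrak})^{\lambda_4^\dfrak})=\lambda_1^\dfrak$ is in general weaker than the bound $\cof(([\lambda_1^\dfrak]^{<\lambda_1^\bfrak})^{\lambda_2^\dfrak})\leq\lambda_1^\dfrak$ that the argument actually needs. You should have either adjusted the exponent to $\lambda_2^\dfrak$ (which is what the constellation forces for $\dfrak$) or flagged the mismatch; copying $\lambda_4^\dfrak$ from \autoref{cor1:4SNmax} without adjusting for the reshuffled roles of the $\lambda_i$'s leaves a gap in this clause.
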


\begin{theorem}\label{4SN3max}
    Under the same assumptions of \autoref{thm:4SNmax}, there is a ccc poset forcing the constellation in~\autoref{Fig4SN3max}.
    \begin{align*}
    \aleph_1 \leq & \add(\Nwf)=\add(\SNwf) = \add(\Iwf_\id) = \lambda^\bfrak_1 \leq \cov(\Nwf) = \lambda^\bfrak_2 \leq \bfrak = \lambda^\bfrak_3 \leq\\
    \leq & \cov(\SNwf) = \supcov =  \non(\Mwf) = \lambda^\bfrak_5 \leq
    \cov(\Mwf) = \non(\SNwf) = \lambda^\dfrak_5 \leq\\ 
    \leq  & \dfrak = \lambda^\dfrak_3 \leq  \non(\Nwf) = \lambda^\dfrak_2 \leq \cof(\Nwf) = \cof(\Iwf_\id) = \lambda^\dfrak_1 \leq \cfrak =\lambda \text{ and } \lambda^\dfrak_1 \leq \cof(\SNwf).
    \end{align*}
    Furthermore:
    \begin{enumerate}[label = \rm (\alph*)]
        \item If (in the ground model) $\cof\left(([\lambda^\dfrak_1]^{<\lambda^\bfrak_1})^{\lambda^\dfrak_4}\right) = \lambda^\dfrak_1$ then the same ccc poset forces $\cof(\SNwf) = \lambda^\dfrak_1$.

        \item If $\lambda^\dfrak_5=\lambda^\dfrak_1 = \kappa$ and $\dfrak_{\kappa} = \cof\left(([\kappa]^{<\lambda^\bfrak_1})^{\kappa}\right) = \nu$ then the same ccc poset forces $\cof(\SNwf) = \nu$.
    \end{enumerate}
\end{theorem}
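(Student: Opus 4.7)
The plan is to mirror the proof of \autoref{thm:4SNmax}, but starting from the ccc poset produced by \autoref{thm:4SN3} in place of the one from \autoref{thm:4SN}. First, I would invoke Elliot Glazer's observation (as in the proof of \autoref{thm:4SNmax}) so that, without loss of generality, GCH holds above some regular cardinal $\theta^-_1>\lambda$. Then pick interleaved uncountable regular cardinals $\theta^-_i$ and $\theta_i$ satisfying the strict inequalities analogous to the arrangement shown in the figure accompanying the proof of \autoref{thm:4SNmax}, with $\theta_1\leq\theta_2\leq\theta_3\leq\theta_5\leq\theta_6=\theta_6^{\aleph_0}$ and suitable closure hypotheses so that \autoref{thm:4SN3} applies.

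Let $\Por$ be the ccc poset of \autoref{thm:4SN3} for these parameters. Examining that proof, $\Por$ has size $\theta_6$ and forces: $\cfrak=\theta_6$; $\Cbf_{[\theta_6]^{<\theta_1}}\leqT\Sbf^{f_0}$ (where $f_0(i)=\frac{(i+1)(i+2)(2i+3)}{6}$); $\Rbf_i\eqT\Cbf_{[\theta_6]^{<\theta_i}}$ for $i\in\{1,2,3\}$ with $\Rbf_1=\Lc^*$, $\Rbf_2=\Cbf^\perp_\Nwf$, $\Rbf_3=\Dbf$; $\theta_5\leqT\Cbf_\Mwf$ and $\theta_6\leqT\Cbf_\Mwf$; $\Cbf^\perp_{\Iwf_g}\leqT\theta_6\times\theta_5$ for a suitable $g\in\baireincr$ (which by \autoref{thm:cfcovSN} also yields $\theta_5,\theta_6\leqT\Cbf^\perp_\SNwf$); and $\theta_6^{\theta_6}\leqT\SNwf\leqT\Cbf_{[\theta_6]^{<\theta_1}}^{\theta_6}$. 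All the parameters $f_0,b,g$ live in the ground model. For each $i\in\{1,2,3\}$ let $S_i:=[\theta_6]^{<\theta_i}\cap V$; by GCH above $\theta^-_1$, $S_i\eqT\Cbf_{[\theta_6]^{<\theta_i}}\cap V$, a Tukey equivalence preserved by every ccc forcing.

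Following the dynamics of \cite[Sec.~5]{CM22} as in the proof of \autoref{thm:4SNmax}, I would construct a $\sigma$-closed elementary submodel $N\preceq H_\chi$ of size $\lambda$ such that for each $i\in\{1,2,3\}$ one has $S_i\cap N\leqT\prod_{j=i}^{5}\lambda^\dfrak_j\times\lambda^\bfrak_j$, together with $\lambda^\bfrak_i\leqT S_i\cap N$ and $\lambda^\dfrak_i\leqT S_i\cap N$, and also $\theta_5\cap N\eqT\lambda^\bfrak_5$, $\theta_6\cap N\eqT\lambda^\dfrak_5$. The combination of these Tukey data with \autoref{KcapN} (for the definable relational systems $\Rbf_i$, $\Cbf_\Mwf$, and the corresponding products) and \autoref{SN_N} (for the two non-absolute statements $\Cbf_{[\theta_6]^{<\theta_1}}\leqT\Sbf^{f_0}$ and $\Cbf^\perp_\SNwf$-bounds), together with \autoref{SfYorio} to transfer from $\Sbf^{f_0}$ to every Yorioka ideal $\Iwf_g$, shows that $\Por\cap N$ forces the full constellation of \autoref{Fig4SN3max}. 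The lower bound $\lambda^\dfrak_1\leq\cof(\SNwf)$ is free from $\cof(\Nwf)\leq\cof(\SNwf)$ and the value forced to $\cof(\Nwf)$. For the two furthermore clauses, since $\Por\cap N$ is ccc the ground-model values of $\cof(([\lambda^\dfrak_1]^{<\lambda^\bfrak_1})^{\lambda^\dfrak_4})$ and of $\dfrak_\kappa$, $\cof(([\kappa]^{<\lambda^\bfrak_1})^\kappa)$ are preserved, and the exact value of $\cof(\SNwf)$ then follows from \autoref{new_upperb} (upper bound) and \autoref{cor:lowSN} (lower bound), exactly as in \autoref{cor1:4SNmax} and \autoref{cor2:4SNmax}.

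The main obstacle I anticipate is ensuring that the Tukey connections involving $\SNwf$ survive the intersection with $N$, since $\SNwf$ is not a definable relational system of the reals and the general transfer result \autoref{KcapN} does not apply to it. This is precisely the role of \autoref{SN_N}, which is tailored for witnesses given by nice names of the form $\bar\sigma=\langle\sigma^{\dot f}:\dot f\in\baireincr_\Por\rangle$ coding members of $\SNwf$; verifying that the witnesses produced by the proof of \autoref{thm:4SN3} can be chosen in this canonical form (and hence lie in $N$ after book-keeping) is the technical point on which the argument hinges. The upper bound $\SNwf\leqT\Cbf_{[\theta_6]^{<\theta_1}}^{\theta_6}$ will not transfer cleanly through $N$, which is why the value of $\cof(\SNwf)$ is left open in the main statement and pinned down only under the additional hypotheses of the two furthermore clauses.
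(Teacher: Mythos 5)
Your proposal matches the paper's own (implicit) proof: the paper leaves \autoref{4SN3max} without an explicit argument, stating only that it follows ``in a similar way'' to \autoref{thm:4SNmax} by replacing the base poset from \autoref{thm:4SN} with the one from \autoref{thm:4SN3}, which is precisely what you carry out, correctly identifying \autoref{SN_N} as the needed substitute for \autoref{KcapN} on the $\SNwf$-side and handling the two furthermore clauses exactly as in \autoref{cor1:4SNmax} and \autoref{cor2:4SNmax}.
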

\begin{figure}[ht!]
\begin{center}
  \includegraphics[width=\linewidth]{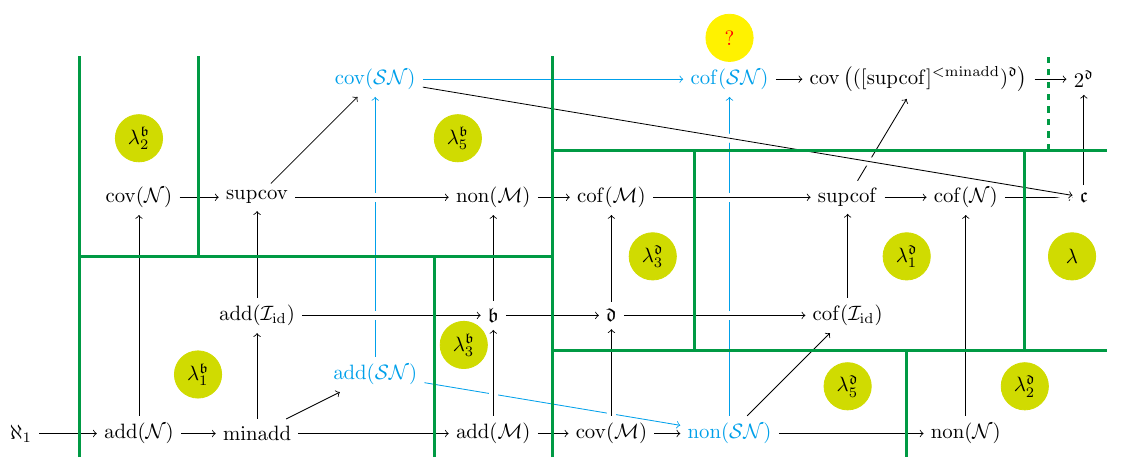}
 \caption{Constellation forced in~\autoref{4SN3max}.} 
  \label{Fig4SN3max}
\end{center}
\end{figure}

\begin{remark}\label{remcovnon}
    For the same reasons explained in \autoref{rem:covIf}, it is unclear whether all coverings of Yorioka ideals can be forced to be the same in all the results of this section. Moreover, after intersecting with submodels, it could happen that at least two uniformities of Yorioka ideals are different, but it is unclear how to modify the parameters of the construction so that they are all forced to be the same.
\end{remark}

\section{Discussions and open questions}\label{sec:Q}

We forced the four cardinal characteristics associated with $\SNwf$ pairwise different along with Cicho\'n's maximum, but we still do not know how to separate additional values in~\autoref{Cichonwith_SN}. For instance, we could ask the following:

\begin{question}\label{Oaddsn} 
Are each of the following statements consistent with $\thzfc$? 
\begin{enumerate}[label=\rm(\arabic*)]
    \item\label{Oaddsnone}  $\add(\Nwf)<\add(\SNwf)<\bfrak$.
    
    \item\label{Oaddsntwo} $\add(\Nwf)<\bfrak<\add(\SNwf)$.   
\end{enumerate}
\end{question}

One natural way to solve~\autoref{Oaddsn} would be to find a poset that increases $\add(\SNwf)$ while preserving $\add(\Nwf)$ (and $\bfrak$) small. 
It is known from Pawlikowski~\cite{paw85} that $\minLc \leq \add(\SNwf)$ where 
\[\minLc := \min\set{|F|}{\exists\, b\in\omega^\omega\colon F\subseteq \prod b \text{ and }\neg\exists\, \varphi\in \Swf(b,\id)\ \forall\, x\in F\colon x\in^* \varphi},\]
which he used to force $\bfrak<\minLc=\cfrak$.
However,  $\add(\Nwf) = \min\{\bfrak,\minLc\}$ (see e.g.~\cite[Lem.~3.11]{CM}), so we cannot increase $\bfrak$ and $\minLc$ simultaneously, while forcing $\add(\Nwf)$ small. 


Concerning Yorioka ideals, it is known that many coverings and uniformities can be forced to be different (even with continuum many different values, see~\cite{CKM}). However, the consistency of two different additivity numbers is unknown, likewise for the cofinality. Although the consistency of $\add(\Nwf)<\add(\Iwf_f)<\cof(\Iwf_f)<\cof(\Nwf)$ for some $f$ is known~\cite[Thm.~5.12]{CM}, it is unknown whether ZFC proves $\minadd = \add(\Nwf)$ and $\supcof = \cof(\Nwf)$, or $\minadd =\add(\Iwf_\id)$ and $\supcof = \cof(\Iwf_\id)$. These questions are addressed in~\cite[Sec.~6]{CM}.

With respect to \autoref{rem:covIf} and~\ref{remcovnon}, we wonder about the consistency of $\cov(\Nwf)<\cov(\Iwf_\id)$ and $\non(\Iwf_\id) < \non(\Nwf)$. This was indeed proved by Goto~\cite{goto}, but not in the context of FS iterations. Solving this consistency for finite support iterations would give tools to solve the problems addressed in these remarks, namely, forcing that all coverings of Yorioka ideals are equal in the models constructed in \autoref{SecConstr} and~\ref{sec:subm}, likewise for the uniformities.

As mentioned in the introduction, the first proof of Cicho\'n's maximum used strongly compact cardinals~\cite{GKScicmax}. Concretely, a ccc poset forcing that separates the values of the left side of Cicho\'n's diagram is produced, and Boolean ultrapowers are applied to additionally separate the right side. The dynamics are similar to the proofs in \autoref{sec:subm}: given a strongly compact cardinal $\kappa$ and an elementary embedding $j\colon V\to N$ corresponding to a Boolean ultrapower, if $\Por$ is a ccc poset that forces $K\leqT \Rbf$, where $\Rbf$ is a definable relational system of the reals, then $j(\Por)$ forces $j(K) \leqT \Rbf$ (with respect to $V$), and the same applies to $\Rbf\leqT K$ (note the similarity with \autoref{KcapN}). So, to understand the values $j(\Por)$ forces to $\bfrak(\Rbf)$ and $\dfrak(\Rbf)$, it is enough to calculate $\bfrak(j(K))$ and $\dfrak(j(K))$. 

Concerning $\SNwf$, we can show similar results as \autoref{SN_N}, namely, $\Vdash_\Por K\leqT\Sbf^{f_0}$ implies $\Vdash_{j(\Por)} j(K) \leqT \Sbf^{f_0}$, and likewise when replacing $\Sbf^{f_0}$ by $\Cbf^\perp_\SNwf$. As a consequence, Boolean ultrapowers can be used to obtain constellations as in \autoref{sec:subm}. However, we do not know the effect of $j$ to Tukey connections for, e.g.\ $K\leqT \SNwf$ and $\Cbf^\perp_{\SNwf} \leqT K$. The reason is that, although any nice $j(\Por)$-name of a real is always in $N$, we cannot state the same about nice $j(\Por)$-names of strong measure zero sets.

Many questions related to ~\autoref{4SN} remain open.  We mention a few:

\begin{question}\label{Question4I}
Is each one of the following statements consistent with ZFC?
\begin{enumerate}[label=\rm(\arabic*)]
    \item\label{Q:4E} $\add(\Ewf)<\cov(\Ewf)<\non(\Ewf)<\cof(\Ewf)$.
    
    \item\label{Q:4M} $\add(\Mwf)<\cov(\Mwf)<\non(\Mwf)<\cof(\Mwf)$.
    
    \item $\add(\SNwf)<\non(\SNwf)<\cov(\SNwf)<\cof(\SNwf)$.
     
    \item\label{Q4I-4} $\add(\Iwf_f)<\non(\Iwf_f)<\cov(\Iwf_f)<\cof(\Iwf_f)$ for any $f\in\omega^\omega$.
\end{enumerate}
\end{question}

FS iterations of ccc forcings will not work to solve~\autoref{Question4I} because any such iteration whose length has uncountable cofinality $\pi$ forces $\non(\Mwf)\leq\cf(\pi)\leq\cov(\Mwf)$ and $\cov(\SNwf)\leq \cf(\pi)$ (see \autoref{thm:cfcovSN}), so alternative approaches are required. Roughly speaking, two approaches could be used to solve~\autoref{Question4I}: the first approach is a creature-forcing method based on the notion of decisiveness \cite{KS09,KS12}, developed in \cite{FGKS,CKM}, but this method is restricted to $\omega^\omega$-bounding forcings, that means, results in $\dfrak=\aleph_1$. So this method does not work to solve~\ref{Q:4M}. On the other hand, $\dfrak =\aleph_1$ implies $\add(\Ewf)=\aleph_1$ and $\cov(\Ewf)=\cov(\Nwf)$, but this method tends to use a device called ``rapid reading", which makes the creature construction force $\cov(\Nwf)=\aleph_1$. Therefore, the known creature methods cannot be used to solve~\ref{Q:4E}.

The second approach is the first author's new method of \emph{shattered iterations}~\cite{Brshatt}, but further research is needed to know how to use it to solve these problems. Since the writing of this paper, \autoref{Question4I}~\ref{Q4I-4} has been solved by David Valderrama with this method.

\subsection*{Acknowledgments} The authors would like to thank: Elliot Glazer, for letting us know that eventual~GCH is not essential to prove Cicho\'n's maximum; Andr\'es Uribe-Zapata, for revision and suggestions to this paper, which helped us deliver a more presentable final version; and the anonymous referee for the valuable comments.

{\small
\bibliography{bibli}
\bibliographystyle{alpha}
}

\end{document}